\newtheorem{theorem}{Theorem}[section]
\newtheorem{corollary}[theorem]{Corollary}
\newtheorem{lemma}[theorem]{Lemma}
\newtheorem{proposition}[theorem]{Proposition}
\theoremstyle{definition}
\newtheorem{definition}[theorem]{Definition}
\newtheorem{example}[theorem]{Example}
\newtheorem{construction}[theorem]{Construction}
\theoremstyle{remark}
\newtheorem*{claim}{Claim}
\newtheorem*{claim*}{Claim}
\newtheorem{remark}[theorem]{Remark}
\newcommand{\Z}{\mathbb{Z}}
\newcommand{\Q}{\mathbb{Q}}
\newcommand{\R}{\mathbb{R}}
\newcommand{\C}{\mathbb{C}}
\newcommand{\T}{\mathbb{T}}
\newcommand{\F}{\mathbb{F}}
\newcommand{\id}{\operatorname{id}}
\newcommand{\sign}{\operatorname{sign}}
\newcommand{\eps}{\varepsilon}
\newcommand{\im}{\operatorname{im}}
\newcommand{\Cyl}{\operatorname{Cyl}}
\newcommand{\PD}{\operatorname{PD}}
\newcommand{\Pb}{\operatorname{Pb}}
\newcommand{\sm}{\setminus}
\newcommand{\aug}{\operatorname{aug}}
\newcommand{\tr}{\text{tr}}
\newcommand{\op}{\operatorname}
\newcommand{\dsign}{\operatorname{dsign}}
\DeclareMathOperator{\lk}{lk}
\DeclareMathOperator{\coker}{coker}
\DeclareMathOperator{\Hom}{Hom}
\DeclareMathOperator{\rk}{rk}
\begin{document}

\title{Multivariable signatures, genus bounds and \texorpdfstring{$0.5$--solvable}{0.5-solvable} cobordisms}
\author{Anthony Conway}
\address{Universit\'e de Gen\`eve, Section de math\'ematiques, 2-4 rue du Li\`evre, 1211 Gen\`eve 4, Switzerland}
\email{anthony.conway@unige.ch}

\author{Matthias Nagel}
\address{McMaster University, Hamilton, Canada}
\email{nagel@cirget.ca}

\author{Enrico Toffoli}
\address{Fakult\"at f\"ur Mathematik\\ Universit\"at Regensburg\\   Germany}
\email{enricotoffoli@gmail.com}

\begin{abstract}
We refine prior bounds on how the multivariable signature and the nullity of a link change
under link cobordisms. The formula generalizes a series of results about the 4-genus having
their origins in the Murasugi-Tristram inequality, and at the same time extends previously
known results about concordance invariance of the signature to a bigger set of allowed variables.
Finally, we show that the multivariable signature and
nullity are also invariant under $0.5$--solvable cobordism.
\end{abstract}
\maketitle

\section{Introduction}\label{sec:Intro}

Given $\omega \in S^1\sm\{1\}$, the Levine-Tristram signature and nullity of a link~$L$ are given
by the signature and nullity of $(1-\omega)A+(1-\overline{\omega})A^T$, where
$A$ is any Seifert matrix for $L$~\cite{Levine69, Tristram}.  For
a~$\mu$-colored link, i.e.\ an oriented link~$L$ in~$S^3$ whose components are
partitioned into~$\mu$ sublinks~$L_1, \dotsc , L_\mu$, the
Levine-Tristram signature and nullity have been generalized to multivariable functions
\[ \sigma_L, \eta_L \colon \T^\mu \to \Z, \]
where $ \T^\mu$ denotes the set $(S^1 \setminus \{1\})^\mu$~\cite{CimasoniFlorens}.
Apart from their $3$-dimensional definition using C-complexes~\cite{Cooper, CimasoniFlorens},
a $4$-dimensional interpretation in the smooth setting has been given by
Cimasoni-Florens using branched covers and the $G$-signature theorem
for elements of $\T^\mu$ of finite order~\cite[Theorem 6.1]{CimasoniFlorens}.
We focus on another interpretation by Viro~\cite{Viro09} using directly the complements of surfaces bounding the link in the $4$-ball.

We shall always work in the topological (locally flat) category.
Let $F$ be a union~$F_1 \cup \dots \cup F_\mu \subset D^4$ of properly embedded locally flat surfaces
that only intersect each other transversally in double points and whose boundary is
a colored link~$L \subset S^3$.
Since the first homology group of the exterior~$W_F$ of  such a \emph{colored bounding surface} $F \subset D^4$ is free abelian,
any choice of $\omega \in \T^\mu$ gives rise to a coefficient system $H_1(W_F;\Z) \to U(1)$
and thus to a twisted signature~$\sign_\omega(W_F)$.
The twisted signature~$\sign_\omega (W_F)$ is independent of the
colored bounding surface $F$
and defines an invariant of colored links~\cite[Section 2.3]{Viro09}.
Building on \cite[Theorem 1.3]{ConwayFriedlToffoli}, we give a proof to the following statement of~\cite[Section 2.5]{Viro09} in Proposition~\ref{prop:ColorSignature}. The corresponding result for the nullity is proven in Proposition~\ref{prop:NullityNoCcomplex}.

\begin{proposition}\label{prop:ViroCimasoniFlorens}
Let $L$ be a $\mu$-colored link and let $\omega \in \T^\mu$. For any colored bounding surface $F$, the twisted signature $\sign_\omega(W_F)$ coincides with the multivariable signature $\sigma_L(\omega)$.
\end{proposition}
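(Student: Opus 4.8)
The plan is to identify both invariants with a common intermediate object, following the strategy of establishing independence-of-surface via cobordism invariance and then computing on a convenient model. First I would recall the construction of $\sigma_L(\omega)$ via a C-complex: Cimasoni–Florens realize $\sigma_L(\omega)$ as the signature of a hermitian form on the Seifert-type module $H_1$ of the C-complex, which by \cite[Theorem 1.3]{ConwayFriedlToffoli} can be reinterpreted as the signature of the twisted intersection form of a specific $4$-manifold built from the C-complex (roughly, the double branched-type cover, or the exterior of a pushed-in surface coming from the C-complex). The key point provided by that cited theorem is that this $3$-dimensionally defined object already has a $4$-dimensional signature description. So the task reduces to comparing $\sign_\omega(W_F)$ for an \emph{arbitrary} colored bounding surface $F$ with $\sign_\omega(W_{F_0})$ for the \emph{particular} surface $F_0$ obtained by pushing a C-complex into $D^4$.

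The heart of the argument is a Novikov-additivity / $G$-signature-type gluing computation. Given two colored bounding surfaces $F$ and $F_0$ for the same $L$, I would glue $W_F$ to $-W_{F_0}$ along the common boundary piece coming from $S^3 \setminus L$ (and the portion of the boundary corresponding to the surface exteriors), obtaining a closed — or at least boundary-controlled — $4$-manifold $X$ with a $U(1)$ coefficient system restricting to $\omega$. By Novikov additivity for twisted signatures, $\sign_\omega(W_F) - \sign_\omega(W_{F_0}) = \sign_\omega(X) - (\text{correction terms from the gluing region})$. One then argues that $\sign_\omega(X)$ vanishes: since the link $L$ bounds in both pieces, $X$ is (twisted-homology) a "null-cobordism"-type object, and the coefficient system on $H_1(X)$ factors appropriately so that the twisted intersection form is metabolic or the twisted Betti numbers force signature zero. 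Alternatively, and perhaps more cleanly, one invokes the already-established independence statement of Viro \cite[Section 2.3]{Viro09} for $\sign_\omega(W_F)$ directly, so that it suffices to compute on the single surface $F_0$; then \cite[Theorem 1.3]{ConwayFriedlToffoli} identifies $\sign_\omega(W_{F_0})$ with the C-complex signature, which is $\sigma_L(\omega)$ by definition.

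Concretely, the steps I would carry out are: (1) fix a C-complex $S$ for $L$, push its interior into $D^4$ to get a colored bounding surface $F_0$, and analyze $W_{F_0}$ — compute $H_1(W_{F_0};\Z)$, check it is the expected free abelian group, and describe the $\omega$-coefficient system; (2) apply \cite[Theorem 1.3]{ConwayFriedlToffoli} to rewrite $\sign_\omega(W_{F_0})$ as the signature of the twisted intersection form associated to $S$, carefully matching the hermitian form appearing there with the generalized Seifert form $(1-\omega_i)(1-\overline{\omega_j})$-weighted form of Cimasoni–Florens; (3) conclude $\sign_\omega(W_{F_0}) = \sigma_L(\omega)$; (4) invoke independence of $\sign_\omega(W_F)$ on the surface $F$ to transfer the equality to arbitrary $F$.

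The main obstacle I anticipate is step (2): the bookkeeping needed to match the hermitian pairing in \cite[Theorem 1.3]{ConwayFriedlToffoli} — which is phrased for a general surface and its exterior — with the \emph{specific} multivariable Seifert form of Cimasoni–Florens, including getting the normalization factors $(1-\omega_i)$, the conjugations, and the orientation/sign conventions exactly right, and ensuring the pushed-in C-complex genuinely produces a locally flat colored bounding surface whose exterior has the homology controlled enough for that theorem to apply. A secondary subtlety is handling the nullity/degenerate part (relevant for the companion Proposition~\ref{prop:NullityNoCcomplex}), i.e.\ confirming that the radicals of the two forms correspond, but for the signature statement alone this only affects whether the forms agree on the nose versus up to the radical, which does not change the signature.
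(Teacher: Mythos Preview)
Your proposal takes essentially the same approach as the paper: reduce to a pushed-in C-complex $F_0$ using Viro's independence statement, then apply \cite[Theorem~1.3]{ConwayFriedlToffoli} to identify $\sign_\omega(W_{F_0})$ with $\sign H(\omega)$. The paper's execution of your step~(2) is slicker than you anticipate, however: rather than matching normalizations and sign conventions by hand, the paper assumes the C-complex is totally connected so that $H_i(W_{F_0};\Lambda_S)=0$ for $i\neq 2$ and $H_2(W_{F_0};\Lambda_S)$ is free, and then uses the universal coefficient spectral sequence to produce a commutative diagram showing that the $\C^\omega$-intersection form is literally $\C^\omega\otimes_{\Lambda_S}\lambda_{\Lambda_S}$; since the latter is represented by $H(t)$ by the cited theorem, the former is represented by $H(\omega)$ with no further bookkeeping.
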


Cimasoni and Florens showed that the signature $\sigma_L(\omega)$ is invariant under smooth link concordance~\cite[Theorem 7.1]{CimasoniFlorens} for those $\omega=(\omega_1, \dots, \omega_\mu) \in \T^\mu$
that satisfy the following condition: there exists a prime $p$ such that for all $i$, the order of $\omega_i$ is a power of $p$. For the same subset of $\T^\mu$, they provide lower bounds on the genus and on the number of double points of smooth surfaces in $D^4$ bounded by a colored link $L$~\cite[Theorem 7.2]{CimasoniFlorens}, extending the
Murasugi-Tristram inequality~\cite{Murasugi,Tristram}  to the multivariable setting.

Building on the approach used in~\cite{NagelPowell} to study
concordance invariance of the Levine-Tristram signature, we consider the subset $\T_!^\mu$ of $\T^\mu$ given by those $\omega$'s which are not roots of any polynomial  $p\in  \Z[t_1^{\pm 1},\dots ,t_\mu^{\pm 1}]$ whose evaluation on $(1,\dotsc, 1)$ is invertible. This set includes the elements considered by Cimasoni and Florens~\cite[Section 7]{CimasoniFlorens}; see Proposition~\ref{prop:TPContainedT!}.
A \emph{colored cobordism} between two $\mu$-colored links $L$ and $L'$ is a collection of  properly embedded locally flat surfaces~$\Sigma = \Sigma_1 \cup \dots \cup \Sigma_\mu$
in $S^3 \times [0,1]$ which have the following properties: the surfaces only intersect each other transversally in double points, each surface~$\Sigma_i$ has boundary~$L_i \sqcup -L_i'$, and each connected component of $\Sigma_i$ has at least one boundary component in $S^3 \times \{0\}$
and one in $S^3 \times \{1\}$.
Our first main result gives bounds on the Euler characteristic and on the number of double points in such a cobordism, generalizing Powell's treatment of a genus bound for the Levine-Tristram signature~\cite{Powell}.

\begin{theorem}\label{thm:GenusIntro}
Let $\Sigma = \Sigma_1 \cup \dots \cup \Sigma_\mu$ be a colored cobordism between two $\mu$-colored links $L$ and $L'$. If $\Sigma$ has $c$ double points, then
\[|\sigma_L(\omega)-\sigma_{L'}(\omega)|  + |\eta_L(\omega)-\eta_{L'}(\omega)|
	\leq \sum_{i=1}^{\mu} -\chi(\Sigma_i) + c \]
for all $\omega\in \T_!^\mu$.
\end{theorem}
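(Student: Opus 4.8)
The plan is to give the inequality a four-dimensional shape, following the strategy of~\cite{NagelPowell, Powell} in the single-variable case, and to bring the hypothesis $\omega\in\T_!^\mu$ to bear on the twisted homology of the exterior $W_\Sigma:=(S^3\times[0,1])\sm\nu\Sigma$ of the cobordism. By Proposition~\ref{prop:ViroCimasoniFlorens} and its nullity counterpart Proposition~\ref{prop:NullityNoCcomplex}, both $\sigma_L(\omega)$ and $\eta_L(\omega)$ are recovered from twisted (co)homological data of the exterior $W_F$ of any colored bounding surface $F\subset D^4$ for $L$ (the nullity in a way independent of $F$), and likewise for $L'$. Fix such an $F$, and view $D^4\cup_{S^3}(S^3\times[0,1])$ as a copy of $D^4$; then $G:=F\cup_L\Sigma$ is a colored bounding surface for $L'$, its exterior decomposes as $W_G=W_F\cup_{E_L}W_\Sigma$ with $E_L=S^3\sm\nu L$, and this gluing takes place along the $3$-manifold $E_L$. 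The corner of the gluing is $\partial E_L$, a union of peripheral tori on which the coefficient system is nontrivial (each meridian maps to some $\omega_i\neq 1$), so $H_1(\partial E_L;\C_\omega)=0$ and the Wall non-additivity term vanishes; twisted Novikov additivity therefore gives
\[
\sigma_{L'}(\omega)-\sigma_L(\omega)=\sign_\omega(W_G)-\sign_\omega(W_F)=\sign_\omega(W_\Sigma),
\]
while a parallel Mayer--Vietoris computation with $\C_\omega$-coefficients for $W_G=W_F\cup_{E_L}W_\Sigma$ (using $H_0(E_L;\C_\omega)=0$) expresses $\eta_{L'}(\omega)-\eta_L(\omega)$ through the twisted homology of $W_\Sigma$ and the $\chi(\Sigma_i)$. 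Note that $W_\Sigma$ is an open subset of the orientable manifold $S^3\times[0,1]$, so $\sign_\omega(W_\Sigma)$ makes sense even when the $\Sigma_i$ are non-orientable, and $H_1(W_\Sigma;\Z)$ is free since each $\Sigma_i$ has non-empty boundary, so the coefficient system is available.

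Two facts about $W_\Sigma$ then enter. First, decomposing $S^3\times[0,1]$ as the union of $W_\Sigma$ and a regular neighborhood of $\Sigma$ — their overlap being an $S^1$-bundle over $\Sigma$ away from the double points, of Euler characteristic $0$, with each double point contributing one to the count — one obtains $\chi(W_\Sigma)=-\sum_i\chi(\Sigma_i)+c$, and since the coefficient system is flat, $\chi(W_\Sigma;\C_\omega)=\chi(W_\Sigma)=-\sum_i\chi(\Sigma_i)+c$. Second, and this is the heart of the matter, the hypothesis $\omega\in\T_!^\mu$ controls $H_*(W_\Sigma;\C_\omega)$ in low degrees. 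Transplanting to $\Z[\Z^\mu]$ the mechanism of~\cite{NagelPowell}: the relevant homology modules of $W_\Sigma$ over $\Z[\Z^\mu]$ are torsion, and the cobordism condition (each component of each $\Sigma_i$ meets both ends of $S^3\times[0,1]$, so that from the point of view of $\C_\omega$-homology $W_\Sigma$ is built from $E_L\times[0,1]$ using essentially only handles of index $\geq 2$) forces these modules to vanish after the substitution $t_1=\dots=t_\mu=1$; since $\omega$ is, by definition of $\T_!^\mu$, not a zero of any $p\in\Z[t_1^{\pm1},\dots,t_\mu^{\pm1}]$ that is a unit at $(1,\dots,1)$, their $\C_\omega$-specialization is then as small as the boundary allows. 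In particular $H_0(W_\Sigma;\C_\omega)=0$, and $H_1(W_\Sigma;\C_\omega)$ and, via twisted Poincar\'e--Lefschetz duality, $H_3(W_\Sigma;\C_\omega)$ are bounded by the nullities. Here one also uses the Mayer--Vietoris identification $H_*(\partial W_\Sigma;\C_\omega)\cong H_*(E_L;\C_\omega)\oplus H_*(E_{L'};\C_\omega)$ — the $S^1$-bundle part over $\Sigma$ and the corner tori carry no $\C_\omega$-homology — together with $\dim_\C H_i(E_L;\C_\omega)=\eta_L(\omega)$ for $i=1,2$ to match the boundary contributions with the nullities.

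It remains to assemble the estimate. The twisted intersection form on $H_2(W_\Sigma;\C_\omega)$ has radical equal to the image of $H_2(\partial W_\Sigma;\C_\omega)$, so $|\sign_\omega(W_\Sigma)|$ is bounded by $\dim_\C H_2(W_\Sigma;\C_\omega)$ minus the dimension of that image. Feeding in the Euler characteristic identity, the duality between $H_1$ and $H_3$, the homology control above, and the reduction for the nullity difference turns this into the stated inequality, with $\eta_L(\omega)$ and $\eta_{L'}(\omega)$ entering through the boundary homology and the radical of the form. By Proposition~\ref{prop:TPContainedT!} the resulting bound in particular recovers the range of variables of~\cite[Theorem~7.2]{CimasoniFlorens}.

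The main obstacle I anticipate is the homological control in the second paragraph above and its interaction with the assembly step: proving the vanishing $H_0(W_\Sigma;\C_\omega)=0$ and the sharp bounds on $H_1(W_\Sigma;\C_\omega)$ and $H_3(W_\Sigma;\C_\omega)$ from the $\T_!^\mu$-hypothesis and the cobordism condition, and then arranging all the dimension counts so that the signature contribution, the two nullity contributions and the Euler characteristic combine with \emph{exactly} the constant $-\sum_i\chi(\Sigma_i)+c$ rather than something larger; the single-variable prototype for this bookkeeping is~\cite{Powell}.
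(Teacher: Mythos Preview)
Your proposal is correct and follows essentially the same route as the paper: identify $\sigma_{L'}(\omega)-\sigma_L(\omega)$ with $\sign_\omega(W_\Sigma)$ via twisted Novikov--Wall additivity across $E_L$, use the $\T_!^\mu$-hypothesis through the chain-homotopy argument (Lemma~\ref{lem:Cone}) on the pair $(W_\Sigma,E_L)$ to force $H_i(W_\Sigma;\C^\omega)=0$ for $i=0,3,4$ and $\beta_1^\omega(W_\Sigma)\le\min(\eta_L,\eta_{L'})$, split $H_*(\partial W_\Sigma;\C^\omega)$ as the two link exteriors, and assemble the dimension count against $\chi(W_\Sigma)=-\sum_i\chi(\Sigma_i)+c$. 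One minor imprecision: the image of $H_2(\partial W_\Sigma;\C^\omega)$ is only \emph{contained in} the radical of the twisted form, not necessarily equal to it, but containment is all that is needed for the bound $|\sign_\omega(W_\Sigma)|\le\dim\operatorname{coker} j$.
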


Two $\mu$-colored links~$L$ and~$L'$ are \emph{concordant} if
there exists a $\mu$-colored cobordism between $L$ and $L'$ that has no
intersection points and consists exclusively of annuli.
As an application of Theorem~\ref{thm:GenusIntro}, we extend two different results of Cimasoni and Florens to the topological setting and to a bigger set of values of the variable $\omega$. The first result relaxes the conditions under which the signature and nullity are an obstruction to colored concordance~\cite[Theorem $7.1$]{CimasoniFlorens}. See Corollary~\ref{cor:ConcordanceViaGenus} for a proof.

\begin{corollary}
\label{cor:ConcordanceIntro}
The multivariable signature and nullity are topological concordance invariants at all $\omega \in \mathbb{T}^\mu_!$
\end{corollary}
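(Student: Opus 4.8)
The plan is to deduce the corollary directly from Theorem~\ref{thm:GenusIntro} by specializing it to concordances. First I would unpack the definition: a concordance between $L$ and $L'$ is precisely a colored cobordism $\Sigma = \Sigma_1 \cup \dots \cup \Sigma_\mu$ with no double points, so $c = 0$, in which each $\Sigma_i$ is a disjoint union of annuli. Since every annulus has vanishing Euler characteristic, $\chi(\Sigma_i) = 0$ for all $i$, and therefore the right-hand side of the inequality in Theorem~\ref{thm:GenusIntro} is $\sum_{i=1}^{\mu} -\chi(\Sigma_i) + c = 0$.

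Next I would feed this into Theorem~\ref{thm:GenusIntro} to obtain, for every $\omega \in \T_!^\mu$,
\[ |\sigma_L(\omega)-\sigma_{L'}(\omega)| + |\eta_L(\omega)-\eta_{L'}(\omega)| \leq 0. \]
Since the left-hand side is a sum of two nonnegative real numbers, both summands must vanish, so $\sigma_L(\omega) = \sigma_{L'}(\omega)$ and $\eta_L(\omega) = \eta_{L'}(\omega)$ for all $\omega \in \T_!^\mu$, which is exactly the asserted concordance invariance.

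There is no genuine obstacle here: the corollary is an immediate consequence of the genus bound, and the only work is bookkeeping. The one point worth checking is that a concordance really is admissible as a colored cobordism in the sense required by Theorem~\ref{thm:GenusIntro} — in particular that each connected component of $\Sigma_i$ has boundary on both $S^3 \times \{0\}$ and $S^3 \times \{1\}$ — but this is built into the definition of concordance recalled above, each annulus running from a component of $L_i$ to one of $L_i'$. I would also remark that, since everything is carried out in the topological locally flat category and since $\T_!^\mu$ contains the values considered by Cimasoni--Florens (Proposition~\ref{prop:TPContainedT!}), this statement strengthens \cite[Theorem 7.1]{CimasoniFlorens} both in the working category and in the admissible range of the variable $\omega$.
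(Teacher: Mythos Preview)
Your proof is correct and is essentially identical to the paper's own argument (given as Corollary~\ref{cor:ConcordanceViaGenus}): apply the genus bound of Theorem~\ref{thm:Genus} to a concordance, observe that annuli have zero Euler characteristic and there are no double points, so the right-hand side vanishes and both differences must be zero. Your additional remarks about the admissibility of concordances as colored cobordisms and the comparison with \cite[Theorem~7.1]{CimasoniFlorens} also mirror the paper's discussion.
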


As a second consequence of Theorem~\ref{thm:GenusIntro}, we obtain a generalization of~\cite[Theorem $7.2$]{CimasoniFlorens}; the latter result being itself an extension of the Murasugi-Tristram inequality~\cite{Murasugi, Tristram}. In what follows, we denote the first Betti number of a surface~$F$ by $\beta_1(F)$. We refer the reader to
Corollary~\ref{cor:CimasoniFlorens72} for a proof of the next result and to Remark~\ref{rem:Genus}
for a comparison with a similar result obtained by Viro~\cite[Section $4$]{Viro09}.

\begin{corollary}\label{cor:CimasoniFlorens72Intro}
Let $F=F_1 \cup \cdots \cup F_\mu$ be a colored bounding surface for a $\mu$-colored link $L$
such that $F_1,\dots, F_\mu$ have a total number of $m$ connected components,
intersecting in $c$ double points. 
Then, for all $\omega \in \T_!^\mu$, we have
\[  |\sigma_L(\omega)|+|\eta_L(\omega)-m+1| \leq \sum_{i=1}^\mu \beta_1(F_i) +c. \]
\end{corollary}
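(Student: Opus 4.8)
The plan is to deduce Corollary~\ref{cor:CimasoniFlorens72Intro} from Theorem~\ref{thm:GenusIntro} by viewing a colored bounding surface~$F$ for~$L$ as a colored cobordism between~$L$ and a convenient ``trivial'' colored link~$L'$ whose multivariable signature and nullity are understood. Concretely, I would take $L'$ to be a split union of Hopf links or, more simply, the $\mu$-colored unlink~$U_m$ on~$m$ components (distributed among the colors so that the combinatorics of connected components of~$F$ is matched), for which one knows $\sigma_{U_m}(\omega)=0$ and $\eta_{U_m}(\omega)=m-1$ at every $\omega\in\T^\mu_!$; the latter computation is the standard fact that the nullity of an $m$-component split link equals $m-1$ (each split component contributing a one-dimensional kernel, glued along a point). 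The surface~$F$ is then re-read, after pushing it slightly off the boundary $S^3\times\{1\}$ and attaching the $m$ trivial disks bounded by~$U_m$, as a colored cobordism~$\Sigma$ from~$L$ to~$U_m$ with the same double points~$c$, and with $-\chi(\Sigma_i) = -\chi(F_i) + (\text{number of disks of color }i) = \beta_1(F_i) - (\text{number of components of }F_i) + (\text{number of disks of color }i)$.

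The arithmetic is then bookkeeping: summing over~$i$, the $-(\text{components})$ and $+(\text{disks})$ contributions must be arranged to cancel, which is exactly why one should cap off each connected component of~$F$ with precisely one trivial disk of the matching color. With that choice, $m$ disks are used in total, $\sum_i -\chi(\Sigma_i) = \sum_i \beta_1(F_i)$, and the cobordism condition (every component of~$\Sigma_i$ meets both ends) is satisfied because each component of~$F_i$ acquires a boundary circle in $S^3\times\{1\}$ from its capping disk while retaining its original boundary in $S^3\times\{0\}$. Plugging into Theorem~\ref{thm:GenusIntro} gives
\[
 |\sigma_L(\omega) - 0| + |\eta_L(\omega) - (m-1)| \leq \sum_{i=1}^\mu \beta_1(F_i) + c,
\]
which is the claimed inequality after rewriting $\eta_L(\omega)-(m-1) = \eta_L(\omega)-m+1$.

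There are two points requiring care. First, one must make sure that the auxiliary capped-off surface genuinely is a \emph{colored cobordism} in the sense defined in the excerpt: the disks must be embedded disjointly from~$F$ and from each other (push them into a collar of $S^3\times\{1\}$ and shrink), they must not introduce new double points, and the resulting components must each touch both boundary $3$-spheres --- all of which hold by the capping construction. Second, and this is the main obstacle, I need the values $\sigma_{U_m}(\omega) = 0$ and $\eta_{U_m}(\omega) = m-1$ to be valid for \emph{all} $\omega\in\T^\mu_!$, not merely at roots of unity; this follows from the C-complex / Seifert-matrix description of the multivariable invariants for split links (where the relevant matrix is block-trivial) together with the fact that these are rational-function formulas, but it should be isolated as a small lemma or a citation to the basic properties of~$\sigma_L$, $\eta_L$ for unlinks, rather than glossed over. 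Once that input is in place, the corollary is immediate.

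I should also remark that a slicker alternative avoids choosing a model link altogether: one can instead use the $4$-ball exterior~$W_F$ directly and run the argument of Theorem~\ref{thm:GenusIntro} with the ``outgoing'' end replaced by the empty link, since $D^4$ itself, as a cobordism from $S^3$ to nothing, contributes the constants $0$ and $m-1$ through the Euler characteristic of the capped surface; but framing it as a cobordism to the unlink makes the cited theorem applicable verbatim and is the cleaner write-up.
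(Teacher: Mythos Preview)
Your proposal is correct and follows essentially the same strategy as the paper's proof: puncture each connected component of $F$ to obtain a colored cobordism from $L$ to an $m$-component colored unlink $U_m$, invoke $\sigma_{U_m}(\omega)=0$ and $\eta_{U_m}(\omega)=m-1$ (computed via a C-complex of $m$ disjoint disks, exactly as the paper does), and apply Theorem~\ref{thm:GenusIntro}. The only cosmetic difference is that the paper phrases the construction as removing small $4$-balls and tubing them, and routes the Euler characteristic bookkeeping through the identity $\sum_i b_1(\Sigma_i)=\sum_i b_1(F_i)+m$ rather than your direct $-\chi$ computation; your ``attaching/capping'' language is a bit misleading (you are removing disks, not attaching them), but the mathematics is the same.
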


The last part of this article deals with $0.5$-solvable cobordisms.
This notion was defined by Cha~\cite{Cha} giving a relative version of the notion of Cochran-Orr-Teichner's $n$-solvability~\cite{CochranOrrTeichner}.
We refer to Section~\ref{sec:Solvable} for the precise definition of $n$-solvable cobordant links, however note that abelian link invariants are not expected to distinguish $0.5$-solvable cobordant links.
For instance, if two links are $1$-solvable cobordant,
then their first non-zero Alexander polynomials agree up to norms and their Blanchfield pairings are Witt
equivalent~\cite[Theorems $B$ and $C$]{Kim}. Our final result is the corresponding
statement for the multivariable signature and nullity.
\begin{theorem}\label{thm:SolvableNullitySignature}
If two $\mu$-colored links $L$ and $L'$ are $0.5$-solvable cobordant, then
\[\eta_{L}(\omega)=\eta_{L'}(\omega) \quad \text{and} \quad \sigma_{L}(\omega)=\sigma_{L'}(\omega)\]
for all $\omega \in \mathbb{T}^m_!$.
\end{theorem}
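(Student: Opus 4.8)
The plan is to reduce the statement to the vanishing of a single $\omega$-twisted signature and of a single twisted homology defect attached to the cobordism, and then to establish these using the integral intersection-form structure of a $0.5$-solution together with the defining property of $\T^\mu_!$. Write $\Lambda:=\Z[\Z^\mu]$, and for $\omega\in\T^\mu$ let $\C_\omega$ denote $\C$ viewed as a $\Lambda$-module via $t_i\mapsto\omega_i$, used as a coefficient system through $H_1(-;\Z)\to\Z^\mu$. By the definition of $0.5$-solvable cobordism (Section~\ref{sec:Solvable}, after \cite{Cha}), the hypothesis provides a compact oriented $4$-manifold $W$ with $\partial W=-M_L\sqcup M_{L'}$, where $M_L:=\partial W_F$ is the closed $3$-manifold bounded by the exterior of a colored bounding surface $F$ for $L$ (independent of $F$, so that $H_1(M_L;\Z)$ is free of rank $\mu$), such that $M_L\hookrightarrow W\hookleftarrow M_{L'}$ are $H_1(-;\Z)$-isomorphisms and the ordinary intersection form of $W$ carries an order-zero Lagrangian with order-zero duals; in particular $H_1(W;\Z)\cong\Z^\mu$ and this integral form is hyperbolic, so $\sign(W)=0$. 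Since $W_F\subset D^4$ gives $\sign(W_F)=0$, Proposition~\ref{prop:ViroCimasoniFlorens} realizes the multivariable signature as a twisted signature defect: $\sigma_L(\omega)=\sign_\omega(W_F)=\sign_\omega(W_F)-\sign(W_F)=:\rho_\omega(M_L)$, a quantity independent of the $4$-manifold used to compute it by Novikov additivity, while Proposition~\ref{prop:NullityNoCcomplex} presents $\eta_L(\omega)$ through a twisted homology invariant of $M_L$. Cobordism invariance of $\rho_\omega$ then yields
\[ \sigma_{L'}(\omega)-\sigma_L(\omega)=\rho_\omega(M_{L'})-\rho_\omega(M_L)=\sign_\omega(W)-\sign(W)=\sign_\omega(W), \]
so it suffices to prove $\sign_\omega(W)=0$ and $H_1(M_L;\C_\omega)\cong H_1(M_{L'};\C_\omega)$ for every $\omega\in\T^\mu_!$.

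\emph{Role of $\T^\mu_!$.} I would feed both claims through the homological-algebra input that singles out $\T^\mu_!$, following \cite{NagelPowell,Powell}: if $D_*$ is a bounded complex of finitely generated free $\Lambda$-modules with $H_*(D_*\otimes_\Lambda\Z)=0$ — a condition guaranteed, for instance, when the $\Lambda$-homology of $D_*$ is torsion with order a unit at $(1,\dots,1)$ — then $H_*(D_*\otimes_\Lambda\C_\omega)=0$ for all $\omega\in\T^\mu_!$. For the nullity this is applied to the relative chain complex of the universal abelian cover of $(W,M_L)$, which is $\Z$-acyclic in low degrees because $M_L\hookrightarrow W$ is an $H_1(-;\Z)$-isomorphism of connected spaces; the lemma upgrades this to $H_1(M_L;\C_\omega)\xrightarrow{\ \cong\ }H_1(W;\C_\omega)$, and the same for $M_{L'}$, hence $H_1(M_L;\C_\omega)\cong H_1(M_{L'};\C_\omega)$ and, by Proposition~\ref{prop:NullityNoCcomplex}, $\eta_L(\omega)=\eta_{L'}(\omega)$. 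For the signature one uses the order-zero Lagrangian with its duals: over $\Z$ the intersection form of $W$ is hyperbolic, and the discrepancy between this and the corresponding statement over $\Lambda$ is again a torsion $\Lambda$-module with order a unit at $(1,\dots,1)$; the lemma transports the metabolic structure to the $\C_\omega$-coefficient intersection form of $W$, giving $\sign_\omega(W)=0$ and therefore $\sigma_L(\omega)=\sigma_{L'}(\omega)$.

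\emph{The hard part.} The main obstacle is precisely this last transport step: an integral Lagrangian need not tensor up to a $\C_\omega$-Lagrangian, and the whole point of restricting to $\T^\mu_!$ is that the obstruction is a torsion $\Lambda$-module annihilated, at such $\omega$, by the characterizing property of $\T^\mu_!$. Turning this into a proof requires the same careful analysis of the universal-abelian-cover chain complexes of $W$, $\partial W$, $W_F$ and $W_{F'}$, and of their Poincar\'e--Lefschetz duality, that underlies Powell's treatment of the genus bound behind Theorem~\ref{thm:GenusIntro}; indeed the present argument is a variant of that one, with ``the integral intersection form of a $0.5$-solution is hyperbolic'' playing the role of ``a surface of bounded genus controls the rank of $H_2$''. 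Finally, since a topological concordance gives rise to a $0.5$-solvable cobordism (its exterior being $\Lambda$-homologically a product), Corollary~\ref{cor:ConcordanceIntro} is recovered as the special case in which $W$ is a concordance exterior.
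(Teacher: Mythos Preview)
Your proposal has two genuine gaps, one in each half of the argument.

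\textbf{Nullity.} The claim that the $\Z$-acyclicity of $(W,M_L)$ in degrees $\leq 1$ upgrades via the $\T^\mu_!$ lemma to an isomorphism $H_1(M_L;\C_\omega)\xrightarrow{\cong}H_1(W;\C_\omega)$ is false. The lemma gives $H_i(W,M_L;\C_\omega)=0$ for $i\leq 1$, hence \emph{surjectivity} of $H_1(M_L;\C_\omega)\to H_1(W;\C_\omega)$, but $H_2(W,M_L;\C_\omega)$ has dimension $\beta_2(W,M_L;\Z)=2r$ (nonzero as soon as the $0.5$-solution is nontrivial), and the connecting map $\partial\colon H_2(W,M_L;\C_\omega)\to H_1(M_L;\C_\omega)$ need not vanish. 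What must be shown is that the images of $H_2(W;\C_\omega)\to H_2(W,M_L;\C_\omega)$ and $H_2(W;\C_\omega)\to H_2(W,M_{L'};\C_\omega)$ have the \emph{same} dimension; this genuinely uses the Lagrangian--dual structure of the $0.5$-solution and is the content of Lemma~\ref{lem:RankImages} in the paper. Your argument, as stated, would prove nullity invariance under arbitrary $H_1$-cobordism, which is not true.

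\textbf{Signature.} You write $\partial W=-M_L\sqcup M_{L'}$ with $M_L:=\partial W_F$ closed and ``independent of $F$'', but in the paper's (and Cha's) definition the $0.5$-solvable cobordism is between the link \emph{exteriors} $X_L,X_{L'}$, which have toroidal boundary; the closed manifold $\partial W_F=X_L\cup_{L\times S^1}M_F$ depends on $F$ through the plumbed piece $M_F$. To convert this into the rho-invariant picture you want, one must form $V=W_F\cup_{X_L}W\cup_{X_{L'}}(-W_{F'})$ and prove two nontrivial facts: (i) Novikov--Wall additivity holds for these gluings along $3$-manifolds with boundary (the paper checks that the relevant Lagrangian subspaces of $H_1(L\times S^1;\R)$ coincide, using that linking numbers are preserved under $H_1$-cobordism), and (ii) $\dsign_\omega(V)=0$, which requires identifying $\partial V$ as a \emph{balanced} plumbed $3$-manifold and invoking the rho-invariant computations of Section~\ref{sec:Plumbed}. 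Your phrase ``cobordism invariance of $\rho_\omega$'' hides exactly these steps. Separately, the assertion that ``the lemma transports the metabolic structure to the $\C_\omega$-coefficient intersection form'' is the heart of the matter: one must show that the images of the $1$-Lagrangian generators remain linearly independent in $H_2(W,M;\C_\omega)$ (Proposition~\ref{prop:relIndep}) and that the resulting subspace is at least half-dimensional in $H_2(W;\C_\omega)/\im H_2(\partial W;\C_\omega)$ (Proposition~\ref{prop:DimensionCount}); this is not a torsion-order argument and does not follow from the $\T^\mu_!$ lemma alone.
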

Since concordant links are $n$-solvable cobordant for all $n$, Theorem~\ref{thm:SolvableNullitySignature} can be viewed as a vast refinement of Corollary~\ref{cor:ConcordanceIntro}.

\begin{remark}\label{rem:WhitneyGropeFormulation}
Note that the notion of $n$-solvable cobordism is related to Whitney tower/grope concordance.  See \cite{Cha}
for the definition of these notions. In particular, using~\cite[Corollary 2.17]{Cha},
Theorem~\ref{thm:SolvableNullitySignature} implies that the multivariable signature and
nullity are invariant under height~$3$ Whitney tower/grope concordance.
\end{remark}

\begin{remark}
The \emph{Alexander nullity} $\beta(L)$ of a colored link $L$ is the $\Z[t_1^{\pm 1},\ldots,t_\mu^{\pm 1}]$-rank of its Alexander module. Kim~\cite[Theorem C]{Kim} showed that the Alexander nullity is invariant under $1$-solvable cobordisms. In Proposition~\ref{prop:Invariance}, we improve this result by proving invariance under $0.5$-solvable cobordisms. Note also that this statement does not follow from the invariance of the nullity function $\eta_L(\omega)$ since $\beta(L)=\operatorname{min}\lbrace \eta_L(\omega) \ | \ \omega \in \T^\mu \rbrace$~\cite[Proposition 2.3]{CimasoniConwayZacharova}.
\end{remark}

\medbreak
This paper is organized as follows. Section~\ref{sec:Prelim} introduces the necessary
background material on twisted homology and signatures.
Section~\ref{sec:4dDef} introduces the colored signature and nullity and
proves Theorem~\ref{thm:GenusIntro} together with its applications.
Section~\ref{sec:Plumbed} introduces plumbed $3$-manifolds and proves some results
about their signature defects. These form the technical foundation for the proof of Theorem~\ref{thm:SolvableNullitySignature}, which is the subject of Section~\ref{sec:Solvable}.

\subsection*{Acknowledgments.}
We thank Christopher Davis for sharing his insights of $0.5$--solvability,
which helped us immensely in navigating through the technicalities of Section~\ref{sec:Solvable}.
The authors wish to thank Ana Lecuona, David Cimasoni, Vincent Florens, Stefan Friedl, Paul Kirk, Andrew Nicas and Mark Powell for helpful discussions.
We are indebted to the referees for their detailed and helpful suggestions.
AC thanks UQ\`AM for its hospitality and was supported by the NCCR SwissMap funded by the Swiss FNS.
MN is grateful for his stay at the University of Regensburg funded by the SFB 1085, which started the project.
ET was supported by the GK ``Curvature, Cycles and Cohomology'',
funded by the Deutsche Forschungsgemeinschaft (DFG).
MN was supported by a CIRGET postdoctoral fellowship, and by a Britton postdoctoral fellowship from McMaster University.

\section{Twisted homology, signatures and concordance roots}\label{sec:Prelim}
In Section~\ref{sub:Twisted}, we set up the conventions on twisted homology.
In Section~\ref{sub:IntersectionForm},
we review twisted intersection forms, which leads us to discuss the additivity of the signature in Section~\ref{sub:NovikovWall}.
In Section~\ref{sub:ConcordanceRoots}, we generalize the concept of Knotennullstellen~\cite{NagelPowell}.

\subsection{Twisted homology}\label{sub:Twisted}
We start by fixing some notation and conventions regarding twisted homology. After that, we review two universal coefficient spectral sequences and apply them to a particular abelian coefficient system.
\medbreak
Let $X$ be a connected CW-complex and let $Y \subset X$ be a possibly empty subcomplex.
Denote by $p \colon \widetilde{X} \to X$ the universal cover of $X$ and set
$\widetilde{Y}:=p^{-1}(Y)$, so that $C(\widetilde{X},\widetilde{Y})$ is a
left $\mathbb{Z}[\pi_1(X)]$-module. Given a ring $\F$ with involution, we can consider
homomorphisms~$\phi \colon \mathbb{Z}[\pi_1(X)] \to \F$ of rings with involutions, which
means that $\phi(g^{-1}) = \overline {\phi(g)}$ for all $g \in \pi_1(X)$. Such a homomorphism~$\phi$
turns $\F$ into a $(\F,\mathbb{Z}[\pi_1(X)])$-bimodule, which we denote by~$R$. We may consider the left $\F$--modules
\begin{align*}
H_*(X,Y;R)&=H_* \left(R \otimes_{\mathbb{Z}[\pi_1(X)]} C(\widetilde{X},\widetilde{Y}) \right), \\
H^*(X,Y;R)&=H_*\left( \text{Hom}_{\text{right-}\mathbb{Z}[\pi_1(X)]}(C(\widetilde{X},\widetilde{Y})^\tr,R) \right),
\end{align*}
where the \emph{transposed module}~$M^\tr$ of an $S$-module~$M$ has the same underlying abelian
group with multiplication flipped using the involution.

Our main examples of twisted homology and cohomology modules will come from the following examples.
\begin{example}\label{ex:Ccoefficients}
Let $\varphi \colon \pi_1(X) \to \mathbb{Z}^\mu=\langle t_1,\dots,t_\mu
\rangle$ be a homomorphism and let~$\omega = (\omega_1,\dots,\omega_\mu) \in \mathbb{T}^\mu \subset \mathbb{C}^\mu$.
Composing the induced map
$\mathbb{Z}[\pi_1(X)] \to \mathbb{Z}[\mathbb{Z}^\mu]  $ with the
map~$\mathbb{Z}[\mathbb{Z}^\mu] \xrightarrow{\alpha} \C$ which evaluates $t_i$ at $\omega_i$, produces a morphism~$\phi \colon \mathbb{Z}[\pi_1(X)] \to \mathbb{C}$ of rings with involutions.
In turn, $\phi$ endows $\C$ with a $(\mathbb{C},\mathbb{Z}[\pi_1(X)])$-bimodule structure.
To emphasize the choice of $\omega$, we shall write $\mathbb{C}^\omega$ instead of $\mathbb{C}$.
Since $\mathbb{C}^\omega$ is a $(\mathbb{C},\mathbb{Z}[\pi_1(X)])$-bimodule, we may
consider the complex vector spaces $H_k(X,Y;\mathbb{C}^\omega)$ and
$H^k(X,Y;\mathbb{C}^\omega)$.

Consider the ring $\Lambda_S=\mathbb{Z}[t_1^{\pm 1},\dots,t_\mu^{\pm 1},(1-t_1)^{-1},\dots,(1-t_\mu)^{-1}]$
and observe that since none of the $\omega_i$ are equal to $1$, the map $\phi\colon \Z[\pi_1(X)] \to \mathbb{C}$ factors through a
map $\Lambda_S \to \mathbb{C}$. In particular, the homology
$\mathbb{C}$-vector space~$H_k(X,Y;\C^\omega)$ is the $k$--th homology of the chain complex~$\mathbb{C} \otimes_{\Lambda_S} C(X,Y;\Lambda_S)$.
\end{example}

\begin{example}
\label{ex:FieldCoefficients}
Let $\Q(\Z^\mu)$ denote the field of fractions of $\Lambda:=\Z[t_1^{\pm 1},\ldots,t_\mu^{\pm 1}]$. Given a homomorphism $\varphi \colon \pi_1(X) \to \mathbb{Z}^\mu=\langle t_1,\dots,t_\mu \rangle$, the canonical map $\Lambda \to \Q(\Z^\mu)$ endows $\Q(\Z^\mu)$ with a $(\Q(\Z^\mu),\Z[\pi_1(X)])$--bimodule structure. In particular, we may consider the $\Q(\Z^\mu)$-vector spaces $H_k(X,Y;\Q(\Z^\mu))$. Observe that since $\Q(\Z^\mu)$ is the field of fractions of both $\Lambda$ and $\Lambda_S$, we deduce that $H_k(X,Y;\Q(\Z^\mu))$ is canonically isomorphic to both $\Q(\Z^\mu) \otimes_{\Lambda} H_k(X,Y;\Lambda)$ and $\Q(\Z^\mu) \otimes_{\Lambda_S} H_k(X,Y;\Lambda_S)$.
\end{example}

Most of our main results will involve either the coefficient system $R=\C^\omega$ or the coefficient system $R=\Q(\Z^\mu)$. When we mention that a statement holds for both coefficients systems, it will always be understood that when $R=\C^\omega$ (resp.~$R=\Q(\Z^\mu)$) we take $\F=\C$ (resp. $\F=\Q(\Z^\mu)$).

In order to discuss the relation between homology and cohomology, we introduce
some further notation. First, using the fact that $\phi$ is a morphism of rings
with involution, one can check that
\begin{align*}
 \text{Hom}_{\text{right-}\mathbb{Z}[\pi]}(C(\widetilde{X},\widetilde{Y})^\tr,R) & \to \text{Hom}_{\text{left-}\F}(R \otimes_{\mathbb{Z}[\pi]}C(\widetilde{X},\widetilde{Y}),\F)^\tr   \\
 f & \mapsto \left( (r \otimes \sigma) \mapsto r \overline{f(\sigma)} \right)
\end{align*}
is a well-defined isomorphism of chain complexes of left $\F$-modules.
The isomorphism of chain complexes induces an evaluation homomorphism
\[ \text{ev} \colon H^k(X,Y;R) \to \text{Hom}_{\text{left-}\F}(H_k(X,Y;R),\F)^\tr\]
of left $\F$--modules. This evaluation map is not an isomorphism in general.
Nevertheless, it can be studied using the universal coefficient spectral
sequence~\cite[Theorem 2.3]{Levine77}. For the sake of concreteness, instead of
giving the most general statement, we shall focus on the cases described in Examples~\ref{ex:Ccoefficients} and~\ref{ex:FieldCoefficients}.

\begin{proposition}\label{prop:UCSS}
Let $(X,Y)$ be a CW pair and let $\omega \in \mathbb{T}^\mu$. Suppose $R$ is either~$\C^\omega$ or $\Q(\Z^\mu)$, viewed as a $(\F,\Z[\pi_1(X)])$--module. Then, for each $k$, evaluation provides the following isomorphism of left $\F$-vector spaces:
$$ H^k(X,Y;R) \cong \operatorname{Hom}_{\emph{left-}\F}(H_k(X,Y;R),\F)^\tr.$$
\end{proposition}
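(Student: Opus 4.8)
The plan is to deduce the statement from the universal coefficient theorem over a field, the key point being that in both cases the coefficient ring $\F$ (equal to $\C$, resp.\ $\Q(\Z^\mu)$) is a field. First I would recall the chain-level isomorphism constructed above: it identifies the cochain complex $\operatorname{Hom}_{\text{right-}\Z[\pi_1(X)]}\bigl(C(\widetilde{X},\widetilde{Y})^{\tr},R\bigr)$, which by definition computes $H^*(X,Y;R)$, with the $\F$-linear dual
\[ \operatorname{Hom}_{\text{left-}\F}\bigl(C(X,Y;R),\F\bigr)^{\tr}, \qquad C(X,Y;R) := R\otimes_{\Z[\pi_1(X)]} C(\widetilde{X},\widetilde{Y}). \]
Since $\F$ is a field, $C(X,Y;R)$ is a chain complex of $\F$-vector spaces and the contravariant functor $\operatorname{Hom}_{\F}(-,\F)$ is exact.

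Next I would use exactness of $\operatorname{Hom}_{\F}(-,\F)$ to conclude that it commutes with passing to (co)homology, so that the natural Kronecker evaluation map $H^k\bigl(\operatorname{Hom}_{\F}(C(X,Y;R),\F)\bigr)\to \operatorname{Hom}_{\F}\bigl(H_k(X,Y;R),\F\bigr)$ is an isomorphism. Equivalently, this is the degeneration of the universal coefficient spectral sequence of~\cite[Theorem 2.3]{Levine77}: with coefficients in the field $\F$, its $E_2$-page is built from $\operatorname{Ext}$-groups over $\F$, which vanish in positive degree, so the spectral sequence collapses and yields the same evaluation isomorphism. Applying the transpose operation to both sides and composing with the chain-level isomorphism recalled above produces an isomorphism $H^k(X,Y;R)\cong \operatorname{Hom}_{\text{left-}\F}(H_k(X,Y;R),\F)^{\tr}$.

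It remains to identify this composite with the evaluation homomorphism $\ev$. This should be immediate from the constructions, since $\ev$ was itself defined as the map induced on (co)homology by the same chain-level isomorphism followed by Kronecker evaluation, and naturality of the Kronecker pairing supplies the compatibility. The step I expect to require the most care is not a genuine obstacle but rather bookkeeping: one must keep track of the transpose operation together with the involution on $\Z[\pi_1(X)]$ throughout, so as to be sure that each map in the chain is left $\F$-linear and hence that the final isomorphism is one of left $\F$-vector spaces, as asserted. All the real content of the proposition sits in the single observation that $\F$ is a field, which annihilates the higher $\operatorname{Ext}$-terms.
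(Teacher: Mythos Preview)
Your proposal is correct and follows essentially the same approach as the paper: both invoke the universal coefficient spectral sequence of \cite[Theorem 2.3]{Levine77} and observe that, since $\F$ is a field, the higher $\operatorname{Ext}$-terms vanish and the spectral sequence degenerates to the evaluation isomorphism. Your write-up is a bit more explicit about the chain-level identification and the transpose bookkeeping, but the mathematical content is identical.
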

\begin{proof}
There is a spectral sequence with $E_2^{p,q}\cong \op{Ext}_\F^q(H_p(X,Y;R),\F)$, which converges to $H^*(X,Y;R)$~\cite[Theorem 2.3]{Levine77}. The result now follows: since $\F$ is a field, the $\op{Ext}$ groups vanish for $q>0$. We also refer to~\cite[Theorem 5.4.4 and Proposition 7.5.4]{ConwayThesis} for further details.
\end{proof}

Given a pair $(X,Y)$, we denote  the rank of $H_i(X,Y)$ by $\beta_i(X,Y)$ and the dimension of $H_i(X,Y;R)$  by $\beta_i^{R}(X,Y)$ when $R$ is either $\C^\omega$ or $\Q(\Z^\mu)$.
As an application of Proposition~\ref{prop:UCSS}, we prove the following lemma.

\begin{lemma}\label{lem:DualityUCSS}
Let $\omega \in \T^\mu$, let $R$ be either $\C^\omega$ or $\Q(\Z^\mu)$ and let $W$ be a $4$-dimensional manifold whose boundary decomposes as $\partial W=M \cup_{\partial} M'$,
where $M$ and $M'$ are (possibly empty) connected $3$-manifolds with $\partial M = \partial M'$.
If $W$ is equipped with a homomorphism~$H_1(W;\Z) \to \Z^\mu$,
then $\beta_{4-i}^{R}(W,M)=\beta_i^{R}(W,M')$ for $i=0,1$.
\end{lemma}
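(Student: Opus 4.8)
The statement is a twisted-coefficient version of half Lefschetz duality combined with the long exact sequence of a pair, so the plan is to reduce it to Poincaré–Lefschetz duality with $\F$-coefficients and then bookkeep Euler characteristics. First I would invoke twisted Poincaré–Lefschetz duality for the $4$-manifold $W$ with boundary $\partial W = M \cup_\partial M'$: this gives $H_{4-i}(W,M;R) \cong H^i(W,M';R)$ as $\F$-vector spaces, valid because $W$ carries the coefficient system $H_1(W;\Z)\to \Z^\mu \to \F^\times$ and because twisted duality holds over the (possibly noncommutative, here commutative) ground ring $\F$ for any compact manifold with the stated boundary decomposition. (If one wants to stay strictly within what is cited, one can first work over $\Lambda_S$ or $\Lambda$, where duality holds, and then extend scalars to $\F$, which is flat since $\F$ is a field of fractions in the $\Q(\Z^\mu)$ case and a flat $\Lambda_S$-algebra in the $\C^\omega$ case, as recorded in Examples~\ref{ex:Ccoefficients} and~\ref{ex:FieldCoefficients}.)

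Next I would apply Proposition~\ref{prop:UCSS} to the pair $(W,M')$: since $\F$ is a field, evaluation gives $H^i(W,M';R)\cong \Hom_{\F}(H_i(W,M';R),\F)^\tr$, and in particular $\beta_i^R(W,M') = \dim_\F H^i(W,M';R)$. Combining this with the duality isomorphism from the previous step yields $\beta_{4-i}^R(W,M) = \beta_i^R(W,M')$ directly. For $i=0$ and $i=1$ this is exactly the claim, so in fact duality plus the universal coefficient proposition already give the result for all $i$, not merely $i=0,1$; the restriction to $i\le 1$ in the statement is presumably only because that is what is needed later.

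The only genuinely delicate point is the \emph{form} of twisted Poincaré–Lefschetz duality being used, namely that for $W$ with $\partial W = M\cup_\partial M'$ and $M\cap M' = \partial M = \partial M'$, cap product with a twisted fundamental class induces $H_{4-i}(W,M;R)\cong H^i(W,M';R)$. For locally flat topological $4$-manifolds this requires the existence of such a fundamental class; one can either cite the standard formulation of twisted Lefschetz duality for topological manifolds (e.g.\ via the corresponding statement in \cite{ConwayFriedlToffoli} or the thesis \cite{ConwayThesis} already cited for Proposition~\ref{prop:UCSS}), or reduce to the smooth/handle case after noting that the surface exteriors appearing later are homeomorphic to smoothable manifolds. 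I would state this duality as a lemma-level input, verify the hypotheses on the boundary decomposition, and then the dimension count is immediate. The main obstacle is therefore purely one of citation hygiene—making sure the cited duality statement is phrased for the manifold-with-corners-along-$\partial M$ situation and for the twisted coefficients $R$—rather than any real mathematical difficulty.
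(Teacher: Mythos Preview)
Your proposal is correct and matches the paper's proof essentially verbatim: the paper simply invokes twisted Poincar\'e--Lefschetz duality $H_{4-i}(W,M;R)\cong H^i(W,M';R)$ and then applies Proposition~\ref{prop:UCSS} to identify $H^i(W,M';R)$ with $\Hom_\F(H_i(W,M';R),\F)^\tr$, exactly as you describe. Your observation that the argument works for all $i$ (not just $i=0,1$) is also correct; the restriction in the statement reflects only what is used downstream.
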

\begin{proof}
By duality, $H_{4-i}(W,M;R) \cong H^i(W,M';R)$. Using Proposition~\ref{prop:UCSS}, we deduce that
$H^i(W,M';R) \cong \text{Hom}_{\F}(H_i(W,M';R),\F)^\tr$ for $i=0,1$. The result now follows immediately.
\end{proof}

As observed in Example~\ref{ex:FieldCoefficients}, there is a canonical isomorphism of $H_k(X,Y;\Q(\Z^\mu))$ with $\Q(\Z^\mu) \otimes_{\Lambda_S} H_k(X,Y;\Lambda_S)$.
On the other hand, a particular case of the universal coefficient spectral sequence in homology is needed to deal with $\C^\omega$-coefficients; see e.g.~\cite[Chapter 2]{Hillman}.
\begin{proposition} \label{prop:UCSSTor}
Given a CW-pair $(X,Y)$ and $\omega \in \mathbb{T}^\mu$, there exists a spectral sequence
\begin{enumerate}
\item converging to $H_*(X,Y;\mathbb{C}^\omega)$
\item with $E^2_{p,q}\cong \text{Tor}^{\Lambda_S}_{p}(H_{q}(X,Y;\Lambda_S),\mathbb{C}^\omega)$
\item with differentials $d^r$ of degree $(-r,r-1).$
\end{enumerate}
More specifically, there is a filtration
$$ 0 \subset F_n^0 \subset F_n^1 \subset \dots \subset  F_n^n=H_n(X,Y;\mathbb{C}^\omega)$$
with $F_n^p/F_n^{p-1} \cong E_{p,n-p}^\infty$.
\end{proposition}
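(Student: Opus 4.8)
The plan is to realize $H_*(X,Y;\mathbb{C}^\omega)$ as the homology of the total complex of a double complex and extract the two associated spectral sequences, one of which degenerates while the other has the desired $E^2$ page. By Example~\ref{ex:Ccoefficients}, the cellular chain complex $C_\bullet:=C(X,Y;\Lambda_S)=\Lambda_S\otimes_{\mathbb{Z}[\pi_1(X)]}C(\widetilde{X},\widetilde{Y})$ is a bounded-below complex of \emph{free} $\Lambda_S$-modules, and $C(X,Y;\mathbb{C}^\omega)\cong \mathbb{C}^\omega\otimes_{\Lambda_S}C_\bullet$ as chain complexes. First I would fix a projective resolution $P_\bullet\to\mathbb{C}^\omega$ of $\mathbb{C}^\omega$ over $\Lambda_S$; since $\Lambda_S$ is a localization of the Noetherian regular ring $\Lambda=\mathbb{Z}[\mathbb{Z}^\mu]$, it has finite global dimension, so $P_\bullet$ may be taken to be finite. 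Then I would form the first-quadrant double complex $D_{p,q}:=P_p\otimes_{\Lambda_S}C_q$, whose boundedness guarantees that both spectral sequences of $D$ converge to the homology of $\operatorname{Tot}(D)$.

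Next I would compute the two spectral sequences. Taking homology of $D$ in the $p$-direction first: since each $C_q$ is free, hence flat, one gets $H_p(P_\bullet\otimes_{\Lambda_S}C_q)=0$ for $p>0$ and $\mathbb{C}^\omega\otimes_{\Lambda_S}C_q$ for $p=0$, so this spectral sequence degenerates at $E^2$ and yields $H_n(\operatorname{Tot}(D))\cong H_n(\mathbb{C}^\omega\otimes_{\Lambda_S}C_\bullet)=H_n(X,Y;\mathbb{C}^\omega)$, which is item~(1). Taking homology of $D$ in the $q$-direction first: since each $P_p$ is flat, $H_q(P_p\otimes_{\Lambda_S}C_\bullet)=P_p\otimes_{\Lambda_S}H_q(X,Y;\Lambda_S)$, and a further pass of homology in the $p$-direction gives $E^2_{p,q}\cong\operatorname{Tor}^{\Lambda_S}_p(H_q(X,Y;\Lambda_S),\mathbb{C}^\omega)$, using that $\operatorname{Tor}$ is balanced; this is item~(2). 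The bidegree $(-r,r-1)$ of the differentials $d^r$ and the existence of the asserted finite filtration $0\subset F_n^0\subset\cdots\subset F_n^n=H_n(X,Y;\mathbb{C}^\omega)$ with $F_n^p/F_n^{p-1}\cong E^\infty_{p,n-p}$ are the standard output of the spectral sequence of a bounded double complex.

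Once these inputs are assembled, the argument is formal, and in the write-up I would either carry it out in one paragraph or simply invoke a textbook reference such as the treatment in~\cite[Chapter~2]{Hillman}. The only point deserving a remark is convergence: one must ensure the relevant double complex is bounded, which is precisely where the finite global dimension of $\Lambda_S$ enters (alternatively, first-quadrantness of $D$ already suffices). A secondary routine check, which I expect to be the closest thing to an obstacle, is verifying that passing from the $\mathbb{Z}[\pi_1(X)]$-free complex $C(\widetilde{X},\widetilde{Y})$ to $\Lambda_S$-coefficients genuinely produces a complex of \emph{free} $\Lambda_S$-modules, so that $C_\bullet$ itself may be used in the derived tensor product in place of a projective resolution.
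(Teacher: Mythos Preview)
Your argument is correct and is exactly the standard double-complex derivation of the universal coefficient spectral sequence. The paper does not give its own proof of this proposition; it merely records the statement and points to \cite[Chapter~2]{Hillman}, so your write-up already goes further than what the paper supplies. Your caveats are well placed but harmless: freeness of $C_\bullet$ over $\Lambda_S$ is immediate from the cellular model, and first-quadrantness of $D$ already yields convergence and the finite filtration, so the appeal to finite global dimension of $\Lambda_S$ is unnecessary (though correct).
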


As for cohomology, we provide an easy application of this spectral sequence, to which we shall often refer.
\begin{lemma} \label{lem:UCSSTorExample}
Let $X$ be a connected CW-complex together with a homomorphism $H_1(X;\Z) \to
\mathbb{Z}^\mu = \Z\langle e_1, \dots, e_\mu\rangle$ such that at least one
generator~$e_i$ is in the image.
If $\omega \in \T^\mu$, then
$H_0(X;\mathbb{C}^\omega) = 0$.
Furthermore, $H_1(X;\mathbb{C}^\omega)$ is isomorphic to $\mathbb{C}^\omega \otimes_{\Lambda_S} H_1(X;\Lambda_S)$.
\end{lemma}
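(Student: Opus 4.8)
The plan is to deduce both statements about $H_0$ and $H_1$ from the spectral sequence of Proposition~\ref{prop:UCSSTor} together with the corresponding facts over $\Lambda_S$. First I would prove $H_0(X;\Lambda_S)=0$: since the augmentation-type map $H_0(X;\Lambda_S)\to H_0(X;\Z)=\Z$ can be analysed via the Wang-type exact sequence, the hypothesis that some generator $e_i$ lies in the image of $H_1(X;\Z)\to\Z^\mu$ means that the corresponding deck transformation $t_i$ acts on $\wt X$ in a way that forces $(1-t_i)$ to act invertibly on $H_0(X;\Lambda_S)$ (here one uses that $(1-t_i)^{-1}\in\Lambda_S$); on the other hand $(1-t_i)$ annihilates $H_0(X;\Z[\Z^\mu])\cong\Z$ after base change, and chasing this through yields $H_0(X;\Lambda_S)=0$. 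An equally clean route: $H_0(X;\Lambda_S)=\Lambda_S/I$ where $I$ is the ideal generated by $\varphi(g)-1$ for $g\in\pi_1(X)$; since some $e_i$ is hit, $t_i-1\in I$, hence $t_i-1$ is both in $I$ and a unit in $\Lambda_S$, so $I=\Lambda_S$ and $H_0=0$.

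Next I would feed this into the spectral sequence. The bottom corner gives a filtration on $H_0(X;\C^\omega)$ whose only possible nonzero graded piece is $E^\infty_{0,0}$, a quotient of $E^2_{0,0}=\C^\omega\otimes_{\Lambda_S}H_0(X;\Lambda_S)=0$; hence $H_0(X;\C^\omega)=0$. For $H_1$, the filtration reads $0\subset F_1^0\subset F_1^1=H_1(X;\C^\omega)$ with $F_1^0\cong E^\infty_{0,1}$ and $F_1^1/F_1^0\cong E^\infty_{1,0}$. The term $E^2_{1,0}=\operatorname{Tor}^{\Lambda_S}_1(H_0(X;\Lambda_S),\C^\omega)=0$ because $H_0(X;\Lambda_S)=0$, so $F_1^0=H_1(X;\C^\omega)$; that is, $H_1(X;\C^\omega)\cong E^\infty_{0,1}$. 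Finally $E^\infty_{0,1}$ is a subquotient of $E^2_{0,1}=\C^\omega\otimes_{\Lambda_S}H_1(X;\Lambda_S)$; one checks the only differential that could affect the $(0,1)$ slot is $d^2\colon E^2_{2,0}\to E^2_{0,1}$, and its source $E^2_{2,0}=\operatorname{Tor}^{\Lambda_S}_2(H_0(X;\Lambda_S),\C^\omega)$ vanishes again because $H_0(X;\Lambda_S)=0$. Therefore $E^\infty_{0,1}=E^2_{0,1}$ and $H_1(X;\C^\omega)\cong\C^\omega\otimes_{\Lambda_S}H_1(X;\Lambda_S)$.

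The main obstacle is pinning down $H_0(X;\Lambda_S)=0$ cleanly, since the hypothesis is phrased in terms of the image of $H_1(X;\Z)\to\Z^\mu$ rather than $\pi_1$; but $H_1(X;\Z)$ is the abelianization of $\pi_1(X)$ and the map $\pi_1(X)\to\Z^\mu$ factors through it, so ``$e_i$ in the image'' does give an element $g\in\pi_1(X)$ with $\varphi(g)=e_i$, and then the ideal argument above applies verbatim. Everything else is a formal consequence of the two spectral sequences already recorded, so no serious computation is required; the write-up should simply assemble these pieces in the order above, being careful to note that no differential entering or leaving the relevant low-degree terms can be nonzero once $H_0(X;\Lambda_S)$ is known to vanish.
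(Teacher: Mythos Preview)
Your proposal is correct and follows essentially the same route as the paper: both first establish $H_0(X;\Lambda_S)=0$ (the paper cites \cite[Lemma~2.2]{ConwayFriedlToffoli} while you give the direct ideal argument, which is exactly what that lemma records) and then run the universal coefficient spectral sequence of Proposition~\ref{prop:UCSSTor}, using the vanishing of $E^2_{1,0}$ and $E^2_{2,0}$ to identify $H_1(X;\C^\omega)$ with $E^2_{0,1}=\C^\omega\otimes_{\Lambda_S}H_1(X;\Lambda_S)$. Your write-up is slightly more explicit about the filtration and the $\pi_1$ versus $H_1$ issue, but the argument is the same.
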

\begin{proof}
Using the assumption on the map $H_1(W;\Z) \to \Z^\mu$,
the $\Lambda_S$-module~$H_0(W;\Lambda_S)$ vanishes; see e.g.~\cite[Lemma 2.2]{ConwayFriedlToffoli}).
Thus Proposition~\ref{prop:UCSSTor} immediately implies that $H_0(X;\mathbb{C}^\omega) = 0$. Next, we prove the statement involving $H_1(X;\mathbb{C}^\omega)$. Using the notations of Proposition~\ref{prop:UCSSTor}, the differential $0=\text{Tor}^{\Lambda_S}_2(H_0(X;\Lambda_S),\C^\omega)=E_{2,0} \to E_{0,1}$ is zero. Consequently,~$E_{1,0}^\infty=E_{1,0}^2=0$ and $E_{0,1}^\infty=E_{0,1}^2=\mathbb{C}^\omega \otimes_{\Lambda_S} H_1(X_L;\Lambda_S)$. It follows that $H_1(X_L;\C^\omega)=\mathbb{C}^\omega \otimes_{\Lambda_S} H_1(X_L;\Lambda_S)$, as desired.
\end{proof}

\subsection{Twisted intersection forms and signatures}\label{sub:IntersectionForm}
Here, we review twisted intersection forms. Our main example lies in
the coefficient system introduced in Example~\ref{ex:Ccoefficients}. We
conclude with a short bordism argument showing the vanishing of some signature defects.
\medbreak
Given a compact oriented $n$--dimensional manifold~$W$ and a map $\Z[\pi_1(W)]\to \F$ between rings with involutions. Again, we distinguish the ring~$\F$ from the~$(\F,\Z[\pi_1(W)])$--bimodule~$R$. We denote the Poincar\'e duality isomorphisms by $\PD \colon H_k(W,\partial W;R) \cong H^{n-k}(W;R)$ and~$\PD \colon H_k(W;R) \cong H^{n-k}(W,\partial W;R)$. Composing the map induced by the inclusion $(W,\emptyset) \to (W,\partial W)$ with duality and evaluation produces the map
\[ \Phi \colon H_k(W;R) \to H_k(W,\partial W;R) \xrightarrow{\PD} H^k(W;R) \xrightarrow{\text{ev}}\text{Hom}_{\text{left-}\F}(H_k(W;R),\F)^\tr .\]
The main definition of this section is the following.
\begin{definition}\label{def:int}
The \emph{$R$-twisted intersection pairing}
\[ \lambda_R \colon H_i(W;R) \times H_i(W;R) \to \F \]
is defined by $\lambda_{R}(x,y)=\Phi(y)(x)$.
\end{definition}
The form $\lambda_R$ is hermitian, but need not be
nonsingular. In particular, the space $\im (H_1(\partial W;R) \to H_1(W;R)) $ is annihilated by $\lambda_R$. We conclude this section by giving a crucial example of this set-up.

\begin{example} \label{ex:DerivedSeries}
Let $W$ be a compact connected oriented $4$-manifold. Set $\pi=\pi_1(W)$ and
let $\pi^{(n)}=[\pi^{(n-1)},\pi^{(n-1})]$ denote its derived series starting at $\pi^{(0)}=\pi$.
The projection $\pi \to \pi/\pi^{(n)}$ gives rise to the
$\Z[\pi/\pi^{(n)}]$-modules $H_k(W;\Z[\pi/\pi^{(n)}])$  and we may consider the
$\Z[\pi/\pi^{(n)}]$-twisted intersection pairing
\[ \lambda_n \colon H_2(W;\mathbb{Z}[\pi/\pi^{(n)}]) \times H_2(W;\mathbb{Z}[\pi/\pi^{(n)}] )
	\to \mathbb{Z}[\pi/\pi^{(n)}],\]
as in Definition~\ref{def:int}.  Of particular interest to us is the case where~$n=1$ and $\pi/\pi^{(1)}=H_1(W; \Z)$ is free abelian of rank $\mu$.
In this case, $\Z[\pi/\pi^{(1)}]$ is nothing but the commutative ring~$\Lambda=\Z[t_1^{\pm 1},\dots,t_\mu^{\pm 1}]$ of Laurent polynomials.
\end{example}

We now consider the twisted intersection form in the setting of Example~\ref{ex:Ccoefficients}.
Let $W$ be a $4$-dimensional manifold with (possibly empty) boundary together with a
map~$\varphi \colon \pi_1(W) \to \mathbb{Z}^\mu=\Z\langle t_1,\dots,t_\mu \rangle$.
Given an element~$\omega \in \mathbb{T}^\mu \subset \mathbb{C}^\mu$,
we equip the ring~$\mathbb{C}$ with the $(\C,\Z[\pi_1(W)])$-module structure
described in Example~\ref{ex:Ccoefficients} and consider the
$\mathbb{C}$-vector spaces~$H_k(W;\mathbb{C}^\omega)$.
As in Definition~\ref{def:int}, we may consider the twisted intersection form
\[ \lambda_{\mathbb{C}^\omega} \colon H_2(W;\mathbb{C}^\omega) \times H_2(W;\mathbb{C}^\omega)  \to\mathbb{C}.\]
We write $\sign_\omega W = \sign \lambda_{\C^\omega}$ and $\sign W$ for the untwisted
signature~$\sign \lambda_{\Q}$. We will usually be interested in the \emph{signature defect}
\[ \dsign_\omega W := \sign_\omega W-\sign W.\]

\begin{remark}\label{rem:SignatureRemark}
For a smooth closed manifold of even dimension,
the twisted signature coincides with the untwisted one
and hence the signature defect vanishes.
This can be seen by considering the twisted and untwisted Hirzebruch signature
formula~\cite[Theorem 4.7]{Berline92}, which agree if the bundle carries a flat
connection.
\end{remark}
We prove the corresponding result for topological closed $4$-manifolds over $\Z^\mu$
and give a proof, which does not use index theory.
\begin{proposition}\label{prop:bordism}
Let $Z$ be an oriented $4$-manifold  with a map $\pi_1(Z) \to \mathbb{Z}^\mu$.
If $Z$ is closed, then
$\dsign_\omega Z =0$ for all $\omega \in \mathbb{T}^\mu$.
\end{proposition}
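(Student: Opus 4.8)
The plan is to use a bordism argument over the classifying space $B\Z^\mu = (S^1)^\mu$, exploiting the fact that the relevant twisted signature, just like the ordinary signature, is a bordism invariant of the pair $(Z, \varphi\colon \pi_1(Z)\to\Z^\mu)$. First I would observe that the signature defect $\dsign_\omega$ defines a homomorphism from the bordism group $\Omega_4^{\text{STOP}}(B\Z^\mu)$ (or its oriented topological analogue, using that topological manifolds of dimension $4$ have the relevant bordism theory available, e.g.\ via the map to $BSTOP$) to $\Z$. The key input is additivity of the signature under gluing along boundaries --- available from Section~\ref{sub:NovikovWall} and the vanishing of Wall's non-additivity term for closed pieces --- together with the fact that both $\sign_\omega$ and $\sign$ are multiplicative under disjoint union and change sign under orientation reversal, so that $\dsign_\omega(Z \sqcup -Z') = \dsign_\omega Z - \dsign_\omega Z'$ and $\dsign_\omega$ vanishes on any $Z$ that bounds a $5$-manifold carrying a compatible map to $\Z^\mu$.

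Next I would compute $\Omega_4^{\text{STOP}}(B\Z^\mu)$, or rather exhibit an explicit generating set whose signature defects visibly vanish. The torus $(S^1)^\mu$ has the homotopy type of a product of circles, and its low-dimensional (topological, oriented) bordism is generated over the point by products of circles with lower-dimensional closed manifolds: concretely, $\Omega_4(B\Z^\mu)$ is generated by $[S^1\times S^1\times S^1\times S^1]$ (with the identity-type classifying maps onto coordinate subtori) and $[S^4]$, $[\mathbb{CP}^2]$ with the trivial map to $B\Z^\mu$, and products such as $S^1 \times S^1 \times (\text{closed oriented surface})$ --- in all cases the $4$-manifold is a product $N \times (S^1)^k$ with $N$ of dimension $\le 2$ when $k\ge 2$, or $N=S^1 \times S^1$ when $k=2$. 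For these generators the twisted signature is easy to compute directly: for a product $N^{n}\times T^k$ with $k\ge 1$ and the map to $\Z^\mu$ factoring through the $T^k$-factor, the intersection form on $H_2$ with $\C^\omega$-coefficients can be analyzed via a Künneth/transfer argument --- when at least one circle factor is sent to a $t_i$ with $\omega_i \neq 1$, the twisted homology of that circle vanishes, killing the twisted intersection form entirely, so $\sign_\omega = 0$; and the untwisted signature of such a product likewise vanishes by the standard multiplicativity-of-signature-under-products argument unless the dimension-$4$ factor absorbs all of $H_2$, which only happens for $\mathbb{CP}^2$-type generators carrying the trivial map, where $\sign_\omega = \sign$ since the coefficient system is then trivial. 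In every case $\dsign_\omega$ of the generator is $0$.

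So the argument assembles as follows. Given a closed oriented topological $4$-manifold $Z$ with $\varphi\colon \pi_1(Z)\to\Z^\mu$, classified by a map $f\colon Z \to B\Z^\mu$, the pair $(Z,f)$ represents a class in $\Omega_4^{\text{STOP}}(B\Z^\mu)$, which is a $\Z$-linear combination of the explicit generators above. Since $\dsign_\omega$ is additive under disjoint union, changes sign under orientation reversal, and vanishes on any null-bordant pair (by additivity of the signature together with the fact that twisted and untwisted signatures of the boundary of a $5$-manifold both vanish --- the signature of a boundary is zero, and the $\C^\omega$-twisted signature of a boundary is zero for the same reason, since $\lambda_{\C^\omega}$ on the boundary of the $5$-manifold is metabolic by the half-lives-half-dies argument coming from Lemma~\ref{lem:DualityUCSS}), $\dsign_\omega$ is a well-defined homomorphism $\Omega_4^{\text{STOP}}(B\Z^\mu) \to \Z$, and it vanishes on each generator, hence identically. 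Therefore $\dsign_\omega Z = 0$.

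The main obstacle I anticipate is making the bordism-group computation clean in the \emph{topological} category without importing heavy machinery: one wants $\Omega_4^{\text{STOP}}(B\Z^\mu)$ to agree with the smooth (or at least with the $\mathrm{SO}$-)bordism group in this range, or else one wants to sidestep it. A cleaner route --- and the one I would actually pursue --- avoids naming the bordism group at all: argue directly that $(Z,f)$ is bordant to an explicit manifold of the above product form by surgery below the middle dimension on $f$ (making $f$ $2$-connected, so that $f_*\colon \pi_1 \to \Z^\mu$ becomes an isomorphism and $Z$ becomes, up to the $4$-manifold having the homotopy $2$-type of $(S^1)^\mu$, a connected sum of $(S^1)^\mu$-like pieces with simply-connected $4$-manifolds). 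Such surgeries are realizable in the topological category and preserve both $\sign_\omega$ and $\sign$ up to a controlled change (connected sum with closed simply-connected manifolds, on which the coefficient system is trivial so the defect stays $0$). Reducing to the case $\pi_1(Z) = \Z^\mu$ acting as the identity, the computation of $\sign_\omega$ then reduces to the vanishing of the $\C^\omega$-twisted homology of a circle factor, exactly as above. I would take care to phrase the final write-up so the only "black box" is additivity of the signature (already set up in Section~\ref{sub:NovikovWall}) and the half-lives-half-dies lemma, keeping the argument index-theory-free as promised.
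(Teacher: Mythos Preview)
Your approach is essentially the same as the paper's: show that $\dsign_\omega$ descends to a homomorphism $\Omega_4(\Z^\mu)\to\Z$ and then verify it vanishes on generators. The paper makes the generator step cleaner by invoking the Atiyah--Hirzebruch spectral sequence to get $\Omega_4(\Z^\mu)\cong\Omega_4(\mathrm{pt})\oplus H_4(T^\mu;\Z)$, so that the only classes to check are $\mathbb{CP}^2$ (trivial map, hence trivial coefficient system) and the coordinate subtori $T^4\hookrightarrow T^\mu$ (where $H_*(T^4;\C^\omega)=0$ since some $\omega_i\neq 1$); your surgery alternative is therefore unnecessary.
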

\begin{proof}
Given a space $X$, recall that the bordism group $\Omega_n(X)$ consists of
bordism classes of pairs $(N,\psi)$, where $N$ is an $n$-dimensional manifold
and $\psi \colon N \to X$ is a map; see~\cite{ConnerFloyd} for details.
Moreover, if $G$ is a group with classifying space $BG$, then $\Omega_n(G)$ is
defined as $\Omega_n(BG)$.  Since the choice of the map $\varphi \colon
\pi_1(Z) \to \mathbb{Z}^\mu$ is equivalent to the choice of a homotopy class of
a map~$Z \to T^\mu = B\Z^\mu$, the pair $(Z,\varphi)$ produces an element in
$\Omega_4(\mathbb{Z}^\mu)$.  As both the ordinary and the twisted signature vanish on closed oriented $4$-manifold which bound over $\Z^\mu$, for every $\omega\in \T^\mu$, the signature defect gives rise to a well-defined homomorphism
\[\dsign_\omega \Omega_4(\Z^\mu)\to \Z. \]
We want to prove that $\dsign_\omega$ is the trivial homomorphism.

By the Atiyah-Hirzebruch spectral
sequence~\cite[Chapter $1$, Section $7$]{ConnerFloyd}, we have an
isomorphism
\begin{align*}
\Omega_4(\mathbb{Z}^\mu) &\cong \Omega_4(pt) \oplus H_4(T^\mu;\Z) \\
[\psi \colon Z \to T^\mu] &\mapsto [Z \to \text{pt}] \oplus \psi_* [Z].
\end{align*}
It is therefore enough to show that the signature defect vanishes on the elements of $\Omega_4(\mathbb{Z}^\mu)$ corresponding through the above isomorphism to a set of generators of~$\Omega_4(pt)$ and $H_4(T^\mu;\Z)$.

It is well known that $\Omega_4(pt)$ is generated
by the class of $\mathbb{C}P^2$. As $\mathbb{C}P^2$ is simply connected,
its twisted signature agrees with the untwisted one and consequently its signature defect also vanishes.
Let us pick a product structure~$T^\mu = (S^1)^\mu$ on the torus.
By the K\"unneth formula, the
abelian group~$H_4(T^\mu; \Z)$ is generated
by the fundamental classes of the subtori~$T^4 =(S^1)^4\subset T^\mu$ given
by inclusions of factors. For every homology class $i_*([T^4])\subset H_4(T^\mu; \Z)$, the corresponding element in $\Omega_4(\mathbb{Z}^\mu)$ is the cobordism class
$[i \colon T^4 \to T^\mu]$. The ordinary signature of $T^4$ is immediately seen to vanish. To compute the twisted signature,  consider the coefficient system~$\C^\omega$ on
$T^4 = T^3 \times S^1$.
As $\omega \in \T^\mu$, this coefficient system is non-trivial on the $S^1$-factor.
Consequently, the twisted chain complex is acylic~\cite[Corollary App.B.B]{Viro09} and
$H_2(T^3 \times S^1; \C^\omega) = 0$. Thus, the twisted signature vanishes, and as a consequence the signature defect of the cobordism class $[i\colon T^4\to T^\mu]$ is $0$.
We deduce that the signature defect vanishes on all of~$\Omega_4(\Z^\mu)$.
\end{proof}

\begin{corollary}\label{cor:bordism}
Let $M$ be an oriented $3$-manifold with a map $H_1(M;\Z)\to \Z^\mu$ and let $W$, $W'$ be two fillings of $M$ over $\Z^\mu$. Then,
$\dsign_\omega W= \dsign_\omega W'$ for all $\omega\in \T^\mu$.
\end{corollary}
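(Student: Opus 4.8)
The plan is to deduce the corollary from Proposition~\ref{prop:bordism} by the standard trick of gluing the two fillings along their common boundary. First I would form the closed oriented $4$-manifold $Z := W \cup_M (-W')$, obtained by gluing $W$ and $W'$ with reversed orientation along $M$; this is a closed, oriented, topological $4$-manifold. The hypotheses give maps $H_1(W;\Z)\to\Z^\mu$ and $H_1(W';\Z)\to\Z^\mu$ that restrict to the same map $H_1(M;\Z)\to\Z^\mu$ on the boundary, so by a Mayer--Vietoris argument they assemble to a map $H_1(Z;\Z)\to\Z^\mu$ extending both (one does not even need the full fundamental group, since all coefficient systems in play factor through $H_1(-;\Z)$). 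Applying Proposition~\ref{prop:bordism} to $Z$ yields $\dsign_\omega Z = 0$ for all $\omega\in\T^\mu$.

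It remains to identify $\dsign_\omega Z$ with $\dsign_\omega W - \dsign_\omega W'$. This is exactly Novikov additivity for the signature: since $\sign(Z) = \sign(W) + \sign(-W') = \sign(W) - \sign(W')$, and the same additivity holds for the $\C^\omega$-twisted intersection form (the twisted homology of $Z$ decomposes appropriately via Mayer--Vietoris, and the contribution of the separating $3$-manifold $M$ to the Wall non-additivity correction vanishes because the correction term is a Maslov-type triple-intersection index that is trivial when one simply glues two manifolds along a full boundary component — there is no third Lagrangian), we get $\sign_\omega Z = \sign_\omega W - \sign_\omega W'$. Subtracting gives $\dsign_\omega Z = \dsign_\omega W - \dsign_\omega W'$. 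Combining with the vanishing from the previous paragraph yields $\dsign_\omega W = \dsign_\omega W'$.

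The main technical point to be careful about is the additivity of the twisted signature under gluing along a boundary $3$-manifold; in the locally flat topological category one should invoke the version of Novikov additivity / Wall non-additivity valid there, or note that the signature of a compact topological $4$-manifold can be computed from an intersection form and that the decomposition of the twisted chain complex of $Z$ along $M$ behaves as in the PL case. Since this is a well-established piece of bordism-theoretic signature machinery (and is presumably recorded in Section~\ref{sub:NovikovWall} of the paper), I would simply cite it rather than reprove it. Everything else is formal: the construction of the glued manifold, the assembly of the coefficient system, and the arithmetic of signatures.
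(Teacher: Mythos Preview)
Your proof is correct and follows exactly the same route as the paper: form the closed manifold $Z=W\cup_M(-W')$, extend the map to $\Z^\mu$, apply Proposition~\ref{prop:bordism} to get $\dsign_\omega Z=0$, and conclude via Novikov additivity. The paper's proof is simply a terser version of yours, invoking Novikov additivity without the extra commentary on the vanishing of the Wall correction term.
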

\begin{proof}Define the closed oriented $4$-manifold $Z:=W\cup_{M} - W'$, and notice that the map to $\Z^\mu$ can be extended to $Z$. Thanks to Proposition~\ref{prop:bordism}, we have $\sign_\omega Z - \sign Z=0$, and by Novikov additivity we get
	\[ 0 =\sign_\omega Z - \sign Z =  (\sign_\omega W -\sign W) -  (\sign_\omega W' -\sign W').\]
\end{proof}

\subsection{Novikov-Wall additivity of the signature}\label{sub:NovikovWall}
A theorem of Wall~\cite{Wall} computes the correction term to the additivity of the signature under the union of two manifolds along a common codimension $0$ submanifold of their boundaries, generalizing Novikov additivity.
We recall Wall's theorem in the case where the correction term vanishes.
\medbreak

Consider an oriented compact~$4$-manifold~$W$ together with an oriented, properly embedded $3$-manifold~$M$,
which separates~$W$ into two pieces~$W_\pm$. Put differently, $W = W_+ \cup_M (-W_-)$ is obtained by
gluing~$W_+$ to $-W_-$ along the submanifold~$M$. Note that $M$ is allowed to have nonempty
boundary~$\Sigma = \partial M \subset \partial W$ itself. This decomposition induces a decomposition
of the boundaries $\partial W_+ =  N_+ \cup_\Sigma -M$ and $\partial W_- =  N_-\cup_\Sigma -M $;
see Figure~\ref{fig:NovikovWall}. From this, we obtain a decomposition of the boundary~$\partial W = N_+ \cup_\Sigma (- N_-)$.
We equip then $\Sigma$ with the
orientation~$\Sigma = \partial M = \partial N_+ = \partial N_-$.

\begin{figure}[ht]
\includegraphics[width=6cm]{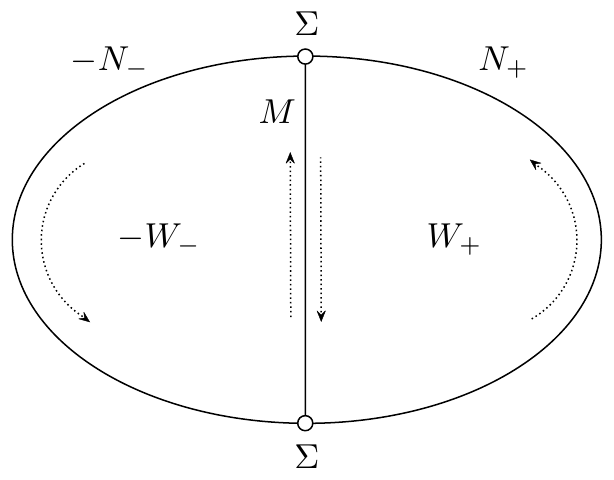}
\caption{A $2$-dimensional sketch of the Novikov-Wall set-up.}
\label{fig:NovikovWall}
\end{figure}

For a manifold $X$ with boundary $\Sigma$, define
\[
V_X:= \ker (H_1(\Sigma;\R)\to H_1(X  ;\R)).
\]
In our setting, we are interested in the spaces $V_M$, $V_{N_+}$ and $V_{N_-}$. The following result is immediately obtained from  the main theorem of~\cite{Wall}, as the correction term vanishes as soon as two of the involved subspaces coincide.
\begin{theorem}
		\emph{(Novikov-Wall additivity)}\label{thm:Wall}
Let $W$ be decomposed as above as the union of $W_+$ and $-W_-$, and suppose that any two among  $V_M$, $V_{N_+}$ and $V_{N_-}$ are equal. Then
	\[	\sign(W)=\sign W_+ - \sign W_- .\]
\end{theorem}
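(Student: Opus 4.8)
\textbf{Proof plan for Theorem~\ref{thm:Wall} (Novikov-Wall additivity, vanishing correction case).}

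The plan is to reduce the statement directly to Wall's non-additivity theorem from \cite{Wall}, whose correction term is a signature computed from a Maslov-type form on a certain subquotient of $H_1(\Sigma;\R)$. First I would recall the precise shape of Wall's formula: with the decomposition $W = W_+ \cup_M (-W_-)$ fixed as above, one has
\[ \sign(W) = \sign(W_+) - \sign(W_-) + \mu(V_M, V_{N_+}, V_{N_-}), \]
where $\mu(V_M, V_{N_+}, V_{N_-})$ is the Maslov triple index (Wall's correction term), a function of the three Lagrangian-type subspaces $V_M, V_{N_+}, V_{N_-}$ of the symplectic vector space $(H_1(\Sigma;\R), \cdot)$ equipped with the skew-symmetric intersection form. (Here the sign conventions for $W_-$ entering with a minus sign must be matched to those in the statement; this is a bookkeeping point, not a mathematical difficulty.) The key algebraic fact is that Wall's correction term $\mu$ vanishes whenever any two of the three subspaces coincide: it is built as the signature of a bilinear form on $V_M \cap (V_{N_+} + V_{N_-})$ modulo $V_M \cap V_{N_+} + V_M \cap V_{N_-}$ (or a symmetric variant), and if, say, $V_{N_+} = V_{N_-}$ then $V_{N_+} + V_{N_-} = V_{N_+} = V_{N_-}$ forces $V_M \cap (V_{N_+}+V_{N_-}) = V_M \cap V_{N_+} = V_M \cap V_{N_+} + V_M \cap V_{N_-}$, so the subquotient is zero; the cases $V_M = V_{N_+}$ and $V_M = V_{N_-}$ are symmetric.

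Concretely, the steps I would carry out are: (1) set up the boundary decompositions $\partial W_\pm = N_\pm \cup_\Sigma (-M)$ and $\partial W = N_+ \cup_\Sigma (-N_-)$ exactly as in the paragraph preceding the theorem, and orient $\Sigma$ as $\partial M = \partial N_+ = \partial N_-$; (2) quote \cite{Wall} to get the identity displayed above with the correction term $\mu(V_M,V_{N_+},V_{N_-})$; (3) invoke the algebraic vanishing of $\mu$ when two of the subspaces agree, citing Wall (and noting the computation is the elementary subquotient argument above); (4) conclude $\sign(W) = \sign(W_+) - \sign(W_-)$. Since the hypothesis is precisely that two of $V_M, V_{N_+}, V_{N_-}$ coincide, step (3) closes the argument.

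The main obstacle is not conceptual but one of \emph{convention matching}: Wall's original paper, and the several later re-expositions (Kirby-Melvin, Gilmer, Py, etc.), use differing orientation and ordering conventions for the three pieces and for the sign with which $W_-$ enters, and the correction term changes sign under a transposition of the subspaces. I would therefore be careful to fix one reference, state Wall's formula in that reference's convention, and check that the orientation of $\Sigma$ and the roles of $N_+, N_-, M$ are assigned so that the correction term appearing is exactly $\mu(V_M, V_{N_+}, V_{N_-})$ (or its negative, which does not matter for the vanishing statement). Apart from that, one should double-check that all homology here is taken with $\R$-coefficients so that the intersection form on $H_1(\Sigma;\R)$ is a genuine (possibly degenerate) skew form and Wall's setup applies verbatim; the subspaces $V_X = \ker(H_1(\Sigma;\R) \to H_1(X;\R))$ are exactly the half-dimensional "Lagrangian" subspaces Wall uses (they are isotropic and of the right dimension by the half-lives-half-dies principle for the $3$-manifolds $M, N_+, N_-$), so no further verification is needed there.
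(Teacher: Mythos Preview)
Your proposal is correct and matches the paper's approach exactly: the paper simply states that the result is immediately obtained from the main theorem of \cite{Wall}, since the correction term vanishes as soon as two of the three subspaces coincide. Your write-up actually gives more detail than the paper does, spelling out the subquotient on which Wall's form lives and why it collapses when two of $V_M, V_{N_+}, V_{N_-}$ agree.
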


Theorem~\ref{thm:Wall} admits a generalization to twisted coefficients.
For simplicity, in the twisted setting we shall only state a weaker result which is sufficient for our purposes. Suppose to have a map $H_1(W;\Z)\to \Z^\mu$. With this map, we can construct the
local coefficient systems~$\mathbb{C}^\omega$ for every $\omega \in \T^\mu$, as explained in Example~\ref{ex:Ccoefficients}. The following additivity result holds for the twisted signature.
\begin{proposition} \label{prop:TwistedWall}
	Suppose that $W$ is decomposed as above as the union of $-W_-$ and $W_+$.
	Then, for each $\omega\in \T^\mu$ such that $H_1(\Sigma;\C^\omega)=0$,
	Novikov-Wall additivity holds for the twisted signature:
	\[ \sign_\omega W= \sign_\omega W_+ -\sign_\omega W_-.\]
\end{proposition}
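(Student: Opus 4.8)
The plan is to realise the statement as the instance of the twisted Wall non-additivity theorem in which the correction term vanishes for dimensional reasons. Write $\lambda_{\C^\omega}$ for the $\C^\omega$-twisted intersection form of $W$ on $H_2(W;\C^\omega)$, and $\lambda^+_{\C^\omega}$, $\lambda^-_{\C^\omega}$ for those of $W_+$, $W_-$. The first step is the Mayer-Vietoris sequence of $W=W_+\cup_M(-W_-)$ with $\C^\omega$-coefficients,
\[
H_2(M;\C^\omega)\xrightarrow{\ f\ }H_2(W_+;\C^\omega)\oplus H_2(W_-;\C^\omega)\xrightarrow{\ g\ }H_2(W;\C^\omega)\xrightarrow{\ \partial\ }H_1(M;\C^\omega).
\]
With appropriate signs one has $\lambda_{\C^\omega}(g(a_+,a_-),g(b_+,b_-))=\lambda^+_{\C^\omega}(a_+,b_+)-\lambda^-_{\C^\omega}(a_-,b_-)$, while $\ker g=\image f$ lies in the radical of $\lambda^+_{\C^\omega}\oplus(-\lambda^-_{\C^\omega})$ since $M$ is contained in $\partial W_\pm$ and the image of the boundary is annihilated by the intersection form (Definition~\ref{def:int}). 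Hence the restriction of $\lambda_{\C^\omega}$ to $P:=\image g=\ker\partial$ has signature $\sign_\omega W_+-\sign_\omega W_-$.

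It then remains to compare $\sign\lambda_{\C^\omega}$ with the signature of $\lambda_{\C^\omega}|_P$, and this is exactly the quantity computed in Wall's argument. The quotient $H_2(W;\C^\omega)/P\cong\image\partial$ sits inside $H_1(M;\C^\omega)$, and the discrepancy $\sign\lambda_{\C^\omega}-\sign(\lambda_{\C^\omega}|_P)$ equals the signature of a hermitian pairing supported on a subquotient of $H_1(\Sigma;\C^\omega)$ assembled from the maps $H_1(\Sigma;\C^\omega)\to H_1(M;\C^\omega)$ and $H_1(\Sigma;\C^\omega)\to H_1(N_\pm;\C^\omega)$ — that is, the twisted Maslov triple index of the subspaces
\[
V_M^\omega:=\ker\!\big(H_1(\Sigma;\C^\omega)\to H_1(M;\C^\omega)\big),\qquad V_{N_+}^\omega,\qquad V_{N_-}^\omega,
\]
defined just as the spaces $V_M$, $V_{N_\pm}$ preceding Theorem~\ref{thm:Wall} but with $\R$ replaced by $\C^\omega$. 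I would carry Wall's deformation and sublagrangian-reduction argument over essentially verbatim, using $\C^\omega$-coefficient Poincaré-Lefschetz duality and the universal coefficient isomorphism (Proposition~\ref{prop:UCSS}, Lemma~\ref{lem:DualityUCSS}) in place of their untwisted versions; since $\C$ is a field, nothing new occurs. The hypothesis $H_1(\Sigma;\C^\omega)=0$ then forces $V_M^\omega=V_{N_+}^\omega=V_{N_-}^\omega=0$, so the correction pairing is defined on the zero space and its signature is $0$, whence $\sign_\omega W=\sign_\omega W_+-\sign_\omega W_-$.

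The hard part is precisely this middle step: transcribing Wall's non-additivity argument to twisted coefficients, and in particular verifying that the correction term is genuinely localised on a subquotient of $H_1(\Sigma;\C^\omega)$ rather than on a larger summand of $H_1(M;\C^\omega)$. If one prefers, one may instead invoke an existing twisted form of Wall's theorem — such a statement fits into the algebraic Poincaré pair formalism over the ring with involution $\C$ (cf.~\cite{ConwayThesis}) — and conclude as above. Finally, it is worth recording that the hypothesis is genuinely restrictive: a short Euler characteristic and Poincaré duality count shows $H_1(\Sigma;\C^\omega)=0$ forces every component of $\Sigma$ to be a $2$-sphere carrying the trivial coefficient system or a torus on which $\C^\omega$ is acyclic, in agreement with Wall's correction term being vacuous.
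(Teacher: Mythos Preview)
The paper does not actually prove Proposition~\ref{prop:TwistedWall}: it is stated without proof as a known consequence of Wall's non-additivity theorem carried over to twisted coefficients, with only the remark that ``in the twisted setting we shall only state a weaker result which is sufficient for our purposes.'' Your sketch supplies exactly the argument the paper omits, and it is the standard one: Wall's correction term is the Maslov triple index of the three Lagrangians $V_M^\omega, V_{N_+}^\omega, V_{N_-}^\omega$ inside the symplectic vector space $H_1(\Sigma;\C^\omega)$, and this vanishes trivially once $H_1(\Sigma;\C^\omega)=0$.

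Two small comments. First, the Mayer--Vietoris preamble is correct but not strictly needed once you invoke the twisted Wall theorem in full; the latter already identifies $\sign_\omega W - \sign_\omega W_+ + \sign_\omega W_-$ with the Maslov correction. Second, your closing remark on the structure of $\Sigma$ is accurate but tangential: in the applications of the paper, $\Sigma$ is always a disjoint union of tori (the boundary tori of link exteriors or of plumbed pieces), and the hypothesis $H_1(\Sigma;\C^\omega)=0$ holds because the meridional coefficient system is nontrivial on each torus for $\omega\in\T^\mu$.
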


\subsection{Concordance roots and vanishing results}\label{sub:ConcordanceRoots}
We generalize the concept of Knotennullstellen~\cite{NagelPowell}.
After applying this concept to a variation of a
well-known chain homotopy argument, we discuss some further properties of these
elements.
\medbreak
Let $U \subset \Z[t_1^{\pm 1},\dots ,t_\mu^{\pm 1}]$ be
the subset of Laurent polynomials~$p(t_1,\dots , t_\mu)$ such that $p(1,\dots ,1)=\pm 1$.
We abbreviate the Laurent ring~$\Z[t_1^{\pm 1},\dots ,t_\mu^{\pm 1}]$ with $\Lambda$.
\begin{definition}
\label{def:ConcordanceRoot}
An element~$\omega \in \T^\mu = (S^1 \sm \{1\})^\mu$ is a \emph{concordance root}
if there is a polynomial $p \in U$ with $p(\omega)=0$. Define $\T^\mu_!$ to be the subset
of all elements $\omega \in \T^\mu$ which are \emph{not} concordance roots.
\end{definition}
Definition~\ref{def:ConcordanceRoot} is a generalization of~\cite[Definition $1.1$]{NagelPowell} to the multivariable case. The key property of non-concordance roots is that they allow us to use a well-known chain homotopy argument~\cite[Proposition 2.10]{CochranOrrTeichner}. The following results are an adaptation of~\cite[Lemma 3.1]{NagelPowell}.

To define the colored (and Alexander) nullity and the colored signature, we will use the bimodules~$\Q(\Z^\mu)$ and $\C^\omega$; see Definition~\ref{def:SignatureNullity} below.
A key ingredient,
necessary to prove the concordance invariance of these invariants, is the following fact: these modules are not
just~$\Lambda$--right modules, but right $U^{-1} \Lambda$-modules where the localisation~$U^{-1} \Lambda$,
inverts all elements of~$U$.

Suppose now that $\Z^m \to \Z^\mu$ is a homomorphism obtained by adding entries.
Then, the induced map of group rings $\Z[\Z^m] \to \Lambda$ fits into the following commutative diagram with the augmentation maps
\[ \begin{tikzcd}
\Z[\Z^m] \ar[rr] \ar[rd, "\aug"']& &\Lambda \ar[ld, "\aug"]\\
& \Z &
\end{tikzcd}. \]
Recall that, the augmentation map sends a Laurent polynomial $p(t_1,\dotsc, t_\mu)$ to its evaluation $p(1,\dotsc , 1)$.
The next lemma follows from considerations of determinants; see cf.~\cite[Proposition 2.4]{CochranOrrTeichner}.
\begin{lemma}\label{lem:DeterminantTrick}
Let $g \colon \Z[\Z^m]^k \to \Z[\Z^m]^k$ be a $\Z[\Z^m]$--module homomorphism with the property
that $\Z \otimes_{\Z[\Z^m]}g$ is an isomorphism. Then
\[ U^{-1} \Lambda \otimes_{\Z[\Z^m]} g \colon  (U^{-1} \Lambda)^k \to (U^{-1} \Lambda)^k \]
is also an isomorphism. Consequently, so is $\Q(\Z^\mu) \otimes_{\Z[\Z^m]}g$ and $\C^\omega \otimes_{\Z[\Z^m]}g$.
\end{lemma}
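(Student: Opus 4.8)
The plan is to reduce the statement to the determinant invariance encapsulated by Lemma~\ref{lem:DeterminantTrick} by choosing a suitable finite free chain-level presentation. First I would take the $\Z[\Z^m]$--module homomorphism $g$ and complete it to a short exact sequence (or rather think of $g$ as the single nonzero differential in a length-one free chain complex $C_* = (\Z[\Z^m]^k \xrightarrow{g} \Z[\Z^m]^k)$); the hypothesis that $\Z \otimes_{\Z[\Z^m]} g$ is an isomorphism says exactly that $H_*(\Z \otimes_{\Z[\Z^m]} C_*) = 0$, i.e.\ $C_*$ is acyclic after augmentation. The goal becomes: acyclicity over $\Z$ implies acyclicity over $U^{-1}\Lambda$ (and hence over the two fraction-field-type rings, since both factor through $U^{-1}\Lambda$ by the discussion preceding Definition~\ref{def:ConcordanceRoot}).

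\medbreak
The key computational step is the ``considerations of determinants'' hinted at in the statement. Since $g\colon \Z[\Z^m]^k \to \Z[\Z^m]^k$ is an endomorphism of a finite free module over the commutative ring $\Z[\Z^m]$, it has a well-defined determinant $\det(g) \in \Z[\Z^m]$, and $g$ becomes an isomorphism after base change along a ring map $\Z[\Z^m] \to S$ precisely when the image of $\det(g)$ in $S$ is a unit (using that $g$ is invertible over $S$ iff $\det(g)$ is, via the adjugate formula, valid over any commutative ring). Applying this with $S = \Z$: the map $\Z \otimes_{\Z[\Z^m]} g$ is an isomorphism means that the augmentation $\aug(\det(g)) = \pm 1$. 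Now push $\det(g)$ forward along the homomorphism $\Z[\Z^m] \to \Lambda$ induced by $\Z^m \to \Z^\mu$; by the commutative triangle with the augmentation maps displayed just before Lemma~\ref{lem:DeterminantTrick}, the image $p := \det(g)|_\Lambda \in \Lambda$ still satisfies $\aug(p) = p(1,\dots,1) = \pm 1$, so $p \in U$. Therefore $p$ becomes invertible in $U^{-1}\Lambda$ by definition of the localisation, which means $U^{-1}\Lambda \otimes_{\Z[\Z^m]} g$ is an isomorphism.

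\medbreak
The final sentence of the lemma follows formally: the fraction field $\Q(\Z^\mu)$ receives a ring map from $U^{-1}\Lambda$ (indeed from $\Lambda$, hence from $U^{-1}\Lambda$ by the universal property since every element of $U$ maps to a nonzero, thus invertible, element of the field), and likewise $\C^\omega$ for $\omega \in \T^\mu$ receives such a map exactly because $\omega$ is not a concordance root — no $p \in U$ vanishes at $\omega$, so $U$ maps into $\C^\times$ and the map $\Lambda \to \C^\omega$ factors through $U^{-1}\Lambda$. Base-changing the already-established isomorphism $U^{-1}\Lambda \otimes_{\Z[\Z^m]} g$ along $U^{-1}\Lambda \to \Q(\Z^\mu)$ (resp.\ $U^{-1}\Lambda \to \C^\omega$) yields that $\Q(\Z^\mu)\otimes_{\Z[\Z^m]} g$ and $\C^\omega \otimes_{\Z[\Z^m]} g$ are isomorphisms.

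\medbreak
The only point requiring any care — and the ``main obstacle'' such as it is — is making sure the determinant argument is valid over a merely commutative (non-domain-free) base: one must invoke that for an $n\times n$ matrix $M$ over a commutative ring $S$, $M$ is invertible iff $\det M \in S^\times$, via $M \cdot \operatorname{adj}(M) = \operatorname{adj}(M) \cdot M = \det(M)\cdot I$, rather than any argument specific to fields or domains. Everything else is naturality of the determinant under ring homomorphisms and of base change, which is routine. It may be worth remarking that this is the multivariable analogue of the classical one-variable fact that a matrix over $\Z[t^{\pm1}]$ which is invertible at $t=1$ is invertible over the Cochran--Orr--Teichner localisation, as in~\cite[Proposition 2.4]{CochranOrrTeichner}.
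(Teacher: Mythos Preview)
Your argument is correct and is exactly the ``considerations of determinants'' the paper has in mind; the paper itself does not spell out a proof but defers to~\cite[Section 3]{NagelPowell} and~\cite[Proposition 2.4]{CochranOrrTeichner}, which proceed in precisely this way. One small clarification: for the final step with $\C^\omega$ you need $\omega \in \T^\mu_!$ (not merely $\omega \in \T^\mu$), since that is what guarantees $U$ maps into $\C^\times$; this hypothesis is implicit in the paper's statement and you invoke it correctly even though you wrote $\T^\mu$.
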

\begin{proof} See~\cite[Section 3]{NagelPowell}. \end{proof}

\begin{lemma}\label{lem:Cone}
Let $k$ be a non-negative integer, and let $\omega$ lie in $\T_!^\mu$.
If $(X,Y)$ is a pair of CW-complexes over $B\Z^\mu$ with $H_i(X, Y; \Z) = 0$ for $0 \leq i \leq k$,
then both~$H_i(X,Y; \Q(\Z^\mu))$ and~$H_i(X,Y; \C^\omega)$ vanish for~$0 \leq i \leq k$.
\end{lemma}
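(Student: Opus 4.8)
The strategy is a standard chain-level argument, combining the hypothesis on integral homology with the determinant trick (Lemma~\ref{lem:DeterminantTrick}). First I would reduce to the case where $(X,Y)$ is a finite CW-pair: since homology commutes with direct limits and the coefficient modules are flat over nothing in particular but the relevant $\mathrm{Tor}$-terms are computed on finite subcomplexes, it suffices to treat the finite case, or one may simply assume finiteness as is customary in this context. Next, consider the cellular chain complex $C_* := C_*(\widetilde{X},\widetilde{Y})$ as a complex of finitely generated free $\Z[\Z^\mu]$-modules (pulling back the universal cover along the map to $B\Z^\mu$), or rather of free $\Z[\Z^m]$-modules after choosing a surjection $\Z^m \to \Z^\mu$; the point of the excerpt's setup is precisely that one works over $\Z[\Z^m]$ so that the augmentation to $\Z$ is available.

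The heart of the argument is the chain homotopy lemma of Cochran--Orr--Teichner (\cite[Proposition~2.10]{CochranOrrTeichner}): the hypothesis $H_i(X,Y;\Z) = H_i(\Z\otimes_{\Z[\Z^m]} C_*) = 0$ for $0 \le i \le k$ means that the truncated complex $C_0 \leftarrow C_1 \leftarrow \cdots \leftarrow C_{k+1}$, after tensoring with $\Z$, is exact in degrees $\le k$. By the standard argument one can then modify $C_*$ up to chain homotopy equivalence (over $\Z[\Z^m]$) so that the differentials $\partial_i$ for $1 \le i \le k+1$, when reduced mod the augmentation ideal, become split injections/surjections of the appropriate ranks; concretely, there exist $\Z[\Z^m]$-module maps whose reductions over $\Z$ are isomorphisms, to which Lemma~\ref{lem:DeterminantTrick} applies. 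Explicitly: for each $i \le k$ one produces a square matrix $g_i$ over $\Z[\Z^m]$ with $\Z\otimes g_i$ an isomorphism, built from a partial chain contraction of $\Z\otimes C_*$; Lemma~\ref{lem:DeterminantTrick} then gives that $\Q(\Z^\mu)\otimes g_i$ and $\C^\omega \otimes g_i$ are isomorphisms. Feeding this back, the complexes $\Q(\Z^\mu)\otimes_{\Z[\Z^m]} C_*$ and $\C^\omega \otimes_{\Z[\Z^m]} C_*$ are exact in degrees $\le k$, which is exactly the vanishing of $H_i(X,Y;\Q(\Z^\mu))$ and $H_i(X,Y;\C^\omega)$ for $0\le i\le k$. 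Since the excerpt already records that $\C^\omega$ and $\Q(\Z^\mu)$ are $U^{-1}\Lambda$-modules and $\omega \in \T^\mu_!$ means $\omega$ kills no element of $U$, the invertibility conclusion of Lemma~\ref{lem:DeterminantTrick} is legitimate for the $\C^\omega$ case.

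The main obstacle — really the only subtle point — is setting up the chain homotopy argument cleanly over $\Z[\Z^m]$ rather than over $\Z[\pi_1]$: one must pass through the surjection $\Z^m \to \Z^\mu$ (adding coordinates) so that the augmentation $\Z[\Z^m] \to \Z$ and the maps $\Z[\Z^m] \to U^{-1}\Lambda$, $\C^\omega$ all fit the commutative triangle displayed before Lemma~\ref{lem:DeterminantTrick}, and verify that the $g_i$'s obtained from the $\Z$-level contraction are defined over $\Z[\Z^m]$. Once the bookkeeping of which maps live over which ring is pinned down, the result follows by applying Lemma~\ref{lem:DeterminantTrick} degree by degree; I would carry out the induction on $k$, or equivalently produce all the $g_i$ at once from a single partial chain contraction, invoke \cite[Section~3]{NagelPowell} and \cite[Proposition~2.10]{CochranOrrTeichner} for the construction, and conclude.
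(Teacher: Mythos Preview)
Your proposal is correct and follows essentially the same route as the paper: lift a partial chain contraction of $C_*(X,Y;\Z)$ to maps over $\Lambda = \Z[\Z^\mu]$, form the endomorphisms $f_i = s^\Lambda_{i-1}\partial_i + \partial_{i+1}s^\Lambda_i$ whose augmentations are the identity, apply Lemma~\ref{lem:DeterminantTrick} to deduce that $U^{-1}\Lambda \otimes f_i$ and hence $R\otimes f_i$ are isomorphisms for $R = \Q(\Z^\mu)$ or $\C^\omega$, and conclude that $R\otimes s^\Lambda$ is a partial contraction. The only comment is that your ``main obstacle'' is a phantom: there is no need to factor through a surjection $\Z^m \to \Z^\mu$, since the cellular chain complex of the $\Z^\mu$-cover is already a complex of free $\Lambda$-modules and Lemma~\ref{lem:DeterminantTrick} applies with $m=\mu$ and the identity map; the extra generality in that lemma's statement is for use elsewhere.
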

\begin{proof}
We make the following abbreviations~$C^\Z := C(X,Y; \Z)$ and $C^\Lambda:= C(X,Y; \Lambda)$ for
the cellular chain complexes of the pairs~$(X,Y)$. For the remainder of the proof, $i$ will be an arbitrary integer $0 \leq i \leq k$.
The chain complex~$C^\Z$ consists of finitely generated free $\Z$-modules, and as $H_i(C^\Z) = 0$, it admits a partial
contraction, i.e.\ homomorphisms~$s_i \colon C^\Z_i \to C^\Z_{i+1}$ with
\[ \id_{C_i^\Z} = s_{i-1} \circ d_i + d_{i+1} \circ s_i.\]

Consider the chain map~$\varepsilon \colon C^\Lambda \to C^\Z$ of chain complexes over $\Lambda$,
which is induced by tensoring with the augmentation map. Pick a lift $s^\Lambda_i$ of $s_i$
under $\varepsilon$, which is a homomorphism~$s_i^\Lambda \colon C^\Lambda_i \to C^\Lambda_{i+1}$
of $\Lambda$-modules such that the following diagram commutes:
\[ \begin{tikzcd}
C_i^\Lambda \ar[r, "s^\Lambda_i"] \ar[d, "\varepsilon"] & C_{i+1}^\Lambda \ar[d, "\varepsilon"]\\
C_i^\Z \ar[r, "s_i"] & C_{i+1}^\Z .
 \end{tikzcd}\]
Such a lift exists because $C_i^\Lambda$ consists of free modules and the map~$\varepsilon$
is surjective. Consider the partial chain map
\[ f_i = s^\Lambda_{i-1} \circ d_i + d_{i+1} \circ s^\Lambda_i.\]
By construction,~$\Z \otimes_\Lambda f_i = s_{i-1} \circ d_i + d_{i+1} \circ s_i = id_{{C_i^\Z}}$ and
so $U^{-1} \Lambda \otimes_\Lambda f_i$ is also an isomorphism; see Lemma~\ref{lem:DeterminantTrick}. We obtain
that $U^{-1} \Lambda \otimes_\Lambda s^\Lambda_i$ is a partial chain contraction for
$U^{-1} \Lambda \otimes_\Lambda C^\Lambda$ and
\[ H_i(X,Y; U^{-1} \Lambda) = H_i(U^{-1} \Lambda \otimes_\Lambda C^\Lambda) = 0. \]
Now we tensor with either $R = \Q(\Z^\mu)$ or $R= \C^\omega$, which are both right~$U^{-1} \Lambda$--modules. Here, we use the fact that~$\omega \in \T^\mu_!$. Note that $R \otimes_\Lambda  s^\Lambda_i$ is a partial chain contraction for
$R \otimes_{U^{-1} \Lambda} {U^{-1} \Lambda}\otimes_\Lambda C^\Lambda$
and so $H_i(X,Y; R) = 0$.
\end{proof}

For the remainder of the section, we collect properties of the set~$\T^\mu_!$ of non-concordance
roots.
For a prime~$p$, define
\[ \T_p^\mu := \{ \omega \in \T^\mu \mid \omega_i \text{ is a } p^n\text{-root of unity for some }n \} \]
and $\T_P^\mu := \bigcup_p \T_p^\mu$. This is the set for which concordance invariance properties and genus bounds are proved in~\cite[Section 7]{CimasoniFlorens}. The next result shows that the set~$\T_!^\mu$ of non-concordance roots contains
$\T_P^\mu$.

\begin{proposition}
\label{prop:TPContainedT!}
The set $\T_P^\mu$ is contained in $\T_!^\mu$.
\end{proposition}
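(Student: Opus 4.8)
The plan is to show that an element $\omega \in \T_P^\mu$ cannot be a root of any polynomial $p \in U$, i.e. any $p \in \Lambda$ with $p(1,\dots,1) = \pm 1$. Suppose $\omega \in \T_p^\mu$ for some prime $p$, so each coordinate $\omega_i$ is a primitive $p^{n_i}$-th root of unity for some $n_i \geq 1$ (we may assume $n_i \geq 1$ since $\omega_i \neq 1$). Set $n = \max_i n_i$, so that all $\omega_i$ are $p^n$-th roots of unity. The key observation is an arithmetic one: reduction modulo $p$ together with the fact that $X^{p^n} - 1 \equiv (X-1)^{p^n} \pmod p$.

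\medbreak
Here is how I would organize the argument. First I would reduce to studying evaluation in a suitable finite field. Since each $\omega_i$ is a $p^n$-th root of unity, there is a ring homomorphism $\Lambda = \Z[t_1^{\pm 1},\dots,t_\mu^{\pm 1}] \to \overline{\F_p}$ sending $t_i \mapsto \overline{\omega_i}$, where $\overline{\omega_i} \in \overline{\F_p}$ is a root of $X^{p^n} - 1$ in the algebraic closure; concretely one can take the composite $\Lambda \to \Z[\zeta] \to \Z[\zeta]/\mathfrak{q}$ for a prime $\mathfrak{q}$ above $p$ in the cyclotomic ring $\Z[\zeta]$, $\zeta$ a primitive $p^n$-th root of unity, but it is cleaner to argue abstractly. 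In $\overline{\F_p}$ we have $\overline{\omega_i}^{p^n} = 1$, hence $(\overline{\omega_i} - 1)^{p^n} = \overline{\omega_i}^{p^n} - 1 = 0$, so $\overline{\omega_i} = 1$ for every $i$. Therefore the evaluation homomorphism $\Lambda \to \overline{\F_p}$, $t_i \mapsto \overline{\omega_i}$, factors as $\Lambda \xrightarrow{\aug} \Z \to \Z/p \hookrightarrow \overline{\F_p}$; that is, for any $q \in \Lambda$, $q(\overline{\omega_1},\dots,\overline{\omega_\mu}) = \overline{\aug(q)}$ in $\overline{\F_p}$.

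\medbreak
Now suppose for contradiction that $\omega \in \T_p^\mu$ is a concordance root, witnessed by $p(\omega) = 0$ with $p \in U$, i.e. $\aug(p) = \pm 1$. On one hand, applying the ring homomorphism $\Z[\zeta] \to \Z[\zeta]/\mathfrak{q} \hookrightarrow \overline{\F_p}$ to the identity $p(\omega_1,\dots,\omega_\mu) = 0$ in $\C$ — where we identify $\omega_i$ with an element of $\Z[\zeta]$ via a fixed embedding $\Z[\zeta] \hookrightarrow \C$, which is legitimate since each $\omega_i$ is a $p^n$-th root of unity — gives $p(\overline{\omega_1},\dots,\overline{\omega_\mu}) = 0$ in $\overline{\F_p}$. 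On the other hand, by the previous paragraph this equals $\overline{\aug(p)} = \overline{\pm 1} = \pm 1 \neq 0$ in $\overline{\F_p}$, since $\overline{\F_p}$ has characteristic $p$ and $1 \neq 0$. This is the desired contradiction, so no such $p$ exists and $\omega \in \T_!^\mu$. Since $\omega \in \T_P^\mu$ was arbitrary, $\T_P^\mu \subseteq \T_!^\mu$.

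\medbreak
The main obstacle, and the only point requiring care, is the bookkeeping that lets one simultaneously regard the $\omega_i$ as complex numbers (where the equation $p(\omega) = 0$ lives) and as elements of characteristic $p$ (where the augmentation collapse $(\overline{\omega_i}-1)^{p^n} = 0$ takes place). The clean way to handle this is to fix a primitive $p^n$-th root of unity $\zeta$, work inside the single ring $\Z[\zeta]$ into which all the $\omega_i$ embed (each being a power of $\zeta$), note that $p(\omega) = 0$ already holds in $\Z[\zeta]$ because $\Z[\zeta] \hookrightarrow \C$ is injective, and then reduce modulo a prime ideal $\mathfrak{q} \subset \Z[\zeta]$ lying over $p$. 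In the residue field $\Z[\zeta]/\mathfrak{q}$ one has $\zeta \mapsto \bar\zeta$ with $\bar\zeta^{p^n} = 1$, hence $(\bar\zeta - 1)^{p^n} = 0$, hence $\bar\zeta = 1$, hence every $\overline{\omega_i} = 1$ and $p(\overline{\omega}) = \overline{\aug(p)} = \pm 1 \neq 0$, contradicting $p(\overline{\omega}) = 0$. Everything else is formal.
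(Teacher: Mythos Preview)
Your proof is correct, but it takes a different route from the paper. The paper reduces to a single variable: writing $\omega = (\zeta^{n_1},\dots,\zeta^{n_\mu})$ for a primitive $p^n$-th root of unity $\zeta$, it forms the one-variable polynomial $\overline{q}(t) := q(t^{n_1},\dots,t^{n_\mu})$, observes that $\overline{q}(\zeta)=0$ forces $\Phi_{p^n}(t)\mid \overline{q}(t)$, and uses $\Phi_{p^n}(1)=p$ to conclude $p\mid q(1,\dots,1)$. Your argument instead stays multivariable and passes to characteristic~$p$: you work in $\Z[\zeta]$, reduce modulo a prime over $p$, and exploit $X^{p^n}-1\equiv (X-1)^{p^n}$ to collapse every $\overline{\omega_i}$ to $1$, so that $q(\omega)\equiv \aug(q)\pmod{\mathfrak q}$. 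Both arguments in the end show $p\mid \aug(q)$; the paper's is more elementary (only the evaluation $\Phi_{p^n}(1)=p$ is needed), while yours is more structural and makes the role of characteristic~$p$ explicit. One cosmetic point: you use the symbol $p$ for both the prime and the polynomial in $U$, which is confusing; rename the polynomial to $q$ as the paper does.
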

\begin{proof}
Let $\omega \in \T_{p}^\mu$ and $q(t_1,\dotsc, t_\mu)$ be a polynomial such that $q(\omega)=0$.
We have to show that $q(1,\dots, 1) \neq \pm 1$.
We pick $n$ large enough such that all
$\omega_i$ are $p^n$-roots of unity.
The subgroup consisting of the $p^n$-roots of unity is cyclic.
Thus we write $\omega=(\zeta^{n_1}, \dots, \zeta^{n_\mu})$ for
a primitive $p^n$-root of unity~$\zeta$.
Define the one variable polynomial $\overline{q}(t):=q(t^{n_1},\dots, t^{n_\mu})$.
Hence, we have $\overline{q}(\zeta)=0$, so $\overline{q}(t)$ is a
multiple of the $p^n$-th cyclotomic polynomial, whose value at $1$ equals $p$.
It follows that $p$ divides $q(1,\dots,1)= \overline{q}(1)$ and so cannot be equal to $\pm 1$.
\end{proof}

The following example shows that $\T^\mu_!$ also contains elements which are not in $\T_P^\mu$, but have algebraic coordinates.
\begin{example}
	\label{ex:T!MoreGeneral}
	We claim that the algebraic element~$\omega=(\frac{3+4i}{5}, -1)$ is in $\T^2_!$, but not contained in $\T_P^2$.
	The algebraic number~$\omega_0 = \frac{3+4i}{5}\in S^1$, has minimal polynomial $p(t)=5t^2-6t+5$
	and is not a root of unity~\cite[Lemma 2.1]{NagelPowell}.
	It follows that $\omega_0$ is not an element of $\T^1_P$.

	To show that $\omega \in \T^2_!$, we prove that any polynomial~$q(t_1, t_2)$ with $q(\omega) = 0$
	has $q(1,1) \neq \pm 1$.
	Consider $\overline{q}(t):=q(t,-1)$ and note that $\frac{3+4i}{5}$ is a root of
	$\overline{q}(t)$. As a consequence $4=p(1)$ divides $\overline{q}(1)$ and
	$\overline{q}(1)=q(1,-1)$ is even. It follows that $q(1,1)$ must also be even.
\end{example}

\begin{lemma}\label{lem:DegenerateEntries}
Let $(\omega_1, \dots, \omega_n) \in \T^n_!$,
and~$\beta \colon \{1, \dots, \mu \} \to \{1, \dots, n\}$ be a map.
Then $(\omega_{\beta(1)}, \dots, \omega_{\beta(\mu)})$ is an element
of $\T^\mu_!$.
\end{lemma}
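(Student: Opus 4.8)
The plan is to argue by contradiction, pulling polynomials back along the substitution of variables induced by $\beta$. Write $\omega := (\omega_1, \dots, \omega_n)$ and $\omega' := (\omega_{\beta(1)}, \dots, \omega_{\beta(\mu)})$. First I would note that $\omega'$ genuinely lies in $\T^\mu$: each coordinate $\omega_i$ belongs to $S^1 \sm \{1\}$ since $\omega \in \T^n_! \subset \T^n$, hence so does every $\omega_{\beta(j)}$, and thus the assertion $\omega' \in \T^\mu_!$ is meaningful.

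Now suppose, for contradiction, that $\omega'$ is a concordance root. By Definition~\ref{def:ConcordanceRoot} there is a Laurent polynomial $q \in \Z[t_1^{\pm 1}, \dots, t_\mu^{\pm 1}]$ with $q(1, \dots, 1) = \pm 1$ and $q(\omega') = 0$. I would then define a Laurent polynomial in $n$ variables by the substitution $p(s_1, \dots, s_n) := q(s_{\beta(1)}, \dots, s_{\beta(\mu)}) \in \Z[s_1^{\pm 1}, \dots, s_n^{\pm 1}]$. Two routine evaluations close the argument: on the one hand $p(1, \dots, 1) = q(1, \dots, 1) = \pm 1$, so $p$ satisfies the defining condition of a concordance-root witness (with $n$ variables in place of $\mu$); on the other hand $p(\omega) = q(\omega_{\beta(1)}, \dots, \omega_{\beta(\mu)}) = q(\omega') = 0$. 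Hence $\omega$ would be a concordance root, contradicting $\omega \in \T^n_!$, and therefore $\omega' \in \T^\mu_!$ as claimed.

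I do not expect any real obstacle here. The only points to verify are that substituting Laurent monomials into a Laurent polynomial again produces a Laurent polynomial, that this substitution commutes with evaluation at a point of $(\C^\times)^n$, and that it preserves the value at $(1, \dots, 1)$ — all of which are immediate from the definitions. Neither the possible non-injectivity nor the possible non-surjectivity of $\beta$ creates any difficulty: repeated indices merely mean $p$ uses fewer of the variables $s_1, \dots, s_n$, and indices of $\omega$ not hit by $\beta$ simply do not occur in $p$.
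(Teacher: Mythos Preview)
Your argument is correct and matches the paper's proof essentially line for line: both define $p(x_1,\dots,x_n)=q(x_{\beta(1)},\dots,x_{\beta(\mu)})$ and observe that $p(\omega)=0$ forces $p(1,\dots,1)=q(1,\dots,1)\neq\pm1$. The only cosmetic difference is that you phrase it as a contradiction while the paper states the contrapositive directly.
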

\begin{proof}
Let $q(t_1, \dots, t_\mu)$ be a polynomial such that $q(\omega_\beta) = 0$,
where $\omega_\beta$ denotes $(\omega_{\beta(1)}, \dots, \omega_{\beta(\mu)})$.
Define a polynomial in $n$-variables by the equality
$p(x_1, \dots, x_n) = q(x_{\beta(1)}, \dots, x_{\beta(\mu)})$.
Note that $ p(\omega_1, \dots, \omega_n) = q(\omega_{\beta(1)}, \dots, \omega_{\beta(\mu)}) = 0$ and as $(\omega_1, \dots, \omega_n) \in \T^n_!$, we deduce that
$q(1,\dots, 1) = p(1,\dots, 1) \neq \pm 1$.
\end{proof}

As shown in the following remark,
it is also easy to construct elements which do not belong to $\T_!^\mu$
and for which our main results will not apply.
\begin{remark}
Let $\omega = (\omega_1, \dots, \omega_\mu) \in \T^\mu$.
A consequence of Lemma~\ref{lem:DegenerateEntries} is that, if $\omega$ belongs to $T_!^\mu$, then
all the coefficients $\omega_i$ belong to $\T_!^1$. Phrasing it differently, if any of the coefficients of $\omega$ is a concordance root, then $\omega$ itself is a concordance root.
\end{remark}

\section{Colored signatures and nullities of links}\label{sec:4dDef}

In Section~\ref{sub:Setup}, we give a definition of the colored signature and nullity of a colored link as twisted invariants of manifolds with boundary.
Section~\ref{sub:CFSignature} shows that they coincide with the invariants introduced by Cimasoni-Florens~\cite{CimasoniFlorens}; see e.g Proposition~\ref{prop:NullityNoCcomplex} and Proposition~\ref{prop:ColorSignature}. Section~\ref{sub:Genus} introduces the notion of colored cobordism and presents the statement of Theorem~\ref{thm:Genus} which provides obstructions on the possible colored cobordisms that two given colored links can bound.
Section~\ref{sub:ProofGenus} is devoted to the proof of the theorem.
Finally, Section~\ref{sub:ApplicationsGenus} provides the applications of Theorem~\ref{thm:Genus} and puts it in relation with some previously known results. In particular, we prove the concordance invariance of the signature and nullity and present obstructions on the possible surfaces a colored link can bound in $D^4$.

\subsection{Set-up}\label{sub:Setup}
This section deals with some preliminaries on colored links and their colored bounding surfaces.
Making use of this set-up, we introduce our main invariants: the colored signature and the colored nullity.
\medbreak
Let $L = L_1 \cup \cdots \cup L_\mu \subset S^3$ be a $\mu$-colored link.
We denote the exterior of~$L$ by $X_L$ and recall that the abelian group~$H_1(X_L; \Z)$ is freely generated by the meridians of $L$. Summing the meridians of the same color, we obtain a homomorphism~$H_1(X_L; \Z) \to \Z^\mu$.
A \emph{colored bounding surface} for a colored link $L$ is a union $F = F_1\cup \cdots \cup F_\mu$ of
properly embedded, locally flat, compact oriented surfaces~$F_i \subset D^4$
with $\partial F_i = L_i$ and which only intersect each other transversally in double points.
A \emph{bounding surface} of a link~$L$ is a union $F = F_1\cup \cdots \cup F_m$ of
properly embedded, locally flat, compact, connected and oriented surfaces~$F_i \subset D^4$
which only intersect each other transversally in double points, and $\partial F = L$.
Note that we require each $F_i$ to be connected. Forgetting about the colors, a colored bounding surface turns into a bounding surface formed by the union of its connected pieces.

As the surfaces $F_i$ are required to be locally flat, that is they admit tubular neighborhoods. Given a (possibly colored) bounding surface~$F$ of $L$, we denote by $\nu F$ the union of some choice of tubular neighborhoods for its components.
We denote then by~$W_F:= D^4 \sm \nu F$ the exterior of $F$. For the convenience of the reader,
we give an argument for the following well-known fact.
\begin{lemma}\label{lem:MayerVietorisExterior}
Given a bounding surface $F$, the abelian group~$H_1(W_F;\Z)$ is freely generated
by the meridians of the components~$F_i$.
\end{lemma}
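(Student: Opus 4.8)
The plan is to use a Mayer--Vietoris argument comparing the exterior $W_F = D^4 \setminus \nu F$ with $D^4$ itself, along the tubular neighborhood $\nu F$ and its boundary. First I would note that since each component $F_i$ is a compact, connected, locally flat, properly embedded oriented surface in $D^4$, a tubular neighborhood $\nu F_i$ is a $D^2$-bundle over $F_i$, which deformation retracts onto $F_i$; hence $H_1(\nu F_i;\Z) \cong H_1(F_i;\Z)$, a free abelian group. However, the double points of $F$ complicate the picture slightly: near a double point of $F_i \cap F_j$ the neighborhood looks like two transverse $2$-disks in a $4$-ball, so $\nu F$ is not quite a disk bundle globally. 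I would handle this by observing that $\nu F$ still deformation retracts onto $F$ itself (viewed as a $2$-complex with finitely many transverse double points), and that $F$ is connected after forgetting colors only if the components are linked through double points — in general $\nu F$ has the homotopy type of the wedge-like union of the $F_i$ glued at double points. In fact for the first homology it is cleanest to work with each $\nu F_i$ separately and the boundary $\partial(\nu F)$, using that $D^4 = W_F \cup \nu F$ with $W_F \cap \nu F = \partial \nu F \cap W_F =: E$, the total boundary of the tubular neighborhoods lying in the interior of $D^4$.

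The key computation is then the Mayer--Vietoris sequence
\[
H_2(D^4;\Z) \to H_1(E;\Z) \to H_1(W_F;\Z) \oplus H_1(\nu F;\Z) \to H_1(D^4;\Z).
\]
Since $H_2(D^4;\Z) = H_1(D^4;\Z) = 0$, this gives $H_1(E;\Z) \cong H_1(W_F;\Z) \oplus H_1(\nu F;\Z)$. The space $E = \partial \nu F \setminus (\text{collar in } S^3)$ is a circle bundle over $F$ away from the double points, with a modification near each double point; its first homology is generated by the fibers (the meridians $m_i$ of $F_i$) together with lifts of $H_1(F_i;\Z)$. I would argue that under the map to $H_1(\nu F;\Z) = \bigoplus_i H_1(F_i;\Z)$, the $H_1(F_i)$-part maps isomorphically and the meridian classes map to zero; hence the kernel of $H_1(E;\Z) \to H_1(\nu F;\Z)$, which maps isomorphically onto $H_1(W_F;\Z)$, is precisely the free abelian subgroup generated by the meridians $m_1,\dots,m_m$. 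The one point requiring care is the relations among the $m_i$ coming from the double points: at a double point of $F_i \cap F_j$, the circle bundle structure degenerates and one might worry that $m_i$ becomes identified with $m_j$ or with a combination; I would check, using the local model $S^3 \cap (\text{two transverse planes})$ whose complement is a torus, that in fact no such relation is introduced in $H_1(W_F;\Z)$ — the meridians remain independent — because each double point contributes an $S^1 \times S^1$ whose two $S^1$-factors are the two distinct meridians, already accounted for.

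\textbf{Main obstacle.} The technical heart is understanding the boundary piece $E$ and the homology of $W_F$ near the double points: away from them everything is a routine disk/circle-bundle computation, but the transverse double points mean $\nu F$ is not a manifold-with-boundary in the naive sense and the Mayer--Vietoris pieces must be set up carefully (e.g.\ by first replacing $\nu F$ with a regular neighborhood of the $2$-complex $F$ and identifying $\partial$ of that regular neighborhood). I expect the cleanest route is to induct on the number of double points, or alternatively to excise small balls around each double point first, compute for the resulting genuine surface-with-boundary, and then glue the double-point balls back in, tracking that each gluing adds a handle but no new $H_1$-relation among meridians. Once the local picture at a double point is pinned down, the global statement follows formally from Mayer--Vietoris together with $H_*(D^4)$ being trivial in positive degrees and $H_1(\nu F_i) \cong H_1(F_i)$ being free.
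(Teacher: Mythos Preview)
Your proposal is correct in outline, and in fact the alternative you mention at the very end---``excise small balls around each double point first, compute for the resulting genuine surface-with-boundary, and then glue the double-point balls back in''---is exactly what the paper does, and it is considerably cleaner than the direct approach you spend most of the proposal on.

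The paper removes a small ball $B_x$ around each intersection point $x$, leaving disjoint surfaces $F_i^\circ$ (each $F_i$ with small discs deleted) inside $D^4 \setminus \bigcup_x B_x$. Since the $F_i^\circ$ are now disjoint and have nonempty boundary, their tubular neighborhoods are honest trivial disc bundles $F_i^\circ \times D^2$, with intersection $F_i^\circ \times S^1$ with the exterior. The Mayer--Vietoris sequence for $D^4 \setminus \bigcup_x B_x = W_F \cup \bigcup_i \nu F_i^\circ$ then reads
\[
0 \to H_1\Big(\bigcup_i F_i^\circ \times S^1\Big) \to H_1\Big(\bigcup_i F_i^\circ \times D^2\Big) \oplus H_1(W_F) \to 0,
\]
since $H_j(D^4 \setminus \bigcup_x B_x)=0$ for $j=1,2$. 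K\"unneth on $F_i^\circ \times S^1$ kills the $H_1(F_i^\circ)$ contribution against $H_1(F_i^\circ \times D^2)$, leaving exactly $\bigoplus_i H_1(\{p_i\}\times S^1) \xrightarrow{\cong} H_1(W_F)$. No separate gluing-back step is needed: the exterior $W_F$ already sits inside $D^4 \setminus \bigcup_x B_x$ as one of the two Mayer--Vietoris pieces.

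By contrast, your primary route---working with $\nu F$ as a regular neighborhood of the singular $2$-complex and analyzing $E = \partial(\nu F)$ directly---forces you to compute $H_1$ of what is in fact a plumbed $3$-manifold (this is the $M_F$ appearing later in the paper), and to verify by hand that the double points introduce no relations among meridians. That works, but it is more labor than necessary; excising the balls first sidesteps the issue entirely.
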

\begin{proof}
Pick a small ball~$B_x$ around each intersection point~$x$ of $F$.
Note that $W_F = D^4 \sm ( \bigcup_x B_x \cup \bigcup_i \nu F^\circ_i)$, where
the surface~$F^\circ_i$ is $F_i$ with little discs removed around the intersection points.
The Mayer-Vietoris sequence of $D^4 \sm \bigcup_x B_x = W_F \cup \bigcup_i \nu F^\circ_i$
with $\Z$-coefficients gives us
\[ 0 \to  H_1\Big(\bigcup_i (F_i^\circ \times S^1)\Big)
\to H_1\Big(\bigcup_i (F_i^\circ \times D^2)\Big)\oplus H_1(W_F)\to 0,\]
where the $0$'s arise as the homology $H_j\Big(D^4 \sm \bigcup_x B_x\Big)$ for $j=1,2$.
Applying the Künneth theorem to the products $F_i^\circ \times S^1$ the sequence can be reduced to
$0 \to  H_1\big(\bigcup_i \{p_i\} \times S^1; \Z\big) \to H_1(W_F;\Z)\to 0$, where $p_i \in F_i$. This concludes the proof of the lemma.
\end{proof}

Consequently, there is a canonical homomorphism~$H_1(W_F;\Z) \to \Z^\mu$ which restricts to~$H_1(X_L; \Z) \to \Z^\mu$ on the link exterior: indeed the inclusion~$X_L \subset W_F$ sends the meridians of $L$ to the meridians of $F$. Since~$X_L$
and~$W_F$ are now both spaces over $\Z^\mu$, we can give the following definition.
\begin{definition}\label{def:SignatureNullity}
Let $F$ be a colored bounding surface for a $\mu$-colored link $L$. Given $\omega \in \T^\mu$, define the \emph{colored signature}~$\sigma_L(\omega)$ and the
\emph{colored nullity}~$\eta_L(\omega)$ by
\[ \sigma_L(\omega) = \sign_\omega W_F, \quad \eta_L(\omega) = \dim H_1(X_L; \C^\omega).\]
\end{definition}
Viro~\cite[Theorem 2.A]{Viro09} showed that $\sign_\omega W_F$
is independent of the choice of colored bounding surface. For a proof, see also the upcoming paper by Degtyarev, Florens and Lecuona~\cite{DegtyarevFlorensLecuona2}.
It is sometimes useful in the following to view $\sigma_L(\omega)$ as signature defect, which is made possible by the following result, probably well known to the experts.
\begin{proposition} \label{prop:UntwistedSign}
If $F$ is a colored bounding surface for a $\mu$-colored link $L$, the
untwisted intersection form on $W_F$ is trivial. As a consequence, the
signature~$\sign W_F$ vanishes and we have $\sigma_L(\omega) = \dsign_\omega W_F$.
\end{proposition}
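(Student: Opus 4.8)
The plan is to show that the untwisted intersection form $\lambda_\Q$ on $H_2(W_F;\Q)$ is identically zero, which immediately gives $\sign W_F = 0$; the final identity $\sigma_L(\omega) = \dsign_\omega W_F$ then follows from Definition~\ref{def:SignatureNullity} together with the definition of the signature defect, since $\dsign_\omega W_F = \sign_\omega W_F - \sign W_F = \sign_\omega W_F - 0 = \sigma_L(\omega)$. So the entire content is the vanishing of $\lambda_\Q$ on $W_F$.

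First I would compute the rational homology of $W_F$. Using the Mayer–Vietoris decomposition from the proof of Lemma~\ref{lem:MayerVietorisExterior} (writing $D^4 \sm \bigcup_x B_x$ as $W_F$ glued to $\bigcup_i (\nu F_i^\circ) = \bigcup_i (F_i^\circ \times D^2)$ along $\bigcup_i (F_i^\circ \times S^1)$), and feeding in that $D^4 \sm \bigcup_x B_x$ is homotopy equivalent to a wedge of circles (one for each intersection point, plus the trivial contribution), one reads off $H_2(W_F;\Q)$. The key point is that $H_2$ of the ambient $D^4 \sm \bigcup_x B_x$ vanishes and $H_2(F_i^\circ \times D^2;\Q) \cong H_2(F_i^\circ;\Q) = 0$ since $F_i^\circ$ is a surface with nonempty boundary, hence homotopy equivalent to a graph. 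Thus $H_2(W_F;\Q)$ is built entirely out of $H_1$'s of the pieces of the decomposition, and in particular every class in $H_2(W_F;\Q)$ is represented by a cycle supported in (a collar of) the boundary $\partial W_F$.

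The second step is the observation, already recorded right after Definition~\ref{def:int}, that $\lambda_R$ annihilates the image of $H_i(\partial W;R) \to H_i(W;R)$; applying this with $R = \Q$ and $i = 2$, once we know $H_2(W_F;\Q)$ is generated by the image of $H_2(\partial W_F;\Q)$, we conclude $\lambda_\Q \equiv 0$. Concretely: the Mayer–Vietoris boundary classes arise from $H_1$ of $F_i^\circ \times S^1$, and one checks these lift to classes carried by tori in $\partial \nu F_i \subset \partial W_F$ — a meridian times a loop in $F_i^\circ$ — so they literally come from $H_2(\partial W_F;\Q)$. The main obstacle is precisely this bookkeeping: tracking the Mayer–Vietoris generators of $H_2(W_F;\Q)$ back to explicit embedded surfaces in the boundary, and being careful about the contributions coming from the small balls $B_x$ around the double points (which contribute to $H_1$, not $H_2$, so they do not affect the argument, but this needs to be said). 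Once that identification is in place, the vanishing of $\sign W_F$ and the equality $\sigma_L(\omega) = \dsign_\omega W_F$ are immediate.
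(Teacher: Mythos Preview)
Your approach is essentially the same as the paper's: both arguments show that the inclusion $H_2(\partial W_F;\Q) \to H_2(W_F;\Q)$ is surjective via a Mayer--Vietoris sequence, then invoke the fact that the intersection form vanishes on classes coming from the boundary. One small slip: $D^4 \sm \bigcup_x B_x$ is homotopy equivalent to a wedge of copies of $S^3$, not circles (remove a point from $D^4$ and you get something with the homotopy type of $S^3$, not $S^1$). This does not affect your argument, since you only use $H_2(D^4 \sm \bigcup_x B_x) = 0$, which is still true and is exactly what the paper records in the proof of Lemma~\ref{lem:MayerVietorisExterior}.

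The paper's proof is slightly cleaner because it uses the decomposition $D^4 = W_F \cup \nu F$ directly, without first excising the balls around the double points. Setting $M_F = \overline{\nu F} \cap W_F \subset \partial W_F$, the Mayer--Vietoris sequence gives
\[
H_2(M_F;\Z) \to H_2(W_F;\Z) \oplus H_2(\nu F;\Z) \to H_2(D^4;\Z)=0,
\]
and since $\nu F$ retracts to a union of surfaces with boundary meeting in points, $H_2(\nu F;\Z)=0$. Hence $H_2(M_F;\Z) \to H_2(W_F;\Z)$ is surjective and the conclusion follows immediately. Your route via the decomposition of Lemma~\ref{lem:MayerVietorisExterior} reaches the same endpoint but carries the extra bookkeeping of the balls $B_x$ and the punctured surfaces $F_i^\circ$, which is unnecessary here.
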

\begin{proof}
	Set $M_F:=\overline{\nu F} \cap W_F$ so that $\partial W_F=
X_L\cup_{L\times S^1} M_F$. Consider the portion $ H_2(M_F;\Z)\to
H_2(W_F;\Z)\oplus 0 \to 0$ of the Mayer-Vietoris sequence associated to the
decomposition $D^4=W_F\cup \nu F$. It follows that the map $H_2(M_F;\Z)\to H_2(W_F;\Z)$
is surjective. Since $M_F$ is contained in the boundary of $W_F$,
the natural map $j \colon H_2(\partial W_F;\Z)\to H_2(W_F;\Z)$ is surjective.
The statement follows immediately since elements of $\im j$ annihilate the
intersection form.
\end{proof}

\subsection{C-complexes}
\label{sub:CFSignature}
We recall the multivariable signature and nullity
functions introduced by Cimasoni-Florens~\cite{CimasoniFlorens} using C-complexes. Our main objective is to show that these invariants coincide with the colored signature and nullity defined in Section~\ref{sub:Setup}.
\medbreak
A \emph{C-complex} for a $\mu$-colored link~$L$ consists of a
collection of Seifert surfaces $S_1, \dots , S_\mu$ for the sublinks~$L_1, \dots , L_\mu$
that intersect only along clasps; see~\cite{Cooper, CimasoniPotential, CimasoniFlorens} for details.
Given such a C-complex and a
sequence~$\varepsilon=(\varepsilon_1,\varepsilon_2,\dots, \varepsilon_\mu)$ of $\pm 1$'s,
there are $2^\mu$ \emph{generalized Seifert matrices}~$A^\varepsilon$, which
extend the usual Seifert matrix~\cite{CimasoniPotential, CimasoniFlorens}.
Note that for all~$\eps$,~$A^{-\eps}$ is
equal to~$(A^\eps)^T$. Using this fact, one easily checks that for
any~$\omega = (\omega_1,\dots,\omega_\mu)$ in the~$\mu$-dimensional torus, the matrix
\[
H(\omega)=\sum_\eps\prod_{i=1}^\mu(1-\overline{\omega}_i^{\eps_i})\,A^\eps
\]
is Hermitian. Since this matrix vanishes when one of the coordinates of~$\omega$ is equal to~$1$,
we restrict ourselves to~$\omega \in \T^\mu$. The \emph{multivariable signature} is
the signature of the Hermitian matrix~$H(\omega)$ and the \emph{multivariable nullity} is $\operatorname{null}H(\omega) + \beta_0(S)-1$,
where $\beta_0(S)$ is the number of connected components of $S$.

We start by proving that~$\eta_L(\omega) =\operatorname{null}H(\omega) + \beta_0(S)-1$, i.e that the colored nullity is equal to the multivariable nullity:

\begin{proposition}\label{prop:NullityNoCcomplex}
Let $L$ be a $\mu$-colored link. For every $\omega \in \T^\mu$ and for any C-complex $S$ for $L$, we have the
equality~$\eta_L(\omega) =\operatorname{null}H(\omega) + \beta_0(S)-1$.
\end{proposition}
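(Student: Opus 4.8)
The plan is to identify $\eta_L(\omega) = \dim_{\C} H_1(X_L;\C^\omega)$ with $\operatorname{null} H(\omega) + \beta_0(S) - 1$ by comparing both quantities to the twisted homology of a suitable space built out of the C-complex $S$. The natural intermediary is the complement $Y_S := S^3 \sm \nu S$ (or better, the space obtained by thickening $S$), equipped with its canonical map $H_1(Y_S;\Z) \to \Z^\mu$ coming from the linking numbers with the colored pieces $S_i$; restricting to $X_L \subset Y_S$ recovers the map used to define $\eta_L$. First I would recall from the work of Cimasoni--Florens (and the underlying Mayer--Vietoris/Wang argument for C-complexes) that the Hermitian matrix $H(\omega)$ arises as a presentation matrix for a twisted homology group of $Y_S$ over $\C^\omega$: concretely, there is a chain complex built from the generalized Seifert matrices $A^\eps$ whose middle homology is computed by $H(\omega)$, so that $\dim H_*(Y_S;\C^\omega)$ is governed by $\operatorname{null} H(\omega)$ together with a rank count.

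Next I would run the Mayer--Vietoris sequence (with $\C^\omega$-coefficients) for the decomposition $S^3 = Y_S \cup \nu S$, or equivalently analyze the inclusion-induced map $H_*(X_L;\C^\omega) \to H_*(Y_S;\C^\omega)$ and the excision piece supported on the clasps and the boundary tori $L \times S^1$. The key homological inputs are: (i) $H_0(X_L;\C^\omega) = 0$ and $H_0(Y_S;\C^\omega) = 0$, which follow from Lemma~\ref{lem:UCSSTorExample} since some meridian (hence some generator $t_i$) is in the image of the coefficient map; (ii) an Euler-characteristic computation, which pins down $\dim H_1$ once $\dim H_0$ and $\dim H_2$ are understood; and (iii) identification of the correction terms coming from the $\beta_0(S)$ connected components of the C-complex — this is precisely where the $\beta_0(S) - 1$ summand enters, exactly as the classical "$-1$" in the one-variable nullity $\operatorname{null}(\,(1-\omega)A + (1-\bar\omega)A^T\,)$ of a link with a disconnected Seifert surface is replaced by a $\beta_0 - 1$ term. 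I would assemble these into the equality $\eta_L(\omega) = \operatorname{null} H(\omega) + \beta_0(S) - 1$.

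The main obstacle, I expect, is the bookkeeping in step two: one must carefully track how the twisted homology of the C-complex complement $Y_S$ — whose definition over $\C^\omega$ involves all $2^\mu$ sign vectors $\eps$ and the matrices $A^\eps$ — relates to the twisted homology of the honest link exterior $X_L$, and in particular show that passing from $Y_S$ to $X_L$ contributes exactly the connected-components term and nothing more. A clean way to organize this is to invoke the results already available over $\Lambda_S$: one knows (from the C-complex literature and from \cite{CimasoniFlorens}) a presentation of $H_1(X_L;\Lambda_S)$ via the $A^\eps$, and then by Example~\ref{ex:Ccoefficients} and Lemma~\ref{lem:UCSSTorExample} one has $H_1(X_L;\C^\omega) \cong \C^\omega \otimes_{\Lambda_S} H_1(X_L;\Lambda_S)$, so the dimension count over $\C^\omega$ reduces to evaluating the presentation matrix at $\omega$ and reading off its corank — which is $\operatorname{null} H(\omega)$ up to the predicted shift. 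The remaining work is then a finite linear-algebra verification that the evaluation of the $\Lambda_S$-presentation matrix at $\omega$ has the same nullity as $H(\omega)$, plus the component count; I would defer the most routine parts of this to \cite{CimasoniFlorens} and \cite{ConwayFriedlToffoli}.
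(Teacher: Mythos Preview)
Your final paragraph lands on exactly the mechanism the paper uses: invoke the $\Lambda_S$--presentation of $H_1(X_L;\Lambda_S)$ from \cite{CimasoniFlorens}, apply Lemma~\ref{lem:UCSSTorExample} to get $H_1(X_L;\C^\omega)\cong \C^\omega\otimes_{\Lambda_S}H_1(X_L;\Lambda_S)$, and read off the nullity of the evaluated matrix. What you miss is the simplification that makes this a two--line proof rather than an excursion through Mayer--Vietoris and component bookkeeping. The right--hand side $\operatorname{null}H(\omega)+\beta_0(S)-1$ is already known to be independent of the C-complex by \cite[Theorem~2.1]{CimasoniFlorens}, and the left--hand side $\eta_L(\omega)$ does not involve $S$ at all. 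So one is free to \emph{choose} a convenient $S$: take a totally connected C-complex (at least one clasp between each pair $S_i,S_j$), so that $\beta_0(S)=1$ and \cite[Corollary~3.6]{CimasoniFlorens} gives $H(t)$ as a \emph{square} presentation matrix for $H_1(X_L;\Lambda_S)$. Then tensoring with $\C^\omega$ and applying Lemma~\ref{lem:UCSSTorExample} gives $\dim H_1(X_L;\C^\omega)=\operatorname{null}H(\omega)$ immediately, with no shift to track.

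Your detour through $Y_S=S^3\sm\nu S$, Euler characteristics, and the inclusion $X_L\hookrightarrow Y_S$ is not wrong in spirit, but it is doing by hand exactly the work that \cite[Corollary~3.6]{CimasoniFlorens} already packages --- and that corollary carries the totally--connected hypothesis, so if you tried to cite it for an arbitrary $S$ (as you suggest at the end) you would have a genuine gap. The moral: exploit the invariance under change of C-complex to reduce to the nicest possible $S$ before computing.
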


\begin{proof}
Since the multivariable nullity $\operatorname{null}H(\omega) + \beta_0(S)-1$
is independent of the chosen $C$-complex~\cite[Theorem $2.1$]{CimasoniFlorens},
pick $S$ for which there is at least one clasp between each pairs of surfaces
$S_i$ and $S_j$, so that in particular~$\beta_0(S)=1$. Note that this is
possible thanks to~\cite[Lemma $3$]{CimasoniPotential}. Using~\cite[Corollary
3.6]{CimasoniFlorens} the Alexander module
$H_1(X_L;\Lambda_S)$ admits a square presentation matrix given by $H(t)$.
Tensoring with $\mathbb{C}^\omega$ we deduce that $H(\omega)$ presents
$\mathbb{C}^\omega \otimes_{\Lambda_S} H_1(X_L;\Lambda_S)$. Using
Lemma~\ref{lem:UCSSTorExample}, we obtain that $H_1(X_L;\C^\omega)=\mathbb{C}^\omega
\otimes_{\Lambda_S} H_1(X_L;\Lambda_S)$ and consequently $H(\omega)$ also
presents $H_1(X_L;\mathbb{C}^\omega)$. The result follows immediately.
\end{proof}

We conclude by showing that the colored signature $\sigma_L(\omega)$ coincides with the multivariable signature of Cimasoni-Florens~\cite{CimasoniFlorens}.
\begin{proposition}\label{prop:ColorSignature}
If $L$ is a $\mu$-colored link $L$, then $\sigma_L(\omega) = \sign H(\omega)$, i.e.\ the colored signature is equal to the multivariable signature.
\end{proposition}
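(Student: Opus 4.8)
The plan is to mimic the strategy used for the nullity in Proposition~\ref{prop:NullityNoCcomplex}, but now one degree higher: exhibit $W_F$ for a suitable colored bounding surface $F$ so that its twisted intersection form can be computed from a generalized Seifert matrix, and then identify that form (up to stabilization by hyperbolics, which do not affect the signature) with $H(\omega)$. The cleanest route is to choose $F$ built from a C-complex: given a C-complex $S = S_1 \cup \cdots \cup S_\mu$ for $L$, push its interior into $D^4$ to obtain a colored bounding surface whose exterior $W_F$ is well understood. Concretely, I would use the handle/Mayer--Vietoris description of $W_F$ relative to $X_L$ and $M_F = \overline{\nu F}\cap W_F$, as in the proof of Proposition~\ref{prop:UntwistedSign}, to get a presentation of the $\Lambda_S$-chain complex of $W_F$, and hence of $H_2(W_F;\Lambda_S)$, in terms of the generalized Seifert matrices $A^\eps$.

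First, I would invoke independence of the colored bounding surface (Viro, cited after Definition~\ref{def:SignatureNullity}): since $\sign_\omega W_F$ does not depend on $F$, it suffices to compute it for one convenient choice. Following \cite{CimasoniPotential, CimasoniFlorens}, I would take a C-complex $S$ with at least one clasp between every pair $S_i,S_j$, so that $S$ is connected; this is the same normalization used in Proposition~\ref{prop:NullityNoCcomplex} and is available by \cite[Lemma~3]{CimasoniPotential}. Second, I would describe $W_F$ for the pushed-in surface: its twisted homology $H_*(W_F;\C^\omega)$ is concentrated in degree $2$ (degrees $0$ and $1$ vanish by Lemma~\ref{lem:UCSSTorExample} together with $H_1(X_L;\C^\omega)\to H_1(W_F;\C^\omega)$ being onto, and degrees $3,4$ by Lemma~\ref{lem:DualityUCSS}), and $H_2(W_F;\C^\omega)$ is presented by a matrix built from the $A^\eps$. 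Third, by Proposition~\ref{prop:UntwistedSign} the untwisted form on $W_F$ is trivial, so $\sigma_L(\omega) = \dsign_\omega W_F = \sign_\omega W_F$, and it remains to match the twisted intersection form with the Hermitian matrix $H(\omega) = \sum_\eps \prod_{i=1}^\mu (1-\overline{\omega}_i^{\eps_i})\,A^\eps$. This identification is where Viro's explicit computation enters: building on \cite[Theorem~1.3]{ConwayFriedlToffoli} and the analysis in \cite{Viro09}, the twisted intersection form on $H_2(W_F;\C^\omega)$ is isometric to $H(\omega)$, possibly after adding a metabolic summand coming from $H_2(M_F;\C^\omega)$ (which contributes zero signature). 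Taking signatures then yields $\sigma_L(\omega) = \sign H(\omega)$.

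The main obstacle will be the explicit chain-level computation identifying the twisted intersection form on $W_F$ with $H(\omega)$: one must carefully set up the Mayer--Vietoris decomposition $D^4 = W_F \cup \nu F$ over $\Lambda_S$, track how the generalized Seifert linking data determine the boundary maps, and control the contribution of $M_F$ (an $S^1$-bundle-type piece over $F$) to both homology and the intersection pairing. Here I would lean on the framework of \cite{ConwayFriedlToffoli} and the corresponding computation in \cite{Viro09}, rather than redoing the linking-form bookkeeping from scratch. A secondary point requiring care is checking that the degenerate part of $\lambda_{\C^\omega}$ (the image of $H_1(\partial W_F;\C^\omega)$, equivalently the nullity contribution) is exactly accounted for, so that the nonsingular part has signature $\sign H(\omega)$; this is consistent with Proposition~\ref{prop:NullityNoCcomplex}, which already identifies the nullity side of the picture.
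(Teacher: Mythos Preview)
Your approach is essentially the same as the paper's: take $F$ to be the push-in of a (totally connected) C-complex and invoke \cite[Theorem~1.3]{ConwayFriedlToffoli}, which computes the $\Lambda_S$-valued intersection form on $H_2(W_F;\Lambda_S)$ as $H(t)$, then specialize to $\omega$. You are correct that independence of $F$ lets you make this choice.

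However, you are overcomplicating the passage from $\Lambda_S$ to $\C^\omega$. The key structural fact you do not mention is that, for the push-in of a totally connected C-complex, $H_i(W_F;\Lambda_S)=0$ for $i\neq 2$ and $H_2(W_F;\Lambda_S)$ is finitely generated and \emph{free} over $\Lambda_S$ (\cite[Section~3 and Proposition~4.1]{ConwayFriedlToffoli}). Once you know this, the universal coefficient spectral sequences (Propositions~\ref{prop:UCSS} and~\ref{prop:UCSSTor}) give canonical isomorphisms $\C^\omega\otimes_{\Lambda_S} H_2(W_F;\Lambda_S)\cong H_2(W_F;\C^\omega)$ and likewise for cohomology, and the whole square comparing $\C^\omega\otimes\lambda_{\Lambda_S}$ with $\lambda_{\C^\omega}$ commutes on the nose. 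Thus $\lambda_{\C^\omega}$ is literally represented by $H(\omega)$, not merely isometric to it up to a metabolic summand; there is no need to track contributions from $M_F$, nor to separately control a degenerate part. Your worries about $H_2(M_F;\C^\omega)$ and the image of $H_1(\partial W_F;\C^\omega)$ are unnecessary here (indeed $H_*(M_F;\Lambda_S)=0$, cf.\ the proof of Lemma~\ref{lem:splitboundary}). Similarly, no direct chain-level Mayer--Vietoris computation is needed: the $\Lambda_S$-form is already supplied by \cite{ConwayFriedlToffoli}, and freeness makes the base change transparent.
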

\begin{proof}
Since the colored signature is independent of the choice of a colored bounding surface,
we can take $F$ to be a
push-in of a C-complex in the $4$-ball; see~\cite[Section
3.1]{ConwayFriedlToffoli} for a precise description.  By~\cite[Theorem
$1.3$]{ConwayFriedlToffoli}, the intersection pairing $\lambda_{\Lambda_S}$
is represented by $H(t)$. Since we wish to show that the intersection pairing
$\lambda_{\C^\omega}$ is represented by $H(\omega)$, the theorem will
follow if we manage to produce the following commutative diagram
\begin{equation}
\label{eq:Desired}
\xymatrix@R0.4cm{
\mathbb{C}^\omega  \otimes_{\Lambda_S} H_2(W_F;\Lambda_S) \times \mathbb{C}^\omega  \otimes_{\Lambda_S} H_2(W_F;\Lambda_S) \ar[r]\ar[d] \ar[d]& \mathbb{C}^\omega \otimes_{\Lambda_S} \Lambda_S\ar[d] \\
H_2(W_F;\mathbb{C}^\omega ) \times H_2(W_F;\mathbb{C}^\omega ) \ar[r] & \mathbb{C}.
}
\end{equation}
Further assuming $S$ to be totally connected implies that $H_i(W_F;\Lambda_S)$ vanishes for $i \neq 2$, and is a finitely generated free $\Lambda_S$-module for $i=2$~\cite[Section $3$ and Proposition $4.1$]{ConwayFriedlToffoli}.

Consider the following diagram below, where homology groups and tensor products
without coefficients are over $\Lambda_S$.
Applying the universal coefficient spectral sequence, as described in Propositions~\ref{prop:UCSS} and~\ref{prop:UCSSTor}, the first three
vertical maps in the following commutative diagram are isomorphisms
\[
\begin{tikzcd}[column sep=0.3cm]
\mathbb{C}^\omega  \otimes H_2(W_F) \ar[r]\ar[d,"\cong"]
& \mathbb{C}^\omega  \otimes H_2(W_F,\partial W_F) \ar[r]\ar[d,"\cong"]
& \mathbb{C}^\omega  \otimes H^2(W_F) \ar[r] \ar[d,"\cong"]
&\mathbb{C}^\omega  \otimes \text{Hom}_{\Lambda_S}(H_2(W_F),\Lambda_S)^{\text{tr}}
	\ar[d,"\cong"] \\
H_2(W_F;\mathbb{C}^\omega ) \ar[r]
& H_2(W_F,\partial W_F;\mathbb{C}^\omega)\ar[r]
& H^2(W_F;\mathbb{C}^\omega )\ar[r]
& \text{Hom}_{\mathbb{C}}(H_2(W_F;\mathbb{C}^\omega),\mathbb{C})^{\text{tr}}.
\end{tikzcd}
\]
The last vertical map is an isomorphism since $H_2(W_F;\Lambda_S)$ is finitely generated
and free. Considering the adjoint, we precisely obtain the diagram of Equation~(\ref{eq:Desired}).
\end{proof}

\subsection{The genus bound}\label{sub:Genus}

For elements~$\omega \in \T_P^\mu$, the multivariable
signature and nullity are known to give lower bounds on the genus of colored
bounding surfaces~\cite[Theorem 7.2]{CimasoniFlorens}. In this section we prove
a more general result for surfaces in~$S^3 \times [0,1]$.  As corollaries, we extend the concordance invariance results of~\cite[Theorem 7.1]{CimasoniFlorens}
and generalize the lower bounds of~\cite[Theorem 7.2]{CimasoniFlorens}.

\begin{definition}\label{def:cobordism}
A \emph{colored cobordism}
between two $\mu$-colored links $L$ and $L'$ is a
collection of properly embedded locally flat surfaces~$\Sigma = \Sigma_1 \cup \dots \cup \Sigma_\mu$
in $S^3 \times [0,1]$ that have the following properties: the surfaces only intersect each other in double points,
	each surface~$\Sigma_i$ has boundary~$-L_i \sqcup L_i'$, and each connected component of $\Sigma_i$ has a boundary both in $S^3 \times \{0\}$ and in $S^3 \times \{1\}$. We say that $\Sigma$ \emph{has $m$ components} if the disjoint union of the surfaces $\Sigma_1,\dotsc, \Sigma_\mu$ has $m$ connected components.
\end{definition}

The main result of this section is the following lower bound.
\begin{theorem}\label{thm:Genus}
If $\Sigma = \Sigma_1 \cup \dots \cup \Sigma_\mu$
is a colored cobordism between two $\mu$-colored links $L$ and $L'$ with $c$ double points, then
\[|\sigma_L(\omega)-\sigma_{L'}(\omega)|  + |\eta_L(\omega)-\eta_{L'}(\omega)|
	\leq \sum_{i=1}^{\mu} -\chi(\Sigma_i) + c \]
for all $\omega\in \T_!^\mu$.
\end{theorem}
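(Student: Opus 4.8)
The plan is to build a single $4$--manifold out of the cobordism and then feed it through the signature additivity machinery established in Section~\ref{sec:Prelim}. First I would form the exterior of $\Sigma$ in $S^3\times[0,1]$, namely $W_\Sigma := (S^3\times[0,1])\setminus\nu\Sigma$, after removing small balls around the $c$ double points. The boundary of $W_\Sigma$ decomposes into three pieces: the link exteriors $X_L$ and $X_{L'}$ at the two ends, the circle--bundle part $M_\Sigma$ over $\Sigma$ minus the double--point discs, and $c$ copies of (a piece of) $S^3\setminus\nu(\text{Hopf link})$ coming from the balls around the intersections. Using Lemma~\ref{lem:MayerVietorisExterior}-type arguments, $H_1(W_\Sigma;\Z)$ maps to $\Z^\mu$ compatibly with the maps on $X_L$ and $X_{L'}$, so all coefficient systems $\C^\omega$ are defined. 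The idea, following Powell, is to glue $W_{F}$ and $-W_{F'}$ for colored bounding surfaces $F,F'$ of $L,L'$ onto the two ends of $W_\Sigma$ to produce a closed $4$--manifold $Z$ over $\Z^\mu$, together with control on its ordinary and twisted signatures.

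The key computational input is a bound on the twisted Betti numbers of $W_\Sigma$. Here is where Lemma~\ref{lem:Cone} enters: each connected component of $\Sigma_i$ meets $S^3\times\{0\}$, so on the level of $\Z$--coefficients one shows $H_0(W_\Sigma, X_L;\Z)=H_1(W_\Sigma,X_L;\Z)=0$, hence by Lemma~\ref{lem:Cone} the same vanishing holds for $R=\C^\omega$ and $R=\Q(\Z^\mu)$ whenever $\omega\in\T^\mu_!$. Combined with Lemma~\ref{lem:DualityUCSS}, this pins down $\beta_i^R(W_\Sigma,X_L)$ for $i=0,1,3,4$, so that an Euler--characteristic count controls $\beta_2^R(W_\Sigma)$ in terms of $\chi(\Sigma)=\sum_i\chi(\Sigma_i)$ and $c$. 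Since the $\C^\omega$--intersection form lives on $H_2(W_\Sigma;\C^\omega)$ and differs from the rational one by at most $\dim H_2(W_\Sigma;\C^\omega)-\beta_2(W_\Sigma)$ in rank, this yields $|\dsign_\omega W_\Sigma|\le -\chi(\Sigma)+c$ up to nullity corrections; the nullity terms $|\eta_L(\omega)-\eta_{L'}(\omega)|$ come from the difference of $\dim H_1(X_L;\C^\omega)$ and $\dim H_1(X_{L'};\C^\omega)$ appearing in the long exact sequence of the pair.

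To assemble the pieces I would use Novikov--Wall additivity. By Proposition~\ref{prop:UntwistedSign} the untwisted intersection forms on $W_F$ and $W_{F'}$ are trivial, and by Proposition~\ref{prop:TwistedWall} (applied along $X_L$ and $X_{L'}$, noting $H_1(X_L;\C^\omega)$ need not vanish, so I may instead need to glue along the full boundary components or invoke Corollary~\ref{cor:bordism} to compare fillings) the twisted signature of $Z$ splits as $\sign_\omega W_F - \sign_\omega W_{F'} + \sign_\omega W_\Sigma$ up to correction terms that vanish because the relevant $V$--subspaces agree. Since $Z$ is closed over $\Z^\mu$, Proposition~\ref{prop:bordism} gives $\dsign_\omega Z=0$, and by Definition~\ref{def:SignatureNullity} $\sign_\omega W_F=\dsign_\omega W_F=\sigma_L(\omega)$, $\sign_\omega W_{F'}=\sigma_{L'}(\omega)$; rearranging yields $\sigma_L(\omega)-\sigma_{L'}(\omega)=\pm\dsign_\omega W_\Sigma$, and the Betti--number estimate closes the argument.

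The main obstacle I expect is bookkeeping the boundary of $W_\Sigma$ correctly and handling the double points: the balls around intersection points contribute extra boundary pieces and extra $2$--handles, and one must check that gluing in standard pieces (e.g.\ exteriors of the negative Hopf link in $S^3$) neither disturbs the $\Z^\mu$--coefficient system nor the vanishing of relative homology needed for Lemma~\ref{lem:Cone}, while contributing exactly the $+c$ to the Euler characteristic count and not upsetting Novikov--Wall additivity. A secondary subtlety is that $H_1(X_L;\C^\omega)$ is generally nonzero, so Proposition~\ref{prop:TwistedWall} does not apply verbatim along the ends; the fix is to either enlarge the gluing region or to route the comparison through Corollary~\ref{cor:bordism}, which only requires that the two fillings agree on the $3$--manifold, at the cost of carefully tracking how the nullity defect contributes the $|\eta_L(\omega)-\eta_{L'}(\omega)|$ term in the final inequality.
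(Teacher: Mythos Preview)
Your overall strategy---form $W_\Sigma$, control $\beta_2^\omega(W_\Sigma)$ via Euler characteristic and Lemma~\ref{lem:Cone}, then use Novikov--Wall additivity to relate $\sign_\omega W_\Sigma$ to $\sigma_L(\omega)-\sigma_{L'}(\omega)$---is exactly right, and the paper follows the same path. But two points in your execution need correction.

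First, a genuine gap: the manifold $Z=W_F\cup_{X_L}W_\Sigma\cup_{X_{L'}}(-W_{F'})$ is \emph{not} closed. The boundaries $\partial W_F=X_L\cup_{L\times S^1}M_F$ and $\partial W_\Sigma = X_L\cup X_{L'}\cup M_\Sigma$ each contain plumbed pieces $M_F,M_\Sigma,M_{F'}$ that survive the gluing, so Proposition~\ref{prop:bordism} does not apply. (The paper does carry out exactly this two-ended gluing later, in Section~\ref{sec:Solvable}, but only after developing the plumbing machinery of Section~\ref{sec:Plumbed} to handle the leftover boundary.) The paper's cleaner fix here is to glue on \emph{one} end only: set $F':=F\cup_L\Sigma$, which is itself a colored bounding surface for $L'$ in $D^4\cup(S^3\times I)\cong D^4$, so that by Definition~\ref{def:SignatureNullity} one has $\sigma_{L'}(\omega)=\sign_\omega W_{F'}$. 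Then Novikov--Wall gives $\sign_\omega W_{F'}=\sign_\omega W_F+\sign_\omega W_\Sigma$ directly, with no bordism argument needed.

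Second, your Novikov--Wall concern is misplaced. The hypothesis of Proposition~\ref{prop:TwistedWall} requires $H_1(\partial M;\C^\omega)=0$ for the \emph{boundary} of the gluing region $M$, not for $M$ itself. Here $M=X_L$ and $\partial X_L=L\times S^1$; since each $S^1$--factor carries a nontrivial character, $H_1(L\times S^1;\C^\omega)=0$ and Proposition~\ref{prop:TwistedWall} applies without further work. Likewise, you need not remove balls around the double points or track Hopf--link exteriors: Lemma~\ref{lem:splitboundary} shows $H_*(M_\Sigma;\C^\omega)=0$, so $\partial W_\Sigma$ behaves homologically like $X_L\sqcup X_{L'}$ and the contribution of the $c$ double points is entirely absorbed into the Euler--characteristic computation $\chi(W_\Sigma)=-\sum_i\chi(\Sigma_i)+c$ (Lemma~\ref{lem:eulersigma}). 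With these corrections, your Betti--number bookkeeping goes through: one bounds $|\sign_\omega W_\Sigma|$ by $\dim(H_2(W_\Sigma;\C^\omega)/\im j)$, computes this via $\chi(W_\Sigma)$ and $\beta_1^\omega(W_\Sigma)$, and then uses $\beta_1^\omega(W_\Sigma)\le\min(\eta_L(\omega),\eta_{L'}(\omega))$ (from $H_1(W_\Sigma,X_L;\C^\omega)=0$, exactly as you describe) to extract the nullity term.
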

\begin{remark}\label{rem:formulas}
The right-hand side of the inequality can equivalently be expressed in terms of the first Betti number or of the genus of the surfaces. Suppose that $L$ is an $n$-component link, $L'$ is an $n'$-component link, and that the cobordism $\Sigma$ has $m$ components
(in the sense of Definition~\ref{def:cobordism}). Then, we have the following equalities:
\[ \sum_{i=1}^{\mu} -\chi(\Sigma_i) +c =  \sum_{i=1}^{\mu}b_1(\Sigma_i) -m+c = \sum_{i=1}^{\mu} 2 g_i(\Sigma_i)+ n + n'-2m +c.\]
For this reason, we will usually refer to the inequality of Theorem \ref{thm:Genus} as a \emph{genus bound}, even if the genus does not appear explicitly in the formula.
\end{remark}

\subsection{Proof of the main theorem}\label{sub:ProofGenus}
We proceed towards the proof of Theorem~\ref{thm:Genus}, starting with a series of preliminary results.

\medbreak

First, we describe the Euler characteristic of the exterior~$W_\Sigma$ of a
colored cobordism~$\Sigma$ in $S^3 \times [0,1]$ in terms of the Euler characteristic of the surfaces $\Sigma_i$.

\begin{lemma}\label{lem:eulersigma}
Suppose $\Sigma$ is a $\mu$-colored cobordism between two colored links~$L$ and~$L'$ with $c$ double points. Then the Euler characteristic of $W_\Sigma$ is given by
\[\chi(W_\Sigma) = \sum_{i=1}^\mu -\chi(\Sigma_i) +c .\]
\end{lemma}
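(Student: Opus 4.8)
The plan is to compute $\chi(W_\Sigma)$ directly from a handle/CW decomposition of $D^4\times[0,1]$ — wait, the ambient space here is $S^3\times[0,1]$, so I work inside $S^3\times[0,1]$. First I would set up notation exactly as in the proof of Lemma~\ref{lem:MayerVietorisExterior}: for each double point $x$ of $\Sigma$, pick a small ball $B_x\subset S^3\times[0,1]$, and let $\Sigma_i^\circ$ denote $\Sigma_i$ with small open discs removed around each double point lying on it. Then $W_\Sigma = (S^3\times[0,1])\sm(\bigcup_x B_x\cup\bigcup_i\nu\Sigma_i^\circ)$, where $\nu\Sigma_i^\circ$ is an open tubular neighborhood; each $\nu\Sigma_i^\circ$ is a $D^2$-bundle over $\Sigma_i^\circ$ and hence homotopy equivalent to $\Sigma_i^\circ$, so $\chi(\nu\Sigma_i^\circ)=\chi(\Sigma_i^\circ)=\chi(\Sigma_i)-2c_i$ if $c_i$ double points lie on $\Sigma_i$ (each removed disc drops $\chi$ by $1$, and each double point is counted on two sheets — I need to be slightly careful whether a double point is a self-intersection of one $\Sigma_i$ or an intersection of two different colors; in either case the total count of removed discs over all $i$ is $2c$).

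Next I would run inclusion–exclusion for the Euler characteristic. Since $S^3\times[0,1]\simeq S^3$ has $\chi=0$, and $\chi$ is additive under gluing along intersections, I would write $\chi(S^3\times[0,1]) = \chi(W_\Sigma') + \sum_i\chi(\nu\Sigma_i^\circ) + \sum_x\chi(B_x) - (\text{overlaps})$, where $W_\Sigma'$ is the complement of the open neighborhoods. The cleanest way is: decompose $S^3\times[0,1]$ as the union of the closed neighborhood $N=\overline{\bigcup_x B_x}\cup\overline{\bigcup_i\nu\Sigma_i^\circ}$ and $W_\Sigma$, glued along $\partial N\cap W_\Sigma$. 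Then $0=\chi(S^3\times[0,1])=\chi(N)+\chi(W_\Sigma)-\chi(N\cap W_\Sigma)$. The piece $N$ deformation retracts onto the singular surface $\Sigma$ itself (a regular neighborhood of $\Sigma$), so $\chi(N)=\chi(\Sigma)=\sum_i\chi(\Sigma_i)-c$, because gluing two sheets at each double point identifies one point and so drops the total Euler characteristic by $1$ per double point (as $\chi$ of two discs meeting at a point is $2-1=1$). The intersection $N\cap W_\Sigma=\partial N\cap W_\Sigma$ is the "boundary of the tubular neighborhood of $\Sigma$ away from $\Sigma$'s own boundary" — away from the double points it is an $S^1$-bundle over $\Sigma_i^\circ$ (Euler characteristic $0$ by the multiplicativity of $\chi$ for fiber bundles with $\chi(S^1)=0$), and near each double point the local model is the boundary of a neighborhood of two transverse $2$-discs in $\mathbb{R}^4$, which is a compact $3$-manifold with $\chi=0$. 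So $\chi(N\cap W_\Sigma)=0$.

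Putting it together: $0 = \big(\sum_i\chi(\Sigma_i)-c\big) + \chi(W_\Sigma) - 0$, hence $\chi(W_\Sigma) = -\sum_i\chi(\Sigma_i)+c = \sum_{i=1}^\mu-\chi(\Sigma_i)+c$, as claimed. The main obstacle I anticipate is the bookkeeping near the double points — precisely identifying the local model $B_x\cap\Sigma$ as a cone on a Hopf link (two transverse $2$-planes in $\mathbb{R}^4$ meet $S^3$ in a Hopf link) and verifying that both $N$ retracts correctly onto $\Sigma$ with the stated $\chi$ and that the link $N\cap W_\Sigma$ has vanishing Euler characteristic there. An alternative, perhaps more robust, route is to avoid the neighborhood-retraction argument and instead CW-structure everything: give $S^3\times[0,1]$ a CW structure in which each $\Sigma_i$ is a subcomplex, then $W_\Sigma$ is homotopy equivalent to the complement, and use $\chi(S^3\times[0,1]) = \chi(\nu\Sigma) + \chi(W_\Sigma) - \chi(\nu\Sigma\cap W_\Sigma)$ with the three terms computed as above; this is essentially the Mayer–Vietoris bookkeeping already used in Lemma~\ref{lem:MayerVietorisExterior}, now taken at the level of Euler characteristics rather than $H_1$.
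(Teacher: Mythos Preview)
Your proposal is correct and follows essentially the same route as the paper: decompose $S^3\times[0,1]$ as $\nu\Sigma\cup W_\Sigma$, use additivity of $\chi$, retract $\nu\Sigma$ onto the singular surface $\bigcup_i\Sigma_i$ to get $\chi(\nu\Sigma)=\sum_i\chi(\Sigma_i)-c$, and observe that the gluing region $M_\Sigma=\nu\Sigma\cap W_\Sigma$ has vanishing Euler characteristic. The only difference is cosmetic: where you analyze $M_\Sigma$ piece by piece ($S^1$-bundles away from the double points, local Hopf-link models near them), the paper simply notes that $M_\Sigma$ is a compact $3$-manifold with toroidal boundary, hence $\chi(M_\Sigma)=\tfrac12\chi(\partial M_\Sigma)=0$ in one stroke.
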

\begin{proof}
First, we prove that~$\chi(W_\Sigma)=-\chi(\nu \Sigma)$.
Consider the decomposition~$S^3 \times I = \nu \Sigma \cup W_\Sigma$ and set $M_\Sigma := \nu \Sigma \cap W_\Sigma$. Using the decomposition formula for the Euler characteristic yields
$\chi(S^3 \times I) = \chi(W_\Sigma) + \chi(\nu \Sigma ) -  \chi(M_\Sigma)$. As the Euler characteristic of a $3$-manifold with toroidal boundary vanishes, $\chi(M_\Sigma)=0$. Since $\chi(S^3 \times I)$ also vanishes, the claim follows.  Now note that $\nu \Sigma$ is homotopy equivalent to the union~$A = \bigcup_i \Sigma_i \subset S^3$.
Recall that the surfaces~$\Sigma_i$ intersect each other in $c$ points.
We apply again the decomposition formula for $A$ and obtain
\[ \chi(A) = \sum_{i=1}^\mu \chi(\Sigma_i)
-\chi\Big( \bigcup_{i \neq j} \Sigma_i \cap \Sigma_j \Big)= \sum_{i=1}^\mu \chi(\Sigma_i) -c. \]

\end{proof}

By Lemma~\ref{lem:MayerVietorisExterior}, one observes that $H_1(W_\Sigma;\Z)$ is freely generated by the meridians of $\Sigma$. Consequently, there is a homomorphism $H_1(W_\Sigma;\Z) \to \Z^\mu$ that extends the maps on $H_1(X_L;\Z)$ and $H_1(X_{L'};\Z)$.

Next, we observe that with $\C^\omega$ coefficients, the boundary of $W_\Sigma$
behaves as the disjoint union of the link exteriors~$X_L$ and $X_{L'}$.
\begin{lemma}
\label{lem:splitboundary}
	The inclusion of $X_L\sqcup X_{L'}$ into $\partial W_\Sigma$ induces an isomorphism
	\[H_i(X_L;\C^\omega) \oplus 	H_i(X_{L'};\C^\omega) \cong H_i(\partial W_\Sigma;\C^\omega)\]
for all $\omega \in \T^\mu$.
\end{lemma}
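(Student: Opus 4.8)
The plan is to understand the boundary $\partial W_\Sigma$ explicitly and then show that the "extra" pieces beyond $X_L \sqcup X_{L'}$ contribute nothing to $\C^\omega$-twisted homology. First I would decompose $\partial W_\Sigma$. Recall $W_\Sigma = (S^3 \times [0,1]) \setminus \nu\Sigma$, so its boundary consists of three parts: the bottom $X_L = (S^3 \times \{0\}) \setminus \nu L$, the top $X_{L'} = (S^3 \times \{1\}) \setminus \nu L'$, and the "side" $M_\Sigma := \overline{\nu\Sigma} \cap W_\Sigma$, which is the circle-bundle part of the boundary of the tubular neighborhood of $\Sigma$ (a union of pieces of the form $\Sigma_i^\circ \times S^1$ glued along the double-point spheres, exactly as in the proof of Lemma~\ref{lem:MayerVietorisExterior}). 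These are glued along tori: $X_L$ and $X_{L'}$ meet $M_\Sigma$ along $L \times S^1$ and $L' \times S^1$ respectively. Thus $\partial W_\Sigma = X_L \cup_{L\times S^1} M_\Sigma \cup_{L'\times S^1} X_{L'}$.

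The key computation is that $M_\Sigma$, and the tori $L\times S^1$, $L'\times S^1$ over which it is glued, all have vanishing $\C^\omega$-homology. For the gluing tori this is because each component is of the form $(\text{circle of a meridian of some }L_i)\times S^1$, and the $S^1$ factor is sent by the coefficient map to $\omega_i \neq 1$, so the twisted chain complex of that torus is acyclic by~\cite[Corollary App.B.B]{Viro09} — the same argument used in the proof of Proposition~\ref{prop:bordism}. For $M_\Sigma$ itself one uses the condition in Definition~\ref{def:cobordism} that every connected component of each $\Sigma_i$ meets both $S^3\times\{0\}$ and $S^3\times\{1\}$: this guarantees that on every connected component of $M_\Sigma$ the coefficient system is non-trivial (the $S^1$-direction of the normal bundle is detected, since the relevant color $\omega_i\neq 1$ and that meridian is non-trivial in $H_1$ of the component), whence $H_*(M_\Sigma;\C^\omega)=0$. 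Concretely I would run a Mayer–Vietoris argument on $M_\Sigma = \bigcup_i \nu(\Sigma_i) \cap W_\Sigma$ exactly parallel to Lemma~\ref{lem:MayerVietorisExterior}, reducing to the $\Sigma_i^\circ \times S^1$ pieces whose $\C^\omega$-homology vanishes by the Künneth/acyclicity argument; alternatively, one invokes Lemma~\ref{lem:UCSSTorExample} after checking that some meridian generator lies in the image of $H_1 \to \Z^\mu$.

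Given these vanishings, I would finish with a Mayer–Vietoris sequence for the decomposition $\partial W_\Sigma = (X_L \sqcup X_{L'}) \cup M_\Sigma$, glued along $(L\times S^1) \sqcup (L'\times S^1)$:
\[ \cdots \to H_i\big((L\sqcup L')\times S^1;\C^\omega\big) \to H_i(X_L;\C^\omega)\oplus H_i(X_{L'};\C^\omega) \oplus H_i(M_\Sigma;\C^\omega) \to H_i(\partial W_\Sigma;\C^\omega) \to \cdots \]
Since both outer twisted homology groups vanish in all degrees, the middle map is an isomorphism for every $i$, and the $M_\Sigma$ summand is zero; this gives precisely $H_i(X_L;\C^\omega)\oplus H_i(X_{L'};\C^\omega) \cong H_i(\partial W_\Sigma;\C^\omega)$, and inspecting the map shows it is induced by inclusion, as claimed.

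The main obstacle I anticipate is the verification that the coefficient system is non-trivial on \emph{every} connected component of $M_\Sigma$ — this is where Definition~\ref{def:cobordism}'s hypothesis (each component of $\Sigma_i$ has boundary at both ends) is essential, and one must be careful that after removing the double-point discs the relevant meridian still survives in $H_1$ of the component of $M_\Sigma$ and maps to a non-trivial element of $\Z^\mu$ whose image under evaluation is $\omega_i \neq 1$. Handling the components of $\Sigma_i$ that happen to be closed surfaces or that only touch double points would break the argument, but these are excluded by hypothesis. Everything else is a routine assembly of Mayer–Vietoris sequences and the twisted-torus acyclicity already cited in the paper.
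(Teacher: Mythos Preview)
Your approach is essentially the same as the paper's: decompose $\partial W_\Sigma = X_L \cup_{L\times S^1} M_\Sigma \cup_{L'\times S^1} X_{L'}$, show $H_*(M_\Sigma;\C^\omega)=0$, and conclude by Mayer--Vietoris. The only difference is in establishing the acyclicity of $M_\Sigma$: the paper cites the vanishing $H_*(M_\Sigma;\Lambda_S)=0$ from \cite[Proof of Lemma~5.2]{ConwayFriedlToffoli} and then applies the universal coefficient spectral sequence of Proposition~\ref{prop:UCSSTor}, whereas you propose a direct K\"unneth/Mayer--Vietoris argument on the pieces $\Sigma_i^\circ\times S^1$. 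Both are valid; the paper's route is shorter given the available citation, while yours is self-contained.

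One correction: the hypothesis in Definition~\ref{def:cobordism} that each component of $\Sigma_i$ meets both ends is \emph{not} needed for this lemma. The non-triviality of the coefficient system on each piece $\Sigma_i^\circ\times S^1$ follows simply from the fact that the $S^1$ fiber is a meridian of $\Sigma_i$ and hence maps to $\omega_i\neq 1$, regardless of where the boundary of the component lies. That hypothesis is used later (in Lemma~\ref{lem:034AndNullityBound}) to control $\beta_1^\omega(W_\Sigma)$, not here.
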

\begin{proof}
The boundary of $W_\Sigma$ decomposes into the union of $X_L$, $X_{L'}$ and the
plumbed $3$-manifold~$M_\Sigma$.
The homology groups~$H_*(M_\Sigma;\Lambda_S)$ are zero~\cite[Proof of Lemma 5.2]{ConwayFriedlToffoli}.
The universal coefficient spectral sequence of Proposition~\ref{prop:UCSSTor}
implies that $H_*(M_\Sigma;\C^\omega)=0$.
The result now follows from the Mayer-Vietoris exact sequence for $\partial W_F$.
\end{proof}

The next lemma provides some information on the twisted homology of $W_\Sigma$.

\begin{lemma}\label{lem:034AndNullityBound}
If $\Sigma\subset S^3\times I$ is a $\mu$-colored cobordism between $L$ and $L'$ and $\omega \in \T_!^\mu$, then
\begin{enumerate}
\item $\beta_1^\omega(W_\Sigma) \leq \eta_L(\omega)$
		and $\beta_1^\omega(W_\Sigma) \leq \eta_{L'}(\omega)$,
\item $H_{i}(W_\Sigma; \C^\omega) = 0$ for $i= 0,3,4$.
\end{enumerate}
\end{lemma}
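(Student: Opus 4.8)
The plan is to analyze the long exact sequence of the pair $(W_\Sigma, X_L)$ (and symmetrically $(W_\Sigma, X_{L'})$) with $\C^\omega$-coefficients, together with the duality statement of Lemma~\ref{lem:DualityUCSS} and the vanishing criterion of Lemma~\ref{lem:Cone}. First I would establish part~(2) at the easy degrees. Since $W_\Sigma$ carries a surjection $H_1(W_\Sigma;\Z)\to\Z^\mu$ hitting at least one generator (meridians of $\Sigma$), Lemma~\ref{lem:UCSSTorExample} gives $H_0(W_\Sigma;\C^\omega)=0$, which is the case $i=0$. For $i=4$, note that $W_\Sigma$ is a compact $4$-manifold with nonempty boundary, so $H_4(W_\Sigma;\C^\omega)=0$ for any local coefficient system. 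For $i=3$, I would invoke Lemma~\ref{lem:DualityUCSS} applied to $W=W_\Sigma$ with the boundary splitting $\partial W_\Sigma = X_L \cup_{M_\Sigma}(-X_{L'})$ cut along $\partial M_\Sigma$ --- actually the cleaner route is: by Poincaré--Lefschetz duality $H_3(W_\Sigma;\C^\omega)\cong H^1(W_\Sigma,\partial W_\Sigma;\C^\omega)$, and then by the universal coefficient isomorphism of Proposition~\ref{prop:UCSS} this is dual to $H_1(W_\Sigma,\partial W_\Sigma;\C^\omega)$; the long exact sequence of the pair together with $H_0(\partial W_\Sigma;\C^\omega)=0$ (by Lemma~\ref{lem:splitboundary} and Lemma~\ref{lem:UCSSTorExample}) and $H_0(W_\Sigma;\C^\omega)=0$ shows $H_1(W_\Sigma,\partial W_\Sigma;\C^\omega)$ surjects onto... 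I would instead use Lemma~\ref{lem:DualityUCSS} with $i=1$: $\beta_3^\omega(W_\Sigma, X_{L'}) = \beta_1^\omega(W_\Sigma, X_L)$, combined with the excision/collapse argument below.

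For part~(1), the core input is that $(W_\Sigma, X_L)$ has vanishing integral homology in low degrees so that Lemma~\ref{lem:Cone} applies. Concretely: each component of each $\Sigma_i$ meets $S^3\times\{0\}$, so $X_L \hookrightarrow W_\Sigma$ is surjective on $\pi_0$ and on $H_0$, and a Mayer--Vietoris / general-position argument (the exterior $W_\Sigma$ deformation retracts onto something built from $X_L$ by attaching cells of index $\geq 2$, since $\Sigma$ is obtained from a collar of $L$ by handle attachments dual to the $1$- and $2$-handles of the surface) shows $H_0(W_\Sigma, X_L;\Z)=H_1(W_\Sigma, X_L;\Z)=0$. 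Then Lemma~\ref{lem:Cone} with $k=1$ gives $H_0(W_\Sigma,X_L;\C^\omega)=H_1(W_\Sigma,X_L;\C^\omega)=0$. Feeding this into the long exact sequence of the pair,
\[
H_1(X_L;\C^\omega) \to H_1(W_\Sigma;\C^\omega) \to H_1(W_\Sigma,X_L;\C^\omega)=0,
\]
shows $H_1(X_L;\C^\omega)\to H_1(W_\Sigma;\C^\omega)$ is surjective, hence $\beta_1^\omega(W_\Sigma)\leq \dim H_1(X_L;\C^\omega)=\eta_L(\omega)$, using Definition~\ref{def:SignatureNullity}. The identical argument with $X_{L'}$ in place of $X_L$ gives the second inequality. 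Finally, for the remaining case $i=3$ of part~(2): from $H_1(W_\Sigma,X_L;\C^\omega)=0$ and $H_0(X_L;\C^\omega)=0$, the pair sequence also forces relevant top-degree groups to vanish via Lemma~\ref{lem:DualityUCSS}, namely $\beta_3^\omega(W_\Sigma)=\beta_3^\omega(W_\Sigma,\emptyset)$; here I would apply Lemma~\ref{lem:DualityUCSS} with $M=\emptyset$, $M'=\partial W_\Sigma$ to get $\beta_3^\omega(W_\Sigma)=\beta_1^\omega(W_\Sigma,\partial W_\Sigma)$, and then the pair sequence of $(W_\Sigma,\partial W_\Sigma)$ with $H_0(\partial W_\Sigma;\C^\omega)=0=H_0(W_\Sigma;\C^\omega)$ plus surjectivity of $H_1(\partial W_\Sigma;\C^\omega)\to H_1(W_\Sigma;\C^\omega)$ (which follows from the already-established surjectivity from $X_L\subset\partial W_\Sigma$) yields $H_1(W_\Sigma,\partial W_\Sigma;\C^\omega)=0$, hence $H_3(W_\Sigma;\C^\omega)=0$.

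The step I expect to be the main obstacle is the purely topological claim that $H_0(W_\Sigma,X_L;\Z)=H_1(W_\Sigma,X_L;\Z)=0$ --- i.e., that passing from the link exterior $X_L$ to the cobordism exterior $W_\Sigma$ does not create new homology below degree $2$. The $H_0$ statement is just connectivity of $W_\Sigma$ and surjectivity of $\pi_0(X_L)\to\pi_0(W_\Sigma)$, which uses the hypothesis that every component of $\Sigma_i$ touches $S^3\times\{0\}$. The $H_1$ statement requires a handle-decomposition argument: building $S^3\times I$ from $X_L$-side by attaching the tubular neighborhood $\nu\Sigma$, dually $W_\Sigma$ is built from (a collar of) $X_L$ by attaching handles of index $2,3,4$ only, so the inclusion $X_L\hookrightarrow W_\Sigma$ is $1$-connected; this is where some care with the double points is needed, but the balls $B_x$ around intersection points contribute $2$-handles as well, so the index bound is preserved. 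I would phrase this via the Mayer--Vietoris decomposition $S^3\times I = \nu\Sigma\cup W_\Sigma$ with $M_\Sigma=\nu\Sigma\cap W_\Sigma$, exactly as in the proof of Lemma~\ref{lem:eulersigma}, reading off $H_*(W_\Sigma,X_L)$ from the homology of $(\nu\Sigma, \nu\Sigma\cap (S^3\times\{1\}))$ by excision --- and the latter vanishes in degrees $0,1$ because $\nu\Sigma$ deformation retracts to the surface $\Sigma$ rel its $S^3\times\{1\}$-boundary, which is $1$-connected as a pair since each component of $\Sigma$ has boundary on that side.
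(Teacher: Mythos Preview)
Your overall strategy is correct and matches the paper's: vanishing of $H_i(W_\Sigma,X_L;\Z)$ for $i=0,1$, then Lemma~\ref{lem:Cone} to get the same over $\C^\omega$, then the long exact sequence for the nullity bound, and finally duality (Lemma~\ref{lem:DualityUCSS}) applied to the pair $(W_\Sigma,\partial W_\Sigma)$ for the vanishing in degrees $3,4$.

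The one place where you make your life harder than necessary is the step you flag as the ``main obstacle'', namely $H_1(W_\Sigma,X_L;\Z)=0$. The paper does not need handle decompositions or an excision argument through $\nu\Sigma$; it simply observes that $H_1(W_\Sigma;\Z)$ is freely generated by the meridians of the components of $\Sigma$ (this is Lemma~\ref{lem:MayerVietorisExterior}, which works verbatim in $S^3\times I$), and that the inclusion $X_L\hookrightarrow W_\Sigma$ sends meridians of $L$ to meridians of $\Sigma$. Since every component of $\Sigma$ meets $S^3\times\{0\}$, every meridian of $\Sigma$ is hit, so $H_1(X_L;\Z)\to H_1(W_\Sigma;\Z)$ is surjective; together with connectedness this gives $H_i(W_\Sigma,X_L;\Z)=0$ for $i=0,1$ in one line. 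Your excision sketch in the last paragraph is also not quite set up correctly (the pair $(W_\Sigma,X_L)$ does not excise directly to $(\nu\Sigma,\nu\Sigma\cap(S^3\times\{1\}))$), so the meridian argument is both simpler and safer. Likewise, for $i=3,4$ the paper runs the identical argument with $\partial W_\Sigma$ in place of $X_L$ (the inclusion of $X_L$ factors through $\partial W_\Sigma$, so surjectivity on $H_1$ is inherited) and then invokes Lemma~\ref{lem:DualityUCSS} once, which is a bit more streamlined than your two-step route via $H_1(W_\Sigma,\partial W_\Sigma;\C^\omega)$, though logically equivalent.
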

\begin{proof}
As $W_\Sigma$ and $X_L$ are both connected, there is an isomorphism $H_0(X_L;
\Z)\cong H_0(W_\Sigma; \Z)$. Since the inclusion $X_L \subset W_\Sigma$ takes meridians to meridians, $H_1(X_L;\Z)\to H_1(W_\Sigma;\Z)$ is surjective. Combining these facts, $H_i(W_\Sigma,X_L;\Z)=0$, so that Lemma~\ref{lem:Cone} gives $ H_i(W_\Sigma,X_L; \C^\omega)=0$ for
$i=0,1$. It follows from the long exact sequence of the pair $(W_\Sigma,X_L)$
that the inclusion induced map $H_1(X_L;\C^\omega)\to H_1(W_\Sigma;\C^\omega)$ is surjective, and thus
$\beta_1^\omega(W_\Sigma) \leq \eta_L(\omega)$. Repeating the argument for $X_{L'}$, the
first statement is proven.

Since the inclusion of $X_L$ into $W_\Sigma$ factors through $\partial W_\Sigma$, an analogous argument shows that $H_i(W_\Sigma,\partial W_\Sigma; \C^\omega)=0$ for $i=0,1$. Lemma~\ref{lem:DualityUCSS} now
implies that $H_i(W_\Sigma; \C^\omega)=0$ for $i=3,4$.

Note that the entries of~$\omega = (\omega_1, \ldots, \omega_\mu)$ are different from~$1$. This implies that the vector space~$H_0(W_\Sigma; \C^\omega)$ vanishes by its description as a quotient~\cite[Section VI.3]{Hilton97}.
\end{proof}

We conclude this section with a dimension count, which will prove itself useful to
bound the twisted signature of $W_\Sigma$.
\begin{lemma}\label{lem:EqGenus}
	Denote by $j \colon H_2(\partial W_\Sigma;\C^\omega) \to H_2(W_\Sigma;\C^\omega)$
	the map induced by the inclusion. Then, for $\omega \in \T^\mu_!$, we have
\[ \dim(\coker j)
= \beta_2^\omega(W_\Sigma)-\beta_2^\omega(\partial W_\Sigma) +\beta_1^\omega(W_\Sigma), \]
\end{lemma}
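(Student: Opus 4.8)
The plan is to read the identity off the long exact sequence of the pair $(W_\Sigma,\partial W_\Sigma)$ with $\C^\omega$-coefficients, feeding in the vanishing results already established. Write $W:=W_\Sigma$ and $N:=\partial W_\Sigma$, and recall that $N$ is a closed oriented $3$-manifold carrying the restriction of the map to $\Z^\mu$.

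First I would collect the dimensions that enter the sequence. Lemma~\ref{lem:034AndNullityBound} gives $H_i(W;\C^\omega)=0$ for $i\in\{0,3,4\}$. Applying Lemma~\ref{lem:DualityUCSS} with one boundary piece taken to be empty yields $\beta_1^\omega(W,N)=\beta_3^\omega(W)=0$, while Poincaré--Lefschetz duality $H_2(W,N;\C^\omega)\cong H^2(W;\C^\omega)$ together with Proposition~\ref{prop:UCSS} gives $\dim H_2(W,N;\C^\omega)=\beta_2^\omega(W)$. Plugging these into the long exact sequence of $(W,N)$, the relevant stretch reduces to the exact sequence
\[
H_2(N;\C^\omega)\xrightarrow{\ j\ }H_2(W;\C^\omega)\xrightarrow{\ q\ }H_2(W,N;\C^\omega)\xrightarrow{\ \partial\ }H_1(N;\C^\omega)\xrightarrow{\ r\ }H_1(W;\C^\omega)\to 0,
\]
where the terminal zero comes from $H_1(W,N;\C^\omega)=0$.

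Now a short dimension count finishes the argument. Exactness at $H_2(W;\C^\omega)$ and at $H_2(W,N;\C^\omega)$ gives $\coker j\cong\image q=\ker\partial$, so $\dim\coker j=\dim H_2(W,N;\C^\omega)-\dim\image\partial=\beta_2^\omega(W)-\dim\ker r$; and since $r$ is onto (this is where $H_1(W,N;\C^\omega)=0$ is used), $\dim\ker r=\beta_1^\omega(N)-\beta_1^\omega(W)$. Hence $\dim\coker j=\beta_2^\omega(W)-\beta_1^\omega(N)+\beta_1^\omega(W)$. To reach the stated form I would finally replace $\beta_1^\omega(N)$ by $\beta_2^\omega(N)$: as $N$ is a closed oriented $3$-manifold, Poincaré duality and Proposition~\ref{prop:UCSS} give $\dim H_k(N;\C^\omega)=\dim H_{3-k}(N;\C^\omega)$ for all $k$, so $\beta_1^\omega(N)=\beta_2^\omega(N)$; alternatively one can use Lemma~\ref{lem:splitboundary} to reduce to the link exteriors $X_L$ and $X_{L'}$, whose twisted Euler characteristic vanishes. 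I do not anticipate a genuine obstacle here; the only point needing a moment's care is that the duality and universal-coefficient package applies verbatim to $\C^\omega$-coefficients both on the bounded manifold $W_\Sigma$ and on the closed $3$-manifold $\partial W_\Sigma$, which is exactly what Lemma~\ref{lem:DualityUCSS} and Proposition~\ref{prop:UCSS} are designed to supply.
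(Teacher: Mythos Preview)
Your argument is correct. Both you and the paper run the long exact sequence of the pair $(W_\Sigma,\partial W_\Sigma)$, but you read it from opposite ends. The paper looks to the left of $j$: since $H_3(W_\Sigma;\C^\omega)=0$, the connecting map $\delta\colon H_3(W_\Sigma,\partial W_\Sigma;\C^\omega)\to H_2(\partial W_\Sigma;\C^\omega)$ is injective, so $\dim(\im j)=\beta_2^\omega(\partial W_\Sigma)-\beta_3^\omega(W_\Sigma,\partial W_\Sigma)$, and Lemma~\ref{lem:DualityUCSS} gives $\beta_3^\omega(W_\Sigma,\partial W_\Sigma)=\beta_1^\omega(W_\Sigma)$; the formula follows immediately. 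You instead look to the right of $j$, passing through $H_2(W_\Sigma,\partial W_\Sigma;\C^\omega)$ and $H_1$, which costs you two duality identifications (one for $\beta_2^\omega(W_\Sigma,\partial W_\Sigma)=\beta_2^\omega(W_\Sigma)$ and one for $\beta_1^\omega(\partial W_\Sigma)=\beta_2^\omega(\partial W_\Sigma)$) rather than one. The paper's route is marginally shorter, but the substance is the same.
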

\begin{proof}
Recall that by Lemma~\ref{lem:034AndNullityBound}, the vector space $H_3(W_\Sigma;\C^\omega)$
vanishes.
Consider the following portion of the long exact sequence of the pair~$(W_\Sigma,\partial W_\Sigma)$:
\[ 0\to H_3(W_\Sigma, \partial W_\Sigma; \C^\omega)\xrightarrow{\delta}
	H_2(\partial W_\Sigma; \C^\omega)\xrightarrow{j} H_2( W_\Sigma;\C^\omega).\]
By exactness, $\dim(\im j)=\beta_2^\omega(\partial W_\Sigma) - \dim(\im \delta)$.
As $\delta$ is injective, one gets
$ \dim (\im j) =  \beta_2^\omega(\partial W_\Sigma)- \beta_3^\omega(W_\Sigma, \partial W_\Sigma)$. The result now follows since Lemma~\ref{lem:DualityUCSS} implies that
$\beta_3^\omega(W_\Sigma, \partial W_\Sigma)= \beta_1^\omega(W_\Sigma)$.
\end{proof}

We are now ready to conclude the proof of Theorem~\ref{thm:Genus}.
\begin{proof}[Proof of Theorem~\ref{thm:Genus}]
We start by proving the following inequality:
\[ |\sign_\omega W_\Sigma| \leq \chi(W_\Sigma)-|\eta_L(\omega)-\eta_{L'}(\omega)|. \]
As in Lemma~\ref{lem:EqGenus}, we use $j$ to denote the map~$H_2(\partial W_\Sigma; \C^\omega) \to H_2(W_\Sigma; \C^\omega)$.
Since the twisted intersection form~$\lambda_{\C^\omega}$ descends to a pairing on
$H_2(W_\Sigma;\C^\omega)/ \im j$, an application of Lemma~\ref{lem:EqGenus} yields
\begin{equation}\label{eq:FirstStepGenus}
|\sign_\omega W_\Sigma| \leq \dim \frac{H_2(W_\Sigma;\C^\omega)}{\im j} = \beta_2^\omega(W_{\Sigma})-\beta_2^\omega(\partial W_{\Sigma}) +\beta_1^\omega(W_{\Sigma}).
\end{equation}
Now, thanks to Lemma~\ref{lem:034AndNullityBound}, we have
$\chi(W_\Sigma) = \beta_2^\omega(W_\Sigma) - \beta_1^\omega(W_\Sigma)$, and using Lemma~\ref{lem:splitboundary}, one gets
$\beta_1^\omega(\partial W_\Sigma)=\eta_L(\omega)+ \eta_{L'}(\omega)$.
Using these last two identities, Equation (\ref{eq:FirstStepGenus}) can be rewritten as
\[ |\sign_\omega W_{\Sigma}| \leq \chi(W_\Sigma) + 2\beta_1^\omega(W_\Sigma) - \eta_L(\omega)- \eta_{L'}(\omega). \]
The desired inequality is now obtained by using Lemma~\ref{lem:034AndNullityBound} to bound $\beta_1^\omega(W_\Sigma)$
above both by $\eta_L(\omega)$ and $\eta_{L'}(\omega)$.

With the inequality above, Theorem~\ref{thm:Genus}
will follow from Lemma~\ref{lem:eulersigma} once we have established that
\[ \sign_{\omega} W_\Sigma   =  \sign_{L'}(\omega)-\sign_L(\omega). \]
Pick a colored bounding surface $F\subset D^4$ for $L$. Thanks to Proposition~\ref{prop:UntwistedSign}, we have $\sigma_L(\omega)=\sign_\omega(W_F)$. One can now form the surface with singularities $F':=F\cup_L \Sigma \subset D^4\cup_{S^3} S^3\times I$. Using an orientation-preserving diffeomorphism between $D^4\cup_{S^3} S^3\times I$ and $D^4$, the surface $F\cup_L \Sigma$ is sent to a colored bounding surface for~$L'$.
Its exterior~$W_{F'}$ is clearly homeomorphic to $W_F\cup_{X_L} W_\Sigma$.
Once again thanks to Proposition~\ref{prop:UntwistedSign}, we have $\sigma_{L'}(\omega)=\sign_\omega(W_{F'})$.
Since $H_1(L\times S^1; \C^\omega)=0$, Proposition~\ref{prop:TwistedWall} implies that
Novikov additivity holds for the twisted signature, yielding
\[  \sign_\omega W_{F'} = \sign_\omega W_F + \sign_\omega W_\Sigma. \]
Summarizing, we have shown that
$\sigma_{L'}(\omega)= \sigma_L(\omega) + \sign_\omega W_\Sigma$. Combining this with the inequality of Equation (\ref{eq:FirstStepGenus}) concludes the proof of Theorem~\ref{thm:Genus}.
\end{proof}

\subsection{Applications of the genus bound}\label{sub:ApplicationsGenus}
We will give two applications of Theorem~\ref{thm:Genus}.
First, we show that the colored signature and nullity are concordance invariants,
see Corollary~\ref{cor:ConcordanceViaGenus}, then we study the genus of
colored bounding surfaces in Corollary~\ref{cor:CimasoniFlorens72}.
\medbreak
Two $\mu$-colored links~$L$ and~$L'$ are \emph{concordant}
if there exists a $\mu$-colored cobordism between $L$ and $L'$ which has
no intersection points and consists exclusively of annuli.

\begin{corollary}
\label{cor:ConcordanceViaGenus}
If $L$ and $L'$ are two colored links that are concordant, then
\[ \sigma_{L}(\omega)=\sigma_{L'}(\omega) \quad \quad\text{and} \quad \quad \eta_{L}(\omega)=\eta_{L'}(\omega)\]
for all $\omega \in \mathbb{T}^\mu_!$.
\end{corollary}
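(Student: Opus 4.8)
The plan is to derive Corollary~\ref{cor:ConcordanceViaGenus} as an immediate consequence of Theorem~\ref{thm:Genus} by observing that a concordance is a colored cobordism for which the right-hand side of the genus bound vanishes. Concretely, suppose $L$ and $L'$ are concordant via a $\mu$-colored cobordism $\Sigma = \Sigma_1 \cup \dots \cup \Sigma_\mu$ that has no double points and in which every connected component is an annulus. First I would record that $c = 0$ since there are no intersection points, and that each $\Sigma_i$ is a disjoint union of annuli, so $\chi(\Sigma_i) = 0$ for every $i$. Hence $\sum_{i=1}^\mu -\chi(\Sigma_i) + c = 0$.

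Next I would simply feed this into Theorem~\ref{thm:Genus}: for every $\omega \in \T^\mu_!$ we obtain
\[
|\sigma_L(\omega) - \sigma_{L'}(\omega)| + |\eta_L(\omega) - \eta_{L'}(\omega)| \leq 0.
\]
Since both summands on the left are nonnegative, each must be zero, which gives $\sigma_L(\omega) = \sigma_{L'}(\omega)$ and $\eta_L(\omega) = \eta_{L'}(\omega)$ for all $\omega \in \T^\mu_!$, as claimed.

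There is essentially no obstacle here: the only thing to be careful about is that the definition of colored cobordism requires each connected component of $\Sigma_i$ to meet both $S^3 \times \{0\}$ and $S^3 \times \{1\}$, and that Definition of concordance is phrased precisely so that the annuli in a concordance satisfy this — each annulus has one boundary circle mapping to a component of $L_i$ and the other to a component of $L_i'$. So a concordance genuinely is a special case of a colored cobordism, and Theorem~\ref{thm:Genus} applies verbatim. One could alternatively remark that this also recovers, via Proposition~\ref{prop:NullityNoCcomplex} and Proposition~\ref{prop:ColorSignature}, the topological analogue of Cimasoni--Florens' concordance invariance, but that observation is not needed for the proof itself.
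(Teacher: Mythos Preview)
Your proof is correct and follows exactly the same approach as the paper's own proof: apply Theorem~\ref{thm:Genus} to a concordance, observe that each $\Sigma_i$ is a union of annuli with $\chi(\Sigma_i)=0$ and that $c=0$, so the right-hand side vanishes and both absolute values must be zero. Your additional remarks about verifying that a concordance satisfies the definition of colored cobordism are accurate but not strictly needed, since the paper's definition of concordance is already phrased as a special case of colored cobordism.
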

\begin{proof}
We apply Theorem~\ref{thm:Genus} to the case where each $\Sigma_i$ is a union of annuli and  there are no double points.
The result follows as all the terms in the right-hand side of the inequality are zero.
\end{proof}
\begin{remark}
In~\cite[Theorem 7.1]{CimasoniFlorens}, Corollary~\ref{cor:ConcordanceViaGenus} is stated for all $\omega \in \T_P^\mu$. However, note that~\cite[Proposition 2.3]{NagelPowell} presents two $1$-colored links $L$ and~$L'$, which have the property that $\sigma_L(z) = \sigma_{L'}(z)$ and $\eta_L(z) = \eta_{L'}(z)$ for all $z \in \mathbb{T}^{1}_P$, but such that there exists a $z_0 \in \mathbb{T}^{1}_!$ with $\sigma_L(z_0) \neq \sigma_{L'}(z_0)$ and $\eta_L(z_0) \neq \eta_{L'}(z_0)$.
\end{remark}

Note that Corollary~\ref{cor:ConcordanceViaGenus} will be significantly improved upon in Section~\ref{sec:Solvable}: the signature and nullity will be shown to be invariant under $0.5$-solvable cobordisms.

Using $\beta_1(F)$ to denote the first Betti number of a surface~$F$, an application of Theorem~\ref{thm:Genus} also gives the inequality below.

\begin{corollary} \label{cor:CimasoniFlorens72}
Let $F=F_1\cup\cdots \cup F_\mu$ be a colored bounding surface for $L$, and suppose that $F$ has $m$ components and $c$ intersection points. Then, for all $\omega \in \T_!^\mu$, we have
\[  |\sigma_L(\omega)|+|\eta_L(\omega)-m+1| \leq \sum_{i=1}^\mu \beta_1(F_i) +c. \]
\end{corollary}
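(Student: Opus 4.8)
The plan is to realise $F$ as a colored cobordism from $L$ to an $m$-component unlink and then feed this cobordism into Theorem~\ref{thm:Genus}. Assume, as we may, that $F$ has no closed components. Choose an interior point $p_j$ in each of the $m$ connected components of $F$ and join these points by disjoint arcs in $D^4$, in general position so that the arcs meet $F$ only at the $p_j$ and avoid the $c$ double points; let $B\subset\operatorname{int}(D^4)$ be a regular neighbourhood of the union of these arcs. Then $B$ is a $4$-ball, $D^4\setminus\operatorname{int}(B)$ is homeomorphic to $S^3\times[0,1]$, and $B$ meets $F$ transversally in $m$ disjoint, locally flat, properly embedded disks $\Delta_1,\dots,\Delta_m$, one inside each component of $F$. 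Since the $\Delta_j$ are disjoint disks in the ball $B$, the link $U:=\bigsqcup_j\partial\Delta_j\subset\partial B\cong S^3$ is the $m$-component unlink; colour the $j$-th component of $U$ by the colour of the component of $F$ containing $\Delta_j$. The surface $\Sigma:=F\setminus\bigsqcup_j\operatorname{int}(\Delta_j)\subset S^3\times[0,1]$ then has $c$ double points, with boundary link $L$ at one end and $U$ at the other, and --- crucially using that no component of $F$ is closed --- every connected component of each $\Sigma_i$ touches both ends. Hence $\Sigma$ is a colored cobordism between $L$ and $U$ in the sense of Definition~\ref{def:cobordism}.

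Next I would carry out the bookkeeping. Writing $F_i=\Sigma_i\cup\bigl(\bigsqcup_{j}\Delta_j\bigr)$, with the union glued along circles, the inclusion--exclusion formula for Euler characteristics gives $\chi(\Sigma_i)=\chi(F_i)-\beta_0(F_i)$, hence $-\chi(\Sigma_i)=\beta_0(F_i)-\chi(F_i)=\beta_1(F_i)$ (again using that $F_i$ has no closed component). Summing over $i$, the right-hand side of Theorem~\ref{thm:Genus} for the cobordism $\Sigma$ is exactly $\sum_{i=1}^\mu\beta_1(F_i)+c$. For the invariants of the unlink, take the C-complex for $U$ consisting of $m$ disjoint disks: the associated generalized Seifert matrix $H(\omega)$ is empty, so Proposition~\ref{prop:ColorSignature} gives $\sigma_U(\omega)=0$ and Proposition~\ref{prop:NullityNoCcomplex} gives $\eta_U(\omega)=\operatorname{null}H(\omega)+m-1=m-1$.

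Applying Theorem~\ref{thm:Genus} to $\Sigma$ --- legitimate since $\omega\in\T^\mu_!$ --- now yields
\[
|\sigma_L(\omega)-\sigma_U(\omega)|+|\eta_L(\omega)-\eta_U(\omega)|\leq\sum_{i=1}^\mu -\chi(\Sigma_i)+c,
\]
and substituting $\sigma_U(\omega)=0$, $\eta_U(\omega)=m-1$ and $\sum_i-\chi(\Sigma_i)=\sum_i\beta_1(F_i)$ gives precisely $|\sigma_L(\omega)|+|\eta_L(\omega)-m+1|\leq\sum_{i=1}^\mu\beta_1(F_i)+c$, as desired.

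The only point requiring genuine care is the cobordism construction: one must choose $B$ so that $\Sigma$ really satisfies all conditions of Definition~\ref{def:cobordism}, in particular that each connected component of each $\Sigma_i$ has boundary at both ends of the cylinder --- this is exactly where the absence of closed components of $F$ is needed, since a closed component would survive in $\Sigma$ with boundary only on the $\partial B$ side. Everything else is routine: the Euler characteristic identity above and the two facts $\sigma_U(\omega)=0$, $\eta_U(\omega)=m-1$ for the unlink, both immediate from the C-complex descriptions of the colored signature and nullity established in Propositions~\ref{prop:ColorSignature} and~\ref{prop:NullityNoCcomplex}.
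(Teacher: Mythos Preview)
Your proof is correct and follows essentially the same approach as the paper's: remove a disk from each connected component of $F$ to obtain a colored cobordism from $L$ to an $m$-component colored unlink, compute the unlink's invariants via a C-complex of disjoint disks, and apply Theorem~\ref{thm:Genus}. Your construction of the ball $B$ as a regular neighbourhood of a tree is exactly equivalent to the paper's procedure of removing $m$ small $4$-balls and then tubing their boundary spheres together; your direct Euler characteristic computation $-\chi(\Sigma_i)=\beta_1(F_i)$ is a slight streamlining of the paper's route through Remark~\ref{rem:formulas}. You are in fact a bit more careful than the paper in explicitly flagging the assumption that $F$ has no closed components, which is needed for the cobordism condition of Definition~\ref{def:cobordism}.
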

\begin{proof}
Remove small $4$-balls in the interior of $D^4$ on each component of $F$. With
small enough balls,~$F$ will intersect the boundary spheres in unknots.
Tubing the boundary spheres together, we have constructed a $\mu$-colored
cobordism~$\Sigma$ with $m$ components between $L$ and a $\mu$-colored unlink $L'$ of
$m$ components. Thanks to the results of Section~\ref{sub:CFSignature}, we can compute the signature and
nullity of $L'$ using C-complexes~\cite[Section 2]{CimasoniFlorens}.
We pick a disjoint union of $m$ disks as a C-complex. The resulting generalized Seifert matrices are
empty, yielding $\sigma_{L'}(\omega)=0$ and $\eta_{L'}(\omega) = 0+\beta_0(S)-1=m-1$ for all $\omega\in \T^\mu$.
Using Theorem~\ref{thm:Genus} and Remark~\ref{rem:formulas}, we get
\[|\sigma_L(\omega)|+|\eta_L(\omega)-m+1| \leq - \sum_{i=1}^\mu \chi(\Sigma_i)
	+c=\sum_{i=1}^{\mu}b_1(\Sigma_i) -m+c.\]
Now, if $C$ is any of the $m$ components of $F$, the corresponding component
$C'$ of $\Sigma$ is obtained from $C$ by removing a small disk, so that
$\beta_1(C')=\beta_1(C)+1$. Summing over all the components, we get
$\sum_{i=1}^{\mu}\beta_1(\Sigma_i)=\sum_{i=1}^{\mu}\beta_1(F_i)+m$, whence the desired
formula.
\end{proof}

The next example discusses the (non)-sharpness of the bound of Corollary~\ref{cor:CimasoniFlorens72}.
\begin{example} 
We start with an example where the bound is sharp.
Consider the $1$-colored Hopf link~$H = L_1 = K_0 \cup K_1$ (with any orientation). The oriented link~$H$ bounds an annulus~$A$ in $S^3$, and we compute~$|\sigma_H(-1)| = 1$ and $\eta_H(-1) = 0$. If we push $A$ into the $4$--ball, we obtain a bounding surface~$F = F_1 = A$. The inequality of Corollary~\ref{cor:CimasoniFlorens72} is sharp:
	\[ 1 = |\sigma_H(-1)|+|\eta_H(-1)-1+1| \leq \beta_1(A) +0 = 1. \] 
Although it is easy to construct examples
where this inequality is not sharp, 
we claim that the defect can in fact be arbitrarily large: pick a family of knots~$J_n$ such that~$J$ has the Seifert matrix of a slice knot, and topological $4$--genus~$g^\text{top}_4(J_n) \geq~n$ (such knots exist thanks to~\cite[Theorem 1.3]{Cha08}). Now consider $H(J_n) = K_0 \cup (K_1 \# J_n)$, where we tie the knot~$J_n$ into $K_1$ in a small $3$--ball disjoint from~$K_0$. The signature~$\sigma_{H(J_n)}(-1) = 1$ and the nullity~$\eta_{H(J_n)}(-1) = 0$ do not change, but we have~$g_4(H(J_n)) \geq g_4(J_n) -1 \geq n-1$, concluding the proof of the claim.

Instead, if we pick each knot $J_n$ to be topologically slice, but with smooth $4$--genus~$g_4^\text{smooth}(J_n) \geq n$ (such knots exist~\cite[Remark 1.2]{Tanaka98}), then the $H(J_n)$ provide a family of knots where the inequality is sharp in the topological category, but not in the smooth category.
\end{example}

We now compare Corollary~\ref{cor:CimasoniFlorens72} with previous results.

\begin{remark}
\label{rem:Genus}
Corollary~\ref{cor:CimasoniFlorens72} is a generalization of~\cite[Theorem 7.2]{CimasoniFlorens}.
In that paper, it is proven in the smooth setting and requires~$\omega$ to be in the set $\T_{P}^\mu$,
which is strictly smaller as $\T_!^\mu$; see Example~\ref{sub:ConcordanceRoots}.
Since all surfaces $F_i$ are assumed to be connected, $\mu$ appears instead of $m$ in their formula.

We can also recover a previous result~\cite[Theorem 1.4]{Powell} bounding the $4$-genus
of a $1$-colored link~$L$ with $m$ components.
Consider disjoint surfaces $F_1,\dots, F_m$ in $D^4$ bounding $L$.
Indeed, $F:=F_1\sqcup\cdots \sqcup F_m$ is a $1$-colored bounding surface for $L$, and applying
Corollary~\ref{cor:CimasoniFlorens72}, we get
	\[|\sigma_L(\omega)|+|\eta_L(\omega)-m+1 | \leq  \beta_1(F)=2g(F), \]
for $\omega\in \T_!^1$.
The result follows by passing to the minimum over all such collections of surfaces and observing that $\T_!^1$ is dense in $S^1$.

Finally, note that Viro proves inequalities similar to Corollary~\ref{cor:CimasoniFlorens72} in any odd dimension. In particular, for links in $S^3$ he obtains $|\sigma_L(\omega)|+\eta_L(\omega) \leq \beta_2(F,L)+\beta_1(F)$ and  $|\sigma_L(\omega)|+\eta_L(\omega) \leq \beta_1(F,L)+\beta_0(F)$~\cite[Theorem 4.C]{Viro09}. Reworking his equations leads to the inequality
$$|\sigma_L(\omega)|+\eta_L(\omega)-m \leq \sum_{i=1}^\mu \beta_1(F_i) +c,$$
which is slightly weaker than Corollary~\ref{cor:CimasoniFlorens72}. The interested reader will note that while Viro essentially obtains his results for all $\omega \in \T^\mu_!$, his methods are quite different from the chain homotopy argument we rely on, see~\cite[Appendix C]{Viro09}.
\end{remark}

\section{Plumbed \texorpdfstring{$3$--manifolds}{3-manifolds} and surfaces in the \texorpdfstring{$4$--ball}{4-ball}}\label{sec:Plumbed}
In this section, we review plumbed $3$-manifolds and prove a vanishing result for their signature defect. This result is a key step in the proof of Theorem \ref{thm:SolvableNullitySignature} (which is concerned with the invariance of the signature and nullity under $0.5$-solvable cobordisms).
In Section~\ref{sub:APS}, we show this vanishing result in the case of
products of a closed surface with $S^1$.
To do so, we apply a product formula for the Atiyah-Patodi-Singer rho invariant, and pass from the smooth to the topological setting by using a bordism argument. In Section~\ref{sub:Plumbing}, we introduce the framework of plumbed $3$-manifolds and prove the main result, which is contained in Proposition~\ref{prop:PbFilling}. This proposition shows that the signature defect of a $4$-manifold vanishes if its boundary is a  so-called ``balanced" plumbed $3$-manifold.
Finally, in Section~\ref{sub:SurfacesD4} we describe how plumbed $3$-manifolds arise naturally from surfaces intersecting transversally in the $4$-ball, and we perform a homological computation which is needed in Section~\ref{sec:Solvable}.

\subsection{The rho invariant of a product \texorpdfstring{$\Sigma\times S^1$}{Sigma x S1}}\label{sub:APS}

We consider the \emph{rho invariant}~$\rho(M,\alpha)$, a real number,
in the special case of $M$ being a smooth, odd dimensional manifold
with a homomorphism~$\alpha\colon H_1(M;\Z)\to U(1)$~\cite{AtiyahPatodiSinger}.
The definition of the rho invariant requires spectral analysis
of elliptic differential operators on a manifold,
and we will not attempt to recall it.
Instead we state the following properties of $\rho(M,\alpha)$, which
will be sufficient for the purposes of this article.

\begin{proposition}\label{prop:APS}
~
\begin{enumerate}
\item\label{Item:Bounding} If $Z$ is a smooth $2n$-manifold together with a
homomorphism~$\alpha\colon H_1(Z;\Z)\to~S^1$,
then $\rho(\partial Z,\alpha) = -(\sign_\alpha Z - \sign Z)$.

\item\label{Item:Tensor} If $N$ is a closed smooth $2m$-manifold with a homomorphism $\alpha\colon H_1(N;\Z)\to~S^1$,
and $S^1$ comes with a homomorphism $\beta\colon H_1(S^1;\Z)\to S^1$,
then
\[ \rho(N\times S^1, \alpha\otimes \beta) 
		=(-1)^m \sign N \cdot \rho(S^1, \beta).\]
In particular, $\rho(N\times S^1, \alpha\otimes \beta) =0$ if $m$ is odd.
\end{enumerate}
\end{proposition}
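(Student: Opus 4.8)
The plan is to treat the two items separately, as they have quite different flavors. Item~\eqref{Item:Bounding} is essentially the Atiyah-Patodi-Singer index theorem in its $\rho$-invariant formulation: one applies the APS signature theorem to $Z$ both with the flat $U(1)$-bundle determined by $\alpha$ and with the trivial bundle, and subtracts. The local index densities (integrals of Hirzebruch $L$-polynomials paired with Chern-Weil forms of a flat connection) coincide, so they cancel, leaving only the boundary contributions: $\sign_\alpha Z - \sign Z = \eta_\alpha(\partial Z) - \eta(\partial Z) = -\rho(\partial Z,\alpha)$ up to sign conventions. I would cite~\cite{AtiyahPatodiSinger} directly here rather than reprove it; the only thing to be careful about is matching the sign convention for $\rho$ used throughout the rest of the paper, which dictates writing the conclusion as $\rho(\partial Z,\alpha) = -(\sign_\alpha Z - \sign Z)$.

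For Item~\eqref{Item:Tensor}, the plan is to invoke a product formula for the $\rho$-invariant. Writing $D_N$ and $D_{S^1}$ for the relevant signature operators, the signature operator on $N \times S^1$ with the twisted flat bundle $\alpha \otimes \beta$ decomposes (via separation of variables / the Künneth principle for the $\eta$-invariant) in a way that expresses $\eta(N\times S^1, \alpha\otimes\beta)$ as the signature of $N$ twisted by $\alpha$ times $\eta(S^1,\beta)$, with a sign $(-1)^m$ coming from the grading of the $2m$-dimensional factor. Since $N$ is closed and $\alpha$ is a flat $U(1)$-system, $\sign_\alpha N = \sign N$ (the signature defect of a closed manifold over a flat bundle vanishes — this is the closed case already recalled in Remark~\ref{rem:SignatureRemark}, and is also what Proposition~\ref{prop:bordism} establishes purely topologically over $\Z^\mu$). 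Hence $\rho(N\times S^1,\alpha\otimes\beta) = (-1)^m \sign N \cdot \rho(S^1,\beta)$. The final sentence, that this vanishes for $m$ odd, is then immediate: for odd $m$ the manifold $N$ has dimension $2m \equiv 2 \pmod 4$, and the intersection form of a closed oriented manifold of dimension $\equiv 2 \pmod 4$ is skew-symmetric, hence $\sign N = 0$.

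The main obstacle I anticipate is not conceptual but bookkeeping: pinning down the exact sign conventions so that items \eqref{Item:Bounding} and \eqref{Item:Tensor} are mutually consistent and consistent with the normalization of $\rho(S^1,\beta)$ used later in Section~\ref{sub:APS}. A secondary subtlety is making sure the product formula is quoted in a form valid for a \emph{twisted} (rather than trivial) coefficient system on the $N$-factor; the cleanest route is to cite an established product formula for the reduced $\eta$-invariant and then specialize, rather than deriving the heat-kernel asymptotics from scratch. Since the paper explicitly states it will not recall the analytic definition of $\rho$, I would keep the proof at the level of citing~\cite{AtiyahPatodiSinger} for the index-theoretic input, invoking a standard product formula for~\eqref{Item:Tensor}, and supplying only the short linear-algebra observation that $\sign N = 0$ in dimension $\equiv 2 \pmod 4$.
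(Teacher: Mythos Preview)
Your proposal is correct and matches the paper's own proof essentially point for point: the paper likewise cites the Atiyah--Patodi--Singer index theorem for item~\eqref{Item:Bounding}, obtains item~\eqref{Item:Tensor} from a standard product formula (citing Neumann), notes that $\sign_\alpha N = \sign N$ for closed $N$, and deduces the vanishing for odd $m$ from the fact that the signature of a closed manifold is nonzero only in dimensions divisible by $4$. Your anticipated bookkeeping issue about sign conventions is exactly the one the paper flags as well.
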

\begin{proof}
The first result is the specialization to our setting of the Atiyah-Patodi-Singer index theorem~\cite[Theorem 2.4]{AtiyahPatodiSinger}. The formula in the second statement follows from a direct computation combined with the classical Atiyah-Singer theorem.
Both results can be found in~\cite[Theorem 1.2, (iii) and (v)]{Neumann79}, where it has to be observed that the invariant considered by the author differs from the rho invariant by a sign and that $\sign N = \sign_\alpha N$  (this follows from~(\ref{Item:Bounding}) since $N$ has no boundary, or
alternatively from the Hirzebruch signature formula; see Remark~\ref{rem:SignatureRemark}). The last claim follows immediately from the fact that the ordinary signature of a closed manifold is non-trivial only in dimension $4k$.
\end{proof}

We restrict further to manifolds~$M$ with a homomorphism~$H_1(M;\Z) \to \Z^\mu$.
Since one-dimensional representations of $H_1(M;\Z)$ factoring through $\Z^\mu$ are in bijection
with values~$\omega\in (S^1)^\mu$, we will denote by
$\rho_\omega(M)$ the rho invariant corresponding to the representation~$\alpha$
given by the composition
\[\alpha \colon H_1(M;\Z) \to \Z^\mu \xrightarrow{\omega} S^1.\]
Using Proposition~\ref{prop:APS}, we prove the following lemma.
\begin{lemma}\label{lem:EtaVanishing}
	If $\Sigma$ is a closed oriented connected surface and $\phi \colon H_1(\Sigma \times S^1; \Z) \to \Z^\mu$ is a homomorphism, then $\rho_\omega(\Sigma \times S^1) = 0$ for all $\omega \in \T^\mu$.
\end{lemma}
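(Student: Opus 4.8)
The statement asserts that $\rho_\omega(\Sigma \times S^1) = 0$ for every $\omega \in \T^\mu$ and every closed oriented connected surface $\Sigma$, regardless of which homomorphism $\phi \colon H_1(\Sigma \times S^1;\Z) \to \Z^\mu$ we choose. The strategy is to reduce to the product formula in Proposition~\ref{prop:APS}(\ref{Item:Tensor}), but the obstacle is that an arbitrary $\phi$ need not be of the form $\alpha \otimes \beta$ for a representation $\alpha$ on $H_1(\Sigma;\Z)$ and $\beta$ on $H_1(S^1;\Z)$: a priori $\phi$ mixes the two K\"unneth factors. So the first thing I would do is set up the K\"unneth splitting $H_1(\Sigma\times S^1;\Z) \cong H_1(\Sigma;\Z) \oplus H_1(S^1;\Z)$ and decompose the induced $U(1)$-representation $\alpha = \omega \circ \phi$ accordingly as $\alpha = \alpha_\Sigma \otimes \alpha_{S^1}$, where $\alpha_\Sigma$ is the restriction of $\alpha$ to $H_1(\Sigma;\Z)$ and $\alpha_{S^1}$ to the $S^1$-factor. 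The key point is that $U(1)$ is abelian, so \emph{any} one-dimensional representation of a direct sum splits as such a tensor product — there is no mixing at the level of one-dimensional representations, only at the level of the underlying map to $\Z^\mu$. This observation dispatches the apparent obstacle.

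**Main computation.** Having written $\alpha = \alpha_\Sigma \otimes \alpha_{S^1}$, I would apply Proposition~\ref{prop:APS}(\ref{Item:Tensor}) with $N = \Sigma$ (so $m = 1$), obtaining
\[ \rho_\omega(\Sigma \times S^1) = \rho(\Sigma \times S^1, \alpha_\Sigma \otimes \alpha_{S^1}) = (-1)^1 \sign(\Sigma) \cdot \rho(S^1, \alpha_{S^1}) = 0, \]
since $\sign(\Sigma) = 0$ for any closed oriented surface (its intersection form on $H_2(\Sigma;\Q) \cong \Q$ for $\Sigma = S^2$ is the zero form if we are careful, but more robustly: the middle-dimensional intersection form of a $2$-manifold lives in degree $1$, not in a degree congruent to $0 \bmod 4$, so its signature vanishes; equivalently $\sign(\Sigma) = 0$ because $b_2 = b_0$ and the relevant form is skew or absent). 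Actually the cleanest route is simply that the signature of a closed oriented $2k$-manifold with $k$ odd vanishes, and here $k = 1$; this is exactly the parity remark in Proposition~\ref{prop:APS}(\ref{Item:Tensor}), which gives $\rho(N \times S^1, \alpha \otimes \beta) = 0$ whenever $m$ is odd — so in fact I only need to invoke that last sentence of the proposition directly, with $m = 1$.

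**What remains.** The only genuine content is the verification that $\alpha$ splits as a tensor product over the K\"unneth decomposition; once that is in place, the proof is a one-line application of Proposition~\ref{prop:APS}(\ref{Item:Tensor}). I do not anticipate any real obstacle: the representation-splitting is formal for one-dimensional representations of an abelian group, and the signature vanishing is the stated parity fact. One small bookkeeping point to be careful about is that $\alpha_{S^1}$ could be trivial (if $\phi$ kills the $S^1$-factor), in which case $\rho(S^1,\alpha_{S^1})$ is the rho invariant of the trivial representation, which is $0$; and if $\alpha_\Sigma$ is what is trivial, the conclusion still holds because of the $\sign(\Sigma) = 0$ factor. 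Either way the product formula closes the argument, so no case analysis is really needed.
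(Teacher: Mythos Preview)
Your proposal is correct and follows essentially the same route as the paper: decompose the induced $U(1)$-representation along the K\"unneth splitting $H_1(\Sigma\times S^1;\Z)\cong H_1(\Sigma;\Z)\oplus H_1(S^1;\Z)$, observe that a one-dimensional representation of a direct sum is a tensor product of its restrictions, and then apply Proposition~\ref{prop:APS}(\ref{Item:Tensor}) with $m=1$ odd. The paper records the splitting via a small commutative diagram, but the content is exactly your observation that $U(1)$ is abelian.
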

\begin{proof}
	Since $H_1(\Sigma \times S^1; \Z) = H_1(\Sigma;\Z) \oplus H_1(S^1; \Z)$,
we may restrict $ \phi \colon H_1(\Sigma \times S^1; \Z) \to \Z^\mu$ to
each summand. This produces maps $\phi_\Sigma \colon H_1(\Sigma; \Z) \to \Z^\mu$
and $\phi_{S^1} \colon H_1( S^1; \Z) \to \Z^\mu$. Postcomposing each of
these maps with the map $\Z^\mu \xrightarrow{\omega} S^1$ produces maps~$\varphi, \varphi_\Sigma$
and $\varphi_{S^1}$. Since these maps fit in the commutative diagram
	\[
	\begin{tikzcd}[column sep=1.5cm]
	H_1(\Sigma \times S^1; \Z) \ar[d, "pr_\Sigma \oplus pr_{S^1}"] \ar[r, "\varphi"] & S^1\\
	H_1(\Sigma; \Z) \oplus H_1(S^1; \Z) \ar[r, "\varphi_\Sigma \times \varphi_{S^1}"]
	& S^1 \times S^1, \ar[u,"\cdot"]
	\end{tikzcd}
	\]
it follows that $\varphi = \varphi_\Sigma \otimes \varphi_{S^1}$. Using point~(\ref{Item:Tensor}) of Proposition~\ref{prop:APS}, one obtains
\[\rho_\omega( \Sigma \times S^1 ) = \rho(\Sigma \times S^1, \varphi_\Sigma\otimes \varphi_{S^1})
		=  0.
\]
This concludes the proof of the lemma.
\end{proof}

The following corollary is nearly immediate.
\begin{corollary}\label{cor:twSignatureVanishing}
	Let $V$ be a set of closed oriented connected surfaces.
	If $W$ is a $4$-manifold over $\Z^\mu$ with boundary
	\[ \partial W = \bigsqcup_{\Sigma \in V} \Sigma \times S^1,\]
	then $\sign_\omega W - \sign W= 0$.
\end{corollary}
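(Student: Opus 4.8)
The plan is to deduce Corollary~\ref{cor:twSignatureVanishing} from Lemma~\ref{lem:EtaVanishing} together with Proposition~\ref{prop:APS}(\ref{Item:Bounding}), but the subtlety is that Lemma~\ref{lem:EtaVanishing} is a statement about the \emph{smooth} rho invariant, whereas $W$ is only assumed to be a topological $4$-manifold over $\Z^\mu$. So the argument must proceed in two stages: first treat the smooth case using index theory, and then remove the smoothness hypothesis by a bordism argument, exactly in the spirit of the proof of Proposition~\ref{prop:bordism}.

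In the smooth case, the argument is essentially one line. By Proposition~\ref{prop:APS}(\ref{Item:Bounding}), since $W$ is a smooth $4$-manifold over $\Z^\mu$ with boundary $\partial W = \bigsqcup_{\Sigma \in V} \Sigma \times S^1$, we have
\[ \sign_\omega W - \sign W = -\rho_\omega(\partial W) = -\sum_{\Sigma \in V} \rho_\omega(\Sigma \times S^1), \]
using additivity of the rho invariant over disjoint unions of boundary components. Each term on the right vanishes by Lemma~\ref{lem:EtaVanishing}, so $\dsign_\omega W = 0$ in the smooth setting.

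To handle the general topological case, I would run the same kind of bordism-theoretic reduction used in Proposition~\ref{prop:bordism} and Corollary~\ref{cor:bordism}. The idea is that the signature defect $\dsign_\omega$ of a topological $4$-manifold whose boundary is of the prescribed form depends only on a suitable relative bordism class, because $\dsign_\omega$ vanishes on closed topological $4$-manifolds over $\Z^\mu$ (Proposition~\ref{prop:bordism}) and is additive under gluing along the entire boundary by Novikov additivity (the boundary pieces $\Sigma \times S^1$ carry $\C^\omega$-acyclic homology, so the Wall correction term vanishes by Proposition~\ref{prop:TwistedWall}). Concretely: the boundary $N := \bigsqcup_{\Sigma \in V} \Sigma \times S^1$, equipped with its map to $\Z^\mu$, bounds a \emph{smooth} $4$-manifold $W'$ over $\Z^\mu$ — for instance, each $\Sigma \times S^1$ bounds $\Sigma \times D^2$ (with the induced map), which is smooth. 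Form the closed topological $4$-manifold $Z := W \cup_N (-W')$ over $\Z^\mu$. By Proposition~\ref{prop:bordism}, $\dsign_\omega Z = 0$, and by Novikov additivity (applicable since $H_*(N;\C^\omega) = 0$ for $\omega \in \T^\mu$ forces the relevant vanishing — here one uses that $\C^\omega$ is non-trivial on the $S^1$-factor of each $\Sigma \times S^1$, making that piece $\C^\omega$-acyclic, cf.\ the proof of Proposition~\ref{prop:bordism}), one gets
\[ 0 = \dsign_\omega Z = \dsign_\omega W - \dsign_\omega W'. \]
Since $W'$ is smooth, the first part of the argument gives $\dsign_\omega W' = 0$, and therefore $\dsign_\omega W = 0$ as well.

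The main obstacle is bookkeeping around the passage from smooth to topological, specifically verifying that the Novikov/Wall correction term genuinely vanishes when gluing $W$ and $-W'$ along all of $N$. One needs that the maps to $\Z^\mu$ on $W$ and $W'$ restrict to the \emph{same} map on $N$ (which holds by hypothesis, up to choosing $W'$ compatibly), and that $H_1(\Sigma \times S^1; \C^\omega) = 0$ so that Proposition~\ref{prop:TwistedWall} applies with no correction. For the latter: the map $H_1(S^1;\Z) \to \Z^\mu$ appearing in $\alpha$ may a priori be zero on some factors, but as in the proof of Lemma~\ref{lem:EtaVanishing} the relevant observation is about the full coefficient system $\C^\omega$ on $\Sigma \times S^1$; when this system is non-trivial on the $S^1$-direction the complex is acyclic, and when it is trivial one can still argue component-by-component. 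This is a minor technical point but is the place where care is genuinely needed; everything else is a formal consequence of results already established.
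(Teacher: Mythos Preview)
Your treatment of the smooth case is exactly the paper's proof: apply Proposition~\ref{prop:APS}(\ref{Item:Bounding}), use additivity of the rho invariant over disjoint unions, and invoke Lemma~\ref{lem:EtaVanishing} on each factor. The paper then handles the passage to topological $W$ in Remark~\ref{rem:TOP}, with the same overall structure you propose: produce a smooth filling $W'$ of the same boundary over $\Z^\mu$, and use that the signature defect is independent of the filling (Corollary~\ref{cor:bordism}, which is precisely the $Z = W \cup_N (-W')$ argument you sketch).

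There is, however, a genuine gap in your execution of the topological step. Your proposed smooth filling $W' = \bigsqcup_\Sigma \Sigma \times D^2$ is \emph{not} a filling over $\Z^\mu$ in general: the inclusion $\Sigma \times S^1 \hookrightarrow \Sigma \times D^2$ kills the $S^1$--factor on $H_1$, so the given map $H_1(\Sigma \times S^1;\Z) \to \Z^\mu$ extends to $\Sigma \times D^2$ only when it is already zero on $H_1(S^1;\Z)$. In the applications that follow (meridional homomorphisms on plumbed manifolds), the $S^1$--direction maps to a generator of $\Z^\mu$, so this fails. The paper avoids this by observing that $\Omega_3(\Z^\mu)$ is the same in the smooth and topological categories (both equal $H_3(T^\mu;\Z)$ via Atiyah--Hirzebruch), so an abstract smooth filling over $\Z^\mu$ exists; once you have that, Corollary~\ref{cor:bordism} finishes the argument directly and your extended discussion of Novikov--Wall corrections is unnecessary (you are gluing along full closed boundary components, so classical Novikov additivity already applies with no hypothesis on $H_1(N;\C^\omega)$).
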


\begin{proof}
Thanks to point (\ref{Item:Bounding}) of Proposition~\ref{prop:APS},
the number $\sign_\omega W - \sign W$ coincides with minus the rho invariants of its boundary.
By Lemma~\ref{lem:EtaVanishing} and additivity of the rho invariant under disjoint union of manifolds~\cite[Theorem $1.2.1$]{Neumann79},
we get $\sign_\omega W - \sign W=0$.
\end{proof}
Since Proposition~\ref{prop:APS} required the cobounding manifold to be smooth, one might worry
about Corollary~\ref{cor:twSignatureVanishing} only holding for smooth $4$-manifolds~$W$.
The following remark deals with this issue.
\begin{remark}\label{rem:TOP}
Let $W$ be a topological $4$-manifold bounding~$M = \bigsqcup_{\Sigma \in V} \Sigma \times S^1$.
The bordism groups are computed in both the topological
case and the smooth case by $\Omega_3(\Z^\mu) = H_3(\Z^\mu; \Z)$.
Thus, if $M$ bounds topologically, then there also exists a smooth filling~$W'$, for which the rho invariant computation gives $\sign_\omega W' -\sign W'=0$. By
Corollary~\ref{cor:bordism}, the difference between twisted and ordinary
signature is the same for two $4$-manifolds filling the same $M$ over $\Z^\mu$,
so we conclude that $\sign_\omega W -\sign W$ is also zero as desired.
\end{remark}

\subsection{Plumbings and their signature defect}\label{sub:Plumbing}
After reviewing the definition of a plumbed $3$-manifold, we use the rho
invariant to observe that if a $4$-manifold~$W$ admits a balanced plumbed
$3$-manifold as its boundary, then its signature defect vanishes; see
Proposition~\ref{prop:PbFilling}. Classical references on plumbed $3$-manifolds
include~\cite{NeumannCalculus,Hirzebruch71}. See also \cite{BorodzikFriedlPowell}
for their use in our context.
\medbreak
We begin by setting up notation.
\begin{construction}\label{constr:PlumbedManifold}
Let $G = (V,E)$ be an unoriented graph with no loops. The set~$E$ is the set of oriented edges, and $s\colon E \to V$
and $t\colon E \to V$ are the source and the target maps. The involution~$i \colon E \to E$ sends an oriented edge to the corresponding edge with the opposite orientation; see e.g.~\cite[Section I.2]{Serre80}. The graph is unoriented in the sense that for each edge, the set $E$ also contains the edge with the opposite orientation. We shall sometimes also denote $i(e)$ by $\bar e$.
Assume that the set of vertices~$V$ consists of oriented, connected and compact surfaces~$F$
and that the edges~$e\in E$ are labeled by weights~$\varepsilon(e)=\varepsilon(\bar e)\in\{\pm 1\}$.

For each edge $e$, we choose an embedded disc~$D_e \subset s(e)$ in such a way that no two discs intersect. 
We then remove these discs, by defining for each surface $F \in V$ the complement
\[ F^\circ = F \sm \bigcup_{s(e) = F} D_e. \]
We define the \emph{plumbed $3$-manifold} $\Pb(G)$ as
\[ \Pb(G):=\left( \bigsqcup_{F \in V} F^\circ \times S^1\right) / \sim \]
where, for all $e \in E$ the identifications are given by
\begin{align}\label{eq:plumbingmaps}
(-\partial D_{e}) \times S^1 &\to (-\partial D_{i(e)}) \times S^1\\
(x,y) & \mapsto
\nonumber \begin{cases}
(y^{-1}, x^{-1}),       & \text{if } \varepsilon(e)=1,\\
(y,x), 	 		& \text{if } \varepsilon(e)=-1.
\end{cases}
\end{align}
Since these identifications make use of orientation reversing homeomorphisms,
the $3$-manifold~$\Pb(G)$ carries an orientation that extends the orientation
of each~$F^\circ \times S^1$.
\end{construction}
\begin{remark}
The orientation $-\partial D_e$ is the one obtained by considering the circle
as a boundary component of $F^\circ$. This
is the opposite of the one induced by the boundary~$\partial D_e$ of the removed disk.
In the general context of plumbing disk bundles, one trivializes over the removed disks, which
causes the two formulas to flip; see e.g.~\cite[Chapter 8 p.\ 67]{Hirzebruch71}.
\end{remark}

The boundary of a plumbed $3$-manifold~$\Pb(G)$ is a union of tori and
the components correspond to the boundary components of the surfaces~$F \in V$.
By construction, the boundary components come with the
product structure~$\partial \Pb(G) = \bigsqcup_{F \in V}\partial F \times S^1$.
We define the homology class~$[\partial F] = [\partial F \times \{ pt\}]$
in $H_1(\partial \Pb(G); \R)$.

In order to describe the kernel $H_1(\partial \Pb(G);\R)\to H_1(\Pb(G);\R)$, we introduce some more notation:
for each surface~$F \in V$ with boundary, label its
boundary components~$K_1, \dots, K_{n_F}$ and accordingly their
meridians~$\mu_1^F, \dots, \mu_{n_F}^F$ and longitudes~$l_1^F, \dots, l_{n_F}^F$.
We have the equality~$[\partial F] = \sum_{k = 1}^{n_F} [l_k^F]$.
The vertices of our graph~$G$ are surfaces. So, for each edge~$e \in E$, the expression~$t(e)$ denotes a surface and~$\mu^{t(e)}_i$ denotes the meridian of $i$--th boundary torus of $t(e)$.
The following lemma describes the kernel of the inclusion~$H_1(\partial \Pb(G);\R)\to H_1(\Pb(G);\R)$,
which will be useful for our applications of Novikov-Wall additivity.
\begin{lemma}\label{lem:KernelPb}
The kernel of the inclusion induced map~$H_1(\partial \Pb(G);\R)\to H_1(\Pb(G);\R)$
is freely generated by the elements
\[ [\partial F] - \sum_{s(e) = F} \varepsilon(e) \mu_1^{t(e)} \text{ and } \quad  \mu_i^F - \mu_1^F, \]
for $F$ varying over the elements in $V$ with $\partial F \ne \emptyset$ and $2\leq i\leq n_F$.
\end{lemma}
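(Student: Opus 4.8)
The plan is to compute $H_1(\Pb(G);\R)$ together with the map from $H_1(\partial\Pb(G);\R)$ by a Mayer--Vietoris argument over the graph $G$, then identify the kernel explicitly. First I would fix notation: $\Pb(G)$ is built from the pieces $F^\circ\times S^1$, and its homology is assembled via the gluing tori $\partial D_e\times S^1$. A clean way to organize this is to write $\Pb(G)=\bigl(\bigsqcup_{F\in V}F^\circ\times S^1\bigr)/\!\sim$ and run the Mayer--Vietoris sequence (or rather the long exact sequence of the pair, or a graph-of-spaces decomposition) with the gluing tori playing the role of the overlaps. On each piece $F^\circ\times S^1$ one has $H_1(F^\circ\times S^1;\R)\cong H_1(F^\circ;\R)\oplus\R\langle S^1\rangle$, where the $S^1$-class is exactly the meridian class, and $H_1(F^\circ;\R)$ is free on a symplectic basis for $F$ together with $n_F-1$ of the boundary longitudes $[l_k^F]$ (one relation $\sum_k[l_k^F]=[\partial F]$ coming from the once-punctured-more picture after removing the discs $D_e$). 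The key local observation is how the gluing maps \eqref{eq:plumbingmaps} act: along the edge $e$ with $s(e)=F$, $t(e)=F'$, the boundary circle $\partial D_e\subset F^\circ$ (a meridian-type curve $\mu_{?}$ on the $t(e)$-side, say) gets identified — up to the sign $\varepsilon(e)$ — with the $S^1$-factor of the $F'$ piece, and symmetrically. Concretely, $\varepsilon(e)=1$ gives $(x,y)\mapsto(y^{-1},x^{-1})$, so the longitude direction on one side becomes the meridian direction (inverted) on the other.

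The second step is to extract the kernel from this picture. An element of $H_1(\partial\Pb(G);\R)=\bigoplus_F\bigl(\bigoplus_{i}\R[l_i^F]\oplus\bigoplus_i\R\mu_i^F\bigr)$ maps to zero in $H_1(\Pb(G);\R)$ precisely when its total contribution cancels after imposing the gluing relations. The relations $\mu_i^F=\mu_1^F$ hold inside $\Pb(G)$ because all the meridians of the boundary tori of a single surface $F$ are freely homotopic in $F^\circ\times S^1$ to the common $S^1$-direction (they are all $\{pt\}\times S^1$), so each $\mu_i^F-\mu_1^F$ visibly lies in the kernel. For the longitudinal part: the total longitude $[\partial F]=\sum_k[l_k^F]$ bounds in $F^\circ\times S^1$ up to the curves $\partial D_e$ that were removed, and along edge $e$ the curve $\partial D_e$ is identified with $\varepsilon(e)$ times the meridian $\mu_1^{t(e)}$ of the corresponding torus on the other side; hence $[\partial F]-\sum_{s(e)=F}\varepsilon(e)\mu_1^{t(e)}$ is null-homologous. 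This gives one relation per surface $F$ with nonempty boundary, plus $n_F-1$ relations of the second type, so the stated elements all lie in the kernel. It remains to show they span it and are independent. Independence is clear since they involve distinct boundary-torus homology classes with a triangular structure (the $\mu_i^F-\mu_1^F$ involve $\mu_i^F$ for $i\ge 2$, and the $[\partial F]-\cdots$ terms involve the longitude block which is disjoint from the meridian block except for the $\mu_1^{t(e)}$, and one checks the resulting matrix has full rank by counting). For spanning, I would compute the rank: $\dim\ker = \dim H_1(\partial\Pb(G);\R)-\operatorname{rk}\bigl(H_1(\partial\Pb(G);\R)\to H_1(\Pb(G);\R)\bigr)$, and compare with the number of proposed generators, $\sum_{\partial F\ne\emptyset}(1+(n_F-1))=\sum_{\partial F\ne\emptyset}n_F$, which is exactly $\tfrac12\dim H_1(\partial\Pb(G);\R)$ since $\partial\Pb(G)$ is a union of tori; this is the expected half-lives-half-dies dimension for the kernel of a $3$-manifold bounding a $4$-manifold, so once independence is established the count forces spanning.

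The main obstacle I expect is bookkeeping the orientation conventions and the sign $\varepsilon(e)$ correctly — in particular making sure that the identification of $-\partial D_e$ with the meridian direction on the $t(e)$-side (and not the longitude direction, and with the right sign/inverse) is consistent with the stated formula $[\partial F]-\sum_{s(e)=F}\varepsilon(e)\mu_1^{t(e)}$, rather than its negative or with $l$'s in place of $\mu$'s. The remark following Construction~\ref{constr:PlumbedManifold} about $-\partial D_e$ versus $\partial D_e$ flipping the plumbing formulas is exactly the subtlety to be careful with here. Everything else — the Mayer--Vietoris computation, the linear-algebra rank count, the homotopies of the meridians — is routine once the conventions are pinned down. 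I would also double-check the edge cases: vertices $F$ with $\partial F=\emptyset$ contribute nothing to the boundary and hence nothing to the kernel generators, consistent with the indexing ``$F$ varying over the elements in $V$ with $\partial F\ne\emptyset$''.
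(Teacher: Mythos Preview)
Your proposal is correct and follows essentially the same route as the paper: a Mayer--Vietoris/graph-of-spaces decomposition to understand $H_1(\Pb(G);\R)$, explicit verification that the listed elements lie in the kernel via the gluing identifications, and a half-lives-half-dies dimension count to conclude they span. One small terminological slip: the half-lives-half-dies principle you need is for a compact orientable $3$-manifold with boundary (here $\Pb(G)$ with its toral boundary), not for a $3$-manifold bounding a $4$-manifold; the paper cites \cite[Lemma 8.15]{Lickorish97} for this, and your count $\sum_{\partial F\neq\emptyset} n_F = \tfrac{1}{2}\dim H_1(\partial\Pb(G);\R)$ is exactly right.
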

\begin{proof}
From the construction of $\Pb(G)$, we see that for every edge~$e \in E$
there is a torus~$-\partial D_e \times S^1\subset s(e)\times S^1$ which is
identified with $-\partial D_{\bar{e}} \times S^1\subset t(e) \times S^1$. We denote this
torus by $T_e \subset \Pb(G)$. Hence, $T_e=-T_{\bar{e}}$.

Now pick an orientation~$E' \subset E$
on the edges, i.e.\ for every $e \in E$, exactly
one of the edges~$e$ and $\bar e$ is an element of $E'$.
From the construction of $\Pb(G)$, we obtain a Mayer-Vietoris sequence
	\[ \label{eq:mayvet} \dots \to  \bigoplus_{e \in E'} H_1(T_e;\R)
	\xrightarrow{i_t-i_s} \bigoplus_{F \in V} H_1(F^\circ \times S^1;\R) \to H_1(\Pb(G);\R) \to \cdots, \]
	where $i_t, i_s$ denote the maps induced by the inclusions of $T_e$ into $t(e) \times S^1$ and $s(e)\times S^1$ respectively.
	For each $F$, the inclusion~$\partial F\times S^1\to \Pb(G)$ factors through
	the space~$\bigsqcup_{F \in V} F^\circ \times S^1$. Consequently, we have the commutative diagram
	of inclusion induced maps
	\[
	\xymatrix@C0.8 cm@R0.4cm{
	\displaystyle\bigoplus_{e \in E'} H_1(T_e;\R) \ar[r]^-{i_t-i_s} & \displaystyle\bigoplus_{F \in V} H_1(F^\circ \times S^1;\R)
	\ar[r]^-h& H_1(\Pb(G);\R)\\
	&&H_1(\partial\Pb(G);\R) \ar[ul]^f \ar[u]^j,&
}
	\]
	yielding $\ker j = \ker  h\circ f  = \{x\in H_1(\partial\Pb(G);\R) \, \vert \, f(x) \in \im i_t-i_s \}.$
We shall now restrict our attention to those surfaces~$F$ with $\partial F\ne \emptyset$,
and prove that both $\mu_k^F - \mu_1^F$ and $[\partial F] - \sum_{s(e) = F} \varepsilon(e) \mu_1^{t(e)}$
belong to $\ker j$.
As $F^\circ$ is connected, all elements $\mu_k^F$ for $1\leq k \leq n_F$
are equal in $H_1(\Pb(G);\R)$, so the elements $\mu_k^F - \mu_1^F$ are in $\ker f$ and a fortiori in $\ker j$.
Next, we check that an element of the form~$[\partial F] - \sum_{s(e) = F} \varepsilon(e) \mu_1^{t(e)}$
is sent by $f$ to the image of $i_s-i_t$.
Note that  $H_1(F^\circ \times S^1; \R) = H_1(F^\circ; \R) \oplus \R\langle \mu_1^F\rangle$, so that we have the relation
	$[\partial F] + \sum_{s(e) = F} [- \partial D_e] = 0$ in $H_1(F^\circ \times S^1; \R)$.
We thus obtain
\[f\Big([\partial F] - \sum_{s(e) = F} \varepsilon(e) \mu_1^{t(e)}\Big)
		= \sum_{s(e) = F} \left([\partial D_e] - \varepsilon(e) \mu_1^{t(e)}\right), \]
and the claim reduces to checking that this element is
in the image of $i_t - i_s$.
Consider the class~$-[\partial D_e] \in H_1(T_e; \Z)$.
We have $- i_s [-\partial D_e] = [\partial D_e]$ and, by the gluing map
given in Construction~\ref{constr:PlumbedManifold},
$i_t[-\partial D_e]= -\varepsilon(e) \mu_{t(e)}$.
As a result, the difference $[\partial D_e] - \varepsilon(e) \mu_1^{t(e)}$ is indeed in the image of $i_t-i_s$, and so $ [\partial F] - \sum_{s(e) = F} \varepsilon(e) \mu_1^{t(e)}$ is in $\ker j$.

Note that the elements in the statement of the lemma span a subspace~$U$, whose
dimension is the number of boundary components of $\Pb(G)$, i.e.\ it is half the
dimension of the space~$H_1(\partial \Pb(G); \R)$.
By the half lives, half dies principle~\cite[Lemma 8.15]{Lickorish97}, the kernel~$\ker j$ has
the same dimension as $U$ and so coincides with $U$.
\end{proof}

\begin{definition}
\label{def:Balanced}
Let $G=(V,E)$ a graph with a label function $\varepsilon\colon E\to \{\pm 1\}$.
For $v,w \in V$ denote by $E(v, w) = \{ e \in E \mid s(e) = v, t(e) = w\}$
the set of all edges between $v$ and $w$. We call the integer
$p(v,w):= \sum_{e\in E(v, w)} \varepsilon(e)$ the \emph{total weight} of the pair of distinct vertices $(v,w)$.
The graph $G$ is called \emph{balanced} if $p(v,w)=0$ for all such pairs~$(v,w)$.
\end{definition}

From now on, assume that our plumbed $3$-manifold~$\Pb(G)$ comes with a homomorphism~$\phi\colon H_1(\Pb(G);\Z) \to \mathbb{Z}^\mu$.  We call such a homomorphism \emph{meridional} if, for each
constituting piece $F^\circ \times S^1\subseteq \Pb(G)$ with $F \in V$,
the restriction of $\phi$ to $H_1(F^\circ \times S^1;\Z)$ sends the class of
$\{pt\}\times S^1$ to one of the canonical generators $e_1,\dotsc, e_\mu$ of $\Z^\mu$.
Moreover, in the next two results we will restrict our attention to plumbings
of \emph{closed} surfaces.

The next lemma shows that if $G$ is balanced, then $\Pb(G)$ is cobordant to a
disjoint union of trivial surface bundles, where the cobordism has vanishing
signature defect.
\begin{lemma}\label{lem:handles}
Let $G = (V,E)$ be a balanced graph with vertices closed connected surfaces.
Suppose that $\phi \colon H_1(\Pb(G); \Z) \to \Z^\mu$ is a meridional homomorphism.
Then there exists a smooth $4$-manifold $Z$ over $\Z^\mu$ such that:
	\begin{enumerate}
		\item the boundary of $Z$ is a disjoint union
		\[\partial Z = -\Pb(G) \sqcup  \bigsqcup_{F \in V} \Sigma_F \times S^1,\]
		where every $\Sigma_F$ is a closed oriented surface;
		\item the restriction $H_1(\bigsqcup_{F \in V} \Sigma_F \times S^1; \Z) \to \Z^\mu$ is meridional;
		\item $\dsign_\omega Z = 0$ for all $\omega \in \T^\mu$.
	\end{enumerate}
\end{lemma}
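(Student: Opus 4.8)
The plan is to build the cobordism $Z$ by attaching $4$-dimensional $2$-handles to $\Pb(G)\times[0,1]$ along the plumbing tori, cancelling the edges of $G$ in a way that exploits the balanced condition. Concretely, since $G$ is balanced, the edges between any fixed pair of vertices $(v,w)$ come in pairs with opposite weights $\varepsilon(e)=+1$ and $\varepsilon(e')=-1$. Geometrically, plumbing with weight $+1$ versus $-1$ differs by the sign of the gluing, and a pair of opposite plumbings between the same two surfaces can be undone by surgery: attach a $2$-handle along a curve of the form $\{pt\}\times S^1$ sitting on one of the tori $T_e\subset\Pb(G)$ (or along a curve that geometrically bounds once the two opposite plumbing tubes are cancelled against each other). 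After attaching these handles — one family for each matched pair of opposite edges — the resulting upper boundary is a disjoint union of trivial bundles $\Sigma_F\times S^1$, where $\Sigma_F$ is obtained from $F$ by tubing in the disks $D_e$ that were removed (so $\Sigma_F$ is again closed, since every $F\in V$ was closed to begin with). First I would set up this handle decomposition carefully, making sure the attaching circles are chosen so that $\{pt\}\times S^1$ factors still map to a single generator of $\Z^\mu$, which gives item (2): the handles are attached along curves in the ``surface direction'', so they do not disturb the $S^1$-factor, and the meridional property is preserved.

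Next I would address item (1), namely that after the surgeries the boundary is exactly $\bigsqcup_F \Sigma_F\times S^1$. The point is that $\Pb(G)$ is built from the pieces $F^\circ\times S^1$ glued along the tori $T_e$ by the maps of Equation~\eqref{eq:plumbingmaps}; cancelling a $+1$/$-1$ pair of edges between $v$ and $w$ by surgery replaces the two glued tubes by a product region, and iterating over all matched pairs (using that $G$ is balanced, so every edge is matched) reassembles each $F^\circ$ with its boundary circles capped back off into the closed surface $\Sigma_F$, with the $S^1$-factor untouched. I would verify that one can globally pair up the edges consistently — this is a combinatorial bookkeeping step, using $p(v,w)=0$ for all pairs and the fact that $\varepsilon(e)=\varepsilon(\bar e)$, so the matched-pair structure is well defined on unoriented edges.

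The main work is item (3), the vanishing of $\dsign_\omega Z$. Here I would invoke Corollary~\ref{cor:bordism}: the signature defect $\dsign_\omega$ depends only on the bordism class over $\Z^\mu$ of the boundary, so it suffices to exhibit \emph{some} filling of $\partial Z$ with vanishing defect and conclude $\dsign_\omega Z$ equals the sum of defects of fillings of the two boundary components. More precisely, $Z$ itself is a cobordism from $\Pb(G)$ to $\bigsqcup_F\Sigma_F\times S^1$; by Corollary~\ref{cor:twSignatureVanishing}, any $4$-manifold bounding $\bigsqcup_F\Sigma_F\times S^1$ has zero signature defect, and by Lemma~\ref{lem:EtaVanishing} together with the bordism argument of Remark~\ref{rem:TOP}, the relevant rho invariants vanish. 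Gluing such a filling of $\bigsqcup_F\Sigma_F\times S^1$ onto $Z$ produces a filling $W$ of $\Pb(G)$; since $\Pb(G)$ bounds over $\Z^\mu$ (as $\Omega_3(\Z^\mu)=H_3(\Z^\mu;\Z)$ and one checks $\Pb(G)$ is null-bordant, or alternatively one simply works with the relative statement), and the plumbed manifold $\Pb(G)$ with its meridional structure bounds a manifold whose defect vanishes by the same rho-invariant computation applied piecewise, Corollary~\ref{cor:bordism} and Novikov additivity force $\dsign_\omega Z=0$. The hard part will be the homological bookkeeping ensuring that the surgeries genuinely produce product bundles $\Sigma_F\times S^1$ on the nose (rather than some twisted or connect-summed bundle) and that the meridional condition survives; once the boundary is correctly identified, the signature-defect vanishing follows formally from the results of Section~\ref{sub:APS} and Corollary~\ref{cor:bordism}.
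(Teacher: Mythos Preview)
Your approach has two substantive gaps.

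First, the $2$-handle attachment as described does not allow the map to $\Z^\mu$ to extend. On the torus $T_e = (-\partial D_e) \times S^1 \subset F_1^\circ \times S^1$, the curve $\{pt\} \times S^1$ is the $S^1$-fibre, i.e.\ the meridian of $F_1$, and $\phi$ sends it to a generator of $\Z^\mu$; the other obvious curve $(-\partial D_e) \times \{pt\}$ is identified via the plumbing map with $\pm\mu_{F_2}$, again a generator. Attaching a $2$-handle along either curve kills that generator in $H_1$, so $\phi$ cannot extend over the handle and $Z$ is not a manifold over $\Z^\mu$. (Your remark that the handles are ``in the surface direction'' is also inconsistent with attaching along $\{pt\}\times S^1$.) Your parenthetical about ``a curve that geometrically bounds once the two opposite plumbing tubes are cancelled'' points in the right direction --- one needs a class with $\phi$-image zero, which forces it to involve \emph{both} tori $T_e$ and $T_{e'}$ --- but this cannot be realized by a $2$-handle on a single torus. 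The paper instead attaches a piece $X_{e,e'} = I \times I \times S^1 \times S^1$, a tube joining $T_e$ to $T_{e'}$; the opposite signs $\varepsilon(e) = -\varepsilon(e')$ are exactly what makes $\phi$ extend over $X_{e,e'}$. A side effect is that the vertex surfaces gain genus ($F_i$ becomes $F_i \# T^2$), so $\Sigma_F$ is not $F$ with disks capped back as you write.

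Second, and more seriously, your argument for item~(3) is circular. You propose to deduce $\dsign_\omega Z = 0$ from knowing that some filling of $\Pb(G)$ has zero defect, via Corollary~\ref{cor:bordism} and ``the same rho-invariant computation applied piecewise''. But no such computation is available: Proposition~\ref{prop:PbFilling} (fillings of balanced $\Pb(G)$ have zero defect) is \emph{deduced from} the present lemma, and the rho invariant is not additive under gluing along tori without correction terms, so one cannot compute $\rho_\omega(\Pb(G))$ piecewise from the $F^\circ \times S^1$. The paper avoids rho invariants of $\Pb(G)$ entirely: it shows directly that both $\Pb(G) \times I$ and $X_{e,e'}$ have trivial twisted and untwisted intersection forms (the boundary surjects onto $H_2$), and then verifies Novikov--Wall additivity for the gluing by computing the kernels $V_{N_+}$ and $V_{N_-}$ explicitly via Lemma~\ref{lem:KernelPb}. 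The balanced hypothesis enters precisely at this step, to make the correction term in Lemma~\ref{lem:KernelPb} vanish so that $V_{N_+} = V_{N_-}$. This hands-on Novikov--Wall verification is the actual content of the proof and is missing from your outline.
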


\begin{proof}
Instead of proving the statement directly, we prove the following: if $E$ is nonempty, then
there exists a balanced graph~$G' = (V', E')$
with the same number of vertices and fewer edges than $G$, such that there exists a manifold~$Z_{G'}$ over $\Z^\mu$ with
$\partial Z_{G'} = -\Pb(G) \sqcup  \Pb(G')$,
which induces a meridional homomorphism on $\Pb(G')$ and such that $\dsign_\omega Z_{G'}= 0$ for all $\omega \in \T^\mu$. 

The original statement can be recovered as follows: iterate the above
to obtain a sequence of graphs~$G = G_0, \dots, G_n$ such that the set of edges of $G_n$ is empty.
Consequently, $\Pb(G_n) =  \bigsqcup_{F \in V} \Sigma_F \times S^1$.
We then glue the $4$-manifolds together:
$Z := Z_{G_1} \cup \dots \cup Z_{G_n}$. We get $\partial Z = -\Pb(G) \sqcup \Pb(G_n)$ as required
and by Novikov additivity $\dsign_\omega Z = \sum_{i=1}^n \dsign_\omega Z_{G_i} = 0$.

Now we proceed with the proof of the modified statement.
Recall from Construction~\ref{constr:PlumbedManifold} that to each edge~$e$ corresponds the
embedded torus~$T_e = (-\partial D_e) \times S^1$. The complement of all of these tori is diffeomorphic to
$\bigsqcup_{F \in V} F^\circ \times S^1 \subset \Pb(G)$.
In order to produce the desired $4$-manifold~$Z$, our aim is to attach a~$D^2 \times T^2$
to the trivial bordism~$\Pb(G) \times I$.

Given two vertices $F_1, F_2 \in V$, we write~$E(F_1, F_2) = \{ e \in E \mid s(e) = F_1, t(e) = F_2\}$ as
in Definition~\ref{def:Balanced}. Pick two vertices $F_1,F_2 \in V$ such that $E(F_1,F_2)$ is nonempty.
As the graph is balanced, this implies we can also pick two edges~$e,e' \in E(F_1,F_2)$ such that
$\varepsilon(e) = 1$ and $\varepsilon(e') = -1$.
Now set~$X_{e,e'} := I\times I \times S^1 \times S^1$.
Consider the corresponding tori~$T_e = (-\partial D_e) \times S^1$ and
$T_{e'} = (-\partial D_{e'}) \times S^1$, with oriented neighborhoods~$I \times T_e$, $I \times T_{e'}$.
	We attach~$X_{e,e'}$ to $\Pb(G)\times \{1\}$ along its vertical boundaries through a homeomorphism $f$ given by the following formulas:
	\begin{align*}\label{eq:handlegluing}
  \{ 0\} \times I \times S^1 \times S^1 &\to I \times (-\partial D_e) \times S^1  &      \{ 1\} \times I \times S^1 \times S^1 &\to I \times (-\partial D_{e'}) \times S^1 \\
	 \nonumber (0,t, x,y) &\mapsto (t, x,y), &
(1,t, x,y) &\mapsto (t, x^{-1},y) .
	\end{align*}
	The induced orientations on $\{0,1\}\times I\times S^1\times S^1$ are such that the above map is orientation-reversing. As a consequence, the orientations of $\Pb(G)\times I$ and $X_{e,e'}$ extend to the resulting $4$-manifold
\[	Z:= X_{e,e'} \cup_f \Pb(G) \times I. \]
Let $a_1, a_2\in \Z^\mu$ the images of the meridians of $F_1$ and $F_2$ under the map $H_1(\Pb(G);\Z)\to \Z^\mu$. Recalling the construction of $\Pb(G)$ given in \eqref{eq:plumbingmaps}, we see that the induced maps to $\Z^\mu$ on $T_e$ and $T_{e'}$ are given by
		\begin{align*}
H_1(-\partial D_e\times S^1;\Z)&\to \Z^\mu &H_1(-\partial D_{e'}\times S^1;\Z)&\to \Z^\mu\\
[\{p\}\times S^1]&\mapsto a_1     &    [\{p\}\times S^1]&\mapsto a_1\\
[-\partial D_e\times \{p\}]&\mapsto a_2 &[-\partial D_{e'}\times \{p\}]&\mapsto -a_2.
\end{align*}
The difference in the sign of the image $[-\partial D_e\times \{p\}]$ is
a consequence of the fact that the edges~$e,e'$ had opposite signs. This allows
us to define a map~$\phi_X \colon H_1(X_{e,e'}; \Z) \to \Z^\mu$
which glues with the map $\phi \colon H_1(\Pb(G); \Z) \to \Z^\mu$, i.e.\
the following diagram commutes:
\[
\begin{tikzcd}[column sep=0.6cm,row sep=0.6cm]
H_1(\{ 0,1\} \times I \times S^1 \times S^1; \Z) \ar[rr, "f_*"]\ar[dr, "\phi_X"] & &H_1(I \times T_e;\Z)\oplus H_1(I \times T_e';\Z) \ar[dl, "\phi"]\\
&\Z^\mu.&
\end{tikzcd}
\]

\begin{figure}[ht]
\includegraphics[width=11cm]{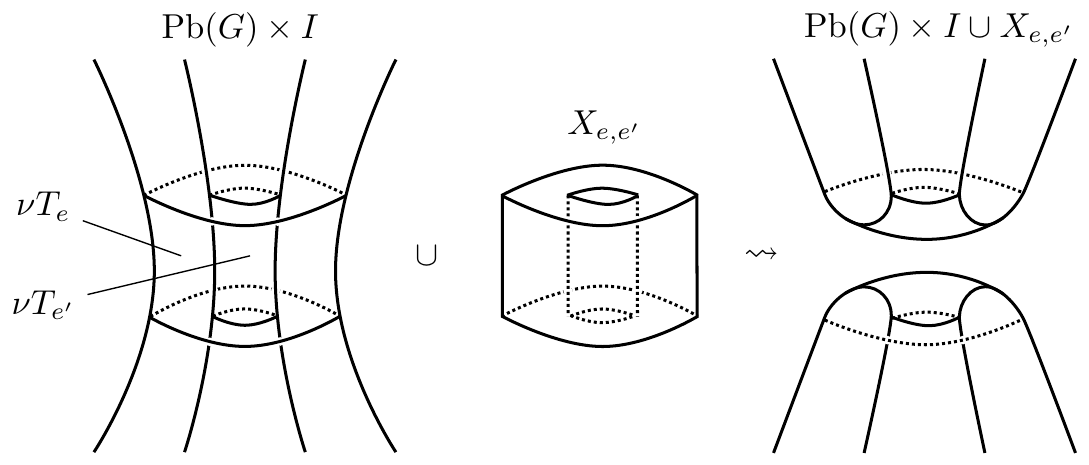}
\caption{The effect of attaching $X_{e,e'}$ to  $\Pb(G) \times I$ depicted in reduced dimensions}
\end{figure}
By making an additional choice of a splitting of the Mayer-Vietoris
sequence
\[ H_1(X_{e,e'};\Z) \oplus H_1(\Pb(G)\times I; \Z) \to H_1(Z; \Z) \to H_0(\{0,1\} \times I \times T^2;\Z), \]
we obtain a map $H_1(Z; \Z) \to \Z^\mu$ which extends
$\phi$ and $\phi_X$ on $H_1(\Pb(G)\times I; \Z)$ and $H_1(X_{e,e'};\Z)$.

The boundary of $Z$ has two components. The bottom boundary is $-\Pb(G)$.
The effect of adding $X_{e,e'}$ on the top boundary is
that of cutting along $T_e$ and $T_{e'}$ and gluing together the boundary component~$- \partial D_e \times S^1$
to $- \partial D_{e'} \times S^1$, and glueing $-\partial D_{i(e)} \times S^1$ to $-\partial D_{i(e')} \times S^1$.
Let $F_1' = F_1 \# T^2$ be the result of $0$--surgery along $D_e$ and $D_{e'}$
in $F_1$, and define $F_2'$ similarly.
The top boundary inherits a plumbed structure along a graph $G'$ obtained from $G$ by replacing the vertices $F_1$ and $F_2$ with $F_1'$ and $F_2'$, 
and by removing the edges~$e$ and $e'$.

We have verified that $Z$ fulfills the first statement.
To conclude the proof of the proposition, it remains to prove that~$\dsign_\omega Z = 0$. This
is a consequence of the following claim.
\begin{claim}
The twisted and untwisted signature of $\Pb(G) \times I$ and $X_{e,e'}$
vanish and Novikov-Wall additivity holds when gluing these two pieces together.
\end{claim}
To prove that the signatures vanish, note that both spaces are $4$-manifolds~$W$
with the property that the inclusions of the boundary $H_2(\partial W; \Z) \to H_2(W; \Z)$
and $H_2(\partial W; \C^\omega) \to H_2(W; \C^\omega)$ surject.
This implies that both the twisted and untwisted intersection forms vanish.
In particular, the twisted and untwisted signatures of $\Pb(G) \times I$ and $X_{e,e'}$ are zero.

Next, we consider Novikov-Wall additivity. We are gluing $W_+ = X_{e,e'}$ to
$W_- =\Pb(G)\times I$ along $M = \nu T_e \sqcup \nu T_{e'} \subset \Pb(G) \times \{1\}$. In the notations of Section \ref{sub:NovikovWall}, we have $N_+=I \times \{0,1\} \times S^1 \times S^1$ and $N_- = \Pb(G) \sm M$.
The boundary of the gluing region is given by the four tori
\[ \Sigma:= -\partial D_e \times S^1 \sqcup -\partial D_{i(e)} \times S^1 \sqcup -\partial D_{e'} \times S^1 \sqcup -\partial D_{i(e')} \times S^1 .\]
We shall prove that $V_{N_+} = \ker H_1(\Sigma;\R) \to H_1(N_+; \R)$
and $V_{N_-} = \ker H_1(\Sigma;\R) \to H_1(N_-; \R)$ agree, so that the hypotheses of the Novikov-Wall additivity theorem are satisfied (recall Theorem \ref{thm:Wall}) .

Observing the gluing maps above, we see that the vector space $V_{N_+}$ has basis
\begin{equation} \label{eq:KernelBasis}
[-\partial D_e] + [-\partial D_{e'}], \quad [ S^1_{e}] - [S^1_{e'}], \quad
	[-\partial D_{i(e)}] + [-\partial D_{i(e')}], \quad [ S^1_{i(e)}] - [S^1_{i(e')}].
\end{equation}
In order to describe $V_{N_-}$, observe that  $N_-=\Pb(G) \sm M$ inherits
a plumbed structure from $\Pb(G)$. It has the same surfaces as vertex set with $F_1$ and $F_2$
replaced by $F_1 \sm ( D_e \cup D_{e'} )$ and $F_2 \sm ( D_{i(e)} \cup D_{i(e')})$. Its set of
edges is obtained by removing $e$ and $e'$ from the set of edges of $G$.
Note that $\Sigma = \partial \Pb(G) \sm M$ and we can use
Lemma~\ref{lem:KernelPb} to obtain a basis for $V_{N_-}$. The difference of meridians gives the basis elements
$[ S^1_{e}] - [S^1_{e'}], [ S^1_{i(e)}] - [S^1_{i(e')}]$.
The surface~$F_1$ has boundary $-\partial D_e \sqcup -\partial D_{e'}$, so that further elements of the basis are given by
\[ [-\partial D_e] + [-\partial D_{e'}] - \sum_{s(k) = v} \varepsilon(k) \mu_k
	= [-\partial D_e] + [-\partial D_{e'}],\]
where the equality follows from the fact that $G$ is balanced.
The analogous statements holds for the other surface $F_2$. Consequently,
the vector space $V_{N_-}$ admits the same basis~\eqref{eq:KernelBasis} as $V_{N_+}$, and hence they coincide.
In particular, Theorem~\ref{thm:Wall} applies, and the untwisted signature
is additive.

For the twisted signature, thanks to Proposition~\ref{prop:TwistedWall},
it is enough to prove that the twisted homology vanishes for $\Sigma$.
This happens exactly if the induced $U(1)$-representation is nontrivial.
This is the case, because $\phi$ is meridional and the entries of $\omega$ are taken to be different from $1$. Consequently, the signature
defect is additive and so
\[ \dsign_\omega Z = \dsign_\omega \Pb(G) \times I + \dsign_\omega X_{e,e'} = 0.\]
\end{proof}

Using Lemma~\ref{lem:handles}, we can prove our main result about plumbed manifolds.
\begin{proposition}\label{prop:PbFilling}
	Let $G = (V,E)$ be a balanced graph with vertices closed connected surfaces~$F$.
Suppose that $\phi \colon H_1(\Pb(G); \Z) \to \Z^\mu$ is a meridional homomorphism and that
	 $\Pb(G)$ bounds a $4$-manifold~$W$ over $\Z^\mu$. Then, for all $\omega \in \T^\mu$,
	\[ \sign_\omega W -\sign W=0. \]
\end{proposition}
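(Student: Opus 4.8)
The plan is to cap off $W$ using the cobordism produced by Lemma~\ref{lem:handles} and then combine Corollary~\ref{cor:twSignatureVanishing} with Novikov--Wall additivity. Since $G$ is balanced and $\phi$ is meridional, Lemma~\ref{lem:handles} supplies a smooth $4$--manifold $Z$ over $\Z^\mu$ with
\[ \partial Z = -\Pb(G) \sqcup \bigsqcup_{F \in V} \Sigma_F \times S^1, \]
each $\Sigma_F$ a closed oriented surface, with the induced map on $\bigsqcup_{F} \Sigma_F \times S^1$ again meridional, and with $\dsign_\omega Z = 0$ for all $\omega \in \T^\mu$.

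Next I would glue $W$ to $Z$ along their common boundary piece $\Pb(G)$, obtaining $W' := W \cup_{\Pb(G)} Z$. As the coefficient maps on $W$ and on $Z$ both restrict to $\phi$ on $H_1(\Pb(G);\Z)$, a Mayer--Vietoris argument yields a map $H_1(W';\Z) \to \Z^\mu$ extending the two, so $W'$ is a $4$--manifold over $\Z^\mu$ (topological, since $W$ need only be topological, which is fine by Remark~\ref{rem:TOP}). Its boundary is $\partial W' = \bigsqcup_{F \in V} \Sigma_F \times S^1$, a disjoint union of products of closed oriented connected surfaces with $S^1$, so Corollary~\ref{cor:twSignatureVanishing} applies and gives $\dsign_\omega W' = 0$ for every $\omega \in \T^\mu$.

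Finally, since $W'$ is assembled from $W$ and $Z$ along the \emph{closed} $3$--manifold $\Pb(G)$, the Novikov--Wall gluing region has empty boundary, so the hypotheses of Theorem~\ref{thm:Wall} and Proposition~\ref{prop:TwistedWall} hold vacuously and
\[ \sign_\omega W' = \sign_\omega W + \sign_\omega Z, \qquad \sign W' = \sign W + \sign Z. \]
Subtracting gives $\dsign_\omega W' = \dsign_\omega W + \dsign_\omega Z$, and since $\dsign_\omega W' = 0$ and $\dsign_\omega Z = 0$ we conclude $\dsign_\omega W = 0$, i.e.\ $\sign_\omega W - \sign W = 0$. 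The only mildly delicate point is verifying that the coefficient systems on $W$ and $Z$ patch over $\Pb(G)$; this is where one uses that the filling $W$ is compatible with $\phi$ and that $Z$ was constructed from the same $\phi$. All the genuine content — the rho-invariant vanishing for $\Sigma \times S^1$ and the handle attachments turning a balanced plumbing into products — has already been carried out in Section~\ref{sub:APS} and Lemma~\ref{lem:handles}, so the present proof is purely formal.
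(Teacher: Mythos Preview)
Your proof is correct and follows essentially the same route as the paper: invoke Lemma~\ref{lem:handles} to obtain the cobordism $Z$, glue to form $W' = W \cup_{\Pb(G)} Z$, apply Corollary~\ref{cor:twSignatureVanishing} to $W'$, and use Novikov additivity along the closed $3$--manifold $\Pb(G)$ to conclude. The paper's argument is slightly terser (it omits the discussion of patching the coefficient systems and the reference to Remark~\ref{rem:TOP}), but the logic is identical.
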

\begin{proof}
	Since the graph is balanced, Lemma~\ref{lem:handles} produces closed
surfaces $\Sigma_F$ and a $4$-manifold $Z$ over $\mathbb{Z}^\mu$ whose
signature defect vanishes, with boundary
\[\partial Z = -\Pb(G) \sqcup  \bigsqcup_{F \in V} \Sigma_F \times S^1.\]
One can now define $P:= W\cup_{\Pb(G)} Z$.  Since the boundary of $P$ consists
of a disjoint union of $\Sigma_F \times S^1$,
Corollary~\ref{cor:twSignatureVanishing} guaranties that $ \dsign_\omega P=0$.
As we are gluing along a full boundary component, Novikov additivity holds for both the twisted and untwisted signature, leading to $\dsign_\omega P = \dsign_\omega W + \dsign_\omega Z$.
Since we know that both $\dsign_\omega P$ and
$\dsign_\omega Z$ vanish, $\dsign_\omega W$ also vanishes.
\end{proof}

\subsection{Surfaces in the \texorpdfstring{$4$--ball}{4-ball}}
\label{sub:SurfacesD4}
In the remainder of the paper, plumbed $3$-manifolds will mostly appear as boundaries of tubular neighborhoods of collections of surfaces in the $4$-ball.
\medbreak

We observe that the exterior of a bounding surface contains a plumbed $3$-manifold in its boundary.
\begin{definition}\label{defn:IntersectionGraph}
The \emph{intersection graph~$(V,E)$ of a bounding surface}~$F = F_1\cup \dots \cup F_m$ has
the vertex set~$V = \{ F_1, \dots, F_m\}$. The set of edges~$E$ consists
of triples~$e = (x, F_i, F_j)$ where $x$ is an intersection point between
the components~$F_i, F_j \in V$. The maps $s,t,i$ are defined on $e$ by
\[ s(e) = F_i \quad t(e) = F_j \quad i(e) = (x, F_j, F_i) . \]
Moreover, we assign a weight $\varepsilon(e)=\pm 1$
to each edge~$e= (x, F_i, F_j)$ corresponding to the sign of the intersection at the point~$x$.
\end{definition}
Our interest in plumbed $3$-manifolds essentially lies in the next example,
which is only balanced if the link has 
pairwise vanishing
linking numbers.
\begin{example}\label{ex:PlumbingIntersections}
Let $F \subset D^4$ be a bounding surface for a link~$L$.
The boundary of the exterior~$W_F = D^4 \sm \nu F$ decomposes
into $\partial W_F = X_L \cup_{L\times S^1} M_F$.
Plumbing the trivialized disk bundles~$F_i \times D^2$ by the intersection graph of $F$ describes
a neighborhood~$\nu F$ of $F$. In this model, the surfaces~$F_i$ are recovered
as the zero sections~$F_i \times \{0\}$~\cite[Chapter 8]{Hirzebruch71}.
As consequence, we see that $M_F$ is diffeomorphic to $\Pb(G)$, where $G$ is the intersection graph of $F$.
\end{example}

Let~$F=F_1\cup\cdots \cup F_m$ be a bounding surface for a link~$L$, and let $L_i$ be the sublink given by $\partial F_i$, for $i=1,\dotsc , m$.
Denote as usual the exterior of $F$ by $W_F$.
Recall from Example~\ref{ex:PlumbingIntersections} that
$\partial W_F = X_L \cup_{L\times S^1} M_F$, where $M_F$ is a
plumbed $3$-manifold. Enumerate the components of $L_i$ and denote their meridians by
$\mu_k^{L_i}$ for $1 \leq k \leq n_{L_i}$, where $n_{L_i}$ is the number of components of $L_i$.
Define the linking number between two disjoint sublinks by
\[
\lk(L_i,L_j)=\sum_{\substack{K\subset L_i\\ J\subset L_j}}\lk(K,J) ,
\]
where the sum runs over the link components of~$L_i$ and $L_j$,
and set $\lk(L_i,L_i)=0$ for all $i$. The following computation will turn out to be useful when applying Novikov-Wall additivity.

\begin{lemma}\label{lem:SameKernel}
The vector space $V_{M_F} =  \ker H_1(L\times S^1; \R) \to H_1(M_F; \R) $ is generated by the elements of the form
\[    [L_i] - \sum_{j=1}^m \lk(L_i, L_j) \mu_{1}^{L_j}\quad \text{and} \quad  \mu_{k}^{L_i} - \mu_1^{L_i}.\]
\end{lemma}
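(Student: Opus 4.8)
The plan is to identify $V_{M_F}$ with the kernel described by Lemma~\ref{lem:KernelPb}, after unwinding how the meridians and longitudes of $L$ sit inside $M_F$. Recall from Example~\ref{ex:PlumbingIntersections} that $M_F \cong \Pb(G)$, where $G$ is the intersection graph of $F$; under this identification each vertex $F_i$ corresponds to a piece $F_i^\circ \times S^1 \subset M_F$. First I would set up the dictionary between the data in Lemma~\ref{lem:KernelPb} and the link-theoretic data: the meridian $\mu_k^{L_i}$ of the $k$-th component of $L_i$ is precisely the class $[\{pt\}\times S^1]$ over that boundary torus of $F_i^\circ$, i.e.\ one of the $\mu_k^F$ of Lemma~\ref{lem:KernelPb}; and $[L_i] = [\partial F_i \times \{pt\}] = \sum_k [l_k^{L_i}]$ is the class $[\partial F_i]$. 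So the basis elements $\mu_k^{L_i} - \mu_1^{L_i}$ transfer verbatim.

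The content lies in the other family of generators. Lemma~\ref{lem:KernelPb} gives, for each surface $F_i$, the element $[\partial F_i] - \sum_{s(e)=F_i} \varepsilon(e)\,\mu_1^{t(e)}$. The sum over edges $e$ with $s(e) = F_i$ runs over all intersection points of $F_i$ with the other components, weighted by the sign $\varepsilon(e)$ of the intersection; grouping these by the target component $F_j$, the coefficient of $\mu_1^{L_j}$ becomes $\sum_{e \in E(F_i,F_j)} \varepsilon(e)$, which is exactly the algebraic intersection number $F_i \cdot F_j$ in $D^4$. The key step is then to observe that this algebraic intersection number equals $\lk(L_i, L_j)$ for $i \neq j$ (and, by the convention $\lk(L_i,L_i)=0$, that the $j=i$ term drops out, consistent with the graph having no loops): this is the standard fact that the intersection number of two properly embedded surfaces in $D^4$ with disjoint boundaries computes the linking number of those boundaries. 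With this substitution, $[\partial F_i] - \sum_{s(e)=F_i}\varepsilon(e)\mu_1^{t(e)}$ becomes $[L_i] - \sum_{j=1}^m \lk(L_i,L_j)\,\mu_1^{L_j}$, as claimed.

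Finally I would note that the inclusion $L \times S^1 \hookrightarrow M_F$ is exactly the inclusion $\partial \Pb(G) \hookrightarrow \Pb(G)$ under the identification of Example~\ref{ex:PlumbingIntersections}, so $V_{M_F}$ is literally the kernel computed in Lemma~\ref{lem:KernelPb}, and the translation above rewrites its basis in the stated link-theoretic form. The main obstacle I anticipate is purely bookkeeping: getting the orientation conventions right so that the sign $\varepsilon(e)$ attached to an intersection point really does assemble into $+\lk(L_i,L_j)$ rather than its negative, and making sure the choice of a distinguished meridian $\mu_1^{L_j}$ (rather than, say, an average) is legitimate — but this is handled by the relations $\mu_k^{L_j} - \mu_1^{L_j} \in V_{M_F}$, which are already in our generating set, so any consistent choice yields the same subspace. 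No genuinely hard step is involved; the statement is a direct specialization of Lemma~\ref{lem:KernelPb} combined with the surface-intersection/linking-number identity.
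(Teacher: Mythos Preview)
Your proposal is correct and follows essentially the same route as the paper: apply Lemma~\ref{lem:KernelPb} to the plumbed description of $M_F$ from Example~\ref{ex:PlumbingIntersections}, then rewrite $\sum_{s(e)=F_i}\varepsilon(e)\,\mu_1^{t(e)}$ as $\sum_j (F_i\cdot F_j)\,\mu_1^{L_j}=\sum_j \lk(L_i,L_j)\,\mu_1^{L_j}$. Your discussion of the bookkeeping (dictionary of meridians/longitudes, sign conventions, choice of distinguished meridian) is more explicit than the paper's, but the argument is the same.
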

\begin{proof}
Consider the surface~$F = t(e)$ for an edge~$e$ and the corresponding sublink~$\partial F = \partial t(e) \subset S^3$, whose first component has meridian~$\mu^{\partial t(e)}_1$.
Applying Lemma~\ref{lem:KernelPb}, the component $F_i$ gives rise to the basis vectors
\[ [L_i] - \sum_{s(e) = F_i} \varepsilon(e) \mu_{1}^{\partial t(e)} \quad \text{and} \quad  \mu_{k}^{L_i} - \mu_1^ {L_i}.\]
The result follows by observing that
\[    \sum_{s(e) = F_i} \varepsilon(e) \mu_{1}^{\partial t(e)}
= \sum_{j=1}^m (F_i\cdot F_j) \mu_{1}^{L_j}
= \sum_{j=1}^m \lk(L_i, L_j) \mu_{1}^{L_j}.\]
\end{proof}

\section{Invariance by \texorpdfstring{$0.5$--solvable}{0.5-solvable} cobordisms}\label{sec:Solvable}
The aim of this section is to prove that the multivariable signature and nullity are invariant under $0.5$-solvable cobordism. Sections~\ref{sub:H1Bordism} and~\ref{sub:05Solvable} respectively review the notion of  $H_1$-cobordisms and $0.5$-solvable cobordisms. Section~\ref{sub:NullityProof} tackles the invariance of the nullity. Section~\ref{sub:SignatureProof} is concerned with invariance of the signature. Finally, Section~\ref{sub:Technical} proves some technical results which are used in Sections~\ref{sub:NullityProof} and~\ref{sub:SignatureProof}

\subsection{\texorpdfstring{$H_1$--cobordisms}{H1-cobordism}}\label{sub:H1Bordism}
In this section, we review the definition of an $H_1$-cobordisms between 3-manifolds and prove some elementary properties following~\cite{Cha}.
\medbreak
A \emph{cobordism}~$(W; M, M', \varphi)$ between two connected $3$-manifolds~$M, M'$ with a preferred orientation-preserving diffeomorphism~$\varphi\colon \partial M \to \partial M'$ is a compact connected $4$-manifold~$W$ with a decomposition~$\partial W \cong -M \cup_{\varphi} M'$. We will often suppress~$\varphi$ from the notation.
A cobordism $(W; M, M')$ is an \emph{$H_1$-cobordism}
if additionally the inclusions of $M$ and $M'$ into $W$ induce
isomorphisms~$ H_1(M; \Z) \xrightarrow{\cong} H_1(W; \Z) \xleftarrow{\cong} H_1(M' ; \Z)$.

We start by recalling some immediate facts about $H_1$-cobordisms.
\begin{lemma}\label{lem:H1bordism}
If $(W;M,M')$ is an $H_1$-cobordism, then the following statements hold:
\begin{enumerate}
\item $H_i(W,M; \Z)=0=H_i(W,M';\Z)$ for all $i \neq 2$.
\item The groups $H_2(W,M;\Z)$ and $H_2(W,M';\Z)$ are isomorphic and free abelian.
\item Denote by $k \colon H_2(\partial W; \Z) \to H_2(W; \Z)$ the map induced by the inclusion.
There exists a unique map~$\psi \colon H_2(W,M; \Z) \to H_2(W; \Z)/\im k$ such that
\[
\begin{tikzcd}
H_2(W; \Z)/\im k \ar[r] & H_2(W, \partial W; \Z)\\
H_2(W; \Z) \ar[u] \ar[r]& H_2(W,M; \Z) \ar[u] \ar[ul, dashed, "\psi"]
\end{tikzcd}
\]
is commutative. The map~$\psi$ is an isomorphism.
\end{enumerate}
\end{lemma}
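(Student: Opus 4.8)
The plan is to run the long exact sequences of the pairs $(W,M)$ and $(W,M')$, feed in the $H_1$-cobordism hypothesis, and then play this against Poincar\'e--Lefschetz duality and the universal coefficient theorem; statement (3) will come out of a diagram chase once the relevant maps are understood.

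For (1), the cases $i=0,1$ are immediate: $M\hookrightarrow W$ is surjective on $\pi_0$, so $H_0(W,M;\Z)=0$, and in $H_1(M;\Z)\to H_1(W;\Z)\to H_1(W,M;\Z)\to H_0(M;\Z)\to H_0(W;\Z)$ the first arrow is an isomorphism and the last is injective, whence $H_1(W,M;\Z)=0$; the same holds with $M'$ in place of $M$. For $i=3,4$ I would invoke Poincar\'e--Lefschetz duality $H_i(W,M;\Z)\cong H^{4-i}(W,M';\Z)$ together with the universal coefficient theorem: since $H_0(W,M';\Z)=H_1(W,M';\Z)=0$, both the $\Hom$ and the $\op{Ext}$ terms computing $H^0(W,M';\Z)$ and $H^1(W,M';\Z)$ vanish, so $H_3(W,M;\Z)=H_4(W,M;\Z)=0$; symmetrically for $(W,M')$. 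The remaining degrees vanish for dimension reasons, so only $i=2$ survives.

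For (2), part (1) collapses the long exact sequence of $(W,M)$ to a short exact sequence
\[ 0\to H_2(M;\Z)\to H_2(W;\Z)\xrightarrow{q} H_2(W,M;\Z)\to 0, \]
where surjectivity of $q$ uses that $H_2(W,M;\Z)\to H_1(M;\Z)$ vanishes, because $H_1(M;\Z)\to H_1(W;\Z)$ is injective. Freeness comes from duality and the universal coefficient theorem again: $H_2(W,M;\Z)\cong H^2(W,M';\Z)\cong\Hom_\Z(H_2(W,M';\Z),\Z)$, the $\op{Ext}$ term vanishing because $H_1(W,M';\Z)=0$, and $\Hom_\Z(-,\Z)$ of a finitely generated group is free. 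By the same argument with $M$ and $M'$ interchanged, $H_2(W,M';\Z)$ is finitely generated free; comparing ranks through the duality isomorphism shows the two groups have the same rank, hence are isomorphic.

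For (3), surjectivity of $q$ above allows us to set $\psi(q(x)):=[x]\in H_2(W;\Z)/\im k$. This is well defined because $\ker q=\im\bigl(H_2(M;\Z)\to H_2(W;\Z)\bigr)$ is contained in $\im k$ --- indeed $M\subset\partial W$, so $H_2(M;\Z)\to H_2(W;\Z)$ factors through $k$ --- and it is evidently the unique map whose composite with $q$ is the projection, which is exactly the commutativity demanded by the diagram; surjectivity of $\psi$ is then clear. The main obstacle is injectivity of $\psi$, which is equivalent to the equality $\im\bigl(H_2(M;\Z)\to H_2(W;\Z)\bigr)=\im k$, i.e.\ to $\ker\bigl(H_2(W;\Z)\to H_2(W,M;\Z)\bigr)=\ker\bigl(H_2(W;\Z)\to H_2(W,\partial W;\Z)\bigr)$; one inclusion is formal. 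For the other I would first show that $H_2(W,M;\Z)\to H_2(W,\partial W;\Z)$ is injective, and then chase the long exact sequence of the triple $(W,\partial W,M)$: a class in $H_2(W;\Z)$ dying in $H_2(W,\partial W;\Z)$ has image in $H_2(W,M;\Z)$ lying in $\im\bigl(H_2(\partial W,M;\Z)\to H_2(W,M;\Z)\bigr)=0$, hence already dies in $H_2(W,M;\Z)$. To see that $H_2(W,M;\Z)\to H_2(W,\partial W;\Z)$ is injective, I would use that Poincar\'e--Lefschetz duality is natural for enlarging the boundary piece one works relative to, which identifies this map with $H^2(W,M';\Z)\to H^2(W;\Z)$; the latter is injective exactly when $H^1(W;\Z)\to H^1(M';\Z)$ is surjective, and that holds because $H_1(M';\Z)\to H_1(W;\Z)$ is an isomorphism, so applying $\Hom_\Z(-,\Z)$ keeps it one. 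Keeping straight which boundary component plays the "dual side'' in that naturality statement is the one point where care is genuinely needed.
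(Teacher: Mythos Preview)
Your argument is correct and follows essentially the same route as the paper. The only cosmetic difference is in how $\psi$ is produced: you descend from $H_2(W;\Z)$ using the surjectivity of $q\colon H_2(W;\Z)\to H_2(W,M;\Z)$ established in (2), whereas the paper lifts through the injection $H_2(W;\Z)/\im k\hookrightarrow H_2(W,\partial W;\Z)$; in either construction the decisive step is the injectivity of $H_2(W,M;\Z)\to H_2(W,\partial W;\Z)$, and both you and the paper obtain it by dualizing to $H^1(W;\Z)\xrightarrow{\cong} H^1(M';\Z)$ via the $H_1$-cobordism hypothesis.
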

\begin{proof}
Since the first two assertions can be found in~\cite[Lemma 2.20]{Cha}, we only show here the third one here.
As a first step, we show that the map $i \colon H_2(W,M; \Z) \to H_2(W,\partial W; \Z)$
arising from the long exact sequence of the triple~$(W,\partial W,M)$ is an injection. To prove this,
consider the diagram
$$\xymatrix@C0.6cm@R0.8cm{
\text{Hom}(H_1(W; \Z),\mathbb{Z}) \ar[r] & \text{Hom}(H_1(M'; \Z),\mathbb{Z}) \\
H^1(W; \Z) \ar[r]^f \ar[dd]^{\text{PD}}_\cong \ar[u]^\cong_{\text{ev}} & H^1(M'; \Z) \ar[d]^{\text{PD}}_\cong \ar[u]^\cong_{\text{ev}}\\
 & H_2(M',\partial M'; \Z) \ar[d]^{\text{exc}}_\cong  \\
H_3(W,\partial W; \Z) \ar[r] & H_2(\partial W, M; \Z) \ar[r] & H_2(W,M; \Z) \ar[r]^i & H_2(W,\partial W; \Z),
}$$
where $\text{exc}$ denotes excision. The upper square clearly commutes, while the pentagon commutes
by~\cite[Section $\text{VI}.6$, Problem $3$]{Bredon}.
Since $(W;M,M')$ is an $H_1$-cobordism, the uppermost horizontal map is an isomorphism. Consequently,
the map $f$ is an isomorphism and therefore so is the map~$H_3(W,\partial W; \Z) \to H_2(\partial W,M; \Z)$.
Exactness now implies that $i \colon H_2(W,M; \Z) \to H_2(W,\partial W; \Z)$ is injective.

As a second step, we show existence and uniqueness of~$\psi \colon H_2(W,M; \Z) \to \frac{H_2(W; \Z)}{\im k}$.
The portion \[ H_2(\partial W; \Z) \stackrel{k}{\to} H_2(W; \Z) \stackrel{j}{\to} H_2(W,\partial W; \Z) \stackrel{\partial}{\to} H_1(\partial W; \Z) \stackrel{\ell}{\to} H_1(W; \Z) \]
of the long exact sequence of the pair $(W,\partial W)$ produces the short exact sequence in the top row of the following commutative diagram:
\[\xymatrix{
0 \ar[r] & \frac{H_2(W; \Z)}{\im k} \ar[r]^-{j} & H_2(W,\partial W; \Z) \ar[r]^\partial & \ker \ell  \ar[r]& 0 \\
& H_2(W; \Z) \ar[r] \ar@{->>}[u]& H_2(W,M; \Z) \ar[u]^i \ar@{-->}[lu]^{\psi} \ar[r] & \ker \big( H_1(M; \Z) \to H_1(W; \Z) \big). \ar[u]
}\]
Since $(W;M,M')$ is an $H_1$-cobordism, the group $\ker (H_1(M; \Z) \to
H_1(W; \Z))$ vanishes. Consequently, given $x \in H_2(W,M; \Z)$, the composition
$\partial(i(x))$ is zero and so, by exactness, there exists $[y] \in
\frac{H_2(W; \Z)}{\im k}$ such that $j([y])=i(x)$. We therefore define
$\psi(x):=[y]$. As $j$ is injective, $\psi$ is well-defined.
By construction $j \circ \psi=i$.

Next, we show that $\psi$ is an isomorphism. Injectivity is
immediate from the diagram above and the fact that $i$ is injective.
As $\ker (H_1(M; \Z) \to H_1(W; \Z))  = 0$, we obtain the following commutative diagram
\[\begin{tikzcd}
H_2(W; \Z)/\im k \ar[r] & H_2(W, \partial W; \Z)\\
H_2(W; \Z) \ar[u, twoheadrightarrow] \ar[r, twoheadrightarrow]& H_2(W,M; \Z) \ar[u] \ar[ul, "\psi"]
\end{tikzcd},\]
which shows the surjectivity of $\psi$.
\end{proof}

Given an $H_1$-cobordism $(W;M,M')$ with a map $H_1(W;\Z) \to \Z^\mu$, we shall often consider homology and cohomology with twisted coefficients in either $R=\Q(\Z^\mu)$ or $R=\C^\omega$ (for $\omega\in \T_!^\mu$). In both cases, we denote the underlying fields~$\Q(\Z^\mu)$ or
$\C$ by $\F$, so that the twisted (co-)homology groups are vector spaces over $\F$.
As in Section~\ref{sub:Twisted}, for a pair $(X,Y)$ we denote by
$\beta_i(X,Y)$ the rank of $H_i(X,Y;\Z)$ and by $\beta_i^\omega(X,Y)$ the dimension of $H_i(X,Y;\mathbb{C}^\omega)$.
We conclude this subsection with a consequence of Lemma~\ref{lem:Cone}.

\begin{lemma}\label{lem:ChainHomotopy}
Let $(W;M,M')$ be an $H_1$-cobordism
equipped with a homomorphism $H_1(W;\Z) \to \Z^\mu$.
Then both $H_i(W,M;\Q(\Z^\mu))$ and $H_i(W,M;\C^\omega)$ vanish for $i \neq 2$
and for all $\omega \in \T_!^\mu$. In particular, $\beta_2^\omega(W,M)$ equals $\beta_2(W,M)$.
\end{lemma}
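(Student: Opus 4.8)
The plan is to reduce the statement to Lemma~\ref{lem:Cone} by checking its hypothesis, namely that the integral relative homology $H_i(W,M;\Z)$ vanishes for $0\leq i\leq k$ for a suitable $k$. By part~(1) of Lemma~\ref{lem:H1bordism}, we already know that $H_i(W,M;\Z)=0$ for all $i\neq 2$; in particular $H_i(W,M;\Z)=0$ for $i=0,1$. First I would apply Lemma~\ref{lem:Cone} with $k=1$ to the CW-pair $(W,M)$, which is a pair over $B\Z^\mu$ via the given homomorphism $H_1(W;\Z)\to\Z^\mu$: this immediately yields $H_i(W,M;\Q(\Z^\mu))=0$ and $H_i(W,M;\C^\omega)=0$ for $i=0,1$ and all $\omega\in\T^\mu_!$. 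To handle the vanishing in degrees $i=3,4$, I would pass to the other end of the cobordism. Since $(W;M,M')$ is also an $H_1$-cobordism with the roles of $M$ and $M'$ interchanged (the defining isomorphisms are symmetric in $M$ and $M'$), the same argument gives $H_i(W,M';R)=0$ for $i=0,1$ and $R\in\{\Q(\Z^\mu),\C^\omega\}$. Then I would invoke Lemma~\ref{lem:DualityUCSS}, applied to $W$ with $\partial W = M\cup_\partial M'$: it gives $\beta_{4-i}^R(W,M)=\beta_i^R(W,M')$ for $i=0,1$, hence $H_3(W,M;R)$ and $H_4(W,M;R)$ also vanish. Combining, $H_i(W,M;R)=0$ for all $i\neq 2$ and both coefficient systems.

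For the final sentence of the statement, I would compare Euler characteristics. The Euler characteristic of the complex $R\otimes_{\Lambda} C(W,M;\Lambda)$ equals that of $\Z\otimes_{\Lambda}C(W,M;\Lambda)=C(W,M;\Z)$, since tensoring a finitely generated free chain complex with a ring does not change the alternating sum of ranks; equivalently one can use that $\C^\omega$ and $\Q(\Z^\mu)$ are flat over suitable localizations so the Euler characteristic is computable cellwise. Therefore
\[
(-1)^2\beta_2^\omega(W,M) \;=\; \chi(W,M) \;=\; (-1)^2\beta_2(W,M),
\]
because in both cases all homology is concentrated in degree $2$. Hence $\beta_2^\omega(W,M)=\beta_2(W,M)$, as claimed; the same reasoning applies verbatim with $\Q(\Z^\mu)$-coefficients.

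The only mildly delicate point is making sure Lemma~\ref{lem:Cone} is being applied legitimately: its hypothesis asks for a pair of CW-complexes over $B\Z^\mu$ with vanishing integral relative homology up to degree $k$, and here the map to $\Z^\mu$ is the one carried by $W$ (restricted to $M$), which exists by hypothesis. This is routine. I expect no real obstacle: the lemma is essentially a bookkeeping corollary combining Lemma~\ref{lem:H1bordism}(1), the chain-homotopy vanishing of Lemma~\ref{lem:Cone}, and Poincaré–Lefschetz duality via Lemma~\ref{lem:DualityUCSS}. The one thing to be careful about is the duality step in degrees $3$ and $4$: one must feed Lemma~\ref{lem:DualityUCSS} the vanishing of $H_{0},H_1$ of the pair $(W,M')$ rather than $(W,M)$, which is why the symmetry of the $H_1$-cobordism condition is invoked.
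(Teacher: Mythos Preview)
Your proposal is correct and follows essentially the same route as the paper: use Lemma~\ref{lem:H1bordism}(1) plus Lemma~\ref{lem:Cone} for $i=0,1$, then duality via Lemma~\ref{lem:DualityUCSS} (applied with the roles of $M$ and $M'$ swapped) for $i=3,4$, and finish with the Euler characteristic argument. Your careful remark about needing the vanishing for the pair $(W,M')$ in the duality step is exactly the point the paper uses as well.
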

\begin{proof}
Let $R = \Q(\Z^\mu)$ or $\C^\omega$.
Since $W$ is an $H_1$-cobordism, Lemma~\ref{lem:H1bordism} ensures that~$H_i(W,M;\Z)=0$ for $i \neq 2$. Lemma~\ref{lem:Cone} implies that $H_i(W,M;R)=0$ for $i=0,1$ and Lemma~\ref{lem:DualityUCSS} guarantees that for $i=3,4$, we have
\[ H_i(W, M; R) \cong H^{4-i} (W, M'; R) \cong \operatorname{Hom}_\F( H_{4-i}(W, M'; R), \F )^{\operatorname{tr}}
= 0.\]
The last claim now follows since the Euler characteristic of $(W,M)$ may be computed indifferently using $\Z$-coefficients or $R$-coefficients.
\end{proof}

\subsection{\texorpdfstring{$0.5$--solvable}{0.5-solvable} cobordisms}
\label{sub:05Solvable}
We review here the notion of $0.5$-solvable cobordism as defined in~\cite{Cha}. For simplicity, we avoid discussing $n$-solvability and $n.5$-solvability, referring to~\cite{Cha} for a more general treatment.
\medbreak
In the following paragraphs, given an $H_1$-cobordism $(W;M,M')$, we use $H$ as a shorthand for $H_1(W;\Z)$ and use $\lambda_1$ to denote the $\Z[H]$-valued intersection form on $H_2(W;\Z[H])$.
Recall the following definition from~\cite[Definition 2.8]{Cha},
which extends the definition of solvability from Cochran-Orr-Teichner's work~\cite{CochranOrrTeichner} to a relative notion.
\begin{definition}\label{def:nSolvable}
An $H_1$-cobordism~$(W;M,M',\varphi)$ is a \emph{$0.5$-solvable cobordism} if there exists a submodule~$\mathcal{L} = \langle l_1, \ldots, l_r \rangle \subset H_2(W;\Z[H])$ together with homology classes $d_1,\ldots, d_r \in H_2(W;\Z)$ that satisfy the following properties:
\begin{enumerate}
\item the intersection form~$\lambda_1$ vanishes on $\mathcal{L}$;
\item the image of $\mathcal{L}$ under the composition
$H_2(W;\Z[H]) \to H_2(W;\Z) \to H_2(W,M;\Z)$ has rank~$r \geq \frac{1}{2} \rk H_2(W, M; \Z)$;
\item the images~$l_i' \in H_2(W; \Z)$ of the elements~$l_i$ fulfill the relation~$\lambda_1(l_i',d_j)=\delta_{ij}$ for each~$1 \leq i,j \leq  r$;
\end{enumerate}
We refer to $\mathcal{L}$ as a \emph{$1$--lagrangian}, and to $\mathcal{D}:=\langle d_1,\dots, d_r\rangle$ as its \emph{$0$--dual}.
\end{definition}

\begin{remark}
Suppose that $(W; M, M')$ is a $0.5$--solvable cobordism $(W; M, M')$ with $1$--langrangian~$\mathcal{L}= \langle l_1, \ldots, l_r \rangle$. Then the images of the $l_i$'s in $H_2(W,M;\Z)$ span a free submodule of rank~$r$, since they are dual to the~$d_i$'s.
\end{remark}

For further reference, we make note of the following result, whose proof is outlined in~\cite[Proof of Theorem $3.2$]{Cha}.
\begin{proposition}\label{prop:ordinarysignature}
The signature~$\sign W$ of a $0.5$--solvable cobordism~$(W; M, M')$ vanishes.
\end{proposition}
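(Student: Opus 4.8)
The plan is to reduce the statement to the elementary fact that a non-degenerate symmetric bilinear form over $\Q$ admitting a Lagrangian (a totally isotropic subspace of half the dimension) has vanishing signature. First I would pin down the relevant non-degenerate form. The ordinary intersection form $\lambda=\lambda_\Q$ on $H_2(W;\Q)$ is degenerate, and its radical is exactly the image of $H_2(\partial W;\Q)$: by Definition~\ref{def:int} the form factors as
\[ H_2(W;\Q)\longrightarrow H_2(W,\partial W;\Q)\xrightarrow{\ \PD\ }H^2(W;\Q)\xrightarrow{\ \ev\ }\Hom_\Q\big(H_2(W;\Q),\Q\big), \]
where the last two maps are isomorphisms (Poincar\'e--Lefschetz duality and the universal coefficient theorem over a field), while the kernel of the first is $\im\big(H_2(\partial W;\Q)\to H_2(W;\Q)\big)$ by exactness of the long exact sequence of $(W,\partial W)$. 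Hence $\sign W$ is the signature of the induced non-degenerate form $\bar\lambda$ on $V:=H_2(W;\Q)/\im\big(H_2(\partial W;\Q)\to H_2(W;\Q)\big)$, and by Lemma~\ref{lem:H1bordism}, part~(3), tensored with $\Q$, one has $\dim_\Q V=\rk_\Z H_2(W,M;\Z)=:n$.

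Next I would extract a Lagrangian for $\bar\lambda$ from the $1$--lagrangian $\mathcal L=\langle l_1,\dots,l_r\rangle$, with $0$--dual $\mathcal D=\langle d_1,\dots,d_r\rangle$, furnished by $0.5$--solvability. Let $l_i'\in H_2(W;\Z)\subseteq H_2(W;\Q)$ be the images of the $l_i$, and set $L=\Span_\Q(l_1',\dots,l_r')$. Applying the augmentation $\Z[H]\to\Z$ to condition~(1) of Definition~\ref{def:nSolvable} turns the vanishing of the $\Z[H]$--valued form $\lambda_1$ on $\mathcal L$ into the vanishing of $\lambda$ on $L$. Condition~(3), which pairs $l_i'$ with the $0$--dual classes by $\lambda(l_i',d_j)=\delta_{ij}$, shows simultaneously that the $l_i'$ are linearly independent and that $L$ meets the radical trivially: if $\sum_i a_i l_i'$ lies in the radical then $a_j=\lambda\big(\sum_i a_i l_i',d_j\big)=0$ for every $j$. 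Consequently the image $\bar L$ of $L$ in $V$ is a totally isotropic subspace of $(V,\bar\lambda)$ of dimension exactly $r$.

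Finally I would invoke non-degeneracy twice. Since $\bar\lambda$ is non-degenerate, $\bar L\subseteq\bar L^{\perp}$ gives $r=\dim\bar L\le\dim\bar L^{\perp}=n-r$, so $r\le n/2$; on the other hand condition~(2) of Definition~\ref{def:nSolvable} reads $r\ge\tfrac12\rk_\Z H_2(W,M;\Z)=n/2$. Thus $r=n/2$, the integer $n$ is even, and $\bar L$ is a Lagrangian for $\bar\lambda$. Passing to $\R$, any positive- or negative-definite subspace of $V\otimes_\Q\R$ intersects the isotropic subspace $\bar L\otimes_\Q\R$ only in $0$, so both indices are at most $n/2$, hence equal, and therefore $\sign W=\sign\bar\lambda=0$. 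I do not expect any real obstacle here: the only points requiring a little care are the identification of the radical of the intersection form with the image of the boundary and the compatibility of the $\Z[H]$--valued and ordinary intersection forms under augmentation, both of which are standard; the rest is the linear algebra of metabolic forms.
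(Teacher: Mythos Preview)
Your proof is correct and follows essentially the same strategy as the paper's (very terse) argument: both identify the non-degenerate quotient form on $H_2(W;\Q)/\im\big(H_2(\partial W;\Q)\big)$, use the isomorphism of Lemma~\ref{lem:H1bordism}(3) to read off its dimension as $\rk H_2(W,M;\Z)$, and exhibit the image of the $1$--lagrangian as a Lagrangian for this form. Your write-up supplies the details the paper leaves implicit---most notably the use of the $0$--dual classes to verify that the $l_i'$ are linearly independent and disjoint from the radical, and the two-sided bound $r\le n/2\le r$ forcing $r=n/2$---but the underlying idea is the same.
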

\begin{proof}
Let $\mathcal{L}_{M,\Z}$ be the image of the $1$--lagrangian~$\mathcal{L}$ under~$H_2(W; \Z[H]) \to H_2(W, M; \Z)$. Let $\varphi \colon H_2(W,M; \Z) \to H_2(W;\Z) / \im  H_2(\partial W;\Z)$ be the isomorphism of Lemma~\ref{lem:H1bordism}.
The subspace $\varphi(\mathcal{L}_{M,\Z})$
is Lagrangian for the non-singular intersection pairing $\lambda_\Q$ of $W$.
Consequently, the signature of $W$ vanishes.
\end{proof}

The next definition is an adaptation to the colored framework of the definition given by Cha~\cite{Cha}. Recall that the boundary~$L \times S^1 = \partial X_L$ of a link exterior $X_L$
inherits a product structure by longitudes and meridians, which is well-defined up to isotopy.
A bijection~$\sigma$ of the link components of two links~$L, L'$
induces an orientation-preserving
diffeomorphism~$\varphi_\sigma \colon L \times S^1 \to L' \times S^1$
preserving the product structures, which is unique up to isotopy.
\begin{definition} \label{def:nSolvLink}
Two colored links~$L, L'$ are \emph{$0.5$-solvable cobordant}
if there exists a bijection~$\sigma$ between the components of $L$ and of $L'$
which preserves the colors and
there is a $0.5$-solvable cobordism $(W; X_L, X_{L'}, \varphi_\sigma)$.
\end{definition}
\begin{example} Suppose $L$ and $L'$ are concordant, and let $W$ be a concordance exterior. Then $(W; X_L, X_{L'})$ is a homology cobordism, which is a $0.5$--solvable cobordism since $H_2(W,X_{L};\Z)=0$.
\end{example}

Recall from Section~\ref{sub:Setup} that the exterior~$X_{L}$ of a $\mu$-colored link~$L$ is equipped
with a homomorphism~$\beta_{L} \colon H_1(X_{L}; \Z) \to \Z^\mu$. A $0.5$-solvable cobordism between two colored links $L$ and $L'$ fits into the commutative diagram
\begin{equation} \label{eq:CompLabel}
	\begin{tikzcd}
		H_1(X_{L}; \Z) \ar[r,"i"] \ar[rd, swap, "\beta_{L}"]  \ar[rr, bend left, "j_\sigma"]& H_1(W;\Z) & \ar[l,swap,"i'"]  \ar[ld, "\beta_{L'}"] H_1(X_{L'}; \Z) \\
		& \Z^\mu &
	\end{tikzcd}
\end{equation}
where $j_\sigma$ is the isomorphism that sends the meridian of a component $K$ of $L$ to the meridian of the corresponding component $\sigma(K)$ of $L'$.
We recall that the linking number between two disjoint sublinks is defined as the sum over the linking numbers of all their respective components (see Section \ref{sub:SurfacesD4}).

\begin{lemma}\label{rem:SameLN}
	Let $L$ and $L'$ be two $H_1$--cobordant oriented links. If~$(W; X_L, X_{L'},\varphi_\sigma)$ is a cobordism between them, then
	\[ \lk(J, K) = \lk(\sigma(J), \sigma(K)) \]
for each pair of components $J,K$ of $L$.
	In particular, if $L$ and $L'$ are concordant as $\mu$-colored links, then~$\lk(L_i, L_j) = \lk(L_i', L_j')$ for each pair of colors $i,j$.
\end{lemma}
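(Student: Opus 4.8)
The plan is to deduce the equality of linking numbers from the homological invariance of the pairwise linking form under a cobordism that is a product of a homology cobordism with meridional data. The key point is that the linking number $\lk(J,K)$ of two disjoint knots in $S^3$ is computed, inside the link exterior $X_L$, as the image of the longitude $l_J$ of $J$ under the composite $H_1(X_L;\Z) \to \Z^{\,n}$ (total linking numbers with all components) projected onto the $K$--coordinate; equivalently, $\lk(J,K)$ is the coefficient of the meridian $\mu_K$ when $l_J$ is expressed in the basis of $H_1(X_L;\Z)$ given by the meridians. So I would first record this standard description: if $\{\mu_{K_1},\dots,\mu_{K_n}\}$ is the meridian basis of $H_1(X_L;\Z)$, then $l_{K_i} = \sum_{j} \lk(K_i,K_j)\,\mu_{K_j}$, and analogously for $L'$.

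Next I would use the hypothesis that $(W;X_L,X_{L'},\varphi_\sigma)$ is a cobordism in which both inclusions induce isomorphisms on $H_1(-;\Z)$. Under the preferred diffeomorphism $\varphi_\sigma\colon L\times S^1\to L'\times S^1$, meridians go to meridians and longitudes go to longitudes; hence the isomorphism $j_\sigma\colon H_1(X_L;\Z)\xrightarrow{\cong} H_1(X_{L'};\Z)$ obtained by composing $i$ with the inverse of $i'$ (as in diagram~\eqref{eq:CompLabel}) sends $\mu_{K}\mapsto \mu_{\sigma(K)}$, and moreover it sends $l_{K}\mapsto l_{\sigma(K)}$ because $\varphi_\sigma$ preserves the product structure and the longitude of a boundary component of $X_L$ maps to the longitude of the corresponding boundary component of $X_{L'}$. (Strictly, one must note that the longitude is determined up to meridians by the image of the Seifert-framed curve $l_K\times\{pt\}$ under the inclusion $L\times S^1\hookrightarrow X_L$, and $j_\sigma$ respects this since it is induced by a genuine homeomorphism of pairs $\varphi_\sigma$.) Applying $j_\sigma$ to the relation $l_{K_i}=\sum_j \lk(K_i,K_j)\mu_{K_j}$ and comparing with the analogous relation in $H_1(X_{L'};\Z)$, which has the meridians $\mu_{\sigma(K_j)}$ as a basis, yields $\lk(K_i,K_j)=\lk(\sigma(K_i),\sigma(K_j))$ for all $i,j$.

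For the final sentence, if $L$ and $L'$ are concordant $\mu$-colored links, then by definition there is a colored cobordism consisting of annuli with no double points between them, hence in particular an $H_1$--cobordism $(W;X_L,X_{L'},\varphi_\sigma)$ with $\sigma$ color-preserving. Summing the equality $\lk(J,K)=\lk(\sigma(J),\sigma(K))$ over all components $J\subset L_i$ and $K\subset L_j$ of colors $i\ne j$ gives $\lk(L_i,L_j)=\lk(L_i',L_j')$; the case $i=j$ is trivial since both sides are defined to be $0$.

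The main obstacle is the careful bookkeeping in the middle step: verifying that $j_\sigma$ really does carry longitudes to longitudes. The subtlety is that "the longitude" of a link component is only well-defined as a curve on the boundary torus after fixing the $0$-framing, and one must check that $\varphi_\sigma$ — which is only required to preserve the product structure up to isotopy — carries the $0$-framed longitude of $K$ to a curve homologous in $X_{L'}$ to the $0$-framed longitude of $\sigma(K)$. This follows because $\varphi_\sigma$ preserves meridians and the ambient orientations, so it preserves the parallel framing class; but it is worth spelling out, since everything else is formal manipulation with Mayer--Vietoris–free homology.
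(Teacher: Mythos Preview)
Your argument is correct and follows essentially the same route as the paper's proof: express the longitude of a component in the meridian basis of $H_1(X_L;\Z)$ so that its coordinates are the linking numbers, then use that the isomorphism $j_\sigma=(i')^{-1}\circ i$ carries meridians to meridians and longitudes to longitudes (because $\varphi_\sigma$ glues them), forcing the coordinates to match. The paper handles your ``main obstacle'' simply by convention: the product structure on $\partial X_L=L\times S^1$ is declared to be the one given by the Seifert longitudes and meridians, and $\varphi_\sigma$ is required to preserve it, so no further framing argument is needed.
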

\begin{proof}
The abelian group $H_1(X_{L}; \Z)$ is freely generated by the meridians of $L$, so that every element $x\in H_1(X_{L};\Z)$ has a well defined coordinate $x_K$ corresponding to the meridian of $K$. By definition, the linking number $\lk(J, K)$ is the coordinate~$b_K$ of the longitude $b$ of $J$.
		Let $b'\in H_1(X_{L'};\Z)$ be the longitude of $\sigma(J)$.
		Since the longitudes are glued together,
		we have $i(b)=i'(b') \in H_1(W; \Z)$ in Diagram~\eqref{eq:CompLabel}, and hence $j_\sigma(b)=b'$ by commutativity of the diagram.
		As the map $j_\sigma$ sends meridians to meridians, it preserves the coordinates, and hence $b'_{\sigma(K)}=b_K$. The proof of the first statement is concluded by observing that $b'_{\sigma(K)}$ is by definition the linking number between $\sigma(J)$ and $\sigma(K)$. The equality concerning $\mu$-colored links follows immediately from the fact that the cobordism preserves the colors.
\end{proof}

Given a $H_1$-cobordism $(W;M,M')$ with a map $H_1(W;\Z)\to \Z^\mu$,  the homomorphism $\Z[H_1(W;\Z)] \to \C$
and the canonical map~$\Z[H_1(W;\Z)] \to \Q(\Z^\mu)$ induce homomorphisms $i_R  \colon H_2(W;\Z[H_1(W;\Z)]) \to H_2(W;R)$ and $i_{M,R} \colon H_2(W;\Z[H_1(W;\Z)]) \to H_2(W,M;R)$,
 where $R$ stands either for~$\C^\omega$ or for $\Q(\Z^\mu)$. Also, we write $\lambda_R$ for the $\F$-valued intersection form on~$H_2(W;R)$.

The invariance of the signature and nullity will hinge on the following two results whose proof we delay until Section~\ref{sub:Technical}.

\begin{proposition}\label{prop:ComplexLagrangian}
Let $R$ be either $\Q(\Z^{\mu})$ or $\C^\omega$, with $\omega \in \T^\mu_!$. Let $(W; M, M')$ be an $0.5$--solvable cobordism over~$\Z^\mu$ with $1$--lagrangian~$\mathcal{L} = \langle l_1, \ldots, l_r \rangle$.
Then both subspaces
\begin{align*}
\mathcal{L}_{R} &= \langle i_{R} (l_1), \ldots, i_{R} (l_r)\rangle \subset H_2(W; R)\\
	\mathcal{L}_{M, R} &= \langle i_{M,R} (l_1), \ldots, i_{M,R} (l_r)\rangle \subset H_2(W, M; R)
\end{align*}
have dimension $r$.
Furthermore, they satisfy the following two properties:
\begin{enumerate}
\item the intersection form~$\lambda_{R}$ vanishes on $\mathcal{L}_{R}$.
\item $\dim \mathcal{L}_{R} = r  \geq \frac{1}{2}\dim_\Q H_2(W,M;\Q)$.
\end{enumerate}
\end{proposition}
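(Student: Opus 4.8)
The plan is to reduce the whole statement to the $\F$-linear independence of the classes $i_{M,R}(l_1),\dots,i_{M,R}(l_r)$ in $H_2(W,M;R)$. Since $i_{M,R}(l_i)$ is the image of $i_R(l_i)$ under the natural map $H_2(W;R)\to H_2(W,M;R)$, this independence forces independence of the $i_R(l_i)$ as well, so $\dim\mathcal L_R=\dim\mathcal L_{M,R}=r$. Property~(2) is then immediate, because $\dim_\Q H_2(W,M;\Q)=\rk_\Z H_2(W,M;\Z)$ while condition~(2) of Definition~\ref{def:nSolvable} is exactly $r\geq\tfrac12\rk_\Z H_2(W,M;\Z)$. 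Property~(1) follows formally from the naturality of the twisted intersection form under the ring-with-involution homomorphism $\Z[H]\to R$: one checks that $\lambda_R(i_R(x),i_R(y))$ is the image (up to conjugation) of $\lambda_1(x,y)$, which vanishes for $x,y\in\mathcal L$ by condition~(1). So the real content is the asserted independence.

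To obtain it, I would first turn the $0$-dual $\mathcal D=\langle d_1,\dots,d_r\rangle$ into a splitting over $\Z$. The ordinary intersection form on $H_2(W;\Z)$ factors through $H_2(W;\Z)\to H_2(W,\partial W;\Z)$, hence — as $M\subset\partial W$ and $(W;M,M')$ is an $H_1$-cobordism, so $H_2(W;\Z)\to H_2(W,M;\Z)$ is onto with kernel $\im H_2(M;\Z)$ — each $d_j$ defines a functional $\bar f_j\colon H_2(W,M;\Z)\to\Z$, and condition~(3) reads $\bar f_j(\bar l_i)=\lambda_\Z(l_i',d_j)=\delta_{ij}$, where $\bar l_i\in H_2(W,M;\Z)$ is the image of $l_i$. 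Thus the $\bar l_i$ span a free rank-$r$ direct summand of the free abelian group $H_2(W,M;\Z)$, and under the Lefschetz duality identification $H_2(W,M';\Z)\cong H^2(W,M;\Z)\cong\Hom_\Z(H_2(W,M;\Z),\Z)$ (valid since $H_1(W,M;\Z)=0$), the images $d_j''$ of the $d_j$ in $H_2(W,M';\Z)$ correspond precisely to the functionals $\bar f_j$.

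The key step is to promote this duality to $R$-coefficients through the localisation $U^{-1}\Lambda$, using the determinant trick of Lemma~\ref{lem:DeterminantTrick}; this is where $\omega\in\T^\mu_!$ is used, guaranteeing that $R$ is a $U^{-1}\Lambda$-module. Running the chain-homotopy argument of Lemma~\ref{lem:Cone} for the pairs $(W,M)$ and $(W,M')$, whose $\Z$-homology vanishes outside degree $2$ by Lemma~\ref{lem:H1bordism}, and invoking Lefschetz duality, one gets $H_i(W,M;U^{-1}\Lambda)=H_i(W,M';U^{-1}\Lambda)=0$ for $i\neq2$. Hence the relevant universal coefficient spectral sequences collapse, giving
\[ H_2(W,M;\Z)\cong\Z\otimes_{U^{-1}\Lambda}H_2(W,M;U^{-1}\Lambda),\qquad H_2(W,M;R)\cong R\otimes_{U^{-1}\Lambda}H_2(W,M;U^{-1}\Lambda), \]
and likewise for $M'$. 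In particular the surjection $H_2(W,M';U^{-1}\Lambda)\twoheadrightarrow H_2(W,M';\Z)$ lets us lift $d_j''$ to a class $\widetilde d_j\in H_2(W,M';U^{-1}\Lambda)$, and the second isomorphism identifies $i_{M,R}(l_i)$ with the image of a lift $q_i\in H_2(W,M;U^{-1}\Lambda)$ of $\bar l_i$. Lefschetz duality and the evaluation map furnish a $U^{-1}\Lambda$-valued pairing $H_2(W,M;U^{-1}\Lambda)\times H_2(W,M';U^{-1}\Lambda)\to U^{-1}\Lambda$ natural under $U^{-1}\Lambda\to\Z$ and $U^{-1}\Lambda\to R$. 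Evaluating it on the $q_i$ and the $\widetilde d_j$, naturality under the augmentation together with $\langle\bar l_i,d_j''\rangle=\delta_{ij}$ shows that the matrix $G=[\langle q_i,\widetilde d_j\rangle]_{i,j}\in M_r(U^{-1}\Lambda)$ has augmentation the identity; hence $\det G$ has augmentation $\pm1$, so $\det G$ is a unit of $U^{-1}\Lambda$, and $G\otimes_{U^{-1}\Lambda}R$ is invertible over $\F$. Since this last matrix computes the pairings of the $i_{M,R}(l_i)$ against the images of the $\widetilde d_j$ in $H_2(W,M';R)$, the classes $i_{M,R}(l_1),\dots,i_{M,R}(l_r)$ are $\F$-linearly independent.

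I expect this last step to be the main obstacle. Working over $\Z[H]$ directly fails, because the $0$-duals $d_j$ need not lift to $H_2(W;\Z[H])$; the remedy is to pass to the relative groups of $(W,M')$ over $U^{-1}\Lambda$, where by the chain-homotopy technology of Section~\ref{sub:ConcordanceRoots} the homology concentrates in degree $2$ and therefore surjects onto the integral homology — it is exactly this concentration that breaks down at concordance roots, which is why $\omega$ must avoid them. The remaining verifications (naturality of the Lefschetz duality and evaluation maps under change of coefficient rings, the fact that an element of $U^{-1}\Lambda$ with augmentation $\pm1$ is a unit, and the sesquilinearity bookkeeping) are routine.
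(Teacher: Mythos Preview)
Your argument is correct but follows a genuinely different route from the paper's. Both proofs reduce to showing that the images $i_{M,R}(l_i)$ are linearly independent in $H_2(W,M;R)$, and both use the $0$--duals $d_j$ to first certify the integral linear independence of the $\bar l_i$ via the pairing $\lambda_{M,\Z}$. From there the approaches diverge. The paper proves a general transfer principle (Proposition~\ref{prop:relIndep}): whenever classes $\alpha_i\in H_2(W;\Z[H])$ have linearly independent images in $H_2(W,M;\Z)$, their images in $H_2(W,M;R)$ are also independent. This is done geometrically, by realizing the $\alpha_i$ as maps of surfaces into the abelian cover, building a suitable $3$--dimensional CW model $W^c$ of $W$ containing a wedge of these surfaces as a subcomplex $X$ disjoint from $M^c$, and then arguing chain-level that $\partial_3$ on $C_*(W^c,M^c\sqcup X)$ stays injective after passing to $R$ (via the determinant trick, Lemma~\ref{lem:DeterminantTrick}), which forces $H_3(W^c,M^c\sqcup X;R)=0$ and hence injectivity of $H_2(X;R)\to H_2(W^c,M^c;R)$.

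Your approach is purely algebraic and exploits the $0$--duals more directly: you lift the relative duals $d_j''$ from $H_2(W,M';\Z)$ to $H_2(W,M';U^{-1}\Lambda)$ using that the latter is concentrated in degree $2$, pair them against the $U^{-1}\Lambda$--images $q_i$ of the $l_i$ to get a Gram matrix augmenting to the identity, and conclude invertibility over $U^{-1}\Lambda$ and hence over $R$. This sidesteps entirely the CW--theoretic work (Wall's finiteness theorem, Thom's realization of degree-$2$ classes by surfaces), at the cost of being specific to the $0.5$--solvable setting where duals are available; the paper's Proposition~\ref{prop:relIndep} is a strictly more general statement that does not assume any dual classes exist. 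Both arguments ultimately rest on the same engine---Lemma~\ref{lem:DeterminantTrick} applied to a map that augments to an isomorphism---but you apply it to a pairing matrix while the paper applies it to a boundary operator.
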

\begin{proof} See Proposition~\ref{prop:free}. \end{proof}

When $R=\C^\omega$, we shall often drop the $\omega$ from the notation of the Lagrangian and simply write $\mathcal{L}_\C$.  The next proposition provides a lower bound on the dimension of~$\mathcal{L}_\C$.

\begin{proposition}\label{prop:LagrangianInequality}
Let $L, L'$ be two $\mu$-colored links that are $0.5$--solvable cobordant via~$(W;X_L, X_{L'})$ with $1$--lagrangian~$\mathcal{L}$. Then 
\[ \frac{1}{2} \dim_{\C} \left( \frac{H_2(W;\C^\omega)}{\im(H_2(\partial W;\C^\omega) \to H_2(W;\C^\omega))}\right) \leq \dim_{\C}(\mathcal{L}_\C).\]
\end{proposition}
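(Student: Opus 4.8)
The plan is to compare the intersection form $\lambda_\C$ on $H_2(W;\C^\omega)$ with the quotient form obtained by killing the boundary image, and to identify $\mathcal{L}_\C$ as a maximal isotropic subspace of this quotient. Write $V := H_2(W;\C^\omega)$, let $k \colon H_2(\partial W;\C^\omega)\to V$ be the inclusion-induced map, and set $\overline V := V/\im k$. As recalled in Section~\ref{sub:IntersectionForm}, the image of $k$ is annihilated by $\lambda_\C$, so $\lambda_\C$ descends to a Hermitian form $\overline\lambda$ on $\overline V$. The first step is to check that $\overline\lambda$ is \emph{nonsingular}: this follows from Poincaré–Lefschetz duality together with Proposition~\ref{prop:UCSS} (evaluation is an isomorphism over the field $\F = \C$), exactly as in the standard argument that the intersection form of a $4$-manifold becomes nonsingular after quotienting by the radical; the relevant long exact sequence of $(W,\partial W)$ together with Lemma~\ref{lem:DualityUCSS} identifies $\overline V$ with its own dual under $\overline\lambda$. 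Here I would also invoke Lemma~\ref{lem:034AndNullityBound}-type vanishing, or rather the $H_1$-cobordism input via Lemma~\ref{lem:ChainHomotopy} and Lemma~\ref{lem:splitboundary}, to control the relevant boundary terms.

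The second step is to produce, inside $\overline V$, an isotropic subspace of dimension at least $\tfrac12\dim_\C\overline V$ coming from $\mathcal{L}_\C$. By Proposition~\ref{prop:ComplexLagrangian} (i.e.\ Proposition~\ref{prop:free}), the subspace $\mathcal{L}_\C = \langle i_{\C^\omega}(l_1),\dots,i_{\C^\omega}(l_r)\rangle$ has dimension $r$ and $\lambda_\C$ vanishes on it; hence its image $\overline{\mathcal L}_\C$ in $\overline V$ is isotropic for $\overline\lambda$. Since $\overline\lambda$ is nonsingular and Hermitian, any isotropic subspace has dimension at most $\tfrac12\dim_\C\overline V$, so it would suffice to show $\dim\overline{\mathcal L}_\C \geq \tfrac12 \dim_\C\overline V$, and then a fortiori $\dim\mathcal{L}_\C = r \geq \dim\overline{\mathcal L}_\C \geq \tfrac12\dim_\C\overline V$, which is exactly the claimed inequality. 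So the crux is the bound $\dim\overline{\mathcal L}_\C \geq \tfrac12\dim_\C\overline V$.

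For that, the plan is to pass through the relative group. The composition $H_2(W;\C^\omega)\to H_2(W,M;\C^\omega)$ (with $M = X_L$) factors through $\overline V$ once one notes that $\im k \subset \im(H_2(M;\C^\omega)\to V)$ lies in the kernel of $V \to H_2(W,M;\C^\omega)$; more precisely, using Lemma~\ref{lem:splitboundary} the boundary splits as $X_L$ and $X_{L'}$ pieces, and the map $\overline V \to H_2(W,M;\C^\omega)$ is injective by the portion of the long exact sequence of $(W,\partial W, M)$ combined with Lemma~\ref{lem:ChainHomotopy} ($H_i(W,M;\C^\omega)=0$ for $i\neq 2$) — this is the twisted analogue of Lemma~\ref{lem:H1bordism}(3). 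Under this injection $\overline{\mathcal L}_\C$ maps onto the image of $\mathcal{L}_{M,\C}$, whose dimension is $r$ by Proposition~\ref{prop:ComplexLagrangian}. Meanwhile $\dim_\C H_2(W,M;\C^\omega) = \beta_2^\omega(W,M) = \beta_2(W,M)$ by Lemma~\ref{lem:ChainHomotopy}, and part (2) of Proposition~\ref{prop:ComplexLagrangian} gives $r \geq \tfrac12\dim_\Q H_2(W,M;\Q) = \tfrac12\beta_2(W,M)$. Chaining $\dim\overline V \leq \dim_\C H_2(W,M;\C^\omega) = \beta_2(W,M) \leq 2r$ and $\dim\overline{\mathcal L}_\C = \dim\mathcal{L}_{M,\C}$-image $= r$ yields $\dim\overline{\mathcal L}_\C = r \geq \tfrac12\dim\overline V$, completing the argument. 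The main obstacle I anticipate is the bookkeeping in the second-to-last step: verifying cleanly that $\overline V \hookrightarrow H_2(W,M;\C^\omega)$ and that this identifies $\overline{\mathcal L}_\C$ with the span of the $i_{M,\C^\omega}(l_i)$, since this requires the twisted version of the $H_1$-cobordism duality diagram (Lemma~\ref{lem:H1bordism}) rather than just its integral form, and one must be careful that the relevant boundary contributions — encoded via $X_L$, $X_{L'}$ and the plumbed piece $M_\Sigma$ — behave as in Lemma~\ref{lem:splitboundary}.
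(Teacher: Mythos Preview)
Your strategy reduces to the correct target: since $\dim_\C\mathcal{L}_\C=r\geq\tfrac12\beta_2(W,X_L)$ by Proposition~\ref{prop:ComplexLagrangian} and $\beta_2^\omega(W,X_L)=\beta_2(W,X_L)$ by Lemma~\ref{lem:ChainHomotopy}, everything comes down to the inequality $\dim_\C\overline V\leq\beta_2^\omega(W,X_L)$, which is exactly what the paper proves (via Lemmas~\ref{lem:Betti} and~\ref{lem:Inequality}). The nonsingularity of $\overline\lambda$ and the isotropy of $\overline{\mathcal L}_\C$ are correct but never actually used; you can drop that discussion. There is, however, a real gap in your route to the key inequality: you invoke a ``twisted analogue of Lemma~\ref{lem:H1bordism}(3)'' to produce an injection $\overline V\hookrightarrow H_2(W,X_L;\C^\omega)$, but no such map exists in general. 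The proof of Lemma~\ref{lem:H1bordism}(3) rests on $H_1(M;\Z)\to H_1(W;\Z)$ being an \emph{isomorphism}; with $\C^\omega$-coefficients, Lemma~\ref{lem:ChainHomotopy} only tells you that $H_1(X_L;\C^\omega)\to H_1(W;\C^\omega)$ is surjective, and its kernel can be nonzero (equivalently $\eta_L(\omega)>\beta_1^\omega(W)$ is allowed). Concretely, the map $H_2(W;\C^\omega)\to H_2(W,X_L;\C^\omega)$ need not annihilate the image of $H_2(X_{L'};\C^\omega)$, so it does not factor through $\overline V$; and in the other direction the map $\psi$ of Lemma~\ref{lem:H1bordism}(3) requires $\partial\colon H_2(W,X_L;\C^\omega)\to H_1(X_L;\C^\omega)$ to vanish, which again fails.

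The fix is short and in fact simpler than the paper's argument. Set $\overline V' := H_2(W;\C^\omega)/\im\big(H_2(X_L;\C^\omega)\big)$. Then $\overline V$ is a quotient of $\overline V'$ (since $\im H_2(X_L;\C^\omega)\subset\im H_2(\partial W;\C^\omega)$), while the long exact sequence of $(W,X_L)$ identifies $\overline V'$ with the image of $H_2(W;\C^\omega)$ in $H_2(W,X_L;\C^\omega)$. Hence
\[
\dim_\C\overline V\;\leq\;\dim_\C\overline V'\;\leq\;\beta_2^\omega(W,X_L)\;=\;\beta_2(W,X_L)\;\leq\;2r\,,
\]
which is all you need. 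This bypasses the paper's two auxiliary Betti-number computations (Lemmas~\ref{lem:Betti} and~\ref{lem:Inequality}) entirely, so once the gap is repaired your approach is actually the more economical one.
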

\begin{proof} See Proposition~\ref{prop:DimensionCount}.\end{proof}
Using the two propositions above, we can now prove the invariance of the nullity and signature under 0.5-solvable cobordism.

\subsection{Nullities and \texorpdfstring{$0.5$--solvability}{0.5-solvability}}
\label{sub:NullityProof}
The next result states the invariance of the multivariable nullity and Alexander nullity under $0.5$-solvable cobordisms.

\begin{proposition}
\label{prop:Invariance}
	Let $R$ be either $\Q(\Z^{\mu})$ or $\C^\omega$, with $\omega \in \T_!^\mu$. If $(W;M,M')$ is a $0.5$--solvable cobordism, then the $\F$-vector spaces~$H_1(M;R)$ and $H_1(M'; R)$ have the same dimension. In particular, if $L$ and $L'$ are $0.5$-solvable cobordant links, then $\beta(L)=\beta(L')$ and $\eta_L(\omega)=\eta_L(\omega)$ for all $\omega \in\T^\mu_!$.
\end{proposition}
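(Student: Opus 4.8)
The plan is to establish the equality $\dim_\F H_1(M;R) = \dim_\F H_1(M';R)$ by a Poincaré–Lefschetz duality argument combined with the algebraic structure provided by $0.5$--solvability, and then specialise to links. First I would fix $R$ (either $\Q(\Z^\mu)$ or $\C^\omega$ with $\omega\in\T^\mu_!$) and write $\F$ for the underlying field. By Lemma~\ref{lem:ChainHomotopy}, the twisted homology $H_i(W,M;R)$ is concentrated in degree $2$, and likewise for $(W,M')$; moreover $\beta_2^R(W,M)=\beta_2(W,M)$ and the analogous equality holds for $M'$. Feeding this into the long exact sequences of the pairs $(W,M)$ and $(W,M')$, together with the fact that $W$, $M$, $M'$ are connected and that $H_1(M;\Z)\to H_1(W;\Z)$ is an isomorphism (so that $H_*(W,M;R)$ behaves like a ``cone''), one extracts that $\dim_\F H_1(M;R) - \dim_\F H_1(W;R)$ and $\dim_\F H_1(M';R) - \dim_\F H_1(W;R)$ are both controlled by the same quantities: concretely, from the exact sequence $H_2(W,M;R)\to H_1(M;R)\to H_1(W;R)\to 0$ one gets $\dim H_1(M;R)=\dim H_1(W;R)+\dim\coker\big(H_2(W;R)\to H_2(W,M;R)\big)$, and similarly for $M'$.

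The heart of the matter is then to show that the two cokernels have the same dimension. Here I would invoke Proposition~\ref{prop:ComplexLagrangian}: the $1$--lagrangian $\mathcal{L}$ produces a subspace $\mathcal{L}_{M,R}\subset H_2(W,M;R)$ of dimension $r\geq\frac12\dim_\Q H_2(W,M;\Q)=\frac12\beta_2(W,M)$, on which $\lambda_R$ vanishes, and whose elements lift to $H_2(W;R)$. Since $\mathcal{L}_{M,R}$ lies in the image of $H_2(W;R)\to H_2(W,M;R)$, the cokernel of that map has dimension at most $\beta_2(W,M)-r\leq\frac12\beta_2(W,M)$. Running the same argument with $M'$ in place of $M$ (using $\beta_2(W,M)=\beta_2(W,M')$ from Lemma~\ref{lem:H1bordism}(2)) gives $\dim\coker\big(H_2(W;R)\to H_2(W,M';R)\big)\leq\frac12\beta_2(W,M)$. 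To get equality rather than just an inequality, I would use duality: by Lemma~\ref{lem:DualityUCSS} (or its proof), the map $H_2(W;R)\to H_2(W,M;R)$ is, up to the evaluation and Poincaré duality isomorphisms, adjoint to $H_2(W;R)\to H_2(W,M';R)$, so that $\dim\coker\big(H_2(W;R)\to H_2(W,M;R)\big)=\dim\ker\big(H_2(W;R)\to H_2(W,M';R)\big)$; combining this rank identity with the two upper bounds $\leq\frac12\beta_2(W,M)$ forces both cokernels to equal exactly $\frac12\beta_2(W,M)$. (Alternatively one can argue that $\mathcal{L}_{M,R}$, being a half-dimensional isotropic subspace in the image, is automatically Lagrangian for the induced nondegenerate form on $H_2(W;R)/\im(H_2(\partial W;R))$, which pins the dimension down.) Hence $\dim_\F H_1(M;R)=\dim_\F H_1(M';R)$.

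Finally, to deduce the statements about links: if $L$ and $L'$ are $0.5$--solvable cobordant, then by Definition~\ref{def:nSolvLink} there is a $0.5$--solvable cobordism $(W;X_L,X_{L'},\varphi_\sigma)$ over $\Z^\mu$, so taking $M=X_L$, $M'=X_{L'}$ and $R=\C^\omega$ gives $\eta_L(\omega)=\dim_\C H_1(X_L;\C^\omega)=\dim_\C H_1(X_{L'};\C^\omega)=\eta_{L'}(\omega)$ for all $\omega\in\T^\mu_!$, which is precisely Definition~\ref{def:SignatureNullity}. Taking instead $R=\Q(\Z^\mu)$ gives $\dim_{\Q(\Z^\mu)}H_1(X_L;\Q(\Z^\mu))=\dim_{\Q(\Z^\mu)}H_1(X_{L'};\Q(\Z^\mu))$; since $H_1(X_L;\Q(\Z^\mu))\cong\Q(\Z^\mu)\otimes_\Lambda H_1(X_L;\Lambda)$ (Example~\ref{ex:FieldCoefficients}), this common dimension is exactly the $\Lambda$--rank of the Alexander module, i.e.\ $\beta(L)=\beta(L')$. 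The step I expect to be the main obstacle is making the duality/adjointness identification between the two cokernels precise enough to upgrade the inequality $\leq\frac12\beta_2(W,M)$ to an equality — this is where the $0.5$--solvability hypothesis (the existence of the $0$--dual $\mathcal{D}$ guaranteeing that $\mathcal{L}_{M,R}$ really has full dimension $r$, not less) does its essential work, and it is worth stating carefully whether one argues via the rank identity or via the ``half lives, half dies'' principle applied to the nondegenerate form on $H_2(W;R)/\im H_2(\partial W;R)$.
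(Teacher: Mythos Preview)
Your reduction in the first paragraph is correct and matches the paper exactly: from the long exact sequence of $(W,M)$ one obtains
\[
\dim_\F H_1(M;R)=\dim_\F H_1(W;R)+\dim_\F\coker\bigl(i_M\colon H_2(W;R)\to H_2(W,M;R)\bigr),
\]
and likewise for $M'$, so the problem reduces to showing $\dim\im(i_M)=\dim\im(i_{M'})$.

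The second paragraph, however, contains a genuine gap. Your key step is the rank identity $\dim\coker(i_M)=\dim\ker(i_{M'})$, claimed to follow from Poincar\'e--Lefschetz duality. But the actual relation between $i_M$ and $i_{M'}$ is $\lambda_W(x,y)=\lambda_{W,M}(i_Mx,\,i_{M'}y)$, and since $\lambda_W$ is \emph{degenerate} (its radical is $\im(H_2(\partial W;R)\to H_2(W;R))$), this does not make $i_M$ and $i_{M'}$ adjoint in a sense yielding such a rank identity. In fact, were that identity (and its symmetric counterpart) true, adding them would force $\dim_\F H_2(W;R)=\beta_2(W,M)$, which is false in general because $\dim_\F H_2(W;R)$ varies with~$\omega$. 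More fundamentally, your target conclusion that both cokernels equal exactly $\tfrac12\beta_2(W,M)=r$ is too strong: the cokernel of $i_M$ can be strictly smaller than $r$ (e.g.\ when $H_1(M;R)\cong H_1(W;R)$, the cokernel is zero even if $r>0$).

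The paper's proof (Lemma~\ref{lem:RankImages}) does not attempt to compute $\dim\coker(i_M)$ absolutely. Instead it constructs, via the \emph{nonsingular} pairing $\lambda_{W,M}\colon H_2(W,M;R)\times H_2(W,M';R)\to\F$, duals $d_1,\ldots,d_r\in H_2(W,M;R)$ to the Lagrangian elements, proves the direct-sum decomposition $H_2(W,M;R)=\mathcal{L}_M\oplus\mathcal{D}$, and then shows that the explicit map $\mathcal{D}\to\mathcal{D}'$, $d_i\mapsto d_i'$, restricts to an isomorphism $\mathcal{D}\cap\im(i_M)\cong\mathcal{D}'\cap\im(i_{M'})$ via an intersection-form computation. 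This bypasses entirely the (unavailable) adjointness between $i_M$ and $i_{M'}$, and is where the $0.5$--solvability hypothesis does its real work---not merely to ensure $\dim\mathcal{L}_{M,R}=r$ as you suggest, but to furnish the dual basis structure that makes the comparison possible.
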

\begin{proof}
Consider the exact sequence of the pair~$(W, M)$ in which $R$ coefficients are understood:
\[ 0 \to \im \big( H_2(W) \stackrel{i_M}{\to} H_2(W, M) \big)  \to H_2(W,M)  \xrightarrow{\partial}  H_1(M) \to H_1(W) \to 0. \]
We use $\beta_i^R$ to denote the Betti numbers with $R$-coefficients. Since the Euler characteristic of the sequence is zero and since duality implies that $\beta_2^R(W,M)=\beta_2^R(W,M')$, the proposition boils down to showing that $\im(i_M)$ and $\im(i_{M'})$ have the same dimension. This is proved in Lemma~\ref{lem:RankImages} below.
\end{proof}

We are indebted to Christopher Davis for suggesting that we prove the following key lemma.
\begin{lemma}\label{lem:RankImages}
Let~$R$ be either $\Q(\Z^\mu)$ or $\C^\omega$, with $\omega \in \T_!^\mu$. The images of the two maps
\begin{align*}
   i_M \colon H_2(W; R) \to H_2(W, M; R)\\
   i_{M'}\colon H_2(W; R) \to H_2(W, M'; R).
\end{align*}
have the same dimension over $\F$.
\end{lemma}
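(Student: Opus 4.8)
The plan is to compare the two maps $i_M$ and $i_{M'}$ through the common source $H_2(W;R)$ and a careful diagram chase, reducing everything to ranks that are symmetric in $M$ and $M'$. First I would record the exact sequences of the pairs $(W,M)$ and $(W,M')$ with $R$-coefficients. By Lemma~\ref{lem:ChainHomotopy}, $H_i(W,M;R)=0=H_i(W,M';R)$ for $i\neq 2$, so each reduces to
\[ 0 \to H_2(M;R) \to H_2(W;R) \xrightarrow{i_M} H_2(W,M;R) \xrightarrow{\partial_M} H_1(M;R) \to H_1(W;R)\to 0, \]
and similarly for $M'$. Thus $\dim\im(i_M) = \beta_2^R(W) - \beta_2^R(M)$ and $\dim\im(i_{M'}) = \beta_2^R(W) - \beta_2^R(M')$, so the claim is equivalent to $\beta_2^R(M) = \beta_2^R(M')$.

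Next I would establish $\beta_2^R(M)=\beta_2^R(M')$. Since $M$ and $M'$ are closed oriented $3$-manifolds, Poincaré duality together with Proposition~\ref{prop:UCSS} gives $H_2(M;R)\cong H^1(M;R)\cong \operatorname{Hom}_\F(H_1(M;R),\F)^{\mathrm{tr}}$, so $\beta_2^R(M)=\beta_1^R(M)$ and likewise $\beta_2^R(M')=\beta_1^R(M')$. Hence it suffices to show $\beta_1^R(M)=\beta_1^R(M')$. To do this I would run the Euler characteristic argument on the long exact sequence of $(W,M)$ above: since the alternating sum of dimensions vanishes, $\beta_1^R(M) - \beta_1^R(W) = \beta_2^R(W,M) - \beta_2^R(W) + \beta_2^R(M)$, and substituting $\beta_2^R(M)=\beta_1^R(M)$ this collapses to $\beta_1^R(W) = \beta_2^R(W) - \beta_2^R(W,M)$. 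The same computation for $M'$ yields $\beta_1^R(W) = \beta_2^R(W) - \beta_2^R(W,M')$. By Lemma~\ref{lem:ChainHomotopy} we have $\beta_2^R(W,M)=\beta_2(W,M)$ and $\beta_2^R(W,M')=\beta_2(W,M')$, and by Lemma~\ref{lem:H1bordism}(2) the integral groups $H_2(W,M;\Z)$ and $H_2(W,M';\Z)$ are isomorphic, so $\beta_2^R(W,M)=\beta_2^R(W,M')$. Comparing the two expressions for $\beta_1^R(W)$ forces $\beta_2^R(W,M)=\beta_2^R(W,M')$ (which we already have) and — reading the chain of equalities back — $\beta_1^R(M)=\beta_1^R(M')$, hence $\beta_2^R(M)=\beta_2^R(M')$.

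Finally I would combine these: $\dim\im(i_M)=\beta_2^R(W)-\beta_2^R(M)=\beta_2^R(W)-\beta_2^R(M')=\dim\im(i_{M'})$, which is exactly the assertion. The main obstacle I anticipate is bookkeeping rather than a deep idea: I must make sure the vanishing results from Lemma~\ref{lem:ChainHomotopy} (which require $\omega\in\T^\mu_!$ and the $H_1$-cobordism hypothesis, both satisfied here since a $0.5$-solvable cobordism is in particular an $H_1$-cobordism) genuinely apply in both the $\Q(\Z^\mu)$ and $\C^\omega$ cases, and that duality over the field $\F$ is used correctly for the closed $3$-manifolds $M, M'$. Notably, this lemma does not use the $1$-lagrangian or the intersection-form conditions of $0.5$-solvability at all — only the $H_1$-cobordism structure — so no input from Propositions~\ref{prop:ComplexLagrangian} or~\ref{prop:LagrangianInequality} is needed here.
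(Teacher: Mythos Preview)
Your reduction to showing $\beta_2^R(M)=\beta_2^R(M')$ is fine, but the argument you give for this equality is circular. Once you substitute $\beta_2^R(M)=\beta_1^R(M)$ into the Euler-characteristic identity
\[
\beta_1^R(M)-\beta_1^R(W)=\beta_2^R(W,M)-\beta_2^R(W)+\beta_2^R(M),
\]
the term $\beta_1^R(M)$ cancels on both sides, leaving only $\beta_1^R(W)=\beta_2^R(W)-\beta_2^R(W,M)$. The same thing happens for $M'$. These two equations contain no information about $\beta_1^R(M)$ or $\beta_1^R(M')$ individually, so there is nothing to ``read back''. In fact, via the same exact sequence you wrote down, $\dim\im(i_M)=\dim\im(i_{M'})$ is \emph{equivalent} to $\beta_1^R(M)=\beta_1^R(M')$, which in the link case is precisely the statement $\eta_L(\omega)=\eta_{L'}(\omega)$ of Proposition~\ref{prop:Invariance}. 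Your argument, if it worked, would prove nullity invariance for arbitrary $H_1$-cobordisms with no solvability hypothesis, which is not true.

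Two further remarks. First, $M$ and $M'$ are not closed: in this setting $\partial W=-M\cup_{\partial M}M'$ with $\partial M\cong\partial M'$ nonempty (for link exteriors these are tori). So the Poincar\'e duality step $H_2(M;R)\cong H^1(M;R)$ is not available as stated; the identity $\beta_1^R(M)=\beta_2^R(M)$ does hold for link exteriors, but via $\chi(M)=0$ together with $\beta_0^R(M)=\beta_3^R(M)=0$, not closed duality. Second, contrary to your closing comment, the paper's proof really does use the $0.5$-solvable structure: it takes the $1$-lagrangian $\mathcal L$, pushes it to half-dimensional subspaces $\mathcal L_M\subset H_2(W,M;R)$ and $\mathcal L_{M'}\subset H_2(W,M';R)$ via Proposition~\ref{prop:ComplexLagrangian}, builds complementary ``dual'' subspaces $\mathcal D,\mathcal D'$ using the nonsingular pairing $\lambda_{W,M}$, and then exhibits an explicit isomorphism between $\mathcal D\cap\im(i_M)$ and $\mathcal D'\cap\im(i_{M'})$. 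The Lagrangian is what lets you split off a piece of $\im(i_M)$ of known dimension $r$ on each side and compare the remainders.
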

\begin{proof}
Consider the three following intersection pairings:
\begin{align*}
\lambda_{W} \colon H_2(W; R) \times H_2(W; R) &\to \F,\\
\lambda_{W,\partial W} \colon H_2(W, \partial W; R) \times H_2(W; R) &\to \F,\\
\lambda_{W, M} \colon H_2(W, M; R) \times H_2(W, M'; R) &\to \F.
\end{align*}
These pairings are related as follows. First, observe that the map $i_{\partial W} \colon H_2(W;R) \to H_2(W,\partial W;R)$ induced by the inclusion factors as $i_{M,\partial W} \circ i_M$, where the map $i_{M,\partial W} \colon H_2(W, M;  R) \to H_2(W, \partial W;  R)$ is also induced by the inclusion. We introduce the same notation for $M'$, resulting in a map $i_{M',\partial W}$. Consider the following diagram:
\[ \xymatrix{
H_2(W; R) \ar[r]^{i_{\partial W}} \ar[rd]^{i_M}&  H_2(W,\partial W;R) \ar[r]^{\text{PD}} & H^2(W; R)  \ar[r]^-{\operatorname{ev}} & \operatorname{Hom}(H_2(W; R), \F)^t  \\
&  H_2(W,M; R) \ar[r]^{\text{PD}} \ar[u]_{i_{M,\partial W}} & H^2(W,M'; R) \ar[r]^-{\operatorname{ev}} \ar[u]^{i_{M'}^*}  & \operatorname{Hom}(H_2(W,M';R),\F)^t \ar[u]^{i_{M'}^*}. \\
} \]
The left triangle and right square are clearly commutative, while the middle square commutes thanks to~\cite[Section 6.9 Exercise 3]{Bredon}. It now follows that for $x,z$ in~$H_2(W;R)$ and $y$ in $H_2(W,M;R)$, we obtain
\begin{align}
\label{eq:Useful}
&   \lambda_{W,\partial W}(i_{\partial W}(x),z)=\lambda_W(x,z)=\lambda_{W,M}(i_M(x),i_{M'}(z)) \\
& \lambda_{W,\partial W}(i_{M,\partial W}(y),z)=\lambda_{W,M}(y,i_{M'}(z)).  \nonumber
\end{align}
We introduce one last piece of notation.
By Proposition~\ref{prop:ComplexLagrangian}, the subspaces~$\mathcal{L}_R$, $\mathcal{L}_M = i_M \big( \mathcal{L}_R \big)$, and $\mathcal{L}_{M'}= i_{M'} \big( \mathcal{L}_R \big)$ all have dimension~$r$. 

Now we construct a subspace
\[ \mathcal{D} = \langle d_1, \ldots, d_r \rangle \subset H_2(W, M; R) \]
by constructing the elements~$d_i$.
 Pick a basis~$\mathcal{L}_R = \langle l_1, \ldots, l_r \rangle$.
Since the dimension of $\mathcal{L}_{M'}= i_{M'} \big( \mathcal{L}_R \big)$ is $r$, the elements~$i_{M'}(l_j)$ form a basis of $\mathcal{L}_{M'}$. Therefore, the assignment~$i_{M'}(l_j) \mapsto \delta_{ij}$ defines a map~$\delta_i \colon \mathcal{L}_{M'} \to \F$. Since~$\F$ is a field, $\mathcal{L}_{M'} \subset H_2(W, M'; R)$ is a direct summand and consequently $\delta_i$ extends to an element~$\delta_i \in  \Hom_\F(H_2(W, M'; R), \F)^t$. The element~$d_i \in H_2(W, M; R)$ corresponds to $\delta_i$ under the isomorphism $H_2(W, M;R) \xrightarrow{\cong} \Hom_\F(H_2(W, M';R), \F)^t$ given by the adjoint of $\lambda_{W,M}$. This is an isomorphism, since the pairing~$\lambda_{W,M}$ is non-singular.

Consequently, the space $\mathcal{D}$ is freely generated by elements $d_1,\ldots, d_r$ that satisfy
\begin{equation}
\label{eq:Duals}
\lambda_{W, M}\big(d_i,  i_{M'} (l_j) \big) = \delta_{ij}.
\end{equation}
Completely analogously, we can define a subspace~$\mathcal{D}'$ of $H_2(W,M';R)$ with a basis given by $d_1',\ldots, d_r'$. Summarizing, we now have subspaces $\mathcal{L}_M$ and $\mathcal{D}$ of $H_2(W,M;R)$ and subspaces $\mathcal{L}_{M'}$ and $\mathcal{D}'$ of $H_2(W,M';R)$.

\begin{claim}
The subspaces $\mathcal{L}_M$ and $\mathcal{D}$ intersect trivially.
\end{claim}
To prove this, start with $a \in \mathcal{L}_M \cap \mathcal{D}$ and an arbitrary $l'$ in $\mathcal{L}_{M'}$. There is an $l$ in $\mathcal{L}_R$ such that $i_{M'}(l)=l'$. Similarly, since $a$ lies in $\mathcal{L}_M$, there is a $b$ in $\mathcal{L}_R$ such that $i_M(b)=a$.  Using~(\ref{eq:Useful}), we now have
\begin{equation}
\label{eq:UsefulTwo}
\lambda_{W,M}(a,l') = \lambda_{W,M}(i_M(b),i_{M'}(l))=
\lambda_W(b,l) = 0,
\end{equation}
where the last equality is due to the fact that $\mathcal{L}_R \subset \mathcal{L}_R^\perp$.
Since $a$ also lies in $\mathcal{D}$, we can write $a=\sum_i c_i d_i$. Combine Equation~(\ref{eq:UsefulTwo}) with the property of the $d_i$'s in Equation~(\ref{eq:Duals}) to deduce that~$0=\lambda_{W,M}\big(a,i_{M'}(l_j)\big)=c_j$ for each $j$. This implies that $a=0$, concluding the proof of the claim.

Using the claim it now makes sense to consider the direct sum $\mathcal{L}_M \oplus \mathcal{D} \subset H_2(W, M; R)$. Since $\mathcal{L}_M$ and $\mathcal{D}$ both have dimension at least $r$, we conclude that the dimension of $\mathcal{L}_M \oplus \mathcal{D}$ must at least be $2r$. Using Lemma~\ref{lem:ChainHomotopy}, we see that the dimension of $H_2(W, M; R)$ is equal to~$\rk H_2(W, M; \Z) \leq 2r$. Combining these observations and repeating them for~$\mathcal{D}'$, we deduce that~
\begin{align}
\label{eq:UsefulThree}
\mathcal{L}_M \oplus \mathcal{D} &= H_2(W, M; R), \\
\mathcal{L}_{M'} \oplus \mathcal{D}' &= H_2(W, M'; R), \nonumber\\
\label{eq:RankLM}
\dim \mathcal{L}_M &= r \text{ and } \dim \mathcal{L}_{M'}=r.
\end{align}
Recall that $i_M$ and $i_{M'}$ denote respectively the maps from $H_2(W; R)$ to $H_2(W, M; R)$ and $H_2(W, M'; R)$. Since, by definition, the subspaces~$\mathcal{L}_M$ and $\mathcal{L}_{M'}$ are images of~$\mathcal{L}_R$ under $i_M$ and $i_{M'}$, we deduce that they are subspaces of $\im(i_M)$ and $\im(i_{M'})$.

By~(\ref{eq:UsefulThree}), $\im(i_M)= \im(i_M) \cap (\mathcal{L}_M\oplus \mathcal{D})$, and the same for $M'$. Since we just argued that $\mathcal{L}_M \subset \im(i_M)$, and $\mathcal{L}_{M'} \subset \im(i_{M'})$, it follows that
\begin{align*}
&\dim \im(i_M) = \dim \mathcal{L}_M + \dim \big( \mathcal{D} \cap \im(i_{M}) \big) \\
&\dim \im(i_{M'}) = \dim \mathcal{L}_{M'} + \dim \big( \mathcal{D} \cap \im(i_{M'}) \big).
\end{align*}
Since we wish to show that $\dim \im(i_M)=\dim \im(i_{M'})$ and since $\mathcal{L}_M$ and $\mathcal{L}_{M'}$ have dimension~$r$ by Equation~(\ref{eq:RankLM}), it only remains to prove the following claim:
\begin{claim}
$\dim \big( \mathcal{D} \cap \im(i_M) \big) = \dim \big( \mathcal{D}' \cap \im(i_{M'}) \big)$.
\end{claim}
Since $\mathcal{D}$ and $\mathcal{D}'$ are freely generated by the $d_i$ and $d_i'$, there is an isomorphism $\psi \colon \mathcal{D} \to \mathcal{D}'$ obtained by mapping the $d_i$ to the $d_i'$. The claim will follow if we show that $\psi$ restricts to an isomorphism from $\mathcal{D} \cap \im(i_M)$ to $ {\mathcal{D}'} \cap \im(i_{M'})$. 

First, we check that the map~$\Psi$ restricts to a map~$\Psi|_{\mathcal{D} \cap \im(i_M)}  \colon \mathcal{D} \cap \im(i_M) \to \mathcal{D'} \cap \im(i_{M'})$. So assume that $x=\sum_i a_i d_i$ lies in $\im(i_M) \cap \mathcal{D}$. By definition $\psi(x)$ is equal to $x':=\sum_i a_i d_i'$, which clearly lies in $\mathcal{D}'$. Consequently we have to show that $x'$ lies in $\im(i_{M'})$. 
Since $x$ lies in $\im(i_{M})$, there is a $w$ in $H_2(W; R)$ such that $i_M(w)=x$. Now consider the element $v = x' - i_{M'}(w)$ of $H_2(W, M'; R)$: to show that $x'$ lies in $\im(i_{M'})$, it is enough to show that $v$ lies in $\im(i_{M'})$. Consequently, we consider the submodule
\[ \mathcal{L}^\perp_{M'} = \{ v \in H_2(W, M'; R) \colon \lambda_{W, M'}(v, i_M(l)) = 0 \text{ for all }   l\in \mathcal{L}_R\} \]
and start by verifying that $v \in \mathcal{L}^\perp_{M'}$. Recall that the~$l_j$'s form a basis of $\mathcal{L}_R$ and so it is enough to show that $\lambda_{W,M'}(v,i_{M}(l_j))$ vanishes for each~$j$. This follows successively by using the definition of $v$, the definition of the $d_i$'s in~\eqref{eq:Duals}, and the property in~(\ref{eq:Useful}):
\begin{align*}
\lambda_{W, M'}\big(v, i_M (l_j)\big)
&=\lambda_{W, M'}\big(x', i_M (l_j) \big) - \lambda_{W, M'} \big(i_{M'} (w), i_M (l_j) \big)\\
&=a_j - \lambda_W \big(w, l_j)\\
&=a_j - \lambda_{W, M} \big( x, i_{M'} (l_j) \big)\\
&= a_j - a_j =0.
\end{align*}
Note that $\mathcal{L}^\perp_{M'} \subset \mathcal{L}_{M'}$, since~$H_2(W, M'; R) = \mathcal{L}_{M'} \oplus \mathcal{D}'$ by the first claim above. Consequently, the vector~$v$ belongs to $\mathcal{L}_{M'}$ and thus to $\im(i_{M'})$. Since $v$ was defined as $x'-i_{M'}(w)$, we deduce that $x'$ must also lie in $\im(i_{M'})$, as desired. We showed that~$\Psi$ restricts to a map~$\Psi|_{\mathcal{D} \cap \im(i_M)}  \colon \mathcal{D} \cap \im(i_M) \to \mathcal{D'} \cap \im(i_{M'})$.

Now, by interchanging the roles of~$\mathcal D$ and $\mathcal D'$ in the argument above, we learn that the inverse~$\Psi^{-1}$ restricts to a map~$\Psi^{-1}|_{\mathcal{D'} \cap \im(i_{M'})} \colon \mathcal{D'} \cap \im(i_{M'}) \to \mathcal{D} \cap \im(i_{M})$. This restriction is the inverse of~$\Psi|_{\mathcal{D} \cap \im(i_M)}$ and thus the latter is an isomorphism. This concludes the proof of the last claim and thus of the proposition.
\end{proof}

\subsection{Signatures and \texorpdfstring{$0.5$--solvability}{0.5-solvability}}\label{sub:SignatureProof}
We prove that $0.5$-solvable cobordant links have the same multivariable signatures, concluding the proof of Theorem~\ref{thm:SolvableNullitySignature} from the introduction.

\begin{theorem}
If two $\mu$-colored links $L$ and $L'$ are $0.5$-solvable cobordant, then, for all $\omega \in \mathbb{T}^\mu_!$, we have
\[ \sigma_{L}(\omega)=\sigma_{L'}(\omega).\]
\end{theorem}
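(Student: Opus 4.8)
The plan is to push a colored bounding surface for $L$ through the cobordism $W$, to compare it with a colored bounding surface for $L'$, and to observe that the discrepancy between the two colored signatures is exactly $\sign_\omega W$, which vanishes because $W$ carries a $1$--lagrangian. So fix a $0.5$--solvable cobordism $(W;X_L,X_{L'},\varphi_\sigma)$ witnessing the hypothesis, together with the map $H_1(W;\Z)\to\Z^\mu$ from Diagram~\eqref{eq:CompLabel}.

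The first step is to normalize the bounding surfaces. Recall that $\sigma_L(\omega)=\sign_\omega W_F$ is independent of the colored bounding surface $F$ (Proposition~\ref{prop:UntwistedSign}), and that it is unchanged if one joins two components of some $F_i$ by an embedded tube, adds a trivial handle to some $F_i$, or introduces a cancelling pair of transverse double points between $F_i$ and $F_j$. Since the algebraic number of double points of $F_i\cap F_j$ is forced to equal $\lk(L_i,L_j)$, and $\lk(L_i,L_j)=\lk(L'_i,L'_j)$ by Lemma~\ref{rem:SameLN}, I can therefore choose colored bounding surfaces $F$ for $L$ and $F'$ for $L'$ so that each $F_i$ and each $F'_i$ is connected, $F_i$ and $F'_i$ have the same genus and the same number of boundary components (here one uses that $\sigma$ restricts to a color-preserving bijection of the sublinks, so $L_i$ and $L'_i$ have the same number of components), and for each $i\ne j$ the numbers of positively and of negatively signed double points of $F_i\cap F_j$ agree with those of $F'_i\cap F'_j$. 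Then the intersection graphs of $F$ and $F'$, decorated by their vertex surfaces, edge signs and colors, are isomorphic; hence the plumbed pieces $M_F\subset\partial W_F$ and $M_{F'}\subset\partial W_{F'}$ of Example~\ref{ex:PlumbingIntersections} are orientation-preservingly diffeomorphic, via a diffeomorphism compatible with the meridional maps to $\Z^\mu$ and which near the boundary agrees with $\varphi_\sigma$ on $L\times S^1\cong L'\times S^1$.

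Now glue: set $P:=W_F\cup_{X_L}W$, a compact oriented $4$--manifold over $\Z^\mu$ with $\partial P=M_F\cup_{\varphi_\sigma}X_{L'}$. Via the diffeomorphism of the previous step, $\partial P$ is identified, as a $3$--manifold over $\Z^\mu$, with $\partial W_{F'}=M_{F'}\cup X_{L'}$, so Corollary~\ref{cor:bordism} gives $\dsign_\omega P=\dsign_\omega W_{F'}$. On the other hand the gluing is along $X_L$, whose boundary $L\times S^1$ is a union of tori on which $\C^\omega$ restricts to nontrivial characters (as in the proof of Proposition~\ref{prop:bordism}), so $H_1(L\times S^1;\C^\omega)=0$ and twisted Novikov-Wall additivity (Proposition~\ref{prop:TwistedWall}) yields $\sign_\omega P=\sign_\omega W_F+\sign_\omega W$. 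Since $\sign P=\sign W_F=\sign W_{F'}=0$ by Proposition~\ref{prop:UntwistedSign} and $\sign W=0$ by Proposition~\ref{prop:ordinarysignature}, combining these with $\sigma_L(\omega)=\sign_\omega W_F$ and $\sigma_{L'}(\omega)=\sign_\omega W_{F'}$ gives
\[ \sigma_{L'}(\omega)-\sigma_L(\omega)=\dsign_\omega W_{F'}-\dsign_\omega W_F=\dsign_\omega P-\dsign_\omega W_F=\sign_\omega W. \]

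It remains to prove $\sign_\omega W=0$, which is where $0.5$--solvability genuinely enters. By Proposition~\ref{prop:ComplexLagrangian} the $1$--lagrangian produces an $r$--dimensional subspace $\mathcal L_\C\subset H_2(W;\C^\omega)$ on which $\lambda_{\C^\omega}$ vanishes, and by Proposition~\ref{prop:LagrangianInequality} the integer $r$ is at least half the dimension of the quotient of $H_2(W;\C^\omega)$ by $\im\big(H_2(\partial W;\C^\omega)\to H_2(W;\C^\omega)\big)$. Since this image is precisely the radical of $\lambda_{\C^\omega}$ (Poincar\'e-Lefschetz duality together with the universal coefficient isomorphism of Proposition~\ref{prop:UCSS}), $\mathcal L_\C$ descends to a metabolizer of the induced non-singular hermitian form, whence $\sign_\omega W=\sign\lambda_{\C^\omega}=0$; together with the displayed equality this gives $\sigma_L(\omega)=\sigma_{L'}(\omega)$ for all $\omega\in\T^\mu_!$. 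I expect the main difficulty to lie entirely in this last step, i.e.\ in Propositions~\ref{prop:ComplexLagrangian} and~\ref{prop:LagrangianInequality}: one must verify that the $1$--lagrangian of Definition~\ref{def:nSolvable} stays isotropic, keeps the right dimension, and remains large enough after the change of coefficients $\Z[H_1(W;\Z)]\to\C^\omega$ and the passage to the quotient by the boundary. This rests on the chain-homotopy/determinant argument of Lemma~\ref{lem:Cone} (hence on $\omega$ being a non-concordance root) and on the signature-defect computations for plumbed $3$--manifolds of Section~\ref{sec:Plumbed}.
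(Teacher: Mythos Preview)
Your overall strategy is sound and in fact streamlines the paper's argument: by normalizing $F$ and $F'$ so that $M_F\cong M_{F'}$ over $\Z^\mu$ (matching genera, boundary components via $\sigma$, and adding cancelling double points so the signed intersection graphs agree), you reduce $\dsign_\omega P=\dsign_\omega W_{F'}$ to the bordism argument of Corollary~\ref{cor:bordism}. The paper instead glues both $W_F$ and $-W_{F'}$ to $W$, observes that the resulting closed boundary is a \emph{balanced} plumbed $3$--manifold, and invokes Proposition~\ref{prop:PbFilling}; this requires the whole machinery of Section~\ref{sec:Plumbed} (rho invariants, Lemma~\ref{lem:handles}, etc.), which your approach bypasses entirely. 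So your final remark that the argument ``rests on the signature-defect computations for plumbed $3$--manifolds of Section~\ref{sec:Plumbed}'' is misplaced: you do not use them.

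There is, however, one genuine gap. You assert $\sign P=0$ ``by Proposition~\ref{prop:UntwistedSign}'', but that proposition concerns exteriors of colored bounding surfaces, and $P=W_F\cup_{X_L}W$ is not one. What you actually need is \emph{untwisted} Novikov--Wall additivity for this gluing, i.e.\ $\sign P=\sign W_F+\sign W=0+0$. Unlike the twisted case (where $H_1(L\times S^1;\C^\omega)=0$ suffices), the untwisted case requires Theorem~\ref{thm:Wall}: you must check that two of the three kernels $V_{X_L},\,V_{M_F},\,V_{X_{L'}}\subset H_1(L\times S^1;\R)$ coincide. The kernel $V_{X_L}$ is generated by the classes $[\lambda_K]-\sum_J\lk(K,J)[\mu_J]$, and under the identification $\varphi_\sigma$ the kernel $V_{X_{L'}}$ has the same description with $\lk(\sigma(K),\sigma(J))$ in place of $\lk(K,J)$; these agree by Lemma~\ref{rem:SameLN}. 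This is exactly the computation the paper carries out, and it is the one nontrivial point you skipped. Once you insert it, your argument is complete (and, as noted, avoids Section~\ref{sec:Plumbed}).
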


\begin{proof}
Let $F,F' \subset D^4$ be colored bounding surfaces for $L$ and $L'$ respectively,
with the additional requirement that they have only a single component
per color.
We denote by $W_{F}$ and $W_{F'}$ their respective exteriors and by $X,X'$ the link exteriors.
Setting as usual $M_{F} :=\partial\overline{\nu F}$, we see that the boundary~$\partial W_{F}$
decomposes into $X \cup_{L \times S^1} M_{F}$. An analogous decomposition holds for $\partial W_{F'}$. Let $W$ be a 0.5-solvable cobordism, with $\partial W= -X\cup_\varphi X'$, where $\varphi$ identifies $L\times S^1$ with $L'\times S^1$.
We consider the $4$-manifold
\[ V:=W_{F} \cup_{X} W \cup_{X'} (-W_{F'}),\]
which has boundary $M_{F} \cup_\Sigma (-M_{F'})$, where $\Sigma$ is a disjoint union of tori.
\begin{figure}[ht]
	\includegraphics[width=7cm]{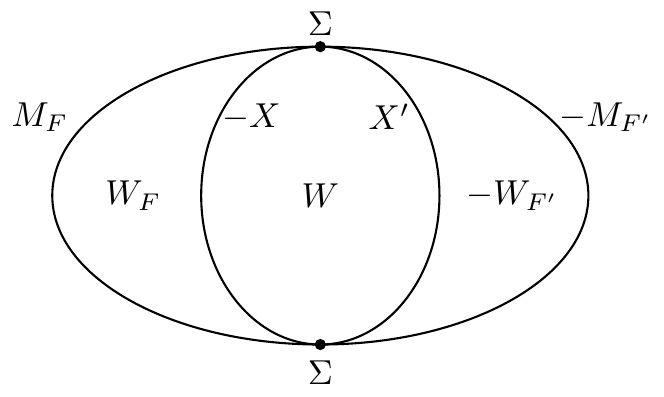}
	\caption{The manifold $V$ as a union of $W_{F}$, $W$ and $-W_{F'}$.}
	\label{fig:Sign0.5}
\end{figure}
By diagram~\eqref{eq:CompLabel}, the coefficient systems on the link exteriors
$X$ and $X'$ extend over $W$ and thus over $V$. We shall now compute
$\dsign_\omega(V) = \sign_\omega V - \sign V$ in two different ways.
\begin{claim}
$\dsign_\omega(V)=\dsign_\omega (W_{F}) - \dsign_\omega (W_{F'}) + \dsign_\omega (W).$
\end{claim}
The claim is proved by a double application of Novikov-Wall additivity, each time both for the twisted and untwisted signature: first we prove additivity for the gluing along $X$ of the two manifolds $W_F$ and $W\cup_{X'} (-W_{F'})$, and then for the gluing along $X'$ of $W$ with $-W_{F'}$. In both cases the boundary of the gluing region is $\Sigma=L\times S^1$, which is identified with $L'\times S^1$ through $\varphi$. As $H_1(\Sigma;\mathbb{C}^\omega)=0$, the hypotheses of Proposition \ref{prop:TwistedWall} are satisfied in the two cases, and twisted additivity holds. Let
\[\begin{split}
V_X&= \ker (H_1(\Sigma;\R) \to H_1(X;\R)), \quad V_{X'}= \ker (H_1(\Sigma;\R) \to H_1(X';\R)),\\
V_{M_{F}}&=\ker (H_1(\Sigma;\R) \to H_1(M_{F};\R)), \quad
	V_{M_{F'}}=\ker (H_1(\Sigma;\R) \to H_1(M_{F'};\R)).
	\end{split}
	\]
In the gluing along $X$, the three spaces to be considered for checking the hypotheses of Theorem \ref{thm:Wall} are $V_{M_{F}}$, $V_{M_{F'}}$, and $V_X$ in the same order as in the statement.
In the second gluing, it is $V_X$, $V_{M_{F'}}$, and $V_{X'}$. We show now that $V_{M_{F}}=V_{M_{F'}}$ and $V_X=V_{X'}$, so that the hypotheses are satisfied in both cases and additivity for the untwisted signature also holds. The space $V_{M_F}$ is described by Lemma~\ref{lem:SameKernel}. The space $V_{M_{F'}}$ is also described by Lemma~\ref{lem:SameKernel} as a subspace of $H_1(L'\times S^1;\R)$. By Lemma~\ref{rem:SameLN}, the two links have the same pairwise linking numbers. Since we assumed that $F$ and $F'$ have exactly one component for each color, the two vector spaces are seen to coincide under the identification between $L\times S^1 $ and $L'\times S^1$. The spaces $V_X$ and $V_{X'}$ also only depend on the linking numbers, and once again they coincide thanks to Lemma~\ref{rem:SameLN}. Hence, Novikov-Wall additivity holds both for the twisted and untwisted signature, and the claim is verified.

Thanks to Proposition~\ref{prop:UntwistedSign}, we have $\dsign_\omega (W_{F})=\sigma_{L}(\omega)$ and $\dsign_\omega (W_{F'})=\sigma_{L'}(\omega)$. The claim gets hence rewritten as
\[\dsign_\omega(V)=\sigma_{L}(\omega) - \sigma_{L'}(\omega) + \dsign_\omega (W).\]
We will now show that both signature defects $\dsign_\omega(W)$ and $\dsign_\omega(V)$ are actually~$0$, from which the conclusion follows.

By Proposition~\ref{prop:ordinarysignature}, the ordinary signature of $W$
vanishes. Invoking Proposition~\ref{prop:LagrangianInequality}, there exists a Lagrangian $\mathcal{L}_\mathbb{C} \subset H_2(W;\mathbb{C}^\omega)$
for the nonsingular intersection form on
$H_2(W;\mathbb{C}^\omega)/\im (H_2(\partial W;\mathbb{C}^\omega) \to H_2(W;\mathbb{C}^\omega) )$
and thus the twisted signature of $W$ must also vanish, so that $\dsign_\omega (W)=0$.

To conclude the proof, it only remains to show that $\dsign_\omega(V)=0$.
Recall that~$\partial V=M_{F} \cup_{\Sigma} (-M_{F'})$, where $\Sigma$ is a disjoint union of tori. We have seen in Example 4.12 that $M_{F}$ can be described as a plumbing of the components of $F$ along its intersection graph. In particular, the total weight between two vertices is given by
	\[p(F_i,F_j)=F_i\cdot F_j = \lk(L_i,L_j). \]
Similarly, the manifold $-M_{F'}$ is obtained by plumbing the surfaces $-F_1',\dotsc, -F_\mu'$, along the negative of the intersection graph of $F'$ (i.e.\ with its labels reversed), so that
	\[p(-F_i',-F_j')=-F_i'\cdot F_j' = -\lk(L_i',L_j'). \]
 The cobordism $W$ gives a bijection between the components of $L$ and those of $L'$, that induces homeomorphisms along which we can glue the components of $F$ and $F'$ in order to get closed oriented surfaces $G_i=F_i\cup_\partial -F'_i$ ($i=1,\dotsc, \mu$). Then $\partial V$ can be described as a plumbed $3$-manifold, whose plumbing graph has the surfaces $G_i$'s as vertices, and edges $E(G_i,G_j)=E(F_i,F_j)\sqcup E(-F_i',-F_j')$. In particular, for each pair of vertices, we have
\[p(G_i, G_j)=p(F_i,F_j)+p(-F_i',-F_j')= \lk(L_i, L_j) -\lk(L_i',L_j')=0, \]
as the linking numbers of $L$ and $L'$ match up.
This means that the plumbed $3$-manifold $\partial V$ is balanced, and Proposition~\ref{prop:PbFilling} now implies that $\dsign_\omega (V)=0$ as desired.
\end{proof}

\subsection{The proof of Proposition~\ref{prop:ComplexLagrangian} and Proposition~\ref{prop:LagrangianInequality}}
\label{sub:Technical}

At this stage, we have proved Theorem~\ref{thm:SolvableNullitySignature} skipping the proofs of Proposition~\ref{prop:ComplexLagrangian} and Proposition~\ref{prop:LagrangianInequality}. The aim of this last subsection is to prove these technical results, starting with some preliminary lemmas.
\medbreak

We consider the following set-up: let~$(W;M,M')$ be an $H_1$--bordism over~$\Z^\mu$, that is the cobordism is equipped with a commutative diagram
\[ \begin{tikzcd}
H_1(M; \Z) \ar[r, "\sim"] \ar[dr] & H_1(W; \Z) \ar[d]&\ar{l}[swap]{\sim} \ar[dl] H_1(M'; \Z)\\
&\Z^\mu&
\end{tikzcd}.
\]
We abbreviate~$H_1(W; \Z)$ by~$H$.
The composition~$\alpha \colon \Z[H] \to \Z[\Z^\mu] \to \C^\omega$ and the canonical inclusion $\Z[\Z^\mu] \to \Q(\Z^\mu)$ induce homomorphisms
\[ i_R  \colon H_2(W;\Z[H]) \to H_2(W;R) \text{ and } i_{M,R} \colon H_2(W;\Z[H]) \to H_2(W,M;R),\]
where $R$ stands for $\C^\omega$ or $\Q(\Z^\mu)$.

We start with a proposition whose proof is inspired by an argument of Cochran-Orr-Teichner ~\cite[Proposition 4.3]{CochranOrrTeichner}.
\begin{proposition}\label{prop:relIndep}
Let $R$ be either~$\Q(\Z^\mu)$ or~$\C^\omega$, with $\omega \in \T^\mu_!$.
Let $(W; M, M')$ be an $H_1$--cobordism over~$\Z^\mu$.
	Let~$\alpha_1, \ldots, \alpha_k \in H_2(W; \Z[H])$ be elements whose projections~$i_{M,\Z}(\alpha_i),\dotsc, i_{M,\Z}(\alpha_k) \in H_2(W, M; \Z)$ are linearly independent. Then, the elements~$i_{M,R} (\alpha_1),\dotsc, i_{M,R} (\alpha_k)$ are linearly independent in $H_2(W, M;R)$.
\end{proposition}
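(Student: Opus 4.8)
The plan is to reduce the statement to the chain-homotopy argument encapsulated in Lemma~\ref{lem:Cone}, following the spirit of the Cochran--Orr--Teichner rational-independence argument cited in the excerpt. First I would set up notation: since $H_1(M;\Z)\to H_1(W;\Z)$ is an isomorphism, Lemma~\ref{lem:H1bordism} tells us that $H_j(W,M;\Z)=0$ for $j\neq 2$ and that $H_2(W,M;\Z)$ is free abelian. Pick a $\Z$-basis of $H_2(W,M;\Z)$ extending the independent set $i_{M,\Z}(\alpha_1),\dots,i_{M,\Z}(\alpha_k)$ (possible since a finitely generated subgroup of a free abelian group generated by part of a basis, after possibly passing to a primitive-ification — more precisely, because the $i_{M,\Z}(\alpha_i)$ are $\Z$-linearly independent they span a rank-$k$ free subgroup; what I actually need is that they can be completed to a generating set, which only requires that the quotient be, say, considered up to torsion — I will spell this out carefully below). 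The cleanest route is: suppose for contradiction that $\sum_i r_i\, i_{M,R}(\alpha_i)=0$ in $H_2(W,M;R)$ with $r_i\in\F$ not all zero. Clearing denominators, we may assume the $r_i$ lie in $\Lambda=\Z[t_1^{\pm1},\dots,t_\mu^{\pm1}]$ (for $R=\Q(\Z^\mu)$ this is immediate; for $R=\C^\omega$, recall from Example~\ref{ex:Ccoefficients} that $\C^\omega$ factors through $\Lambda_S$, and one can arrange the coefficients to come from $\Lambda_S$, then from $\Lambda$ after multiplying by a suitable power of the $t_i$'s).

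The key move is then to lift this relation along the group ring. Consider the $\Lambda=\Z[\Z^\mu]$-coefficient system on $W$ coming from $H_1(W;\Z)\twoheadrightarrow \Z^\mu$; there is a natural map $H_2(W;\Z[H])\to H_2(W;\Lambda)$, and composing with the projection to $H_2(W,M;\Lambda)$ sends each $\alpha_i$ to a class $\widetilde\alpha_i$. The augmentation $\Lambda\to\Z$ recovers $i_{M,\Z}(\alpha_i)$. Now I want to argue that the $\widetilde\alpha_i$ are linearly independent over $\Lambda$ after inverting $U$, i.e.\ over $U^{-1}\Lambda$. Here is where Lemma~\ref{lem:Cone} (or rather its proof, the Cochran--Orr--Teichner chain-homotopy/determinant trick of Lemma~\ref{lem:DeterminantTrick}) enters: a $\Lambda$-linear relation among the $\widetilde\alpha_i$ with coefficients in $U^{-1}\Lambda$ that vanishes would, after applying the augmentation, have to reduce to a $\Z$-linear relation — but the determinant trick guarantees that a matrix which becomes invertible over $\Z$ stays invertible over $U^{-1}\Lambda$, so no nontrivial such relation can collapse to zero at the augmentation. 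More concretely: I would phrase everything in terms of the submodule $N\subset H_2(W,M;\Lambda)$ generated by the $\widetilde\alpha_i$ together with a lift of a complement of their $\Z$-span inside $H_2(W,M;\Z)$, show $\Z\otimes_\Lambda N\to H_2(W,M;\Z)$ is onto, and use the partial chain contraction built in Lemma~\ref{lem:Cone}'s proof to transport independence.

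The main obstacle — and the place requiring genuine care — is the passage from ``$\Z$-linearly independent projections'' to a usable statement over $\Lambda$: one cannot in general lift a $\Z$-basis to a $\Lambda$-basis of $H_2(W,M;\Lambda)$, nor is $H_2(W,M;\Lambda)$ necessarily free. My plan is to circumvent this exactly as in \cite[Proof of Proposition 4.3]{CochranOrrTeichner}: work with the mapping-cone/relative chain complex $C(W,M;\Lambda)$, which is a bounded complex of finitely generated free $\Lambda$-modules whose reduction mod the augmentation ideal has homology concentrated in degree $2$; use Lemma~\ref{lem:DeterminantTrick} applied to the relevant chain-level maps to deduce that a cycle representing $\sum r_i\,\widetilde\alpha_i$ which bounds over $U^{-1}\Lambda$ must already have had its augmentation-reduction bound over $\Z$, contradicting the hypothesized independence of the $i_{M,\Z}(\alpha_i)$. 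Once independence over $U^{-1}\Lambda$ is established, independence over $R$ follows by tensoring, since both $\Q(\Z^\mu)$ and $\C^\omega$ are $U^{-1}\Lambda$-modules (using $\omega\in\T^\mu_!$), and the images $i_{M,R}(\alpha_i)$ are precisely $R\otimes_{U^{-1}\Lambda}$ applied to the $U^{-1}\Lambda$-independent classes — here I would invoke that tensoring an injection out of a free module of a linearly independent tuple stays independent, which is where flatness of $\F$ over $U^{-1}\Lambda$ is used. I expect the bookkeeping of ``clearing denominators into $\Lambda$'' in the $\C^\omega$ case and the precise formulation of the chain-level determinant trick to be the fiddly parts, but no new ideas beyond Lemmas~\ref{lem:DeterminantTrick} and~\ref{lem:Cone} should be needed.
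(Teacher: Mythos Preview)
Your plan has a genuine gap in the $\C^\omega$ case, and it is precisely the step you flag as ``where flatness of $\F$ over $U^{-1}\Lambda$ is used.'' The module $\C^\omega$ is \emph{not} flat over $U^{-1}\Lambda$: the localisation $U^{-1}\Lambda$ is a commutative ring of Krull dimension~$\mu$, and $\C^\omega$ is its quotient by a maximal ideal. So even if you succeed in showing that the $\widetilde\alpha_i$ are $U^{-1}\Lambda$--independent inside $H_2(W,M;U^{-1}\Lambda)$, tensoring the inclusion $(U^{-1}\Lambda)^k \hookrightarrow H_2(W,M;U^{-1}\Lambda)$ with $\C^\omega$ need not stay injective. (For $R=\Q(\Z^\mu)$ your outline is fine, since that is a localisation and hence flat.) Relatedly, the ``clearing denominators'' manoeuvre for $\C^\omega$ does not make sense: a nontrivial $\C$--linear relation among the $i_{M,\C^\omega}(\alpha_i)$ has coefficients in $\C$, and there is no way to lift arbitrary complex numbers back into $\Lambda_S$.

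The paper circumvents this by a geometric argument that avoids ever comparing $U^{-1}\Lambda$--homology to $\C^\omega$--homology at the level of \emph{modules}. It realises each $\alpha_i$ by a map of a closed surface $\Sigma_i\to\widehat W$, then enlarges the CW--structure so that $X=\bigvee_i\Sigma_i$ sits as a subcomplex of a $3$--dimensional model $W^c$ disjoint from $M^c$, with $H_2(X;\Z[H])$ free on the $\alpha_i$. The long exact sequence of the triple $(W^c,M^c\sqcup X,M^c)$ then reduces the independence statement to showing $H_3(W^c,M^c\sqcup X;R)=0$. Because $W^c$ is $3$--dimensional, $C_4=0$, so this amounts to injectivity of the single boundary map $\partial_3^R$ between \emph{free} modules; now Lemma~\ref{lem:DeterminantTrick} applies directly, at the chain level, for each $R$ separately. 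The construction of $X$ and the dimension reduction to a $3$--complex are the ideas your sketch is missing; they are what lets the determinant trick do its work on a map of free modules rather than on homology classes inside a possibly non-free $H_2(W,M;\Lambda)$.
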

\begin{proof}
First, we establish suitable CW-structures on the manifolds~$W$ and $M$.
\begin{claim}
The pair~$(W,M)$ admits a finite CW-structure (up to homotopy), that is there
exists a finite CW-complexes~$W^c$ and a subcomplex $M^c \subset W^c$ with a diagram
\[ \begin{tikzcd}
W^c \ar[r, "\sim"] & W\\
M^c \ar[r, "\sim"] \ar[u,"\subset"] & M \ar[u, "\subset"].
\end{tikzcd}, \]
where the horizontal maps are homotopy equivalences and the diagram commutes up to homotopy.
Furthermore, we can pick~$M^c$ to be a $2$--dimensional complex and $W^c$ to be $3$--dimensional.
\end{claim}
Note that $M$ is a $3$--manifold with nonempty boundary, so it admits a smooth structure and one can find a $2$--dimensional CW-structure~$M^c \xrightarrow{\sim} M$ from a Morse function without critical points of index~$3$; see~\cite[Theorem 8.1 (Index 0)]{Milnor65}.

Since $W$ is a $4$--manifold with boundary, Poincaré duality shows that the homology group~$H_k(W; \Z[\pi_1(W)])$ vanishes for $k \geq 4$ (this involves an explicit computation of~$H_0(W, \partial W; \Z[\pi_1(W)])$).
The $4$--manifold~$W$ admits a finite CW-structure, since it is an absolute neighbourhood retract~\cite[Theorem 3.3]{Han51}; see \cite{Wes77}.
Using a result of Wall~\cite[Corollary 5.1]{Wall66}, these two facts imply that there exists a $3$--dimensional CW-structure~$W^c \xrightarrow{\sim} W$.

Use the inverse of the homotopy equivalence~$W^c \xrightarrow{\sim} W$ to
obtain a map~$M^c \to W^c$ and arrange $M^c$ to be a subcomplex by replacing~$W^c$ with the mapping cylinder of $M^c \to W^c$. Since~$M^c$ was a $2$--complex, $W^c$ is still $3$--dimensional.

Before we proceed with the next claim, note that the commutativity up to homotopy is exactly the ingredient needed to construct a map between the cylinders of the inclusions~$\Cyl(M^c \subset W^c) \xrightarrow{\sim} \Cyl(M \subset W)$. Consequently, the relative homology groups of $(W, M)$, and $(W^c, M^c)$ agree.
\begin{claim}
Without increasing the dimensions of $(W^c, M^c)$, we may assume that
there exists a subcomplex~$X \subset W^c$ disjoint from $M^c$ with 
\[ H_2(X; \Z[H]) = \Z[H]\langle \alpha_1, \ldots, \alpha_k \rangle.\]
\end{claim}
First, we realize the homology classes~$\alpha_i$ geometrically:
For each class~$\alpha_i \in H_2(W; \Z[H])$, there exists a closed oriented surface~$\Sigma_i$ together with a map~$f_i \colon \Sigma_i \to \widehat W$ such that $f_i([\Sigma_i]) = \alpha_i$~\cite[Théorème III.3]{Thom54}, where $\widehat W$ is the abelian cover of~$W$ corresponding to the composition $\pi_1(W)\to H_1(W;\Z)$.
Use the inverse of the homotopy~$W^c \to W$ to obtain maps~$f_i^c \colon \Sigma_i \to \widehat W^c$. 
Consider the space~$\widehat X = H \times \vee_i \Sigma_i$ on which $H$ acts by multiplication on the first factor. Define the $H$--equivariant map
\begin{align*}
  f\colon H \times \vee_i \Sigma_i &\to \widehat W^c\\
  \Big( h, x\Big) &\mapsto h \cdot f^c_i(x) \text{ for } x \in \Sigma_i.
\end{align*}
We now think of $\widehat{X}$ as a subspace of the mapping cylinder~$\Cyl(f)$.
Note that the quotient~$\Cyl(f)/ H \simeq W^c$, and $X = \widehat X / H$ is a subcomplex, which is also disjoint from~$M^c$. Replace~$W^c$ by  $\Cyl(f)/ H$, which is a $3$--dimensional complex, since~the surfaces~$\Sigma_i$ are $2$--dimensional.
The subcomplex~$X$ has homology~$H_2(X; \Z[H]) = \Z[H]\langle \alpha_1, \ldots, \alpha_k \rangle$, which is freely generated by the~$\alpha_i$.
This concludes the proof of the claim.

Having constructed suitable CW-structures on $W$ and $M$, we now proceed with the proof.  
Observe that the following quotient map is a chain isomorphism
\begin{equation}
\label{eq:Identify}
 C(M^c \sqcup X, M^c; Q) \cong C(X; Q),
 \end{equation}
where the coefficient system~$Q$ is either~$\Z$ or $R$. In particular, the assumption on the projections precisely means that the map~$H_2(M^c \sqcup X, M^c;\Z) \to H_2(W^c,M^c;\Z)$ is injective. Similarly, our goal is to show that the induced map~$H_2(M^c \sqcup X, M^c;R) \to H_2(W^c, M^c ;R)=H_2(W, M; R)$ is injective. Indeed, this map sends the $\F$--basis~$\{ \alpha_i \}$ of $H_2(M^c \sqcup X, M^c;R) \cong H_2(X;R)$ to the elements~$\{ i_{M,R}(\alpha_i) \}$.

In order to establish injectivity, consider the following exact sequence of the triple~$(M^c, M^c \sqcup X, W^c)$ with $Q = R$ coefficients:
\begin{equation}
\label{eq:LesTriple}
\begin{tikzcd}[column sep=2.8mm]
H_3(W^c, M^c; Q) \ar[r] & H_3(W^c, M^c \sqcup X; Q) \ar[r, "\partial^Q"] & H_2(M^c \sqcup X, M^c; Q) \ar[r, "i_Q"] & H_2(W^c, M^c; Q).
\end{tikzcd}
\end{equation}

Note that Lemma~\ref{lem:ChainHomotopy} shows that the homology group~$H_3(W,M;R) = H_3(W^c,M^c;R)$ vanishes. As we shall see below,
the proposition reduces to the following claim.

\begin{claim}
The homology group $H_3(W^c,M^c \sqcup X;R)$ vanishes.
\end{claim}
Consider the long exact sequence~\eqref{eq:LesTriple} above for $Q = \Z$.  Recall that $H_2(M^c \sqcup X, M;\Z) \to H_2(W^c, M^c;\Z) = H_2(W, M;\Z)$ is injective by assumption, and~$H_3(W^c, M^c;\Z) = H_3(W, M;\Z)$ vanishes since~$W$ is an $H_1$--bordism. This shows that~$H_3(W^c, M^c \sqcup X; \Z) = 0$. 
Since the CW-structure of $W$ has no $4$--cells, the (cellular) chain module
$C_4(W^c, M^c \sqcup X; \Z) = 0$. 
From these two facts, deduce that the boundary
operator~$\partial_3^\Z \colon C_3(W^c, M^c \sqcup X; \Z) \to C_2(W^c, M^c \sqcup X; \Z)$
is injective. Now we relate this observation to the case~$R = Q$:
$\partial_3 \colon  C_3(W^c, M^c \sqcup X; \Z[H]) \to C_2(W^c, M^c \sqcup X; \Z[H])$
is a homomorphism between free modules, and since $\partial_3^\Z \colon C_3(W^c, M^c \sqcup X; \Z) \to C_2(W^c, M^c \sqcup X; \Z)$ is injective, we deduce that~$\partial_3^{R} \colon C_3(W^c, M^c \sqcup X; R) \to C_2(W^c, M^c \sqcup X; R)$ is injective by Lemma~\ref{lem:DeterminantTrick}. This implies the claim that $H_3(W^c, M^c \sqcup X; R) = 0$.

We now conclude the proof of the proposition. Using the claim and~\eqref{eq:LesTriple}, we deduce that $i_R$ is injective. As we mentioned above, this shows that the $i_{M,R}(\alpha_i)$ are linearly independent and thus the proof is concluded.
\end{proof}

The next proposition was Proposition~\ref{prop:ComplexLagrangian} above.

\begin{proposition}\label{prop:free}
Let $\omega \in \T_!^\mu$ and let $R$ be either $\Q(\Z^\mu)$ or $\C^\omega$.
Let $(W; M, M')$ be an $0.5$--cobordism over~$\Z^\mu$ with $1$--langrangian~$\mathcal{L} = \langle l_1, \ldots, l_r \rangle$. Then the subspaces
\begin{align*}
\mathcal{L}_{R} &= \langle i_{R} (l_1), \ldots, i_{R} (l_r)\rangle \subset H_2(W; R)\\
\mathcal{L}_{M, R} &= \langle i_{M,R} (l_1), \ldots, i_{M,R} (l_r)\rangle \subset H_2(W, M; R)
\end{align*}
have dimension
$\dim \mathcal{L}_{R}=\dim \mathcal{L}_{M,R} = r  \geq \frac{1}{2}\dim_\Q H_2(W,M;\Q)$. Furthermore,
the intersection form~$\lambda_{R}$ vanishes on $\mathcal{L}_{R}$.
\end{proposition}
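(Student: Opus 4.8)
The plan is to deduce all four assertions from Proposition~\ref{prop:relIndep}, the defining properties of a $0.5$-solvable cobordism recorded in Definition~\ref{def:nSolvable}, and the naturality of the twisted intersection pairing. I will use throughout that the change-of-coefficients map $i_{M,R}$ factors as $H_2(W;\Z[H]) \xrightarrow{i_R} H_2(W;R) \xrightarrow{i_M} H_2(W,M;R)$, where $i_M$ is induced by the inclusion of pairs, and that the coefficient homomorphism $\phi\colon \Z[H] \to \F$ underlying $R$ (evaluation at $\omega$ when $R=\C^\omega$, the canonical inclusion when $R=\Q(\Z^\mu)$) is a morphism of rings with involution; in the case $R=\C^\omega$ this uses precisely that $|\omega_i|=1$, so that $\overline{\omega_i}=\omega_i^{-1}$.

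First I would treat the dimension counts. By the remark following Definition~\ref{def:nSolvable}, the elements $i_{M,\Z}(l_1),\dots,i_{M,\Z}(l_r)$ span a free submodule of $H_2(W,M;\Z)$ of rank $r$; being $r$ generators of a rank-$r$ module over the domain $\Z$, they are linearly independent. Proposition~\ref{prop:relIndep}, applied with $\alpha_i=l_i$, then shows that $i_{M,R}(l_1),\dots,i_{M,R}(l_r)$ are linearly independent in $H_2(W,M;R)$, so $\dim \mathcal{L}_{M,R}=r$. Since $i_{M,R}=i_M\circ i_R$, the elements $i_R(l_1),\dots,i_R(l_r)$ are a fortiori linearly independent, whence $\dim \mathcal{L}_R=r$ as well. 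The inequality $r \geq \frac{1}{2}\dim_\Q H_2(W,M;\Q)$ is then immediate from property~(2) of Definition~\ref{def:nSolvable} together with the identity $\rk H_2(W,M;\Z)=\dim_\Q H_2(W,M;\Q)$, which holds because $\Q$ is flat over $\Z$.

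It remains to check that $\lambda_R$ vanishes on $\mathcal{L}_R$. Since $\lambda_R$ is obtained from the $\Z[H]$-valued form $\lambda_1$ by the change of coefficients $\phi$, functoriality of the construction in Definition~\ref{def:int} under this coefficient homomorphism (an easy diagram chase through the maps defining $\Phi$) yields $\lambda_R(i_R(x),i_R(y)) = \phi(\lambda_1(x,y))$ for all $x,y \in H_2(W;\Z[H])$. Property~(1) of Definition~\ref{def:nSolvable} gives $\lambda_1(l_i,l_j)=0$ for all $i,j$, hence $\lambda_R(i_R(l_i),i_R(l_j))=\phi(0)=0$; since the $i_R(l_i)$ generate $\mathcal{L}_R$ and $\lambda_R$ is $\F$-sesquilinear, $\lambda_R$ vanishes on $\mathcal{L}_R$.

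The only substantive input is Proposition~\ref{prop:relIndep}, which is proved separately above; within the present argument the one point needing a little care is the compatibility of the intersection pairing with the coefficient homomorphism, and in particular with the involutions — which is exactly what restricts us to $\omega$ on the unit circle. Everything else is bookkeeping, and I expect no further obstacle.
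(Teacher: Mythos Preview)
Your proof is correct and follows essentially the same approach as the paper: you use the duality with the $d_j$'s (via the remark after Definition~\ref{def:nSolvable}) to get linear independence of the $i_{M,\Z}(l_i)$ in $H_2(W,M;\Z)$, then invoke Proposition~\ref{prop:relIndep}, and finally lift to $H_2(W;R)$ via the factorization $i_{M,R}=i_M\circ i_R$. Your treatment is in fact slightly more explicit than the paper's in that you spell out the vanishing of $\lambda_R$ on $\mathcal{L}_R$ via naturality of the intersection pairing under the coefficient homomorphism $\phi$, and you make the inequality $r\geq \tfrac12\dim_\Q H_2(W,M;\Q)$ explicit from property~(2); the paper's proof leaves both of these implicit.
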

\begin{proof}
Denote the $0$--duals of~$W$ by~$\mathcal{D} = \langle d_1, \ldots, d_r\rangle$.
Denote~$H_1(W; \Z)$ by~$H$, and consider the map~$i_\Z \colon H_2(W; \Z[H]) \to H_2(W; \Z)$, which is induced by the augmentation map~$\Z[H] \to \Z$.
By definition of a $0.5$--cobordism, the images~$i_\Z (l_i) \in H_2(W; \Z)$ of the elements~$l_i$ fulfill the relation $\lambda_\Z \big(i_\Z (l_i),d_j)=\delta_{ij}$ for each~$1 \leq i,j \leq  r$.
This relation descends to the pairing
\[ \lambda_{M,\Z} \colon H_2(W, M; \Z) \times H_2(W, M'; \Z) \to \Z, \]
that sends $(i_{M,\Z} (l_i), d'_j)$ to $\lambda_{M,\Z}(i_{M,\Z} (l_i), d'_j) =\delta_{ij}$
 where $d'_j \in H_2(W, M';\Z)$ is the relative class of $d_j$, that is the image of $d_j$ under the map $H_2(W;\Z) \to H_2(W,M';\Z)$ induced by the canonical inclusion.
Consequently, the elements~$i_{M,\Z} (l_i)$ are linearly independent.
Now apply Proposition~\ref{prop:relIndep} to the elements~$l_i$ to see that the~$i_{M,R} (l_i)$'s are linearly independent.
Since $H_2(W; R) \to H_2(W, M; R)$ sends $i_{R} (l_i) \mapsto i_{M,R} (l_i)$, the elements~$i_{R} (l_i)$ are linearly independent as well.
\end{proof}

The final step is to prove Proposition~\ref{prop:LagrangianInequality}, that is the inequality
\[ \frac{1}{2} \dim_{\C} \left( \frac{H_2(W;\C^\omega)}{\im(H_2(\partial W;\C^\omega) \to H_2(W;\C^\omega))}\right) \leq \dim_{\C}(\mathcal{L}_\C)\]
for cobordisms between link exteriors. We start with two preliminary lemmas involving twisted Betti numbers.
\begin{lemma}\label{lem:Betti}
If two $\mu$-colored links $L$ and $L'$ are
$H_1$-cobordant via $(W; X_L, X_{L'})$,
then for $i = 1,2$ and for all $\omega \in \T^\mu_!$  we have
$$\beta_2^\omega(W,\partial W) = \beta_2^\omega(W,X_{L})+\beta_3^\omega(W,\partial W)= \beta_2^\omega(W, X_{L'})+\beta_3^\omega(W,\partial W).$$
\end{lemma}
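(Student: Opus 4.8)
The plan is to read the two identities off the long exact sequence of the triple $X_L \subset \partial W \subset W$ (and its mirror with $X_{L'}$), after replacing the relative groups of $(\partial W, X_L)$ by those of the opposite link exterior. Throughout, $W$ carries a map $H_1(W;\Z)\to\Z^\mu$ restricting to the colorings on both ends (cf.\ Diagram~\eqref{eq:CompLabel}), so that $\C^\omega$--coefficients are available on all spaces in sight. First I would assemble the inputs. Since $W$ is an $H_1$--cobordism over $\Z^\mu$, Lemma~\ref{lem:ChainHomotopy} gives $H_i(W,X_L;\C^\omega)=0=H_i(W,X_{L'};\C^\omega)$ for all $i\neq 2$ and all $\omega\in\T_!^\mu$. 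Next, each boundary torus of $X_L$ (resp.\ $X_{L'}$) carries the nontrivial $U(1)$--coefficient system sending its meridian to the coordinate $\omega_c\neq 1$ of $\omega$, hence its twisted chain complex is acyclic; exactly as in the proof of Lemma~\ref{lem:splitboundary} (using that the $\Lambda_S$--homology of such a torus vanishes and Proposition~\ref{prop:UCSSTor}), one gets $H_*(\partial X_L;\C^\omega)=0=H_*(\partial X_{L'};\C^\omega)$. Excision, together with the long exact sequence of $(X_{L'},\partial X_{L'})$, then yields $H_n(\partial W,X_L;\C^\omega)\cong H_n(X_{L'},\partial X_{L'};\C^\omega)\cong H_n(X_{L'};\C^\omega)$ for every $n$, and symmetrically with $L\leftrightarrow L'$.

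Second, I would pin down the Betti numbers of the pair $(\partial W,X_L)$. By the identification above, $\beta_n^\omega(\partial W,X_L)=\beta_n^\omega(X_{L'})$. Now $H_0(X_{L'};\C^\omega)=0$ by Lemma~\ref{lem:UCSSTorExample} (a meridian, hence a standard generator of $\Z^\mu$, lies in the image of $H_1(X_{L'};\Z)\to\Z^\mu$), and $H_n(X_{L'};\C^\omega)=0$ for $n\geq 3$ since $X_{L'}$ is homotopy equivalent to a finite $2$--complex. Because the twisted Euler characteristic is computed on the chain level, $\sum_n(-1)^n\beta_n^\omega(X_{L'})=\chi(X_{L'})$, which vanishes (it is half the Euler characteristic of the toral boundary). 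Hence $\beta_1^\omega(X_{L'})=\beta_2^\omega(X_{L'})$, i.e.\ $\beta_0^\omega(\partial W,X_L)=0$, $\beta_n^\omega(\partial W,X_L)=0$ for $n\geq 3$, and $\beta_1^\omega(\partial W,X_L)=\beta_2^\omega(\partial W,X_L)$.

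Finally, I would feed these facts into the long exact sequence of the triple $(W,\partial W,X_L)$ with $\C^\omega$--coefficients. Using $H_i(W,X_L;\C^\omega)=0$ for $i\neq 2$, it collapses to the exact sequence
\[ 0 \to H_3(W,\partial W;\C^\omega) \to H_2(\partial W,X_L;\C^\omega) \to H_2(W,X_L;\C^\omega) \to H_2(W,\partial W;\C^\omega) \to H_1(\partial W,X_L;\C^\omega) \to 0. \]
Taking the alternating sum of dimensions and cancelling $\beta_2^\omega(\partial W,X_L)$ against $\beta_1^\omega(\partial W,X_L)$ (equal by the previous step) leaves $\beta_2^\omega(W,\partial W)=\beta_2^\omega(W,X_L)+\beta_3^\omega(W,\partial W)$; running the identical argument with $X_{L'}$ in place of $X_L$ gives the second equality. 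The only mildly delicate point is the bookkeeping in the middle step — arranging that the two "extra" relative Betti numbers of $(\partial W,X_L)$ coincide so that they drop out of the Euler characteristic; beyond that, the statement is a formal consequence of the exact sequences and the duality/vanishing results already established in Sections~\ref{sec:Prelim} and~\ref{sub:H1Bordism}.
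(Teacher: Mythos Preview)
Your argument is correct and follows the same overall skeleton as the paper: vanish $H_i(W,X_L;\C^\omega)$ for $i\neq 2$ via Lemma~\ref{lem:ChainHomotopy}, feed this into the long exact sequence of the triple $(W,\partial W,X_L)$, and reduce to the cancellation $\beta_1^\omega(\partial W,X_L)=\beta_2^\omega(\partial W,X_L)$. The only difference is in how this last equality is obtained: the paper first decomposes $\beta_i^\omega(\partial W)=\beta_i^\omega(X_L)+\beta_i^\omega(X_{L'})$ (as in Lemma~\ref{lem:splitboundary}) and then runs the long exact sequence of the pair $(\partial W,X_L)$, whereas you cut straight to the answer by excising to $(X_{L'},\partial X_{L'})$ and using that the boundary tori have trivial $\C^\omega$--homology. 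Your route is a modest streamlining of the same idea.
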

\begin{proof}
We start by establishing two preliminary equalities.
As $X_L$ is a link exterior,
its Euler characteristic vanishes. Since $\beta_0^{\omega}(X_L)$ and
$\beta_3^\omega(X_L)$ vanish and since  $\chi^{\omega}(X_L)=\chi(X_L)=0$, we obtain
\begin{equation}
\label{eq:BettiNumberLinks}
\beta_1^\omega(X_{L})=\beta_2^\omega(X_{L}),
\end{equation}
and similarly for $L'$.
 Arguing as in Lemma~\ref{lem:splitboundary}, one deduces that $\beta_i^\omega(\partial W)=\beta_i^\omega(X_L)+\beta_i^\omega(X_{L'})$. Using~(\ref{eq:BettiNumberLinks}), we then see that $\beta_1^\omega(X_{L})-\beta_2^\omega(X_{L})$ equals $\beta_1^\omega(X_{L'})-\beta_2^\omega(X_{L'})$ and therefore
\begin{equation}
\label{eq:BettiNumberBoundaryW}
\beta_1^\omega(\partial W)=\beta_1^\omega(X_L)+\beta_1^\omega(X_{L'})=\beta_2^\omega(X_L)+\beta_2^\omega(X_{L'})=\beta_2^\omega(\partial W).
\end{equation}
We now prove the first equality displayed in the lemma (the proof of the second is identical). Lemma~\ref{lem:ChainHomotopy} shows
that both modules
$H_3(W,X_L;\mathbb{C}^\omega)=0$ and $H_1(W,X_L;\mathbb{C}^\omega)=0$ vanish.
Consider the long exact
sequence of the triple $(W,\partial W,X_L)$
\begin{align*}
0 \to H_3(W, \partial W;\C^\omega) &\to H_2(\partial W,X_L;\C^\omega) \to H_2(W,X_L;\C^\omega) \to H_2(W,\partial W;\C^\omega)\to\\
	&\to H_1(\partial W,X_L;\C^\omega) \to 0 \to H_1(W,\partial W;\C^\omega) \to 0
\end{align*}
and deduce that $H_1(W,\partial W;\mathbb{C}^\omega) =0$. Since the alternating sum of dimensions of an exact sequence vanishes, we obtain
\[ \beta_2^\omega(W,\partial W) =\beta_2^\omega(W,X_L)+\beta_3^\omega(W,\partial W)+\beta_1^\omega(\partial
W,X_L)-\beta_2^\omega(\partial W,X_L).\]
Thus the statement of the lemma reduces to proving the
equality~$\beta_1^\omega(\partial W,X_L)=\beta_2^\omega(\partial W,X_L)$.
To achieve this, consider the long exact sequence of the pair~$(\partial W,X_L)$:
\begin{align*}
 0 \to H_2(X_L;\C^\omega) &\to H_2(\partial W;\C^\omega) \to H_2(\partial W,X_L;\C^\omega) \\
 & \to H_1(X_L;\C^\omega) \to H_1(\partial W;\C^\omega)
\to H_1(\partial W,X_L;\C^\omega) \to 0.
\end{align*}
Note that $H_3(\partial W,X_L; \mathbb{C}^\omega)=0$ because of the long exact sequence of $(W,\partial W,X_L)$
together with the fact that $H_3(W,X_L; \mathbb{C}^\omega)=0$; see Lemma~\ref{lem:ChainHomotopy}.
Again, the alternate sum of dimensions
\[ \beta_2^\omega(X_L) - \beta_2^\omega (\partial W) + \beta_2^\omega (W, X_L) -\beta_1^\omega(X_L) + \beta_1^\omega(\partial W) - \beta_1^\omega(\partial W, X_L)= 0\]
vanishes, and the desired equality now follows by combining~(\ref{eq:BettiNumberLinks}) and~(\ref{eq:BettiNumberBoundaryW}).
\end{proof}

Next, we prove an inequality on the twisted Betti numbers of an
$H_1$-cobordism.
\begin{lemma} \label{lem:Inequality}
Let $L$ and $L'$ be links that are $H_1$--cobordant over~$\Z^\mu$ via $(W; X_L, X_{L'})$, then
\[\beta_3^\omega(W,\partial W)-\beta_1^\omega(\partial W)+\beta_1^\omega(W)\leq 0 \]
for all $\omega \in \mathbb{T}_!^\mu$.
\end{lemma}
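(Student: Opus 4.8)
The plan is to reduce the claimed inequality to the single estimate $2\beta_1^\omega(W)\le \beta_1^\omega(\partial W)$, and then to prove this estimate by bounding $\beta_1^\omega(W)$ separately by the nullities of $X_L$ and $X_{L'}$, exactly as in Lemma~\ref{lem:034AndNullityBound}(1) but with cleaner input coming from the $H_1$--cobordism hypothesis. First I would observe that Poincar\'e--Lefschetz duality gives $H_3(W,\partial W;\C^\omega)\cong H^1(W;\C^\omega)$, and that Proposition~\ref{prop:UCSS} identifies $H^1(W;\C^\omega)$ with $\operatorname{Hom}_\C(H_1(W;\C^\omega),\C)^{\operatorname{tr}}$; since $\C$ is a field this yields $\beta_3^\omega(W,\partial W)=\beta_1^\omega(W)$ (equivalently, this is Lemma~\ref{lem:DualityUCSS} applied with $M'=\emptyset$). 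Hence
\[
\beta_3^\omega(W,\partial W)-\beta_1^\omega(\partial W)+\beta_1^\omega(W)=2\beta_1^\omega(W)-\beta_1^\omega(\partial W),
\]
and it suffices to show $2\beta_1^\omega(W)\le \beta_1^\omega(\partial W)$.

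Next I would bound $\beta_1^\omega(W)$ from above. Since $(W;X_L,X_{L'})$ is an $H_1$--cobordism over $\Z^\mu$ and $\omega\in\T^\mu_!$, Lemma~\ref{lem:ChainHomotopy} (applied to the pair $(W,X_L)$, and then to $(W,X_{L'})$) gives $H_1(W,X_L;\C^\omega)=0$ and $H_1(W,X_{L'};\C^\omega)=0$. The long exact sequences of the pairs $(W,X_L)$ and $(W,X_{L'})$ then show that the inclusion-induced maps $H_1(X_L;\C^\omega)\to H_1(W;\C^\omega)$ and $H_1(X_{L'};\C^\omega)\to H_1(W;\C^\omega)$ are surjective, so $\beta_1^\omega(W)\le\beta_1^\omega(X_L)$ and $\beta_1^\omega(W)\le\beta_1^\omega(X_{L'})$. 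Finally, as already used in the proof of Lemma~\ref{lem:Betti}, arguing as in Lemma~\ref{lem:splitboundary} (the gluing locus $L\times S^1$ has vanishing $\C^\omega$--homology because the entries of $\omega$ differ from $1$ and the $S^1$--direction maps to a meridian, so Mayer--Vietoris splits) gives $\beta_1^\omega(\partial W)=\beta_1^\omega(X_L)+\beta_1^\omega(X_{L'})$. Adding the two bounds yields $2\beta_1^\omega(W)\le\beta_1^\omega(X_L)+\beta_1^\omega(X_{L'})=\beta_1^\omega(\partial W)$, which is what we wanted.

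There is no serious obstacle here; the statement is a homological bookkeeping exercise. The only points needing care are the degree/duality accounting in the first step (using that $\F$ being a field makes a vector space and its dual equidimensional), and making sure the passage from the integral vanishing $H_i(W,X_L;\Z)=0$ ($i\neq 2$) of Lemma~\ref{lem:H1bordism} to the twisted vanishing is legitimate — this is precisely where the hypothesis $\omega\in\T^\mu_!$ enters, via Lemma~\ref{lem:ChainHomotopy} (ultimately Lemma~\ref{lem:Cone} and the determinant trick of Lemma~\ref{lem:DeterminantTrick}).
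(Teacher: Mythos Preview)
Your proof is correct and follows essentially the same approach as the paper's: both use Lemma~\ref{lem:DualityUCSS} to identify $\beta_3^\omega(W,\partial W)=\beta_1^\omega(W)$, invoke Lemma~\ref{lem:ChainHomotopy} to obtain surjectivity of $H_1(X_L;\C^\omega)\to H_1(W;\C^\omega)$ (and similarly for $X_{L'}$), and split $\beta_1^\omega(\partial W)$ via the Mayer--Vietoris argument of Lemma~\ref{lem:splitboundary}. The only difference is cosmetic ordering of the steps.
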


\begin{proof}
By Lemma~\ref{lem:DualityUCSS}, one gets $\beta_3^\omega(W,\partial W)=\beta_1^\omega(W)$. Arguing as in the proof of Lemma~\ref{lem:Betti}, we see that $\beta_1^\omega(\partial W) =\beta_1^\omega(X_L)+\beta_1^\omega(X_{L'})$. Since Lemma~\ref{lem:ChainHomotopy} implies that $H_1(W,X_{L};\C^\omega)=0$, the map $H_1(X_{L};\C^\omega) \to H_1(W;\C^\omega)$ is surjective, and so $\beta_1^\omega (W)-\beta_1^\omega (X_{L}) \leq 0$ and similarly for $X_{L'}$. Combining these facts, $\beta_3^\omega(W,\partial W)-\beta_1^\omega(\partial W)+\beta_1^\omega(W)$ is equal to $2\beta_1^\omega(W)-\beta_1^\omega(X_{L})-\beta_1^\omega(X_{L'}) =(\beta_1^\omega(W)-\beta_1^\omega(X_{L'}))+(\beta_1^\omega(W)-\beta(X_{L'})) \leq 0$, as desired.
\end{proof}

\begin{proposition}\label{prop:DimensionCount}
Let $L$ and $L'$ be $\mu$-colored links that are $0.5$-solvable cobordant via $(W; X_L, X_{L'})$. Then, for all $\omega \in \T_!^\mu$, the subspace~$\mathcal{L}_\C \subset H_2(W;\C^\omega)$ of Proposition~\ref{prop:free} satisfies
\[ \frac{1}{2} \dim_{\C} \left( \frac{H_2(W;\C^\omega)}{\im(H_2(\partial W;\C^\omega) \to H_2(W;\C^\omega))}\right) \leq \dim_{\C}(\mathcal{L}_\C).\]
\end{proposition}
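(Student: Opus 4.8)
The plan is to compute the dimension of the quotient $H_2(W;\C^\omega)/\im j$ appearing in the statement, where $j\colon H_2(\partial W;\C^\omega)\to H_2(W;\C^\omega)$ is the inclusion-induced map, directly from the long exact sequences of the pairs $(W,\partial W)$ and $(W,X_L)$ with $\C^\omega$--coefficients, and then to feed in the three preparatory results already available: Lemma~\ref{lem:Betti}, Lemma~\ref{lem:Inequality}, and Proposition~\ref{prop:free} applied with $M=X_L$. Recall that here $\partial W$ is a \emph{closed} $3$--manifold, since the two link exteriors $X_L$ and $X_{L'}$ are glued along their common torus boundary $L\times S^1$; this is what makes the duality statements below apply directly.

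First I would pin down $\dim_\C\coker j$. Since $(W;X_L,X_{L'})$ is in particular an $H_1$--cobordism, Lemma~\ref{lem:ChainHomotopy} gives $H_i(W,X_L;\C^\omega)=0$ for $i\neq 2$; because $X_L$ is homotopy equivalent to a $2$--complex we also have $H_3(X_L;\C^\omega)=0$, and the long exact sequence of $(W,X_L)$ then forces $H_3(W;\C^\omega)=0$. Poincaré--Lefschetz duality together with Proposition~\ref{prop:UCSS} gives $H_1(W,\partial W;\C^\omega)\cong H^3(W;\C^\omega)\cong\Hom_\C(H_3(W;\C^\omega),\C)=0$, which is precisely the vanishing recorded in the proof of Lemma~\ref{lem:Betti}. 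Plugging $H_1(W,\partial W;\C^\omega)=0$ into the long exact sequence of $(W,\partial W)$ shows that $H_1(\partial W;\C^\omega)\to H_1(W;\C^\omega)$ is surjective and identifies $\coker j$ with $\ker\bigl(H_2(W,\partial W;\C^\omega)\to H_1(\partial W;\C^\omega)\bigr)$; a dimension count along that sequence then yields
\[
\dim_\C\coker j=\beta_2^\omega(W,\partial W)-\beta_1^\omega(\partial W)+\beta_1^\omega(W).
\]

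Next I would assemble the estimate. Lemma~\ref{lem:Betti} rewrites $\beta_2^\omega(W,\partial W)=\beta_2^\omega(W,X_L)+\beta_3^\omega(W,\partial W)$, so that
\[
\dim_\C\coker j=\beta_2^\omega(W,X_L)+\bigl(\beta_3^\omega(W,\partial W)-\beta_1^\omega(\partial W)+\beta_1^\omega(W)\bigr)\leq\beta_2^\omega(W,X_L),
\]
where the inequality is exactly Lemma~\ref{lem:Inequality}. By the last sentence of Lemma~\ref{lem:ChainHomotopy} we have $\beta_2^\omega(W,X_L)=\beta_2(W,X_L)=\dim_\Q H_2(W,X_L;\Q)$, while Proposition~\ref{prop:free} (with $M=X_L$) gives $\dim_\C\mathcal{L}_\C=r\geq\frac{1}{2}\dim_\Q H_2(W,X_L;\Q)$. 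Chaining these inequalities gives $\frac{1}{2}\dim_\C\coker j\leq\frac{1}{2}\dim_\Q H_2(W,X_L;\Q)\leq\dim_\C\mathcal{L}_\C$, which is the assertion.

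I do not expect a genuine obstacle in this argument: its content is entirely homological bookkeeping designed to make the two long exact sequences degenerate as claimed (the key points being that $\partial W$ is closed and that $H_3(W;\C^\omega)=0$). All the substantive input — the linear independence over the twisted rings of the classes arising from a $1$--lagrangian (Proposition~\ref{prop:relIndep}, hence Proposition~\ref{prop:free}), the Betti-number identity of Lemma~\ref{lem:Betti}, and the inequality of Lemma~\ref{lem:Inequality} — has already been established, so the present proposition is a short accounting argument layered on top of them.
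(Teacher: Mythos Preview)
Your proof is correct and follows essentially the same route as the paper: reduce to the inequality $\dim_\C\coker j\leq\beta_2^\omega(W,X_L)$, compute $\dim_\C\coker j$ from the long exact sequence of $(W,\partial W)$ using $H_1(W,\partial W;\C^\omega)=0$, then apply Lemma~\ref{lem:Betti} and Lemma~\ref{lem:Inequality} in turn, and finish with Proposition~\ref{prop:free} and the Betti-number equality from Lemma~\ref{lem:ChainHomotopy}. The only cosmetic difference is that you justify $H_1(W,\partial W;\C^\omega)=0$ via $H_3(W;\C^\omega)=0$ and duality, whereas the paper cites the argument in the proof of Lemma~\ref{lem:Betti} (long exact sequence of the triple); both are valid and equivalent.
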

\begin{proof}
Invoking Proposition~\ref{prop:free}, the dimension of $\mathcal{L}_\C$ is larger than half the rank of $H_2(W,X_L;\Z)$. Using Lemma~\ref{lem:ChainHomotopy}, $\beta_2^\omega(W,X_L)=\beta_2(W,X_L)$, and so the proposition reduces to showing the inequality
\[ d:=\dim\left( \frac{H_2(W;\mathbb{C}^\omega)}{\im H_2(\partial W;\mathbb{C}^\omega) \to H_2(W;\C^\omega)}\right)
	\leq \beta_2^\omega(W, X_L).\]
Set $V:=\im ( H_2(\partial W;\mathbb{C}^\omega) \to H_2(W;\mathbb{C}^\omega) )$.
Since we proved in Lemma~\ref{lem:ChainHomotopy} that $H_1(W,\partial W;\mathbb{C}^\omega)$ vanishes,
the long exact sequence of the pair $(W,\partial W)$ now takes the form
\[ 0 \to V \to H_2(W;\mathbb{C}^\omega) \to H_2(W,\partial W;\mathbb{C}^\omega) \to H_1(\partial W;\mathbb{C}^\omega) \to H_1(W;\mathbb{C}^\omega) \to 0.\]
Finally, using the fact that the alternating dimensions of an exact sequence sum up to zero, one gets
\begin{align*}
d
 &=\beta_2^\omega(W,\partial W)-\beta_1^\omega(\partial W)+\beta_1^\omega(W)  \\
 &=\beta_2^\omega(W,X_L)+\beta_3^\omega(W,\partial W)-\beta_1^\omega(\partial W)+\beta_1^\omega(W)  \\
 & \leq \beta_2^\omega(W,X_L),
\end{align*}
where the last two steps use respectively Lemma~\ref{lem:Betti} and Lemma~\ref{lem:Inequality}.
\end{proof}

\bibliographystyle{alpha}
\bibliography{BiblioSolvable}

\begin{thebibliography}{HNK71}

\bibitem[APS75]{AtiyahPatodiSinger}
Michael~F. Atiyah, Vijay~K. Patodi, and Isador~M. Singer.
\newblock Spectral asymmetry and {R}iemannian geometry. {II}.
\newblock {\em Math. Proc. Cambridge Philos. Soc.}, 78(3):405--432, 1975.

\bibitem[BFP16]{BorodzikFriedlPowell}
Maciej Borodzik, Stefan Friedl, and Mark Powell.
\newblock Blanchfield forms and {G}ordian distance.
\newblock {\em J. Math. Soc. Japan}, 68(3):1047--1080, 2016.

\bibitem[BGV92]{Berline92}
Nicole Berline, Ezra Getzler, and Mich\`ele Vergne.
\newblock {\em Heat kernels and {D}irac operators}, volume 298 of {\em
  Grundlehren der Mathematischen Wissenschaften [Fundamental Principles of
  Mathematical Sciences]}.
\newblock Springer-Verlag, Berlin, 1992.

\bibitem[Bre93]{Bredon}
Glen~E. Bredon.
\newblock {\em Topology and geometry}, volume 139 of {\em Graduate Texts in
  Mathematics}.
\newblock Springer-Verlag, New York, 1993.

\bibitem[CCZ16]{CimasoniConwayZacharova}
David Cimasoni, Anthony Conway, and Kleopatra Zacharova.
\newblock Splitting numbers and signatures.
\newblock {\em Proc. Amer. Math. Soc.}, 144(12):5443--5455, 2016.

\bibitem[CF64]{ConnerFloyd}
Pierre~E. Conner and Edwin~E. Floyd.
\newblock {\em Differentiable periodic maps}.
\newblock Ergebnisse der Mathematik und ihrer Grenzgebiete, N. F., Band 33.
  Academic Press Inc., Publishers, New York; Springer-Verlag,
  Berlin-G\"ottingen-Heidelberg, 1964.

\bibitem[CF08]{CimasoniFlorens}
David Cimasoni and Vincent Florens.
\newblock Generalized {S}eifert surfaces and signatures of colored links.
\newblock {\em Trans. Amer. Math. Soc.}, 360(3):1223--1264 (electronic), 2008.

\bibitem[CFT16]{ConwayFriedlToffoli}
Anthony Conway, Stefan Friedl, and Enrico Toffoli.
\newblock The {B}lanchfield pairing of colored links.
\newblock {\em ArXiv e-prints}, 1609.08057, 2016.
\newblock \href{http://arxiv.org/abs/1609.08057}{ArXiv 1609.08057}.

\bibitem[Cha08]{Cha08}
Jae~Choon Cha.
\newblock Topological minimal genus and {$L^2$}-signatures.
\newblock {\em Algebr. Geom. Topol.}, 8(2):885--909, 2008.

\bibitem[Cha14]{Cha}
Jae~Choon Cha.
\newblock Symmetric {W}hitney tower cobordism for bordered 3-manifolds and
  links.
\newblock {\em Trans. Amer. Math. Soc.}, 366(6):3241--3273, 2014.

\bibitem[Cim04]{CimasoniPotential}
David Cimasoni.
\newblock A geometric construction of the {C}onway potential function.
\newblock {\em Comment. Math. Helv.}, 79(1):124--146, 2004.

\bibitem[Con17]{ConwayThesis}
Anthony Conway.
\newblock {\em Invariants of colored links and generalizations of the {B}urau
  representation}.
\newblock PhD thesis, Universit\'e de Gen\`eve, 2017.
\newblock
  \href{http://dx.doi.org/10.13097/archive-ouverte/unige:99352}{10.13097/archive-ouverte/unige:99352}.

\bibitem[Coo82]{Cooper}
Daryl Cooper.
\newblock The universal abelian cover of a link.
\newblock In {\em Low-dimensional topology ({B}angor, 1979)}, volume~48 of {\em
  London Math. Soc. Lecture Note Ser.}, pages 51--66. Cambridge Univ. Press,
  Cambridge-New York, 1982.

\bibitem[COT03]{CochranOrrTeichner}
Tim~D. Cochran, Kent~E. Orr, and Peter Teichner.
\newblock Knot concordance, {W}hitney towers and {$L^2$}-signatures.
\newblock {\em Ann. of Math. (2)}, 157(2):433--519, 2003.

\bibitem[DFL18]{DegtyarevFlorensLecuona2}
Alex Degtyarev, Vincent Florens, and Ana Lecuna.
\newblock Signature and slopes of colored links.
\newblock {\em ArXiv e-prints}, 1802.01836, 2018.
\newblock \href{http://arxiv.org/abs/1802.01836}{ArXiv 1802.01836}.

\bibitem[Han51]{Han51}
Olof Hanner.
\newblock Some theorems on absolute neighborhood retracts.
\newblock {\em Ark. Mat.}, 1:389--408, 1951.

\bibitem[Hil12]{Hillman}
Jonathan Hillman.
\newblock {\em Algebraic invariants of links}, volume~52 of {\em Series on
  Knots and Everything}.
\newblock World Scientific Publishing Co. Pte. Ltd., Hackensack, NJ, second
  edition, 2012.

\bibitem[HNK71]{Hirzebruch71}
Friedrich Hirzebruch, Walter~D. Neumann, and Sebastian~S. Koh.
\newblock {\em Differentiable manifolds and quadratic forms}.
\newblock Marcel Dekker, Inc., New York, 1971.
\newblock Appendix II by W. Scharlau, Lecture Notes in Pure and Applied
  Mathematics, Vol. 4.

\bibitem[HS97]{Hilton97}
Peter~J. Hilton and Urs Stammbach.
\newblock {\em A course in homological algebra}, volume~4 of {\em Graduate
  Texts in Mathematics}.
\newblock Springer-Verlag, New York, second edition, 1997.

\bibitem[Kim15]{Kim}
Min~Hoon Kim.
\newblock Whitney towers, gropes and {C}asson-{G}ordon style invariants of
  links.
\newblock {\em Algebr. Geom. Topol.}, 15(3):1813--1845, 2015.

\bibitem[Lev69]{Levine69}
Jerome Levine.
\newblock Knot cobordism groups in codimension two.
\newblock {\em Comment. Math. Helv.}, 44:229--244, 1969.

\bibitem[Lev77]{Levine77}
Jerome Levine.
\newblock Knot modules. {I}.
\newblock {\em Trans. Amer. Math. Soc.}, 229:1--50, 1977.

\bibitem[Lic97]{Lickorish97}
W.~B.~Raymond Lickorish.
\newblock {\em An introduction to knot theory}, volume 175 of {\em Graduate
  Texts in Mathematics}.
\newblock Springer-Verlag, New York, 1997.

\bibitem[Mil65]{Milnor65}
John Milnor.
\newblock {\em Lectures on the {$h$}-cobordism theorem}.
\newblock Notes by L. Siebenmann and J. Sondow. Princeton University Press,
  Princeton, N.J., 1965.

\bibitem[Mur65]{Murasugi}
Kunio Murasugi.
\newblock On a certain numerical invariant of link types.
\newblock {\em Trans. Amer. Math. Soc.}, 117:387--422, 1965.

\bibitem[Neu79]{Neumann79}
Walter~D. Neumann.
\newblock Signature related invariants of manifolds. {I}. {M}onodromy and
  {$\gamma $}-invariants.
\newblock {\em Topology}, 18(2):147--172, 1979.

\bibitem[Neu81]{NeumannCalculus}
Walter~D. Neumann.
\newblock A calculus for plumbing applied to the topology of complex surface
  singularities and degenerating complex curves.
\newblock {\em Trans. Amer. Math. Soc.}, 268(2):299--344, 1981.

\bibitem[NP17]{NagelPowell}
Matthias Nagel and Mark Powell.
\newblock Concordance invariance of {L}evine-{T}ristram signatures of links.
\newblock {\em Documenta Mathematica}, 22:25--43, 2017.

\bibitem[Pow17]{Powell}
Mark Powell.
\newblock The four-genus of a link, {L}evine--{T}ristram signatures and
  satellites.
\newblock {\em J. Knot Theory Ramifications}, 26(2):1740008, 28, 2017.

\bibitem[Ser80]{Serre80}
Jean-Pierre Serre.
\newblock {\em Trees}.
\newblock Springer-Verlag, Berlin-New York, 1980.
\newblock Translated from the French by John Stillwell.

\bibitem[Tan98]{Tanaka98}
Toshifumi Tanaka.
\newblock Four-genera of quasipositive knots.
\newblock {\em Topology Appl.}, 83(3):187--192, 1998.

\bibitem[Tho54]{Thom54}
Ren\'e Thom.
\newblock Quelques propri\'et\'es globales des vari\'et\'es diff\'erentiables.
\newblock {\em Comment. Math. Helv.}, 28:17--86, 1954.

\bibitem[Tri69]{Tristram}
Andrew~G. Tristram.
\newblock Some cobordism invariants for links.
\newblock {\em Proc. Cambridge Philos. Soc.}, 66:251--264, 1969.

\bibitem[Vir09]{Viro09}
Oleg Viro.
\newblock Twisted acyclicity of a circle and signatures of a link.
\newblock {\em J. Knot Theory Ramifications}, 18(6):729--755, 2009.

\bibitem[Wal66]{Wall66}
C.~Terence~C. Wall.
\newblock Finiteness conditions for {${\rm CW}$} complexes. {II}.
\newblock {\em Proc. Roy. Soc. Ser. A}, 295:129--139, 1966.

\bibitem[Wal69]{Wall}
C.~Terence~C. Wall.
\newblock Non-additivity of the signature.
\newblock {\em Invent. Math.}, 7:269--274, 1969.

\bibitem[Wes77]{Wes77}
James West.
\newblock Mapping {H}ilbert cube manifolds to {ANR}'s: a solution of a
  conjecture of {B}orsuk.
\newblock {\em Ann. of Math. (2)}, 106(1):1--18, 1977.

\end{thebibliography}
\end{document}